\def\hbar{\bar{h}}
\def\iso{\buildrel \sim\over\to}
\def\GS{{\mathfrak{S}}}
\def\Gb{{\mathfrak{b}}}
\def\Gg{{\mathfrak{g}}}
\def\Gm{{\mathfrak{m}}}
\def\Gn{{\mathfrak{n}}}
\def\Gsl{{\mathfrak{sl}}}
\def\CA{{\mathcal{A}}}
\def\CB{{\mathcal{B}}}
\def\CC{{\mathcal{C}}}
\def\CF{{\mathcal{F}}}
\def\CH{{\mathcal{H}}}
\def\CL{{\mathcal{L}}}
\def\CM{{\mathcal{M}}}
\def\CN{{\mathcal{N}}}
\def\CO{{\mathcal{O}}}
\def\CP{{\mathcal{P}}}
\def\CU{{\mathcal{U}}}
\def\CV{{\mathcal{V}}}
\def\CW{{\mathcal{W}}}
\def\BC{{\mathbf{C}}}
\def\BF{{\mathbf{F}}}
\def\BN{{\mathbf{N}}}
\def\BP{{\mathbf{P}}}
\def\BQ{{\mathbf{Q}}}
\def\BZ{{\mathbf{Z}}}
\def\eps{\varepsilon}
\def\Aut{\operatorname{Aut}\nolimits}
\def\can{{\mathrm{can}}}
\def\diag{\operatorname{diag}\nolimits}
\def\End{\operatorname{End}\nolimits}
\def\Ext{\operatorname{Ext}\nolimits}
\def\GL{\operatorname{GL}\nolimits}
\def\gr{{\operatorname{gr}\nolimits}}
\def\mgr{{\operatorname{\!-gr}\nolimits}}
\def\Gr{{\operatorname{Gr}\nolimits}}
\def\Hom{\operatorname{Hom}\nolimits}
\def\Id{\operatorname{Id}\nolimits}
\def\id{\operatorname{id}\nolimits}
\def\im{\operatorname{im}\nolimits}
\def\Ind{\operatorname{Ind}\nolimits}
\def\mMod{\operatorname{\!-mod}\nolimits}
\def\mMOD{\operatorname{\!-Mod}\nolimits}
\def\opp{{\operatorname{opp}\nolimits}}
\def\mproj{\operatorname{\!-proj}\nolimits}
\def\rad{\operatorname{rad}\nolimits}
\def\rk{\operatorname{rk}\nolimits}
\def\Rep{\operatorname{Rep}\nolimits}
\def\Res{\operatorname{Res}\nolimits}
\def\soc{\operatorname{soc}\nolimits}
\def\Stab{\operatorname{Stab}\nolimits}
\def\ie{{\em i.e.}}
\def\tS{{\tilde{S}}}
\def\tgamma{{\tilde{\gamma}}}
\def\CHom{{{\mathcal H}om}}
\def\CEnd{{{\mathcal E}nd}}
\def\idun{\mathbf{1}}
\newtheorem{thm}{Theorem}[section]
\newtheorem{lemma}[thm]{Lemma}
\newtheorem{cor}[thm]{Corollary}
\newtheorem{prop}[thm]{Proposition}
\theoremstyle{definition}
\newtheorem{rem}[thm]{Remark}
\newtheorem{example}[thm]{Example}
\def\FA{{\textgoth{A}}}
\def\FB{{\textgoth{B}}}
\def\FC{{\textgoth{C}}}
\title{Quiver Hecke algebras and $2$-Lie algebras}
\author{Rapha\"el Rouquier}
\address{Mathematical Institute,
University of Oxford, 24-29 St Giles', Oxford, OX1 3LB, UK
and Department of Mathematics, UCLA, Box 951555,
Los Angeles, CA 90095-1555, USA}
\email{rouquier@maths.ox.ac.uk}
\date{9 December 2011}
\begin{document}
\maketitle
\tableofcontents

\section{Introduction}
This text provides an introduction and complements to some basic constructions
and results in $2$-representation theory of Kac-Moody algebras.
We discuss quiver Hecke algebras \cite{Rou3}, which have
been introduced independently by Khovanov and
Lauda \cite{KhoLau1} and \cite{KhoLau2}, and their cyclotomic versions,
which have been considered independently in the case of level $2$ weights for
type $A$, by Brundan and Stroppel \cite{BrStr}.
We discuss the $2$-categories associated with Kac-Moody algebras
and their $2$-representations: this has been introduced in joint work with
Joe Chuang \cite{ChRou} for $\Gsl_2$ and implicitly for type $A$ (finite
or affine). While the general philosophy of categorifications was older
(cf for example \cite{BeFrKho}), the new idea in \cite{ChRou} was to
introduce some structure at the level of natural transformations: an
endomorphism
of $E$ and an endomorphism of $E^2$ satisfying Hecke-type relations.
The generalization to other types is based on 
quiver Hecke algebras, which account for a half Kac-Moody algebra. We
discuss the geometrical construction of the quiver
Hecke algebras via quiver varieties, which was our starting point for the
definition of quiver Hecke algebras, and that of cyclotomic quiver Hecke
algebras.

\smallskip
The first chapter gives a gentle introduction to nil (affine)
Hecke algebras of type $A$. We
recall basic properties of Hecke algebras of symmetric groups and provide
the construction via BGG-Demazure operators of the nil Hecke algebras.
We also construct symmetrizing forms.

The second chapter is devoted to quiver Hecke algebras. We explain
that the more complicated relation involved in the definition is
actually a consequence of the other ones, up to polynomial torsion: this
leads to a new, simpler, definition of quiver Hecke algebras.
We construct next the faithful polynomial representation.
This generalizes the constructions of
the first chapter, that correspond to a quiver with one vertex. We
explain the relation, for type $A$ quivers, with affine Hecke algebras.
Finally, we explain how to put together all quiver Hecke algebras associated
with a quiver to obtain a monoidal category that categorifies a
half Kac-Moody algebra (and its quantum version).

The third chapter introduces $2$-categories associated with
Kac-Moody algebras and discusses their integrable representations. We
provide various results that reduce the amount of conditions to check
that a category is endowed with a structure of an integrable
$2$-representation, once
the quiver Hecke relations hold: for example, the $\Gsl_2$-relations imply
all other relations, and it can be enough to check them on $K_0$.
We explain the universal construction of ``simple'' $2$-representations, 
and give a detailed description for $\Gsl_2$. We present a
Jordan-H\"older type result. We move next to cyclotomic quiver Hecke algebras,
and present Kang-Kashiwara and Webster's construction of $2$-representations on
cyclotomic quiver Hecke algebras. We prove that the $2$-representation
is equivalent to the universal simple $2$-representation.
Finally, we explain the construction of Fock spaces from representations
of symmetric groups in this framework.

The last chapter brings in geometrical methods available in the
case of symmetric Kac-Moody algebras. We start with a brief
recollection of Ringel's construction of quantum groups via Hall algebras and
Lusztig's construction of enveloping algebras via constructible functions.
We move next to the construction of nil affine Hecke algebras in the
cohomology of flag varieties. We introduce Lusztig's category
of perverse sheaves on the moduli space of representations of a quiver
and show that it is equivalent to the monoidal category of quiver Hecke
algebras (a result obtained independently by Varagnolo and Vasserot). As
a consequence, the indecomposable projective modules for quiver
Hecke algebras over a field of characteristic $0$, and for ``geometric''
parameters, correspond to the canonical basis.
Finally, we show that Zheng's microlocalized categories of sheaves
can be endowed with a structure of $2$-representation isomorphic to the
universal simple $2$-representation. As a consequence, 
the indecomposable projective modules for cyclotomic quiver
Hecke algebras over a field of characteristic $0$, and for ``geometric''
parameters, correspond to the canonical basis of simple representations.

\smallskip
This article is based on a series of lectures at the National Taiwan
University, Taipei, in December 2008 and
a series of lectures at BICMR, Peking University, in March--April 2010. I wish
to thank Professors Shun-Jen Cheng and Weiqiang Wang,
and Professor Jiping Zhang for their invitations to give
these lecture series.

\section{One vertex quiver Hecke algebras}
The results of this section are classical (cf for example
\cite[\S 3]{Rou3}).
\subsection{Nil Hecke algebras}
\subsubsection{The symmetric group as a Weyl group}

Let $n\ge 1$. Given $i\in\{1,\ldots,n-1\}$, we put $s_i=(i,i+1)\in\GS_n$.

We define a function $r:\GS_n\to\BZ_{\ge 0}$.
Given $w\in\GS_n$, let $R_w=\{(i,j) | i<j \text{ and } w(i)>w(j)\}$ and
let $r(w)=|R(w)|$ be the number of inversions.

The {\em length} $l(w)$ of $w\in\GS_n$ is the minimal integer $r$
such that there exists $i_1,\ldots,i_r$ with
$w=s_{i_1}\cdots s_{i_r}$. Such an expression is called a {\em reduced 
decomposition} of $w$. Proposition \ref{pr:lengthequalsr}
says that since $s_1,\ldots,
s_{n-1}$ generate $\GS_n$, these notions make sense.

\smallskip
Note that reduced decompositions are not unique: we have for example
$(13)=s_1s_2s_1=s_2s_1s_2$. Simpler is $s_1s_3=s_3s_1$.

\begin{prop}
\label{pr:lengthequalsr}
The set $\{s_1,\ldots,s_{n-1}\}$ generates $\GS_n$. Given $w\in \GS_n$, we
have $r(w)=l(w)$.
\end{prop}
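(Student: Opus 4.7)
The plan rests on a single computational lemma: for every $w\in\GS_n$ and every simple transposition $s_i$, we have
\[
r(ws_i)=\begin{cases} r(w)+1 & \text{if } w(i)<w(i+1),\\ r(w)-1 & \text{if } w(i)>w(i+1). \end{cases}
\]
To prove this, I would compare the inversion sets $R_w$ and $R_{ws_i}$ directly. Writing $w'=ws_i$, one has $w'(i)=w(i+1)$, $w'(i+1)=w(i)$, and $w'(j)=w(j)$ for $j\notin\{i,i+1\}$. Any pair $(j,k)$ with $\{j,k\}\cap\{i,i+1\}=\emptyset$ contributes to $R_w$ iff it contributes to $R_{w'}$; and a pair involving exactly one of $i,i+1$ is easily matched with its partner (swapping $i\leftrightarrow i+1$) under the bijection $(j,i)\leftrightarrow(j,i+1)$ and similarly on the right. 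Only the pair $(i,i+1)$ itself is unmatched, so $r(w')-r(w)=\pm1$ depending on whether $(i,i+1)\in R_w$.

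With this lemma in hand, I would derive both halves of the proposition by induction on $r(w)$. For generation and for the inequality $l(w)\le r(w)$: if $r(w)=0$ then $w=\id$ (since no inversions forces $w(1)<w(2)<\cdots<w(n)$, i.e.\ $w=\id$), so $w$ is an empty product and $l(w)=0$. If $r(w)\ge 1$, there exists some index $i$ with $w(i)>w(i+1)$ (otherwise $w$ would be increasing, contradicting $r(w)\ge 1$), and by the lemma $r(ws_i)=r(w)-1$. By induction $ws_i$ lies in the subgroup generated by the $s_j$ and $l(ws_i)\le r(w)-1$, hence $w=(ws_i)s_i$ is also such a product and $l(w)\le l(ws_i)+1\le r(w)$.

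For the reverse inequality $r(w)\le l(w)$: take any reduced decomposition $w=s_{i_1}\cdots s_{i_{l(w)}}$ and apply the lemma iteratively to the partial products $\id, s_{i_1}, s_{i_1}s_{i_2},\ldots$; at each step $r$ changes by $\pm 1$, so $r(w)\le 0+l(w)=l(w)$. Combining the two inequalities gives $r(w)=l(w)$.

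The only step that requires real care is the inversion-counting lemma; once the bookkeeping of which pairs of indices are affected by right-multiplication by $s_i$ is done cleanly, everything else is a short induction. I expect the bijection between the pairs $\{(j,i):j<i\}\cup\{(j,i+1):j<i\}\cup\{(i,k):k>i+1\}\cup\{(i+1,k):k>i+1\}$ before and after applying $s_i$ to be the one subtlety worth writing out carefully.
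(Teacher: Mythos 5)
Your proof is correct and follows essentially the same approach as the paper's: both hinge on the observation that right multiplication by a simple transposition $s_i$ changes the inversion number by exactly $\pm 1$ according to whether $(i,i+1)$ is an inversion, and then both halves of the equality follow by induction (the paper locates a descent by taking an inversion $(i,j)$ with $j-i$ minimal and showing $j=i+1$, whereas you argue directly that a permutation with no descent is increasing, hence the identity — a minor variant). Your explicit bijection between $R_w\setminus\{(i,i+1)\}$ and $R_{ws_i}$ is the careful version of what the paper writes more tersely as "$R_{w'}=R_w-\{(i,i+1)\}$".
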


\begin{proof}
Let $w\in \GS_n$, $w\not=1$. Note that $R_w{\not=}\emptyset$.
Consider $(i,j)\in R_w$ such that $j-i$ is minimal. Assume
$j\not=i+1$. By the minimality assumption, $(i,i+1){\not\in}R_w$ and
$(i+1,j){\not\in}R_w$, so $w(j)>w(i+1)>w(i)$, a contradiction.
So, $j=i+1$. Let $w'=ws_i$. We have
$R_{w'}=R_w-\{(i,i+1)\}$, hence $r(w')=r(w)-1$. We deduce by induction
that there exist $i_1,\ldots,i_{r(w)}\in\{1,\ldots,n-1\}$ such that
$w=s_{i_{r(w)}}\cdots s_{i_1}$. In particular, the set
$\{s_1,\ldots,s_{n-1}\}$ generates $\GS_n$ and $l(g)\le r(g)$ for
all $g\in\GS_n$.

Let $j\in\{1,\ldots,n\}$ and $v=ws_j$. Assume $(j,j+1){\not\in}R_w$.
Then, $R_v=R_w\cup\{(j,j+1)\}$. It follows that $r(v)=r(w)+1$. If
$(j,j+1)\in R_w$, then $r(v)=r(w)-1$.
We deduce by induction that $l(g)\ge r(g)$ for all $g\in\GS_n$.
\end{proof}

\begin{prop}
The element $w[1,n]=(1,n)(2,n-1)(3,n-2)\cdots$ is the unique element of
$\GS_n$ with maximal length. We have
$l(w[1,n])=\frac{n(n-1)}{2}$.
\end{prop}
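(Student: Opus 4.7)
The plan is to use Proposition \ref{pr:lengthequalsr}, which identifies the length $l(w)$ with the number of inversions $r(w)=|R_w|$. Since $R_w\subseteq\{(i,j) : 1\le i<j\le n\}$, we automatically get the upper bound $l(w)=r(w)\le \binom{n}{2}=\frac{n(n-1)}{2}$ for every $w\in\GS_n$, with equality if and only if $R_w$ equals the entire set of pairs $(i,j)$ with $i<j$.

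Next I would observe that the condition $R_w=\{(i,j) : i<j\}$ means $w(i)>w(j)$ for \emph{all} $i<j$, i.e., $w$ is the unique order-reversing permutation on $\{1,\ldots,n\}$. Concretely, this forces $w(i)=n+1-i$, and one checks directly that the product $(1,n)(2,n-1)(3,n-2)\cdots$ is precisely this permutation. Hence $w[1,n]$ is the unique element attaining the upper bound, which simultaneously proves uniqueness, maximality, and the formula $l(w[1,n])=\frac{n(n-1)}{2}$.

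The argument is essentially immediate once Proposition \ref{pr:lengthequalsr} is available, so there is no real obstacle; the only care needed is verifying that the given product of disjoint transpositions indeed equals the map $i\mapsto n+1-i$, which follows because for each $i\le n/2$ the transposition $(i,n+1-i)$ swaps $i$ and $n+1-i$, these transpositions have pairwise disjoint supports (so they commute and their product is well-defined independently of ordering), and a fixed point $i=(n+1)/2$ (when $n$ is odd) is already sent to itself by $i\mapsto n+1-i$.
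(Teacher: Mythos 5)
Your proof is correct and takes essentially the same approach as the paper: both reduce the claim to Proposition \ref{pr:lengthequalsr}, observe that $R_{w[1,n]}$ is all of $\{(i,j):i<j\}$ so $|R_w|\le\binom{n}{2}$ with equality only for $w=w[1,n]$, and conclude. You merely spell out in more detail the computation that the product of disjoint transpositions equals $i\mapsto n+1-i$, which the paper takes as obvious.
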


\begin{proof}
Note that $R_{w[1,n]}=\{(i,j) | i<j\}$ and this contains any set
$R_w$ for $w\in\GS_n$, with equality if and only if $w=w[1,n]$.
The result follows from Proposition \ref{pr:lengthequalsr}.
\end{proof}

The set 
$$C_n=\{1,s_{n-1}=(n-1,n),s_{n-2}s_{n-1}=(n-2,n-1,n),\ldots,
s_1\cdots s_{n-2}s_{n-1}=(1,2,\ldots,n)\}$$
is a complete set of representatives for left cosets $\GS_n/\GS_{n-1}$.
Let $w\in\GS_{n-1}$ and $g\in C_n$. We
have $R(gw)=R(w)\coprod R(g)$, so $l(gw)=l(g)+l(w)$. 
Consider now $w\in\GS_n$. There is a unique decomposition
$w=c_n c_{n-1}\cdots c_2$ where $c_i\in C_i$ and we have
$l(w)=l(c_n)+\cdots+l(c_2)$. Each $c_i$ has a unique
reduced decomposition and that provides us with a canonical reduced
decomposition of $w$:
$$w=(s_{j_1} s_{j_1+1}\cdots s_{i_1})
(s_{j_2} s_{j_2+1}\cdots s_{i_2})
\cdots (s_{j_r} s_{j_r+1}\cdots s_{i_r})$$
where $i_1>i_2>\cdots >i_r$ and $1\le j_r<i_r$.

In the case of the longest element, we obtain
$$w[1,n]=(s_1\cdots s_{n-1})(s_1\cdots s_{n-2})\cdots s_1
=(s_1\cdots s_{n-1})w[1,n-1].$$

\smallskip
Using canonical reduced decompositions, we can count the number of elements
with a given length and deduce the following result.

\begin{prop}
\label{pr:Poincare}
We have $\sum_{w\in\GS_n}q^{l(w)}=\frac{(1-q)(1-q^2)\cdots(1-q^n)}{(1-q)^n}$.
\end{prop}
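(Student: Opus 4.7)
The plan is to exploit the canonical reduced decomposition of elements of $\GS_n$ that was constructed just before the statement, together with the fact that it behaves additively with respect to length. Each $w\in\GS_n$ is uniquely written as $w=c_n c_{n-1}\cdots c_2$ with $c_i\in C_i$, and
$$l(w)=l(c_n)+l(c_{n-1})+\cdots+l(c_2).$$
This turns the generating function over $\GS_n$ into a product of generating functions over the $C_i$'s:
$$\sum_{w\in\GS_n}q^{l(w)}=\prod_{i=2}^n\Bigl(\sum_{c\in C_i}q^{l(c)}\Bigr).$$

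Next I would evaluate each factor. Reading off the definition
$$C_i=\{1,\;s_{i-1},\;s_{i-2}s_{i-1},\;\ldots,\;s_1s_2\cdots s_{i-1}\},$$
each listed expression is already a canonical reduced decomposition, so the $i$ elements of $C_i$ have lengths $0,1,2,\ldots,i-1$ respectively (one of each). Hence
$$\sum_{c\in C_i}q^{l(c)}=1+q+q^2+\cdots+q^{i-1}=\frac{1-q^i}{1-q}.$$

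Combining these computations gives
$$\sum_{w\in\GS_n}q^{l(w)}=\prod_{i=2}^n\frac{1-q^i}{1-q}=\frac{(1-q^2)(1-q^3)\cdots(1-q^n)}{(1-q)^{n-1}},$$
which is the same as $\frac{(1-q)(1-q^2)\cdots(1-q^n)}{(1-q)^n}$ after inserting the trivial factor $\frac{1-q}{1-q}$ for $i=1$. There is no real obstacle here: the only substantive ingredient is the additivity of length under the canonical decomposition, which is already established in the discussion preceding the proposition; everything else is the geometric series identity.
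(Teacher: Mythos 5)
Your proof is correct and is exactly the argument the paper has in mind: the paper's one-line justification ("Using canonical reduced decompositions, we can count the number of elements with a given length") is precisely the decomposition $w=c_n\cdots c_2$ with $l(w)=\sum_i l(c_i)$, which factors the generating function as $\prod_{i=2}^n(1+q+\cdots+q^{i-1})$. You have simply written out the details the author left implicit.
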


\begin{lemma}
Let $w\in\GS_n$. Then, $l(w^{-1})=l(w)$ and
$l(w[1,n]w^{-1})=l(w[1,n])-l(w)$.
\end{lemma}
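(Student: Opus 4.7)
The plan is to reduce everything to the inversion description of length provided by Proposition \ref{pr:lengthequalsr}, i.e.\ $l(w)=r(w)=|R_w|$, and to exhibit explicit bijections between inversion sets.

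For the first identity $l(w^{-1})=l(w)$, I would define the map
$$\phi:R_w\longrightarrow R_{w^{-1}},\qquad (i,j)\longmapsto (w(j),w(i)).$$
If $(i,j)\in R_w$ then $i<j$ and $w(i)>w(j)$, so $w(j)<w(i)$, and applying $w^{-1}$ we have $w^{-1}(w(j))=j>i=w^{-1}(w(i))$, i.e.\ $(w(j),w(i))\in R_{w^{-1}}$. Reversing the roles of $w$ and $w^{-1}$ gives an inverse map, so $\phi$ is a bijection and $r(w^{-1})=r(w)$.

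For the second identity, write $w_0=w[1,n]$, so $w_0(k)=n+1-k$ and $R_{w_0}=\{(i,j):i<j\}$ with $|R_{w_0}|=\binom{n}{2}=l(w_0)$. I would compute $R_{w_0w^{-1}}$ directly: for $i<j$,
$$w_0w^{-1}(i)>w_0w^{-1}(j)\ \Longleftrightarrow\ n+1-w^{-1}(i)>n+1-w^{-1}(j)\ \Longleftrightarrow\ w^{-1}(i)<w^{-1}(j).$$
Therefore $R_{w_0w^{-1}}$ is exactly the complement of $R_{w^{-1}}$ inside the set of all pairs $(i,j)$ with $i<j$, giving
$$l(w_0w^{-1})=r(w_0w^{-1})=\binom{n}{2}-r(w^{-1})=l(w_0)-l(w^{-1}),$$
and combining with the first part finishes the proof.

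Neither step has a real obstacle: both reduce to checking that an explicitly defined map on pairs is a bijection, once we invoke $l=r$ from Proposition \ref{pr:lengthequalsr}. The only mild care needed is to keep track of the direction of inequalities under $w_0$ (which reverses order), and to notice that the natural target of the bijection in the second part is the complement of $R_{w^{-1}}$ rather than $R_{w^{-1}}$ itself.
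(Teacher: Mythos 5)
Your proof is correct. The second half matches the paper's argument almost verbatim: both compute the inversion set of $w_0 w^{-1}$ (or of $w_0 w$ and then substitute $w\mapsto w^{-1}$) and observe that it is the complement of $R_{w^{-1}}$ inside $\{(i,j):i<j\}$. The first half is where you diverge. The paper dispatches $l(w^{-1})=l(w)$ in one line via reduced expressions: $w=s_{i_1}\cdots s_{i_r}$ is reduced if and only if $w^{-1}=s_{i_r}\cdots s_{i_1}$ is reduced, so the minimal lengths agree. You instead stay at the level of inversion counts and produce the explicit bijection $(i,j)\mapsto(w(j),w(i))$ from $R_w$ to $R_{w^{-1}}$, then invoke $l=r$ from Proposition \ref{pr:lengthequalsr}. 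Your route is slightly more computational but has the merit of treating both halves of the lemma uniformly through the inversion-set description of length, whereas the paper's is shorter but appeals to the word-length characterization for the first part and the inversion characterization for the second. Both are standard; either is acceptable here.
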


\begin{proof}
The first statement is clear, since $w=s_{i_1}\cdots s_{i_r}$ is a reduced
expression if and only if $w^{-1}=s_{i_r}\cdots s_{i_1}$ is a reduced
expression.

We have $R_{w[1,n]w}=\{(i,j) | (i<j) \text{ and } w(i)<w(j)\}$. The second
statement follows.
\end{proof}

We recall the following classical result.

\begin{prop}
\label{pr:presentationSn}
The group $\GS_n$ has a presentation with generators $s_1,\ldots,s_{n-1}$
and relations
$$s_is_j=s_js_i \text{ if }|i-j|>1 \text{ and }s_is_{i+1}s_i=s_{i+1}s_i
s_{i+1}.$$
\end{prop}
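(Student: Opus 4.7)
The plan is to show that the abstract group $\tilde G_n$ defined by the given presentation (which one reads as including also the Coxeter relations $s_i^2=1$, needed to cut the braid group down to $\GS_n$) has order at most $n!$. That these relations hold in $\GS_n$ is a direct check on the transpositions $s_i=(i,i+1)$, so since $s_1,\ldots,s_{n-1}$ generate $\GS_n$ by Proposition \ref{pr:lengthequalsr}, there is a surjection $\pi:\tilde G_n\twoheadrightarrow\GS_n$. The bound $|\tilde G_n|\le n!=|\GS_n|$ will then force $\pi$ to be an isomorphism.

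I would induct on $n$, the case $n\le 1$ being trivial. Let $\tilde H\subset\tilde G_n$ be the subgroup generated by $s_1,\ldots,s_{n-2}$. Since all defining relations of $\tilde G_{n-1}$ hold among these generators inside $\tilde H$, there is a surjection $\tilde G_{n-1}\twoheadrightarrow\tilde H$, so $|\tilde H|\le (n-1)!$ by the inductive hypothesis. Define $a_n:=1$ and $a_j:=s_js_{j+1}\cdots s_{n-1}$ for $1\le j\le n-1$ in $\tilde G_n$, and put $\tilde C_n:=\{a_1,\ldots,a_n\}$. It then suffices to prove $\tilde G_n=\tilde C_n\cdot\tilde H$, for that gives $|\tilde G_n|\le n\cdot(n-1)!=n!$. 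Since $1=a_n\in\tilde C_n\cdot\tilde H$ and $\tilde G_n$ is generated by $s_1,\ldots,s_{n-1}$, this reduces to showing that $\tilde C_n\cdot\tilde H$ is stable under left multiplication by each $s_i$.

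The core computation is to establish an identity $s_i\cdot a_j=a_{f(i,j)}\cdot\varepsilon(i,j)$ with $\varepsilon(i,j)\in\{1,s_1,\ldots,s_{n-2}\}\subset\tilde H$, for all $1\le i\le n-1$ and $1\le j\le n$. The cases $i\le j-2$ (pure commutation, yielding $a_js_i$), $i=j-1$ (concatenation, yielding $a_{j-1}$), and $i=j$ (cancellation via $s_j^2=1$, yielding $a_{j+1}$) are immediate. The main obstacle, and the only place where the braid relation is genuinely invoked, is the range $j<i\le n-1$: here I would commute $s_i$ past $s_j,s_{j+1},\ldots,s_{i-2}$ (legal since their indices differ from $i$ by at least $2$), apply the braid relation $s_is_{i-1}s_i=s_{i-1}s_is_{i-1}$, and then commute the resulting $s_{i-1}$ past $s_{i+1},\ldots,s_{n-1}$ to arrive at $s_ia_j=a_j\cdot s_{i-1}$, with $s_{i-1}\in\tilde H$. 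Once this telescoping is verified, closure of $\tilde C_n\cdot\tilde H$ under the generators follows and the induction is complete.
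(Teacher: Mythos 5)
The paper simply states this proposition as a ``classical result'' and gives no proof, so there is nothing internal to compare against. Your argument is correct and is the standard Schreier/coset-enumeration induction: build the presented group $\tilde G_n$, get the surjection onto $\GS_n$, bound $|\tilde G_n|\le n!$ by showing the set $\tilde C_n\cdot\tilde H$ is closed under left multiplication by the generators, and conclude that the surjection is an isomorphism. You are also right to flag that the printed relations alone define the braid group; the involutivity $s_i^2=1$ must be added (or tacitly understood) for the statement to be true, and you handle this correctly. The four-way case analysis on $s_i a_j$ is exhaustive (including the boundary values $j=n$, $i=n-1$) and the telescoping in the range $j<i$ --- commute $s_i$ left to meet $s_{i-1}s_i$, apply the braid relation once, then commute $s_{i-1}$ out to the right --- is exactly the identity that makes the coset count close up. A minor remark: your $\tilde C_n=\{a_1,\ldots,a_n\}$ is precisely the preimage of the coset transversal $C_n$ that the paper constructs for $\GS_n/\GS_{n-1}$ just before Proposition \ref{pr:Poincare}, so your induction dovetails naturally with the paper's own bookkeeping of reduced words.
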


\subsubsection{Finite Hecke algebras}
Let us recall some classical results about Hecke algebras of symmetric
groups.

Let $R=\BZ[q_1,q_2]$.
Let $H_n^f$ be the {\em Hecke algebra} of $\GS_n$: this
is the $R$-algebra generated by $T_1,\ldots,T_{n-1}$,
with relations
$$T_iT_{i+1}T_i=T_{i+1}T_iT_{i+1},\ \ T_iT_j=T_jT_i \text { if }
|i-j|>1 \text { and } (T_i-q_1)(T_i-q_2)=0.$$

There is an isomorphism of algebras
$$H_n^f\otimes_R R/(q_1-1,q_2+1)\iso \BZ[\GS_n],\ T_i\mapsto s_i.$$

\smallskip
Let $w\in\GS_n$ with a
reduced decomposition $w=s_{i_1}\cdots s_{i_r}$.
We put $T_w=T_{i_1}\cdots T_{i_r}\in H_n^f$.
One shows that $T_w$ is independent of the choice of a reduced
decomposition of $w$ and that $\{T_w\}_{w\in\GS_n}$ is an $R$-basis of
the free $R$-module $H_n^f$.

Given $w,w'\in\GS_n$ with $l(ww')=l(w)+l(w')$, we have
$T_w T_{w'}=T_{ww'}$.

\smallskip
 The algebra $H_n^f$ is a {\em deformation}
of $\BZ[\GS_n]$. At the specialization $q_1=1$, $q_2=-1$, the element
$T_w$ becomes the group element $w$.

\subsubsection{Nil Hecke algebras of type $A$}
Let ${^0H}_n^f=H_n^f\otimes_R R/(q_1,q_2)$. 
Given $w,w'\in\GS_n$, we have 
$$T_wT_{w'}=
\begin{cases}
T_{ww'} &\text{ if }l(ww')=l(w)+l(w')\\
0 & \text{ otherwise.}
\end{cases}$$
So, the algebra ${^0H}_n^f$ is graded with $\deg T_w=-2l(w)$. The choice of
a negative sign will become clear soon. The factor $2$ comes from
the cohomological interpretation.

\smallskip
Given $M=\bigoplus_{i\in\BZ}M_i$ a graded $\BZ$-module and $r\in\BZ$, we
denote by
$M\langle r\rangle$ the graded module given by
$(M\langle r\rangle)_i=M_{i+r}$.

\smallskip
We have $({^0H}_n^f)_i=\bigoplus_{w\in\GS_n,l(w)=-i/2}
\BZ T_w$. So, $({^0H}_n^f)_i=0$ unless $i\in\{0,-2,\ldots,-n(n-1)\}$.

\smallskip
Let $k$ be a field and $k{^0H}_n^f={^0H}_n^f\otimes_\BZ k$.

\begin{prop}
\label{pr:radsocnilHecke}
The Jacobson radical of $k{^0H}_n^f$ is
$\rad(k{^0H}_n^f)=\bigoplus_{w\not=1}kT_w$ and $k{^0H}_n^f$ has a unique
minimal non-zero two-sided ideal $\soc(k{^0H}_n^f)=kT_{w[1,n]}$.
The trivial module $k$, with $0$-action of the $T_i$'s, is
the unique simple $k{^0H}_n^f$-module.
\end{prop}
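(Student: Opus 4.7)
Proof plan. Let $I=\bigoplus_{w\neq 1}kT_w$. The first step is to observe that $I$ is a two-sided ideal. This is immediate from the multiplication rule: a product $T_w T_{w'}$ with $w\neq 1$ is either zero or equals $T_{ww'}$ with $l(ww')=l(w)+l(w')\ge 1$, so $ww'\neq 1$; similarly on the other side. The quotient $k{^0H}_n^f/I$ is one-dimensional and identifies with $k$ (sending $T_1\mapsto 1$, $T_w\mapsto 0$ for $w\neq 1$), so $I$ is a maximal ideal. Moreover $I$ is nilpotent: any product $T_{w_1}\cdots T_{w_r}$ with each $w_i\neq 1$ is either $0$ or equal to $T_w$ with $l(w)=\sum l(w_i)\ge r$, hence $I^{l(w[1,n])+1}=0$. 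Thus $I\subseteq\rad(k{^0H}_n^f)$, while the semisimplicity of the quotient gives the reverse inclusion, so $\rad(k{^0H}_n^f)=I$. Simple modules of $k{^0H}_n^f$ are simple modules of $k{^0H}_n^f/\rad=k$, giving uniqueness of the trivial module, on which each $T_i\in I$ necessarily acts by zero.

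The second, and main, step is the claim about the socle. I will show that every nonzero left ideal $J$ contains $T_{w[1,n]}$. Fix a nonzero $x=\sum_w a_w T_w\in J$, and pick $w_0$ in the support of $x$ of \emph{minimal} length. Set $u=w[1,n]w_0^{-1}$; by the lemma preceding Proposition~\ref{pr:presentationSn}, $l(u)=l(w[1,n])-l(w_0)$, so $T_u T_{w_0}=T_{w[1,n]}$. For any other $w$ in the support, $l(w)\ge l(w_0)$, hence $l(u)+l(w)\ge l(w[1,n])$, with equality iff $l(w)=l(w_0)$. In the strict case one already has $T_uT_w=0$ since $l(uw)\le l(w[1,n])<l(u)+l(w)$. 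In the equality case, $T_uT_w\neq 0$ would force $uw=w[1,n]$ (the unique element of length $l(w[1,n])$), giving $w=u^{-1}w[1,n]=w_0$, contradicting $w\neq w_0$. Therefore $T_u x=a_{w_0}T_{w[1,n]}\in J$, and $a_{w_0}\in k^\times$ gives $T_{w[1,n]}\in J$.

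The final step is clean-up. The element $T_{w[1,n]}$ generates a one-dimensional left ideal, because $T_i T_{w[1,n]}=0$ for all $i$ (since $l(s_iw[1,n])<l(w[1,n])$). Combined with the previous step, $kT_{w[1,n]}$ is the unique minimal left ideal of $k{^0H}_n^f$. The symmetric argument (applied with $x T_{w_0^{-1}w[1,n]}$) shows $kT_{w[1,n]}$ is also the unique minimal right ideal, so it is a two-sided ideal, and is the unique minimal nonzero two-sided ideal, i.e.\ the socle.

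The part that requires the most care is the middle step: the reader has to see that the single left-multiplication by $T_u$ cleanly kills every term except $T_{w_0}$, which relies crucially on choosing $w_0$ of \emph{minimal} (not maximal) length in the support, together with uniqueness of the longest element $w[1,n]$. Everything else is a mechanical consequence of the graded multiplication rule and the standard radical/socle characterizations.
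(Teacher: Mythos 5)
Your proof is correct and takes essentially the same route as the paper. In both, the radical is $A_{<0}$ by nilpotency of the kernel of the $1$-dimensional quotient, and the socle is reached by left-multiplying an element $\sum_w a_w T_w$ by $T_{w[1,n]w_0^{-1}}$ where $w_0$ is of \emph{minimal} length in the support; the only difference is that you spell out the resulting vanishing $T_u T_w=0$ (splitting strict from equality of lengths and invoking uniqueness of $w[1,n]$) where the paper states it as a one-line assertion.
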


\begin{proof}
Let $A=k{^0H}_n^f$.
Let $J=A_{<0}=\bigoplus_{w\not=1}kT_w$.
We have $J^{n(n-1)+1}=0$. So,
$J$ is a nilpotent two-sided ideal of
$A$ and $A/J\simeq k$. It follows that
$J=\mathrm{rad}(A)$: the algebra $A$ is local and $k$ is the unique
simple module.

Let $M$ be a non-zero left ideal of $A$. Let $m=\sum_w \alpha_w T_w\in M$
be a non-zero element. Consider $w\in \GS_n$ of minimal length
such that $\alpha_w\not=0$. We have 
$T_{w[1,n]w^{-1}}m=\alpha_w T_{w[1,n]}$ because
$T_{w[1,n]w^{-1}}T_{w'}=0$ if $l(w')\ge l(w)$ and $w'\not=w$. It follows
that $kT_{w[1,n]}\subset M$. That shows that $kT_{w[1,n]}$ is the unique
minimal non-zero left ideal of $A$. A similar proof shows it is also the
unique minimal non-zero right ideal.
\end{proof}

\begin{rem}
Let $k$ be a field and $A$ be a finite dimensional graded $k$-algebra.
Assume $A_0=k$ and $A_i=0$ for $i>0$. Then $\rad(A)=A_{<0}$.
\end{rem}

\subsubsection{BGG-Demazure operators}
We refer to \cite[Chapter IV]{Hi} for a general discussion of the results
below.

Let $P_n=\BZ[X_1,\ldots,X_n]$. We let $\GS_n$ act on $P_n$ by permutation
of the $X_i$'s.
We
define an endomorphism of abelian groups
$\partial_i\in\End_\BZ(P_n)$ by
$$\partial_i(P)=\frac{P-s_i(P)}{X_{i+1}-X_i}.$$
Note that the operators $\partial_w$ are $P_n^{\GS_n}$-linear.
Note also that $\im\partial_i\subset P_n^{s_i}=\ker\partial_i$.

\smallskip
The following lemma follows from easy calculations.

\begin{lemma}
\label{le:demazurebraid}
We have
$\partial_i^2=0$, $\partial_i\partial_j=\partial_j\partial_i$ for
$|i-j|>1$ and
$\partial_i\partial_{i+1}\partial_i=
\partial_{i+1}\partial_i\partial_{i+1}$.
\end{lemma}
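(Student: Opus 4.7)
The first two identities are immediate. The relation $\partial_i^2=0$ follows from the observation (noted just before the lemma) that $\im\partial_i\subset P_n^{s_i}=\ker\partial_i$, so applying $\partial_i$ a second time kills the output. For $|i-j|>1$, the involutions $s_i$ and $s_j$ commute and act on disjoint pairs of variables, so $s_i$ fixes $X_{j+1}-X_j$ and conversely; unfolding the definition one checks that $\partial_i\partial_j(P)$ and $\partial_j\partial_i(P)$ both equal
$$\frac{P-s_i P-s_j P+s_is_j P}{(X_{i+1}-X_i)(X_{j+1}-X_j)},$$
so these operators commute.

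For the braid relation, my plan is to reduce to the three-variable case. Since $s_i$ and $s_{i+1}$ both fix every $X_k$ with $k\notin\{i,i+1,i+2\}$, these $X_k$ commute with the operators $\partial_i$ and $\partial_{i+1}$, and the $P_n^{\langle s_i,s_{i+1}\rangle}$-linearity of each side (which is visible from the formula $\partial_i(fP)=f\partial_i(P)$ for $s_i$-invariant $f$) allows one to restrict attention to $\BZ[X_i,X_{i+1},X_{i+2}]$. After relabelling the variables as $X_1,X_2,X_3$, the claim becomes $\partial_1\partial_2\partial_1=\partial_2\partial_1\partial_2$ on $\BZ[X_1,X_2,X_3]$.

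To check this equality, I would establish the closed formula
$$\partial_1\partial_2\partial_1(P)=\partial_2\partial_1\partial_2(P)=\frac{1}{(X_2-X_1)(X_3-X_1)(X_3-X_2)}\sum_{w\in\GS_3}\sgn(w)\,w(P).$$
The right-hand side is a well-defined polynomial because the antisymmetriser of any polynomial is divisible by the Vandermonde determinant. To verify that $\partial_1\partial_2\partial_1$ has this form, I would apply the three divided differences in turn, at each step writing the numerator as a sum of signed permutations and factoring the accumulated differences $(X_j-X_i)$ out of the denominator. The computation for $\partial_2\partial_1\partial_2$ is entirely parallel and produces the same expression, since the sum over $\GS_3$ is insensitive to the choice of reduced decomposition of the longest element $s_1s_2s_1=s_2s_1s_2$.

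The main obstacle is the bookkeeping in that iterated computation: intermediate stages introduce fractional expressions whose numerators must be simplified carefully to confirm that they are divisible by the next linear factor $X_{j}-X_{i}$. A slicker alternative, which would be my fallback if the direct expansion becomes too cumbersome, is to use the Leibniz-type identity $\partial_i(PQ)=\partial_i(P)Q+s_i(P)\partial_i(Q)$ (a one-line consequence of $PQ-s_iP\,s_iQ=(P-s_iP)Q+s_iP(Q-s_iQ)$) to reduce both sides of the braid relation to an identity that needs only be verified on the three generators $X_1,X_2,X_3$, which is an easy direct check.
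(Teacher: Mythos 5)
The paper states that this lemma "follows from easy calculations" and gives no proof, so your proposal supplies genuine content. Your treatment of $\partial_i^2=0$ and of the commuting case is correct and efficient: the formula
$$\partial_i\partial_j(P)=\frac{P-s_iP-s_jP+s_is_jP}{(X_{i+1}-X_i)(X_{j+1}-X_j)},$$
together with $s_is_j=s_js_i$, settles it. The reduction of the braid relation to three variables is valid: both $\partial_i$ and $\partial_{i+1}$ are $\BZ[X_k:k\notin\{i,i+1,i+2\}]$-linear, and $P_n$ is the tensor product of $\BZ[X_i,X_{i+1},X_{i+2}]$ with that subring. Your closed formula
$$\partial_1\partial_2\partial_1(P)=\partial_2\partial_1\partial_2(P)=\frac{\sum_{w\in\GS_3}\sgn(w)\,w(P)}{(X_2-X_1)(X_3-X_1)(X_3-X_2)}$$
is correct for the paper's sign conventions (one checks, for instance, that both sides give $1$ on $X_2X_3^2$, consistent with the later computation in the proof of Theorem~\ref{pr:basispol}), and the well-definedness via divisibility of the antisymmetrisation by the Vandermonde is the right point.

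One imprecision: your fallback asserts that the Leibniz identity $\partial_i(PQ)=\partial_i(P)Q+s_i(P)\partial_i(Q)$ reduces the braid relation to a check "on the three generators $X_1,X_2,X_3$." That phrasing is too strong: the operator $D=\partial_1\partial_2\partial_1-\partial_2\partial_1\partial_2$ is not a derivation nor a twisted derivation, so vanishing on generators alone does not formally propagate. What the Leibniz rule does give is an \emph{induction on degree}: one computes $D(X_kP)$ by repeatedly pushing $X_k$ across the $\partial$'s and finds $D(X_kP)=w_0(X_k)\,D(P)$ (e.g.\ $D(X_1P)=X_3\,D(P)$), using $\partial_i^2=0$ at the key step to kill the extra terms; since $D$ vanishes in degrees $\le 2$, the induction closes. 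With that correction the fallback is rigorous. The main line of your argument does not depend on it and stands as written.
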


We deduce that we have obtained a representation of the nil Hecke algebra.
\begin{prop}
The assignment $T_i\mapsto \partial_i$ defines a representation of
${^0H}_n^f$ on $P_n$.
\end{prop}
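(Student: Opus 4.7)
The proof is essentially an immediate consequence of Lemma \ref{le:demazurebraid}. My plan is as follows.

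First I would unpack the defining relations of ${^0H}_n^f$. By definition, $H_n^f$ is presented by generators $T_1,\ldots,T_{n-1}$ subject to the braid relation $T_iT_{i+1}T_i=T_{i+1}T_iT_{i+1}$, the far commutation $T_iT_j=T_jT_i$ for $|i-j|>1$, and the quadratic relation $(T_i-q_1)(T_i-q_2)=0$. Setting $q_1=q_2=0$ as in the definition of ${^0H}_n^f=H_n^f\otimes_R R/(q_1,q_2)$, the last relation becomes $T_i^2=0$. So ${^0H}_n^f$ is the $\BZ$-algebra presented by $T_1,\ldots,T_{n-1}$ subject to precisely these three families of relations.

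Next, to produce the representation on $P_n$, it suffices by the universal property of an algebra presented by generators and relations to check that the operators $\partial_i\in\End_\BZ(P_n)$ satisfy the three relations above. But this is exactly the content of Lemma \ref{le:demazurebraid}: the identities $\partial_i^2=0$, $\partial_i\partial_j=\partial_j\partial_i$ for $|i-j|>1$, and $\partial_i\partial_{i+1}\partial_i=\partial_{i+1}\partial_i\partial_{i+1}$ match the three relations term by term. Consequently, the assignment $T_i\mapsto\partial_i$ extends uniquely to a $\BZ$-algebra homomorphism ${^0H}_n^f\to\End_\BZ(P_n)$, which is precisely the asserted representation.

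There is no real obstacle here; the only point worth being careful about is that the quadratic relation specializes to $T_i^2=0$ rather than something else when $q_1=q_2=0$, and that the presentation of ${^0H}_n^f$ by generators and relations is inherited from the given presentation of $H_n^f$. Both are transparent from the definitions recalled in the preceding subsection.
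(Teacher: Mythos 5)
Your proof is correct and is exactly the argument the paper has in mind: the sentence preceding the proposition ("We deduce that we have obtained a representation of the nil Hecke algebra") refers to Lemma \ref{le:demazurebraid}, and your appeal to the universal property of the algebra presented by $T_i^2=0$, far commutation, and the braid relations is the intended reasoning.
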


Define a grading of the algebra $P_n$ by $\deg X_i=2$. Then,
the representation above is compatible with the gradings.

Given $w\in\GS_n$, we denote by $\partial_w$ the image of $T_w$.

\smallskip
The following result is clear.

\begin{lemma}
Let $P\in P_n$. 
We have $\partial_i(P)=0$ for all $i$ if and only if
$P\in P_n^{\GS_n}$.
\end{lemma}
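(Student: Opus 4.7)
The plan is to prove the two implications separately; both are immediate once we unwind the definition of $\partial_i$.

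For the ``if'' direction, assume $P \in P_n^{\GS_n}$. Then in particular $s_i(P) = P$ for each $i \in \{1,\ldots,n-1\}$, so the numerator $P - s_i(P)$ defining $\partial_i(P)$ is zero, and hence $\partial_i(P) = 0$.

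For the ``only if'' direction, the key observation is that $\partial_i(P) \in P_n$ is the polynomial such that $(X_{i+1} - X_i)\partial_i(P) = P - s_i(P)$ in $P_n$. Since $P_n$ is an integral domain and $X_{i+1} - X_i \neq 0$, the vanishing $\partial_i(P) = 0$ forces $P - s_i(P) = 0$, i.e.\ $s_i(P) = P$. Assuming this for all $i$, we get that $P$ is fixed by each of the simple transpositions $s_1, \ldots, s_{n-1}$. By Proposition \ref{pr:lengthequalsr}, these generate $\GS_n$, so $P$ is fixed by the whole symmetric group, i.e.\ $P \in P_n^{\GS_n}$.

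Neither direction presents a genuine obstacle; the proof is essentially a one-line verification once one notes that passing from $\partial_i(P) = 0$ to $s_i(P) = P$ requires only that $P_n$ be an integral domain.
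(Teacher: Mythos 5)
Your proof is correct and is exactly the argument the paper has in mind (the paper simply declares the result ``clear'' and omits it): the ``if'' direction is immediate from the formula for $\partial_i$, and the ``only if'' direction follows by clearing the denominator, using that $P_n$ is an integral domain, and then invoking that the $s_i$ generate $\GS_n$ (Proposition \ref{pr:lengthequalsr}).
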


 If $M$ is a free graded module over a commutative ring $k$
with $\dim_k M_i<\infty$ for all $i\in\BZ$,
we put $\mathrm{grdim}(M)=\sum_{i\in\BZ}q^{i/2} \dim(M_i)$.

\begin{thm}
\label{pr:basispol}
The set $\{\partial_w(X_2X_3^2\cdots X_n^{n-1})\}_{w\in\GS_n}$
is a basis of $P_n$ over $P_n^{\GS_n}$.
\end{thm}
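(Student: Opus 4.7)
The plan is three-fold: verify $\partial_{w[1,n]}(\delta) = 1$ for $\delta = X_2 X_3^2 \cdots X_n^{n-1}$; deduce $P_n^{\GS_n}$-linear independence of $\{\partial_w(\delta)\}_{w \in \GS_n}$; and conclude the basis statement by matching graded dimensions.

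For the first step I would induct on $n$ using the factorization $w[1,n] = (s_1 s_2 \cdots s_{n-1})\,w[1,n-1]$. Since $\partial_1,\ldots,\partial_{n-2}$ commute with multiplication by $X_n$ (because $s_1,\ldots,s_{n-2}$ fix $X_n$), the inductive hypothesis gives $\partial_{w[1,n-1]}(\delta) = X_n^{n-1}\cdot \partial_{w[1,n-1]}(\delta_{n-1}) = X_n^{n-1}$, reducing to the claim $\partial_1 \partial_2 \cdots \partial_{n-1}(X_n^{n-1}) = 1$. Starting from $\partial_{n-1}(X_n^{n-1}) = h_{n-2}(X_{n-1},X_n)$ and iterating the identity
$$\partial_i\bigl(h_r(X_{i+1},\ldots,X_n)\bigr) = h_{r-1}(X_i,X_{i+1},\ldots,X_n),$$
which follows from $h_r(X,Y_1,\ldots) - h_r(X',Y_1,\ldots) = (X - X')\,h_{r-1}(X,X',Y_1,\ldots)$, one peels off $\partial_{n-2},\ldots,\partial_1$ in succession, arriving at $h_0(X_1,\ldots,X_n) = 1$.

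For linear independence, suppose $\sum_w f_w \partial_w(\delta) = 0$ is a nontrivial relation with $f_w \in P_n^{\GS_n}$, and pick $w_0$ of \emph{minimal} length among those $w$ with $f_w \ne 0$. Apply the $P_n^{\GS_n}$-linear operator $\partial_{w[1,n]w_0^{-1}}$. By the nil Hecke multiplication rule ($\partial_u \partial_v = \partial_{uv}$ when $l(uv) = l(u) + l(v)$, zero otherwise) together with $l(w[1,n]w_0^{-1}) = \tfrac{n(n-1)}{2} - l(w_0)$, the product $\partial_{w[1,n]w_0^{-1}} \partial_w$ is nonzero only if $l(w) \le l(w_0)$; for $l(w) < l(w_0)$ the coefficient $f_w$ vanishes by minimality, while for $l(w) = l(w_0)$ non-vanishing forces $w[1,n]w_0^{-1}w = w[1,n]$---i.e.\ $w = w_0$---by uniqueness of the longest element. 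Only the $w = w_0$ term survives, giving $f_{w_0} \cdot \partial_{w[1,n]}(\delta) = f_{w_0} = 0$, contradicting the choice of $w_0$.

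Finally, linear independence shows $M := \sum_w P_n^{\GS_n} \partial_w(\delta)$ is a free graded $P_n^{\GS_n}$-submodule of $P_n$ whose generator indexed by $w$ sits in degree $n(n-1) - 2l(w)$. Using Proposition~\ref{pr:Poincare} and the length-reversing involution $w \mapsto w[1,n]w$,
$$\mathrm{grdim}(M) = \frac{1}{\prod_{i=1}^n (1-q^i)} \sum_{w \in \GS_n} q^{n(n-1)/2 - l(w)} = \frac{1}{(1-q)^n} = \mathrm{grdim}(P_n),$$
so the graded inclusion $M \subseteq P_n$ is an equality (over a field directly; integrally via graded Nakayama after checking that the images of $\partial_w(\delta)$ form a $\BZ$-basis of the coinvariant algebra $P_n/(P_n^{\GS_n,+}\cdot P_n)$). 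The main obstacle is the bookkeeping in the extraction argument: the choice of \emph{minimal} (rather than maximal) length for $w_0$, combined with the uniqueness of $w[1,n]$ as the longest element, is precisely what forces every inconvenient cross term to vanish---either through the nil Hecke length obstruction or through the minimality hypothesis.
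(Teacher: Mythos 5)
Your argument follows the paper's proof step for step: compute the normalization $\partial_{w[1,n]}(X_2X_3^2\cdots X_n^{n-1})=1$, prove linear independence by applying $\partial_{w[1,n]w_0^{-1}}$ to a hypothetical relation where $w_0$ has minimal length among nonzero terms, and compare graded dimensions using $P_n^{\GS_n}=\BZ[e_1,\ldots,e_n]$ together with Proposition \ref{pr:Poincare}. The one genuine variation is purely computational: you route the normalization through complete homogeneous symmetric polynomials and the identity $\partial_i h_r(X_{i+1},\ldots,X_n)=h_{r-1}(X_i,\ldots,X_n)$, whereas the paper reduces via $w[1,n]=s_{n-1}\cdots s_1\,w[2,n]$ to $\partial_{n-1}\cdots\partial_1(X_2\cdots X_n)=1$; both are correct, and yours is arguably cleaner.

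One small caution at the end: defining $M$ as the $P_n^{\GS_n}$-\emph{submodule} of $P_n$ spanned by the $\partial_w(\delta)$ and matching graded ranks does not by itself force $M=P_n$ over $\BZ$ (compare $2\BZ\subset\BZ$), and the parenthetical fix you sketch --- graded Nakayama after checking the images form a $\BZ$-basis of the coinvariant algebra --- is circular, since verifying that basis claim is exactly the question of whether the $\partial_w(\delta)$ span modulo $p$ for every prime $p$. The clean route, and the paper's, is to take $M$ abstractly free on symbols $b_w$, map it in by $\phi:b_w\mapsto\partial_w(\delta)$, run your minimal-length extraction to show $\phi\otimes k$ is injective for \emph{every} field $k$ (it works verbatim over $\BF_p$), conclude $\phi\otimes k$ is an isomorphism from the graded dimension count (Lemma \ref{le:Nakgraded}), and then apply Lemma \ref{le:Nakp} degree by degree to descend to $\BZ$. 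You already have all the ingredients; the write-up just needs to make the passage from $\BZ$ to its residue fields explicit rather than deferring it to an unproved $\BZ$-basis claim for the coinvariants.
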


\begin{proof}
Let us show by induction on $n$ that
$$\partial_{w[1,n]}(X_2X_3^2\cdots X_n^{n-1})=1.$$
We have
$w[1,n]=s_{n-1}\cdots s_1 w[2,n]$ and
$l(w[1,n])=l(w[2,n])+n-1$. By induction,
$$\partial_{w[2,n]}(X_2X_3^2\cdots X_n^{n-1})=
X_2\cdots X_n\cdot \partial_{w[2,n]}(X_3\cdots X_n^{n-2})=
X_2\cdots X_n.$$
On the other hand,
$\partial_{n-1}\cdots\partial_1(X_2\cdots X_n)=1$ and we deduce that
$\partial_{w[1,n]}(X_2X_3^2\cdots X_n^{n-1})=1$.

\smallskip
Let $M$ be a free $P_n^{\GS_n}$-module with basis
$\{b_w\}_{w\in\GS_n}$, with
$\deg b_w=2l(w[1,n]w^{-1})=n(n-1)-2l(w)$. 
Define a morphism of $P_n^{\GS_n}$-modules
$$\phi:M\to
P_n,\ b_w\mapsto \partial_w(X_2X_3^2\cdots X_n^{n-1}).$$
This is a graded morphism.

Let $k$ be a field.
Let $a=\sum_w Q_w b_w\in\ker (\phi\otimes k)$, where 
$Q_w\in k[X_1,\ldots,X_n]^{\GS_n}$.
Assume $a\not=0$. Consider $v\in\GS_n$ with
$Q_v\not=0$ and such that $l(v)$ is minimal with this property.
We have $\partial_{w[1,n]v^{-1}}(\phi(a))=Q_v$, hence we have
a contradiction. It follows that $\phi\otimes k$ is injective.

We have $\mathrm{grdim}P_n=(1-q)^{-n}$.
On the other hand, we have
$P_n^{\GS_n}=\BZ[e_1,\ldots,e_n]$, where
$e_r=e_r(X_1,\ldots,X_n)=\sum_{1\le i_1<\cdots<i_r\le n}X_{i_1}\cdots X_{i_r}$.
So, $\mathrm{grdim} P_n^{\GS_n}=(1-q)^{-1}(1-q^2)^{-1}\cdots
(1-q^n)^{-1}$. We deduce that 
$$\mathrm{grdim}M=(1-q)^{-1}\cdots (1-q^n)^{-1}\sum_{w\in\GS_n}q^{l(w)}.$$
The formula of Proposition \ref{pr:Poincare} 
shows that $\mathrm{grdim}M=\mathrm{grdim} P_n$.
Lemma \ref{le:Nakgraded} shows that $\phi_i\otimes k$ is an isomorphism and
then Lemma \ref{le:Nakp} shows that $\phi_i$ is an isomorphism for all $i$.
\end{proof}

The following two lemmas are clear.

\begin{lemma}
\label{le:Nakp}
Let $f:M\to N$ be a morphism between free finitely generated
$\BZ$-modules. If $f\otimes_\BZ (\BZ/p)$ is surjective for all prime $p$, then
$f$ is surjective.
\end{lemma}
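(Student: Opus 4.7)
The plan is to argue via the cokernel $C = N/f(M)$. Since $N$ is finitely generated over $\BZ$, so is $C$, and by right exactness of $-\otimes_\BZ \BZ/p$ we have $C \otimes_\BZ \BZ/p \simeq \coker(f \otimes_\BZ \BZ/p) = 0$ for every prime $p$. The goal is then to show that a finitely generated $\BZ$-module $C$ satisfying $C = pC$ for all primes $p$ must vanish.

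For this, I would invoke the structure theorem for finitely generated abelian groups to write $C \simeq \BZ^r \oplus \bigoplus_i \BZ/n_i\BZ$ with $n_i \ge 2$. If $r \ge 1$, then $C \otimes \BZ/p$ contains $(\BZ/p)^r \ne 0$ for any prime $p$; and if some $n_i \ge 2$, choosing a prime $p$ dividing $n_i$ gives $(\BZ/n_i) \otimes \BZ/p \simeq \BZ/p \ne 0$. Either contradicts the hypothesis, so $r = 0$ and the torsion part is trivial, whence $C = 0$ and $f$ is surjective.

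There is no real obstacle here; the only thing to be slightly careful about is that finite generation of $C$ is used in the structure theorem step (it is what rules out pathologies such as $\BQ$, which satisfies $\BQ = p\BQ$ for every prime but is nonzero). Alternatively one can avoid the structure theorem by first noting that the torsion subgroup $T$ of $C$ is finite, so $T = pT$ for all $p$ forces $T = 0$ (take $p$ coprime to $|T|$, or apply Nakayama after localizing at each $p$); then $C$ is free of finite rank $r$, and $C/pC \simeq (\BZ/p)^r = 0$ gives $r = 0$.
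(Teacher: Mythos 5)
Your proof is correct. The paper labels this lemma ``clear'' and offers no argument, and what you have written is the standard proof one would supply: pass to the cokernel $C$, note $C\otimes_\BZ \BZ/p = \coker(f\otimes_\BZ \BZ/p) = 0$ by right exactness, and then use finite generation of $C$ (via the structure theorem, or your alternative remark about the finite torsion subgroup) to conclude $C=0$. Your parenthetical observation that finite generation is exactly what excludes examples like $\BQ$ is the right thing to be careful about, and your step is watertight.
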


\begin{lemma}
\label{le:Nakgraded}
Let $k$ be a field and $M,N$ be two graded $k$-modules with
$\dim M_i=\dim N_i$ finite for all $i$. If $f:M\to N$ is an injective
morphism of graded $k$-modules, then $f$ is an isomorphism.
\end{lemma}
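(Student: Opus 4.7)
The plan is to reduce the statement to the familiar fact that an injective linear map between finite-dimensional vector spaces of the same dimension is an isomorphism, by working one graded component at a time.

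First I would observe that since $f$ is a morphism of graded $k$-modules, it decomposes as a direct sum $f = \bigoplus_{i \in \BZ} f_i$ where $f_i : M_i \to N_i$ is $k$-linear. The injectivity hypothesis on $f$ forces injectivity of each $f_i$: if $m \in M_i$ lies in $\ker f_i$, then viewing $m$ as an element of $M$ gives $f(m) = 0$, hence $m = 0$. So each $f_i$ is an injective $k$-linear map between finite-dimensional $k$-vector spaces of equal dimension, and therefore a bijection by the rank-nullity theorem.

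Finally, since every graded component $f_i$ is bijective, the total map $f = \bigoplus_i f_i$ is bijective, and its inverse $\bigoplus_i f_i^{-1}$ is automatically a graded $k$-linear map. This shows $f$ is an isomorphism of graded $k$-modules. There is no real obstacle here; the only thing to be careful about is the finiteness hypothesis on $\dim M_i = \dim N_i$, which is used precisely to pass from injectivity to bijectivity in each degree.
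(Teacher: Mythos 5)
Your proof is correct, and it is exactly the standard argument one would supply for this lemma; the paper itself states the result without proof, declaring it ``clear.'' You have filled in that elided argument faithfully: decompose $f$ into its graded components $f_i$, inherit injectivity on each component, and apply rank--nullity degree by degree.
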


\begin{rem}
Note that $\{X_2^{a_2}\cdots X_n^{a_n}\}_{0\le a_i\le i-1}$ is the more
classical basis of $P_n$ over $P_n^{\GS_n}$.
\end{rem}

\subsection{Nil affine Hecke algebras}
\subsubsection{Definition}
Let ${^0H}_n$ be the {\em nil affine Hecke algebra} of $\GL_n$:
this is the $\BZ$-algebra with generators $X_1,\ldots,X_n,
T_1,\ldots,T_{n-1}$ and relations
$$X_iX_j=X_jX_i,\ T_i^2=0,\ T_iT_{i+1}T_i=T_{i+1}T_iT_{i+1},\
T_iT_j=T_jT_i \text{ if } |i-j|>1,$$
$$T_i X_j=X_j T_i \text{ if } j-i\not=0,1,\ 
T_iX_{i+1}-X_iT_i=1 \text{ and }
T_iX_i-X_{i+1}T_i=-1.$$
It is a graded algebra, with $\deg X_i=2$ and $\deg T_i=-2$.

The following lemma is easy.

\begin{lemma}
\label{le:Leibniz}
Given $P,Q\in P_n$, we have
$\partial_i(PQ)=\partial_i(P)Q+s_i(P)\partial_i(Q)$.
\end{lemma}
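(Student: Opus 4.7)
The plan is to unfold the definition $\partial_i(R)=\frac{R-s_i(R)}{X_{i+1}-X_i}$ for $R=PQ$ and perform a single telescoping trick in the numerator. Since $s_i$ is a ring automorphism of $P_n$, we have $s_i(PQ)=s_i(P)s_i(Q)$, so
\[
PQ - s_i(PQ) = PQ - s_i(P)s_i(Q).
\]
The whole proof is then to insert $\pm s_i(P)Q$ in the middle:
\[
PQ - s_i(P)s_i(Q) = \bigl(P-s_i(P)\bigr)Q + s_i(P)\bigl(Q-s_i(Q)\bigr).
\]

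Dividing by $X_{i+1}-X_i$ (which commutes past everything in sight, since we are in a commutative polynomial ring and division is well-defined because both summands in the numerator are individually divisible by $X_{i+1}-X_i$), the first term becomes $\partial_i(P)\,Q$ and the second becomes $s_i(P)\,\partial_i(Q)$. This gives the claimed identity.

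There is no real obstacle here: the only subtlety worth a sentence is justifying that the division by $X_{i+1}-X_i$ distributes over the sum, which is fine because $P-s_i(P)$ and $Q-s_i(Q)$ each lie in the ideal $(X_{i+1}-X_i)\subset P_n$ (this is just the observation that $s_i$ fixes this ideal-class, or equivalently that $\partial_i$ is already known to land in $P_n$ — a fact already used implicitly in the earlier definition). Thus the computation takes place entirely inside $P_n$, with no passage through the fraction field needed, and the lemma follows.
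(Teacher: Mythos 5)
Your proof is correct and is exactly the standard argument the paper has in mind (the paper omits the proof, calling the lemma ``easy''): insert $\pm s_i(P)Q$ in the numerator, use that $s_i$ is a ring automorphism so $s_i(PQ)=s_i(P)s_i(Q)$, and note that each of the two resulting terms $P-s_i(P)$ and $Q-s_i(Q)$ is individually divisible by $X_{i+1}-X_i$ in $P_n$. Nothing to fix.
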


Lemma \ref{le:Leibniz} is the key ingredient to prove the following
lemma.

\begin{lemma}
\label{le:polynomialrep}
We have a representation $\rho$ of ${^0H}_n$ on $P_n$ given by
$$\rho(T_i)(P)=\partial_i(P)\text{ and } \rho(X_i)(P)=X_iP.$$
\end{lemma}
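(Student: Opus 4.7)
The plan is to verify that the operators $P\mapsto \partial_i(P)$ and $P\mapsto X_i P$ on $P_n$ satisfy each of the defining relations of ${^0H}_n$; once this is done, the universal property of the presentation gives the desired algebra map $\rho$.

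First, the relations involving only the $X_i$'s are immediate, since multiplication operators on the commutative ring $P_n$ commute. The relations involving only the $T_i$'s — namely $T_i^2=0$, the braid relation $T_iT_{i+1}T_i=T_{i+1}T_iT_{i+1}$, and commutation $T_iT_j=T_jT_i$ for $|i-j|>1$ — are exactly the content of Lemma \ref{le:demazurebraid}.

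The interesting step is the mixed relations. These will all be checked by applying the Leibniz rule of Lemma \ref{le:Leibniz}. For $j\neq i,i+1$, we have $\partial_i(X_j)=0$ and $s_i(X_j)=X_j$, so $\partial_i(X_j Q)=X_j\partial_i(Q)$ for all $Q\in P_n$; this is the relation $T_iX_j=X_jT_i$. For $j=i+1$, a direct computation gives $\partial_i(X_{i+1})=\tfrac{X_{i+1}-X_i}{X_{i+1}-X_i}=1$ and $s_i(X_{i+1})=X_i$, so Leibniz yields $\partial_i(X_{i+1}Q)=Q+X_i\partial_i(Q)$, which is $T_iX_{i+1}-X_iT_i=1$. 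Similarly $\partial_i(X_i)=-1$ and $s_i(X_i)=X_{i+1}$, giving $T_iX_i-X_{i+1}T_i=-1$.

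There is no real obstacle: the only slightly nontrivial ingredient is the Leibniz rule, and once that is in hand, the mixed relations reduce to evaluating $\partial_i$ on the linear polynomials $X_i$ and $X_{i+1}$. Compatibility with the grading (with $\deg X_i=2$ and $\deg T_i=-2$) is automatic from the definitions, since $\partial_i$ lowers the polynomial degree by one, i.e.\ the internal grading by two.
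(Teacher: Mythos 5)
Your proof is correct and follows exactly the route the paper indicates: reduce everything to the relations on generators, use Lemma \ref{le:demazurebraid} for the $T_i$-only relations, and deduce the mixed relations from the Leibniz rule of Lemma \ref{le:Leibniz} applied to the linear polynomials $X_i$, $X_{i+1}$. The paper leaves these verifications to the reader, so your write-up simply fills in the same computation.
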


\begin{prop}
\label{pr:faithfulnilaffineHecke}
We have a decomposition
${^0H}_n= P_n\otimes {^0H}_n^f$ as a $\BZ$-module and
the representation of ${^0H}_n$ on $P_n$ is faithful.
\end{prop}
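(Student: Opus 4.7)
The plan is to show both halves simultaneously: (i) the multiplication map $\mu\colon P_n\otimes_\BZ {^0H}_n^f\to {^0H}_n$, $X^\alpha\otimes T_w\mapsto X^\alpha T_w$, is surjective, and (ii) the composite $\rho\circ\mu\colon P_n\otimes {^0H}_n^f\to \End_\BZ(P_n)$ is injective. Together these force $\mu$ to be a $\BZ$-module isomorphism and $\rho$ to be faithful.

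For surjectivity, I would put any word in the generators $X_i,T_j$ into ``normal form'' $\sum c_{w,\alpha} X^\alpha T_w$ by pushing every $X$ to the left of every $T$. The cross-relations $T_iX_j=X_jT_i$ (for $j\neq i,i+1$), $T_iX_{i+1}=X_iT_i+1$ and $T_iX_i=X_{i+1}T_i-1$ move a single $X$ past a single $T_i$ at the cost of a correction term with one fewer $T$. A double induction on the number of $T$'s in a word and on the number of $TX$-inversions reduces every element of ${^0H}_n$ to $\sum_\alpha X^\alpha\cdot\langle T_1,\dots,T_{n-1}\rangle$, after which the nil Hecke relations defining ${^0H}_n^f$ express the $T$-part as $\sum_{w\in\GS_n}\BZ T_w$.

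For injectivity of $\rho\circ\mu$, suppose $\sum_{w,\alpha}c_{w,\alpha}X^\alpha T_w$ lies in the kernel. Setting $p_w:=\sum_\alpha c_{w,\alpha}X^\alpha\in P_n$ and using Lemma \ref{le:polynomialrep}, the condition reads $\sum_w p_w\partial_w=0$ as an operator on $P_n$. Since $\{X^\alpha\}$ is a $\BZ$-basis of $P_n$, it suffices to prove that $\{\partial_w\}_{w\in\GS_n}$ is $P_n$-linearly independent in $\End_\BZ(P_n)$. I would establish this by extending scalars to the field of fractions: let $K=\mathrm{Frac}(P_n^{\GS_n})$ and $L=\mathrm{Frac}(P_n)$, so $L/K$ is Galois with group $\GS_n$, and each $\partial_i=(X_{i+1}-X_i)^{-1}(1-s_i)$ extends to $\End_K(L)$. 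Expanding $\partial_w=\partial_{i_1}\cdots\partial_{i_r}$ from a reduced expression $w=s_{i_1}\cdots s_{i_r}$, one writes $\partial_w$ as an $L$-linear combination of elements $v\in\GS_n$ of length $\le l(w)$, with a nonzero coefficient at $v=w$ (coming from the unique summand in which every $s_{i_k}$ is selected). Thus $\{\partial_w\}_w$ and $\{v\}_{v\in\GS_n}$ are related by a length-triangular, invertible base change over $L$. Since the elements of the Galois group are $L$-linearly independent in $\End_K(L)$ by Dedekind's theorem on linear independence of characters, so is $\{\partial_w\}_w$; restricting back to $P_n\subset L$, every $p_w$ must vanish, and hence every $c_{w,\alpha}$.

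The main obstacle is the linear-independence step: the normal-form reduction is formal bookkeeping with the defining relations, but to conclude that distinct PBW-monomials remain distinct in ${^0H}_n$ one needs an input beyond those relations. The Galois-theoretic identification of $\End_K(L)$ with the twisted group algebra of $\GS_n$ over $L$ (via Dedekind/normal basis) supplies this cleanly.
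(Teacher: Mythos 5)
Your argument is correct, but it is not the route the paper takes for this Proposition. The paper stays over $\BZ$ and finds an explicit tester: given a nonzero $a=\sum_w P_w T_w$, choose $w$ of minimal length with $P_w\neq 0$; then $aT_{w^{-1}w[1,n]}=P_w T_{w[1,n]}$ because $T_{w'}T_{w^{-1}w[1,n]}=0$ for any $w'\neq w$ with $l(w')\ge l(w)$, and therefore
$$\rho(a)\bigl(\partial_{w^{-1}w[1,n]}(X_2X_3^2\cdots X_n^{n-1})\bigr)
= P_w\,\partial_{w[1,n]}(X_2X_3^2\cdots X_n^{n-1}) = P_w\neq 0,$$
using $\partial_{w[1,n]}(X_2X_3^2\cdots X_n^{n-1})=1$ from the proof of Theorem \ref{pr:basispol}. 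Your method — passing to $K=\mathrm{Frac}(P_n^{\GS_n})\subset L=\mathrm{Frac}(P_n)$, writing each $\partial_w$ as a length-triangular $L$-linear combination of Galois group elements with invertible diagonal, and invoking Dedekind's linear independence of characters — is in fact exactly the technique the paper deploys later, in the proof of Proposition \ref{pr:polrep}, for the general quiver Hecke algebra, where an explicit Schubert-type tester is no longer convenient. The paper's proof here buys an integral argument with concrete polynomials in $P_n$ witnessing $\rho(a)\neq 0$ (compatible with the graded structure and with reduction mod $p$); yours is more conceptual, avoids the staircase monomial entirely, and generalizes to the multi-vertex setting without change. Both are complete.
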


\begin{proof}
Let $\{P_w\}_{w\in \GS_n}$ be a family of elements of 
$P_n$. Let $a=\sum_w P_wT_w$.
If $a\not=0$, there is $w\in\GS_n$ of minimal
length such that $P_w\not=0$. We have $aT_{w^{-1}w[1,n]}=
P_wT_{w[1,n]}$ and
$$\rho(a)(\partial_{w^{-1}w[1,n]}(X_2\cdots X_n^{n-1}))
=P_w \partial_{w[1,n]}(X_2\cdots X_n^{n-1})=P_w$$
(cf Proof of Proposition \ref{pr:basispol}). We deduce that $\rho(a)\not=0$.
Consequently, the multiplication map
$P_n\otimes {^0H}_n^f\to {^0H}_n$ is injective and
the representation is faithful. On the other hand, the multiplication map
is easily seen to be surjective.
\end{proof}

Note that $P_n$ and ${^0H}_n^f$ are subalgebras of 
${^0H}_n$.

The module $P_n$ is an induced module:
we have an isomorphism of ${^0H}_n$-modules
$$P_n\iso {^0H}_n\otimes_{{^0H}_n^f}\BZ,\ 
P\mapsto P\otimes 1.$$

\begin{rem}
\label{re:generalcommutation}
Given $P\in P_n$, one shows that
$T_i P-s_i(P)T_i=PT_i -T_is_i(P)=\partial_i(P)$.
\end{rem}

\subsubsection{Description as a matrix ring}

Let $b_n=T_{w[1,n]}X_2X_3^2\cdots X_n^{n-1}$. 

\begin{lemma}
\label{le:bnidempotent}
We have $b_n^2=b_n$ and ${^0H}_n={^0H}_n b_n {^0H}_n$.
\end{lemma}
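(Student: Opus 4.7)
The plan is to work via the faithful polynomial representation $\rho$ of Proposition \ref{pr:faithfulnilaffineHecke} and reduce both assertions to statements about endomorphisms of $P_n$. Writing $Y=X_2X_3^2\cdots X_n^{n-1}$, the formula $\rho(b_n)(P)=\partial_{w[1,n]}(YP)$ is immediate from Lemma \ref{le:polynomialrep}. Two facts will be used repeatedly: (a) $\im\partial_{w[1,n]}\subseteq P_n^{\GS_n}$, because for every simple reflection $s_i$ one has $T_iT_{w[1,n]}=0$ in ${^0H}_n^f$ (since $l(s_iw[1,n])<l(w[1,n])$), so $\partial_i\partial_{w[1,n]}=0$, and hence the image lies in $\bigcap_i\ker\partial_i=P_n^{\GS_n}$; (b) $\partial_{w[1,n]}(Y)=1$, as computed inside the proof of Theorem \ref{pr:basispol}.

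For the idempotency, using the $P_n^{\GS_n}$-linearity of $\partial_{w[1,n]}$ together with (a) and (b):
$$\rho(b_n)^2(P)=\partial_{w[1,n]}\bigl(Y\cdot\partial_{w[1,n]}(YP)\bigr)=\partial_{w[1,n]}(YP)\cdot\partial_{w[1,n]}(Y)=\rho(b_n)(P),$$
where $\partial_{w[1,n]}(YP)\in P_n^{\GS_n}$ by (a) is pulled across $\partial_{w[1,n]}$ by $P_n^{\GS_n}$-linearity. Faithfulness of $\rho$ then yields $b_n^2=b_n$.

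For ${^0H}_n={^0H}_n b_n{^0H}_n$, the picture is that $\rho(b_n)$ is the $P_n^{\GS_n}$-linear projection of $P_n$ onto the rank-$1$ free summand $P_n^{\GS_n}\cdot 1$; this is one of the basis elements in the decomposition of Theorem \ref{pr:basispol}, namely the one indexed by $w=w[1,n]$, since $\partial_{w[1,n]}(Y)=1$. The cleanest route is to identify $\rho$ with an isomorphism ${^0H}_n\iso\End_{P_n^{\GS_n}}(P_n)\cong M_{n!}(P_n^{\GS_n})$: both algebras are free $P_n^{\GS_n}$-modules of rank $(n!)^2$ (by Proposition \ref{pr:faithfulnilaffineHecke} combined with Theorem \ref{pr:basispol}), graded with matching Hilbert series, so the graded injection $\rho$ is forced to be a graded isomorphism. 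Under this identification $\rho(b_n)$ is a rank-$1$ free projection, hence conjugate to the matrix unit $E_{11}$ in $M_{n!}(P_n^{\GS_n})$, and the standard identity $M_{n!}(R)\,E_{11}\,M_{n!}(R)=M_{n!}(R)$ delivers the claim.

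The main obstacle is justifying surjectivity of $\rho$: rank-matching plus injectivity does not in general force surjectivity (consider $\BZ\to\BZ$, multiplication by $2$), so one needs a graded Nakayama argument, using that the degree-zero component of the base ring acts like a field after extending scalars. An alternative direct route avoids surjectivity entirely: one first computes $T_{w[1,n]}YT_{w[1,n]}=T_{w[1,n]}$ by a brief manipulation in the faithful representation, giving $T_{w[1,n]}=b_nT_{w[1,n]}\in{^0H}_n b_n{^0H}_n$; next observes that $\rho(T_{w[1,n]})$ is itself the matrix unit $E_{w[1,n],e}$ in the basis $\{\partial_w(Y)\}_w$ of Theorem \ref{pr:basispol}; and finally produces, via the Schubert-polynomial dual basis under the pairing $(f,g)\mapsto\partial_{w[1,n]}(fg)$, explicit preimages in ${^0H}_n$ of the remaining matrix units needed to express $1$.
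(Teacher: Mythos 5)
For the idempotency $b_n^2=b_n$, your argument is correct and amounts to the same underlying computation as the paper's: both rest on computing $\rho(b_n)$ via the polynomial representation and using $\partial_{w[1,n]}(X_2\cdots X_n^{n-1})=1$. Your packaging via $\rho(b_n)(P)=\partial_{w[1,n]}(YP)$, the $P_n^{\GS_n}$-linearity of $\partial_{w[1,n]}$, and $\im\partial_{w[1,n]}\subset P_n^{\GS_n}$ is a clean one-line version of what the paper verifies on the basis $\{\partial_w(Y)\}$.

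For the ideal equality ${^0H}_n={^0H}_n b_n{^0H}_n$, however, you take a genuinely different route that has two problems. First, a circularity: your main argument leans on the matrix-algebra identification ${^0H}_n\simeq M_{n!}(P_n^{\GS_n})$ (Proposition \ref{pr:MoritanilHecke}), but in the paper the \emph{first} proof of that proposition already uses Lemma \ref{le:bnidempotent}, via the fact that $P_n\simeq {^0H}_nb_n$ is a progenerator. You would have to use the paper's second, independent proof of Proposition \ref{pr:MoritanilHecke}, which appears only afterwards. Second, and more seriously, your argument for surjectivity of $\rho$ is not closed. You correctly note (with the $\BZ\xrightarrow{2}\BZ$ example) that a graded injection of free $P_n^{\GS_n}$-modules of equal Hilbert series need not be onto, and you invoke a ``graded Nakayama argument.'' But graded Nakayama over a residue field requires knowing that $\rho\otimes k$ is \emph{still injective} for $k$ a field $P_n^{\GS_n}/\Gm$, and injectivity is not preserved under base change. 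The paper handles exactly this step by re-running the faithfulness proof (the ``minimal length + hit with $T_{w^{-1}w[1,n]}$'' argument) for the base-changed representation; simply asserting graded Nakayama does not supply it. Your alternative route, asking for explicit preimages of all the matrix units via a duality pairing, is likewise left as a sketch.

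By contrast, the paper's proof of the second assertion is entirely self-contained and shorter: a direct induction on $n$, using the identity $T_r\cdots T_{n-1}T_{w[1,n-1]}X_n - X_rT_r\cdots T_{n-1}T_{w[1,n-1]} = T_{r+1}\cdots T_{n-1}T_{w[1,n-1]}$, shows $T_{w[1,n-1]}\in{^0H}_nT_{w[1,n]}{^0H}_n$, and hence $1\in{^0H}_nT_{w[1,n]}{^0H}_n$ by induction, without ever invoking the Morita structure. If you want to salvage your approach, the quickest fix is to verify, for each maximal ideal $\Gm$ of $P_n^{\GS_n}$, that $P_n\otimes_{P_n^{\GS_n}} (P_n^{\GS_n}/\Gm)$ is faithful over ${^0H}_n\otimes_{P_n^{\GS_n}} (P_n^{\GS_n}/\Gm)$ (the faithfulness argument goes through unchanged), then cite Lemma \ref{le:isoatclosedpoints} to get surjectivity of $\rho$, then apply $M_{n!}(R)E_{11}M_{n!}(R)=M_{n!}(R)$.
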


\begin{proof}
Note that $T_{w[1,n]}$ is the unique element of $\End_{P_n^{\GS_n}}(P_n)$
that sends $X_2X_3^2\cdots X_n^{n-1}$ to $1$ and
$\partial_w(X_2X_3^2\cdots X_n^{n-1})$ to $0$ for $w\not=1$
(cf Proof of Theorem \ref{pr:basispol} for the first fact).
We have
$$\rho(T_{w[1,n]}X_2X_3^2\cdots X_n^{n-1}T_{w[1,n]})(
\partial_w(X_2X_3^2\cdots X_n^{n-1}))=0$$
for $w\not=1$ and
$$\rho(T_{w[1,n]}X_2X_3^2\cdots X_n^{n-1}T_{w[1,n]})(
X_2X_3^2\cdots X_n^{n-1})=\partial_{w[1,n]}(X_2X_3^2\cdots X_n^{n-1})=1.$$
It follows that
$b_nT_{w[1,n]}=T_{w[1,n]}$.

\medskip
We show now by induction on $n$ that $1\in {^0H}_n T_{w[1,n]} {^0H}_n$.
Given $1\le r\le n-1$, we have
$$T_r\cdots T_{n-1} T_{w[1,n-1]}X_n-X_rT_r\cdots T_{n-1} T_{w[1,n-1]}=
T_{r+1}\cdots T_{n-1} T_{w[1,n-1]},$$
where we use the convention that
$T_{r+1}\cdots T_{n-1}=\prod_{r+1\le j\le n-1}T_j=1$ if $r=n-1$.
By induction on $r$, we deduce that
$T_{w[1,n-1]}\in {^0H}_n T_{w[1,n]} {^0H}_n$, since
$T_{w[1,n]}=T_1\cdots T_{n-1} T_{w[1,n-1]}$. By induction on
$n$, it follows that $1\in {^0H}_n T_{w[1,n]} {^0H}_n={^0H}_n b_n {^0H}_n$.
\end{proof}

\begin{rem}
Given $w\in\GS_n$ and $P\in P_n$, one shows that
$T_w P T_{w[1,n]}=\partial_w(P) T_{w[1,n]}$ (a particular case
was obtained in the proof of Lemma \ref{le:bnidempotent}).
\end{rem}

We have an isomorphism of ${^0H}_n$-modules
$$P_n\iso {^0H}_n b_n,\ P\mapsto Pb_n$$

This shows that $P_n$ is a progenerator as a ${^0H}_n$-module: it is
a finitely generated projective ${^0H}_n$-module and ${^0H}_n$ is a
direct summand of a multiple of $P_n$, as a ${^0H}_n$-module.

Given $A$ a ring, we denote by $A^\opp$ the opposite ring: it is $A$ as an
abelian group, but the multiplication of $a$ and $b$ in $A^\opp$ is
the product $ba$ computed in $A$.

\begin{prop}
\label{pr:MoritanilHecke}
The action of ${^0H}_n$ on $P_n$ induces an isomorphism of 
$P_n^{\GS_n}$-algebras
$${^0H}_n\iso \End_{P_n^{\GS_n}}(P_n)^{\opp}.$$
 Since $P_n$ is a free
$P_n^{\GS_n}$-module
of rank $n!$, the algebra ${^0}H_n$ is isomorphic to a
$(n!\times n!)$-matrix algebra over $P_n^{\GS_n}$.
\end{prop}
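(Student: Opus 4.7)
The plan is to deduce the proposition from the Morita identification provided by the full idempotent $b_n$. First I would check that $\rho$ is well-defined as a map into $\End_{P_n^{\GS_n}}(P_n)^{\opp}$: multiplication by $X_i$ is manifestly $P_n^{\GS_n}$-linear, while $\partial_i$ is $P_n^{\GS_n}$-linear by Lemma \ref{le:Leibniz}, since for any symmetric polynomial $s$ one has $\partial_i(sP) = \partial_i(s)P + s_i(s)\partial_i(P) = s\partial_i(P)$. The appearance of the opposite algebra merely reflects the fact that $\rho$ comes from a left action. Injectivity of $\rho$ is then immediate: it is exactly the faithfulness part of Proposition \ref{pr:faithfulnilaffineHecke}.

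The substantive step is surjectivity. Set $A = {^0H}_n$ and $e = b_n$; Lemma \ref{le:bnidempotent} states that $e^2 = e$ and $AeA = A$, so $e$ is a full idempotent. The classical Morita identification for full idempotents gives a ring isomorphism $A \iso \End_{eAe}(Ae)^{\opp}$ implemented by left multiplication. I would then transport this identification to the statement we want by handling the three pieces separately. The ${^0H}_n$-module $Ae$ is isomorphic to $P_n$ via $Pe \mapsto P$, as already recorded in the excerpt just after Lemma \ref{le:bnidempotent}. Next, $\End_A(P_n) \simeq P_n^{\GS_n}$: a ${^0H}_n$-linear endomorphism $\phi$ of $P_n$ is determined by $\phi(1)$, and commuting with every $T_i$ forces $\partial_i(\phi(1)) = 0$ for all $i$, so $\phi(1) \in \bigcap_i \ker\partial_i = P_n^{\GS_n}$ by the lemma stated just before Theorem \ref{pr:basispol}; conversely, multiplication by any symmetric polynomial is ${^0H}_n$-linear, once again by Lemma \ref{le:Leibniz}. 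Consequently $eAe = \End_A(Ae)^{\opp} \simeq (P_n^{\GS_n})^{\opp} = P_n^{\GS_n}$, which is commutative, and its right action on $Ae \simeq P_n$ is multiplication by symmetric polynomials. Assembling these identifications, the Morita isomorphism becomes precisely the map $\rho : A \iso \End_{P_n^{\GS_n}}(P_n)^{\opp}$.

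The matrix-algebra statement in the second sentence of the proposition then drops out of Theorem \ref{pr:basispol}, which exhibits $P_n$ as a free $P_n^{\GS_n}$-module of rank $n!$, forcing $\End_{P_n^{\GS_n}}(P_n) \simeq M_{n!}(P_n^{\GS_n})$. The most delicate point in the plan is the naturality check that, after all three identifications, the left-multiplication map from Morita really coincides with $\rho$ — concretely, that the transported right $eAe$-action on $Ae$ is indeed the multiplicative action of $P_n^{\GS_n}$ on $P_n$. This is a short but essential computation and is the step I would write out most carefully; everything else is bookkeeping with facts already proved in the excerpt.
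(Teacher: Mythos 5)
Your proof is correct but takes a genuinely different route from the paper's. The paper's argument hinges on Lemma~\ref{le:criterionsplitting}, which only asserts that for a progenerator $M$ over an $R$-algebra $A$ (finitely generated projective over $R$) the canonical map $A\to\End_R(M)$ is a \emph{split injection} of $R$-modules: Morita theory is used there just to show the cokernel is $R$-projective. The isomorphism in the proposition is then extracted by a rank comparison, since ${^0H}_n$ is free of rank $(n!)^2=\dim_{P_n^{\GS_n}}\End_{P_n^{\GS_n}}(P_n)$ over $P_n^{\GS_n}$. (A second proof in the paper replaces the splitting argument by faithfulness of $P_n\otimes (P_n^{\GS_n}/\Gm)$ over residue fields together with Lemma~\ref{le:isoatclosedpoints}.) You bypass all rank counting by invoking the full double-centralizer statement for the full idempotent $e=b_n$: writing $1=\sum a_ie b_i$, any right $eAe$-linear endomorphism $\phi$ of $Ae$ equals left multiplication by $\sum\phi(a_ie)\,eb_i$, and a left annihilator of $Ae$ kills $AeA=A\ni 1$, so $A\iso\End_{eAe}(Ae)$ directly. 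After that it is transport of structure: $Ae\iso P_n$ as left $A$-modules (using $T_i b_n=0$, which follows from $l(s_iw[1,n])<l(w[1,n])$), and your identification $\End_A(P_n)\iso P_n^{\GS_n}$ via $\phi\mapsto\phi(1)$ together with $\bigcap_i\ker\partial_i=P_n^{\GS_n}$ shows that $eAe$ transports to $P_n^{\GS_n}$ acting by multiplication, as you indicate needs to be checked. The paper's route trades the surjectivity half of double-centralizer for an elementary rank count; yours uses double-centralizer as a black box and is correspondingly more streamlined. Both proofs use $b_n$ as the pivot.
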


\begin{proof}
Since $P_n$ is a progenerator for ${^0H}_n$, we deduce that the canonical map
${^0H}_n\to \End_{P_n^{\GS_n}}(P_n)$
is a split injection of
$P_n^{\GS_n}$-modules (Lemma \ref{le:criterionsplitting}).
The proposition follows from the fact that
${^0H}_n$ is a free $P_n^{\GS_n}$-module of rank $(n!)^2$.
\end{proof}

\begin{lemma}
\label{le:criterionsplitting}
Let $R$ be a commutative ring and $A$ an $R$-algebra, projective and
finitely generated as an $R$-module.
Let $M$ be a progenerator for $A$.
Then, the canonical map
$A\to \End_R(M)^{\opp}$ is a split injection of $R$-modules.
\end{lemma}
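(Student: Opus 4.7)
The plan is to construct an explicit $R$-linear retraction of the canonical map $A \to \End_R(M)^{\opp}$, using the fact that $M$ is a generator over $A$. The ``opp'' plays no role at the level of $R$-modules, so I will treat the map simply as $\lambda : A \to \End_R(M)$ sending $a$ to left multiplication $\lambda_a : m \mapsto am$; this is $R$-linear, and it is injective because $M$ is faithful (being a generator).

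Since $M$ is a progenerator for $A$, the module $A$ is a direct summand of $M^n$ for some $n \ge 1$ as a left $A$-module. So I would choose $A$-linear maps $\iota : A \to M^n$ and $\pi : M^n \to A$ with $\pi \iota = \id_A$, and write $\iota(1) = (m_1, \ldots, m_n)$ with $m_i \in M$, and $\pi(x_1, \ldots, x_n) = \sum_i \pi_i(x_i)$ with $\pi_i \in \Hom_A(M, A)$. The identity $\pi \iota = \id_A$ then becomes the dual-basis relation $\sum_i \pi_i(m_i) = 1$.

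Given this data, I would define
$$\rho : \End_R(M) \to A, \qquad \phi \mapsto \sum_{i=1}^n \pi_i(\phi(m_i)),$$
which is manifestly $R$-linear, each $\pi_i$ and $\phi$ being $R$-linear. To check that $\rho$ is a retraction of $\lambda$, compute
$$\rho(\lambda_a) = \sum_i \pi_i(a m_i) = a \sum_i \pi_i(m_i) = a,$$
where the middle equality uses the $A$-linearity (and not merely the $R$-linearity) of each $\pi_i$. This exhibits $\lambda$ as a split injection of $R$-modules.

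I do not anticipate any serious obstacle: the argument is essentially a reformulation of the dual-basis lemma for the generator $M$. It is worth noting that the hypothesis that $A$ is projective and finitely generated as an $R$-module plays no role in this lemma itself; it is needed in the surrounding argument of Proposition \ref{pr:MoritanilHecke} in order to upgrade the split injection to an isomorphism by a rank count over $P_n^{\GS_n}$.
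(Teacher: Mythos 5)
Your proof is correct, and it takes a genuinely different and more direct route than the paper's. The paper instead lets $L$ be the cokernel of $f:A\to\End_R(M)$, shows that $L\otimes_A M$ is an $R$-projective direct summand of $\End_R(M)\otimes_A M$, and then uses Morita theory (an inverse bimodule $N$ with $M\otimes_{\End_A(M)}N\simeq A$, $N$ projective over $\End_A(M)$) to pull this back and conclude $L\simeq(L\otimes_A M)\otimes_{\End_A(M)}N$ is $R$-projective, so the sequence splits. That argument uses all three hypotheses: projectivity of $M$ over $A$ (to keep the tensored sequence exact and to get $\End_R(M)\otimes_A M$ projective over $R$), projectivity and finite generation of $A$ over $R$ (so that $\End_R(M)$ is $R$-projective), and the generator property (for the Morita equivalence). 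Your dual-basis retraction $\rho(\phi)=\sum_i \pi_i(\phi(m_i))$ is shorter, avoids Morita theory entirely, and as you correctly point out needs nothing beyond the generator property of $M$ — neither projectivity of $M$ over $A$ nor any hypothesis on $A$ as an $R$-module. In particular you also get injectivity of the canonical map cleanly from the same computation (the paper deduces it via faithful flatness but does not spell this out). Your version is the cleaner proof of this lemma; the extra hypotheses in the statement are indeed there to serve the rank count in Proposition \ref{pr:MoritanilHecke}.
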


\begin{proof}
Let $f:A\to \End_R(M)$ be the canonical map and $L$ its cokernel.
The composition  of morphisms of $R$-modules
$$M\xrightarrow{m\mapsto \id\otimes m} \End_R(M)\otimes_AM
\xrightarrow{\alpha\otimes m\mapsto \alpha(m)} M$$
is the identity. So, $f\otimes_A 1_M$ is a split injection of
$R$-modules, hence
$L\otimes_A M$ is a projective $R$-module, since
$\End_R(M)$ is a projective $R$-module and $M$ is a projective 
$A$-module.

By Morita
theory, there is $N$ an $(\End_A(M),A)$-bimodule that is projective
as an $\End_A(M)$-module and such that $M\otimes_{\End_A(M)}N\simeq A$
as $(A,A)$-bimodules. The $R$-module
$L\simeq (L\otimes_A M)\otimes_{\End_A(M)}N$ is projective, since
$N$ is a projective $\End_A(M)$-module. Since $L$ is a projective
$R$-module, we deduce that $f$ is a split injection of $R$-modules.
\end{proof}

\smallskip
We give now a second proof of Proposition \ref{pr:MoritanilHecke}.
The proof of the faithfulness of the representation $P_n$ of
${^0H}_n$ works also to show that $P_n\otimes_{P_n^{\GS_n}}(P_n^{\GS_n}/\Gm)$
is a faithful representation of
${^0H}_n\otimes_{P_n^{\GS_n}}(P_n^{\GS_n}/\Gm)$, for
any maximal ideal $\Gm$ of $P_n^{\GS_n}$. Proposition
\ref{pr:MoritanilHecke} follows now from Lemma \ref{le:isoatclosedpoints}.

\begin{lemma}
\label{le:isoatclosedpoints}
Let $R$ be a commutative ring, $f:M\to N$ a morphism between free
$R$-modules of the same finite rank. If $f\otimes_R 1_{R/\Gm}$ is injective for
every maximal ideal $\Gm$ of $R$, then $f$ is an isomorphism.
\end{lemma}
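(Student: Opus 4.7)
The plan is to reduce the statement to a question about the determinant of a matrix and then use the characterization of units as elements avoiding all maximal ideals.

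First, I would fix bases of $M$ and $N$ (they exist since $M$ and $N$ are free of finite rank, say $n$), and represent $f$ by an $n\times n$ matrix $A$ with entries in $R$. Let $d=\det(A)\in R$. The key observation is that for any maximal ideal $\Gm$ of $R$, the reduction $f\otimes_R 1_{R/\Gm}$ is represented by the matrix $\bar{A}$ obtained by reducing the entries of $A$ modulo $\Gm$, and $\det(\bar A)$ is the image of $d$ in $R/\Gm$.

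Next I would use the hypothesis. For each maximal $\Gm$, the map $f\otimes 1_{R/\Gm}$ is an injective linear map between two $R/\Gm$-vector spaces of the same finite dimension $n$, hence an isomorphism. Thus $\det(\bar A)\neq 0$ in $R/\Gm$, which means $d\notin\Gm$. Since $d$ lies outside every maximal ideal of $R$, it is a unit in $R$.

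Finally, since $\det(A)$ is a unit, $A$ is invertible over $R$ (its inverse is $\det(A)^{-1}$ times the adjugate matrix), so $f$ is an isomorphism. There is no real obstacle here: the only subtle point is the passage from ``not in any maximal ideal'' to ``is a unit'', which holds for any commutative ring with $1$ since non-units generate proper ideals, hence lie in some maximal ideal.
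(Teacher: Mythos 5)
Your proof is correct and is essentially the same argument as the paper's: both reduce to the determinant $d=\det(f)$ and use that injectivity of $f\otimes R/\Gm$ between vector spaces of equal finite dimension forces $d\not\equiv 0\pmod{\Gm}$ for every maximal $\Gm$, hence $d$ is a unit. The paper phrases it as a proof by contradiction (assume $d$ is not a unit, pick a maximal ideal containing $(d)$), while you argue directly, but this is a cosmetic difference.
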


\begin{proof}
Fix bases of $M$ and $N$ and let $d$ be the determinant of $f$ with 
respect to those bases. Let $I$ be the ideal of $R$ generated by $d$. Assume
$d{\not\in} R^\times$. There
is a maximal ideal $\Gm$ of $R$ containing $I$. Since
$f\otimes_R 1_{R/\Gm}$ is an injective map between vector spaces of
the same finite rank, it is an isomorphism, so we have
$d\cdot 1_{R/\Gm}{\not=}0$, a contradiction.
\end{proof}

\subsection{Symmetrizing forms}
\subsubsection{Definition and basic properties}
\label{se:defsymmetrizing}
Let $R$ be a commutative ring and $A$ an $R$-algebra, finitely generated
and projective as an $R$-module. A {\em symmetrizing form} for $A$ is
an $R$-linear map $t\in\Hom_R(A,R)$ such that
\begin{itemize}
\item $t$ is a trace, \ie,
$t(ab)=t(ba)$ for all $a,b\in A$
\item the morphism
of $(A,A)$-bimodules
$$\hat{t}:A\to \Hom_R(A,R),\ a\mapsto (b\mapsto t(ab))$$
is an isomorphism.
\end{itemize}

\smallskip
Consider now a commutative ring $R'$ such that $R$ is an $R'$-algebra,
finitely generated and projective as an $R'$-module. 
Consider $t\in\Hom_R(A,R)$ a trace and $t'\in\Hom_{R'}(R,R')$. We
have a commutative diagram
$$\xymatrix{
A\ar[rr]^-{\hat{t}}\ar[drr]_-{\widehat{t't}} && \Hom_R(A,R)
\ar[rr]^-{\Hom_R(A,\hat{t}')} &&
\Hom_{R}(A,\Hom_{R'}(R,R')) \\
&& \Hom_{R'}(A,R')\ar[urr]_{\text{adjunction}}^-\sim
}$$
We deduce the following result.

\begin{lemma}
\label{le:2outof3}
If two of the forms $t$, $t'$ and $t't$ are symmetrizing, then so is
the third one.
\end{lemma}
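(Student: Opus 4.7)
The strategy is to exploit the commutative diagram displayed just above the lemma, which presents $\widehat{t't}$ as the composition
$$A \xrightarrow{\hat{t}} \Hom_R(A,R) \xrightarrow{\Hom_R(A,\hat{t}')} \Hom_R(A,\Hom_{R'}(R,R')) \xrightarrow{\sim} \Hom_{R'}(A,R'),$$
whose rightmost arrow is always an isomorphism by adjunction. The plan is to separate, in each of the three directions, the trace condition from the non-degeneracy (isomorphism) condition, and to treat them independently.

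For the trace conditions, the property for $t'$ is automatic since $R$ is commutative; and if $t$ is a trace then so is $t't$ by applying $t'$. The only delicate direction is to recover the trace property of $t$ from that of $t't$ together with $\hat{t}'$ being an isomorphism. Here I would use injectivity of $\hat{t}'$: for fixed $a,b\in A$, $R$-linearity of $t$ rewrites $\hat{t}'\bigl(t(ab)-t(ba)\bigr)(c)$ as $(t't)(cab) - (t't)(cba)$ for arbitrary $c\in R$, and the centrality of $c$ in $A$ combined with the cyclic invariance of $t't$ forces this quantity to vanish, so that $t(ab) = t(ba)$.

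For the non-degeneracy claims, each case reduces to a two-out-of-three principle for isomorphisms in the three-fold composition: since $\Hom_R(A,-)$ preserves isomorphisms, once any two of the composable maps are known to be iso the third one must be too. This immediately settles the directions $(\hat t, \hat{t}') \Rightarrow \widehat{t't}$ and $(\hat{t}', \widehat{t't}) \Rightarrow \hat t$ formally. In the remaining direction, $(\hat t, \widehat{t't}) \Rightarrow \hat{t}'$, the diagram only yields that $\Hom_R(A,\hat{t}')$ is an isomorphism, and I would conclude that $\hat{t}'$ itself is an isomorphism by invoking that $A$, being a finitely generated projective $R$-module that contains the unit $1_A$, is a faithful generator of the category of $R$-modules, so the functor $\Hom_R(A,-)$ reflects isomorphisms. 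This final descent step is the main obstacle, as it is the only point in the proof that goes beyond formal manipulation of the commutative diagram and genuinely uses the algebra structure of $A$ over $R$.
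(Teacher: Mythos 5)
Your decomposition into trace conditions and non-degeneracy conditions is the natural way to read the lemma off the displayed commutative diagram, and is certainly what the paper's terse ``we deduce'' has in mind. The trace verifications are all correct --- in particular the direction $(t',\,t't)\Rightarrow t$ rightly combines injectivity of $\hat t'$ with the centrality of $R$ in $A$ and the cyclic invariance of $t't$ --- and the two formal non-degeneracy directions follow by inverting arrows in the diagram exactly as you say, since $\Hom_R(A,-)$ preserves isomorphisms and the adjunction arrow is always invertible.

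The one point that needs more care is exactly the step you flag as the main obstacle, and your justification there is too quick. Containing the unit $1_A$ does not by itself make $A$ a faithful $R$-module: the structure map $R\to A$ may have a kernel, and $\mathrm{Ann}_R(A)=\ker(R\to A)$. The direction $(t,\,t't)\Rightarrow t'$ genuinely requires this faithfulness. Taking $R'=k$ a field, $R=k\times k$, $A=k\times 0$, $t(a,0)=(a,0)$ and $t'(a,b)=a$: one checks that $t$ and $t't$ are symmetrizing, yet $\hat t'$ kills $(0,1)$, so $t'$ is not. The lemma is therefore implicitly assuming, as the paper always arranges in its applications, that the structure map $R\to A$ is injective (equivalently, $A$ is a faithful $R$-module). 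Once that is granted, your conclusion holds: a finitely generated projective module over a commutative ring is faithful if and only if it has everywhere positive rank, if and only if its trace ideal is all of $R$, i.e.\ if and only if it is a generator; and then $\Hom_R(A,-)$, being one half of a Morita equivalence with $\End_R(A)$-modules, reflects isomorphisms, which is exactly what the last descent needs.
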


Let now $B$ be another symmetric $R$-algebra and $M$ an $(A,B)$-bimodule,
finitely generated and projective as an $A$-module and as a right
$B$-module.
We have isomorphisms of functors 
$$\Hom_A(M,-)\xleftarrow[\sim]{\can}\Hom_A(M,A)\otimes_A -
\xrightarrow[\sim]{f\mapsto tf}\Hom_R(M,R)\otimes_A -$$
and similarly $\Hom_B(\Hom_R(M,R),-)\iso M\otimes_B-$.
We deduce that $M\otimes_B -$ is left and right adjoint to
$\Hom_A(M,-)$.

\subsubsection{Polynomials}
\begin{prop}
\label{pr:symmpolynomials}
The linear form $\partial_{w[1,n]}$ is
a symmetrizing form for the $P_n^{\GS_n}$-algebra
$P_n$.
\end{prop}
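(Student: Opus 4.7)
The plan is to argue by induction on $n$ via the factorization
\[
\partial_{w[1,n]} = \sigma_n \circ \partial_{w[1,n-1]}, \qquad \sigma_n := \partial_1 \partial_2 \cdots \partial_{n-1},
\]
which comes from the reduced decomposition $w[1,n] = (s_1 \cdots s_{n-1}) \cdot w[1,n-1]$ noted in the text. Applying Lemma~\ref{le:2outof3} to the tower $P_n^{\GS_n} \subset P_n^{\GS_{n-1}} \subset P_n$ reduces the statement to showing that each of the two constituent forms is symmetrizing. Preliminarily: by Lemma~\ref{le:Leibniz} each $\partial_i$ is $P_n^{s_i}$-linear, so $\partial_{w[1,n-1]}$ is $P_n^{\GS_{n-1}}$-linear and $\sigma_n$ is $P_n^{\GS_n}$-linear; moreover the nil Hecke relation $T_{w[1,m]} T_i = 0$ when $l(w[1,m] s_i) < l(w[1,m])$ gives $\partial_{w[1,n-1]} \partial_i = 0$ for $i \le n-2$, so $\im \partial_{w[1,n-1]} \subset \bigcap_{i \le n-2} \ker \partial_i = P_n^{\GS_{n-1}}$. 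The trace property is automatic from commutativity of $P_n$.

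Under the identification $P_n = P_{n-1}[X_n]$ and $P_n^{\GS_{n-1}} = P_{n-1}^{\GS_{n-1}}[X_n]$, the operator $\partial_{w[1,n-1]} \colon P_n \to P_n^{\GS_{n-1}}$ is the $\BZ[X_n]$-linear extension of the form on $P_{n-1}$ that is symmetrizing by the inductive hypothesis; flat base change preserves the symmetrizing property. The core of the proof is showing that $\sigma_n \colon P_n^{\GS_{n-1}} \to P_n^{\GS_n}$ is symmetrizing. Using the presentation $P_n^{\GS_{n-1}} = P_n^{\GS_n}[X_n]/(f(X_n))$ with $f(T) = \prod_{i=1}^n (T - X_i)$ and the resulting $P_n^{\GS_n}$-basis $\{1, X_n, \ldots, X_n^{n-1}\}$, I establish by descending induction on $j$, via Leibniz together with the splitting identity $h_m(X_j, \ldots, X_n) = \sum_a h_a(X_j, X_{j+1}) h_{m-a}(X_{j+2}, \ldots, X_n)$, the formula
\[
\partial_j \partial_{j+1} \cdots \partial_{n-1}(X_n^k) = h_{k-(n-j)}(X_j, X_{j+1}, \ldots, X_n),
\]
where $h_m$ is the complete homogeneous symmetric polynomial, with the convention $h_m = 0$ for $m < 0$. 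Specializing to $j = 1$ gives $\sigma_n(X_n^k) = h_{k-n+1}(X_1, \ldots, X_n) \in P_n^{\GS_n}$, which also confirms $\im \sigma_n \subset P_n^{\GS_n}$.

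The Gram matrix of the pairing $(P,Q) \mapsto \sigma_n(PQ)$ in the basis $\{X_n^i\}_{0\le i\le n-1}$ is then $M_{ij} = h_{i+j-n+1}(X_1, \ldots, X_n)$, which vanishes for $i+j < n-1$ and equals $1$ for $i+j = n-1$. Since $\sum_i(i + \sigma(i)) = n(n-1)$ for any permutation $\sigma$, the only permutation with all $i + \sigma(i) \ge n-1$ is the reverse permutation, so the only nonvanishing term in $\det M$ gives $\det M = \pm 1 \in (P_n^{\GS_n})^\times$, establishing non-degeneracy. Thus $\sigma_n$ is symmetrizing, and Lemma~\ref{le:2outof3} yields that $\partial_{w[1,n]} = \sigma_n \circ \partial_{w[1,n-1]}$ is symmetrizing, closing the induction. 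The main technical obstacle is the explicit formula for $\partial_j \cdots \partial_{n-1}(X_n^k)$ in terms of the complete symmetric polynomials; once that is in hand, the anti-triangular shape of $M$ makes non-degeneracy immediate.
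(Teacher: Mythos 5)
Your proof is correct, and it takes a genuinely different route from the paper. The paper deduces Proposition \ref{pr:symmpolynomials} as a corollary of Proposition \ref{pr:symmnilaffine}, the corresponding statement for the nil affine Hecke algebra ${^0H}_n$, which is in turn proved via the matrix-algebra description (Proposition \ref{pr:MoritanilHecke}) and a reduction modulo every maximal ideal (Lemma \ref{le:isoatclosedpoints}); the polynomial statement is then pulled back along the faithful representation. Your argument never leaves commutative algebra: you exploit the Demazure factorization $\partial_{w[1,n]}=(\partial_1\cdots\partial_{n-1})\circ\partial_{w[1,n-1]}$ to run an induction across the tower $P_n^{\GS_n}\subset P_n^{\GS_{n-1}}\subset P_n$ via Lemma \ref{le:2outof3}; the inner extension is the inductive hypothesis after free base change $P_{n-1}\leadsto P_{n-1}[X_n]$, and the degree-$n$ extension $\sigma_n\colon P_n^{\GS_{n-1}}\to P_n^{\GS_n}$ is settled by an explicit Gram determinant in the basis $1,X_n,\ldots,X_n^{n-1}$, where the anti-triangular matrix $(h_{i+j-n+1}(X_1,\ldots,X_n))_{i,j}$ has determinant $\pm 1$. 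This is the classical self-contained argument (in the spirit of \cite{Hi}, to which the paper already refers), whereas the paper's route is ``backwards'' but fits its theme of deriving facts about $P_n$ from the already-developed structure of ${^0H}_n$. Your key computation $\sigma_n(X_n^k)=h_{k-n+1}(X_1,\ldots,X_n)$ is correct.

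One small slip in the write-up: to conclude $\im\partial_{w[1,n-1]}\subseteq\bigcap_{i\le n-2}\ker\partial_i=P_n^{\GS_{n-1}}$ you need $\partial_i\partial_{w[1,n-1]}=0$ for $i\le n-2$, which comes from $T_iT_{w[1,n-1]}=0$ (valid since $l(s_iw[1,n-1])<l(w[1,n-1])$). You invoked instead $T_{w[1,n-1]}T_i=0$, i.e.\ $\partial_{w[1,n-1]}\partial_i=0$, which only shows that $\partial_{w[1,n-1]}$ vanishes on $\im\partial_i$, not that its image lies in $\ker\partial_i$. Since $w[1,n-1]$ is the longest element of $\GS\{1,\ldots,n-1\}$ both products vanish, so the conclusion stands, but the stated justification points the wrong way.
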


We will prove this proposition in \S \ref{se:symmnilaffine}:
it will be deduced from a
corresponding statement for the nil affine Hecke algebra, that is easier
to prove.

\smallskip
Together with Lemma \ref{le:2outof3}, Proposition \ref{pr:symmpolynomials}
provides more general symmetrizing forms.
\begin{cor}
\label{co:symmparab}
Given $1\le i\le n$, then the linear
form $\partial_{w[1,n]}\partial_{w[1,i]}\partial_{w[i+1,n]}$
is a symmetrizing form for the $P_n^{\GS_n}$-algebra
$P_n^{\GS\{1,\ldots,i\}\times\GS\{i+1,\ldots,n\}}$.
\end{cor}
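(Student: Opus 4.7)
The strategy is to apply the transitivity of symmetrizing forms (Lemma \ref{le:2outof3}) to the tower of $\BZ$-algebras
$$P_n^{\GS_n}\subset P_n^{\GS\{1,\ldots,i\}\times\GS\{i+1,\ldots,n\}}\subset P_n.$$
By Proposition \ref{pr:symmpolynomials} the outermost form $\partial_{w[1,n]}$ is already known to be symmetrizing for $P_n$ over $P_n^{\GS_n}$. So it suffices to exhibit a symmetrizing form $t\colon P_n\to P_n^{\GS\{1,\ldots,i\}\times\GS\{i+1,\ldots,n\}}$ and a form $t'\colon P_n^{\GS\{1,\ldots,i\}\times\GS\{i+1,\ldots,n\}}\to P_n^{\GS_n}$ such that $t'\circ t=\partial_{w[1,n]}$; then $t'$ is forced to be symmetrizing, and it will be precisely the form claimed in the statement.

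I would take $t=\partial_{w[1,i]}\partial_{w[i+1,n]}$. This form splits as a tensor product: the decomposition $P_n=\BZ[X_1,\ldots,X_i]\otimes_\BZ\BZ[X_{i+1},\ldots,X_n]$ is equivariant for the factor action of $\GS\{1,\ldots,i\}\times\GS\{i+1,\ldots,n\}$, and $\partial_{w[1,i]}$ (resp.\ $\partial_{w[i+1,n]}$) acts only on the first (resp.\ second) tensor factor. Each of $\partial_{w[1,i]}$ and $\partial_{w[i+1,n]}$ is symmetrizing for its factor by Proposition \ref{pr:symmpolynomials} applied at ranks $i$ and $n-i$. A short diagram check — using that each factor is finitely generated and projective over its invariants, so that $\Hom_{R_1}(A_1,R_1)\otimes\Hom_{R_2}(A_2,R_2)\iso\Hom_{R_1\otimes R_2}(A_1\otimes A_2,R_1\otimes R_2)$ — shows that the tensor product of symmetrizing forms is symmetrizing, so $t$ is symmetrizing.

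For the factorization, I use the reduced expression $w[1,n]=w^{i,0}\cdot w[1,i]\cdot w[i+1,n]$ in $\GS_n$ with lengths adding, where $w^{i,0}$ denotes the longest element of the minimal-length coset representatives for $\GS_n/(\GS_i\times\GS_{n-i})$. In the nil Hecke algebra this yields the operator identity $\partial_{w[1,n]}=\partial_{w^{i,0}}\cdot\partial_{w[1,i]}\partial_{w[i+1,n]}$. Setting $t'=\partial_{w^{i,0}}$ — which is the interpretation of the formula $\partial_{w[1,n]}\partial_{w[1,i]}\partial_{w[i+1,n]}$ of the statement, read as a quotient in the nil Hecke calculus since $l(w[1,n])-l(w[1,i])-l(w[i+1,n])=l(w^{i,0})$ — gives $t'\circ t=\partial_{w[1,n]}$. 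One must verify that $t'$ maps $P_n^{\GS\{1,\ldots,i\}\times\GS\{i+1,\ldots,n\}}$ into $P_n^{\GS_n}$: this holds because $t$ is surjective (Theorem \ref{pr:basispol} provides a preimage of $1$, namely $X_2\cdots X_i^{i-1}\cdot X_{i+2}\cdots X_n^{n-i-1}$), and $t'\circ t=\partial_{w[1,n]}$ has image in $P_n^{\GS_n}$. Lemma \ref{le:2outof3} then concludes.

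The main obstacle is correctly parsing the notation of the statement: read naively as a composition of operators on $P_n$, the expression $\partial_{w[1,n]}\partial_{w[1,i]}\partial_{w[i+1,n]}$ vanishes (lengths fail to add in the nil Hecke algebra), so the intended meaning is the form $t'=\partial_{w^{i,0}}$ induced by the factorization above, and the minor technical point of showing $t'$ actually lands in $P_n^{\GS_n}$.
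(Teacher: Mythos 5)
Your proof is correct and follows exactly the route the paper intends (Lemma \ref{le:2outof3} applied to the tower $P_n^{\GS_n}\subset P_n^{\GS\{1,\ldots,i\}\times\GS\{i+1,\ldots,n\}}\subset P_n$, with Proposition \ref{pr:symmpolynomials}, applied at rank $n$ and at ranks $i$ and $n-i$ via the tensor factorization $P_n\simeq\BZ[X_1,\ldots,X_i]\otimes\BZ[X_{i+1},\ldots,X_n]$, supplying the two known symmetrizing forms). You are also right that the expression $\partial_{w[1,n]}\partial_{w[1,i]}\partial_{w[i+1,n]}$ cannot be a literal operator composition (the corresponding product of $T_w$'s vanishes in ${^0H}_n^f$), and that the intended reading is $\partial_{w^{i,0}}$ for the longest minimal coset representative $w^{i,0}=w[1,n]\cdot w[1,i]\cdot w[i+1,n]$.
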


\subsubsection{Nil Hecke algebras}
Define the $\BZ$-linear form $t_0:{^0H}_n^f\to\BZ$ by 
$t_0(T_w)=\delta_{w,w[1,n]}$. 

Define an algebra automorphism $\sigma$
(the Nakayama automorphism) 
of ${^0H}_n^f$ by $\sigma(T_i)=T_{n-i}$.
Note that $\sigma(T_w)=T_{w[1,n]ww[1,n]}$.

The form $t_0$ is not symmetric, it nevertheless gives rise to a 
Frobenius algebra structure.

\begin{prop}
Given $a,b\in {^0H}_n^f$, we have
$t_0(ab)=t_0(\sigma(b)a)$.
The form $t_0$ induces an isomorphism of right
${^0H}_n^f$-modules
$$\hat{t}_0:{^0H}_n^f\iso\Hom_{\BZ}({^0H}_n^f,\BZ), a\mapsto (b\mapsto
t_0(ab)).$$
\end{prop}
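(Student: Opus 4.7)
The plan is to verify both statements by direct computation on the $\BZ$-basis $\{T_w\}_{w\in\GS_n}$ of ${^0H}_n^f$, using the multiplication rule already established: $T_v T_w = T_{vw}$ when $l(v)+l(w)=l(vw)$, and $0$ otherwise.

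\textbf{Step 1: Compute $t_0$ on products of basis elements.} For $v,w\in\GS_n$, $t_0(T_v T_w)=\delta_{vw,w[1,n]}$ provided that $l(v)+l(w)=l(w[1,n])$, and is zero otherwise. However, setting $w=v^{-1}w[1,n]$, the earlier lemma $l(w[1,n]w^{-1})=l(w[1,n])-l(w)$ (applied to $v$) gives the length additivity for free. Conclusion: $t_0(T_v T_w)=1$ if and only if $vw=w[1,n]$, else $0$.

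\textbf{Step 2: The Nakayama twist.} First I would check that $\sigma$, defined on generators by $T_i\mapsto T_{n-i}$, is a well-defined algebra automorphism: the nil-braid relations of Proposition \ref{pr:presentationSn} (together with $T_i^2=0$) are symmetric under $i\leftrightarrow n-i$. Since conjugation by $w[1,n]$ sends $s_i$ to $s_{n-i}$ and preserves length, a reduced decomposition $w=s_{i_1}\cdots s_{i_r}$ of $w$ yields a reduced decomposition $s_{n-i_1}\cdots s_{n-i_r}$ of $w[1,n]ww[1,n]$, hence $\sigma(T_w)=T_{w[1,n]ww[1,n]}$. Applying Step 1, $t_0(\sigma(T_w)T_v)=1$ if and only if $(w[1,n]ww[1,n])v=w[1,n]$, which rearranges to $vw=w[1,n]$. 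This is the same condition as in Step 1, so $t_0(T_v T_w)=t_0(\sigma(T_w)T_v)$ on all pairs of basis elements, and hence by bilinearity $t_0(ab)=t_0(\sigma(b)a)$ for all $a,b\in{^0H}_n^f$.

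\textbf{Step 3: The form is non-degenerate.} Again by Step 1, $\hat{t}_0(T_w)$ is the linear functional sending $T_v$ to $\delta_{v,w^{-1}w[1,n]}$; that is, $\hat{t}_0(T_w)=(T_{w^{-1}w[1,n]})^*$, where $\{(T_u)^*\}_{u\in\GS_n}$ denotes the basis of $\Hom_\BZ({^0H}_n^f,\BZ)$ dual to $\{T_u\}$. Because $w\mapsto w^{-1}w[1,n]$ is a bijection of $\GS_n$ (its inverse is $u\mapsto u^{-1}w[1,n]$, using that $w[1,n]$ is an involution and $l(u^{-1}w[1,n])=l(w[1,n])-l(u)$), $\hat{t}_0$ carries a $\BZ$-basis to a $\BZ$-basis and is therefore an isomorphism. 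The compatibility with the right ${^0H}_n^f$-module structure is automatic from $t_0(a(bc))=t_0((ab)c)$ (associativity).

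The only mildly nontrivial step is Step 2 — specifically, the identity $\sigma(T_w)=T_{w[1,n]ww[1,n]}$ — but the length-preserving character of the involution $s_i\mapsto s_{n-i}$ makes this essentially immediate; everything else is bookkeeping with the Bruhat combinatorics of $\GS_n$.
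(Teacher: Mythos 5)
Your proof of the trace identity (Steps 1--2) follows the same path as the paper: compute $t_0(T_vT_w)$ and $t_0(\sigma(T_w)T_v)$ on the basis, observe both vanish unless $vw=w[1,n]$, and conclude by bilinearity. That part matches the paper essentially line for line, though you are a bit more explicit about why $\sigma$ is well defined, which is a worthwhile addition.

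Where you genuinely diverge is the non-degeneracy argument. The paper reduces modulo each prime $p$, observes that $\ker(\hat{t}_0\otimes\BF_p)$ is a two-sided ideal of $\BF_p\,{}^0H_n^f$ not containing $T_{w[1,n]}$, invokes Proposition \ref{pr:radsocnilHecke} (the socle of $\BF_p\,{}^0H_n^f$ is the line spanned by $T_{w[1,n]}$) to conclude that kernel is zero, and finishes with the general localization lemma \ref{le:isoatclosedpoints}. Your Step 3 avoids all of that structural machinery: you compute directly that $\hat{t}_0(T_w)=(T_{w^{-1}w[1,n]})^*$ in the dual basis, and since $w\mapsto w^{-1}w[1,n]$ is a bijection of $\GS_n$, $\hat{t}_0$ maps a $\BZ$-basis to a $\BZ$-basis and is therefore an isomorphism. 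This is shorter, works over $\BZ$ without any reduction, and makes the ``Frobenius pairing'' between $T_w$ and $T_{w^{-1}w[1,n]}$ completely transparent. The paper's route, by contrast, buys you a method that the author reuses several times later (for the nil \emph{affine} Hecke algebra and for $P_n$ over $P_n^{\GS_n}$, where no such clean dual-basis description is at hand), so it has a uniformity advantage across the section, but for this particular statement your approach is the more elementary one.

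One small slip: the inverse of $w\mapsto w^{-1}w[1,n]$ is $u\mapsto w[1,n]u^{-1}$, not $u\mapsto u^{-1}w[1,n]$ (the latter equals $w[1,n]u^{-1}$ only when $u$ commutes with $w[1,n]$). This does not damage the argument --- the map is a composite of inversion with right translation by $w[1,n]$, hence a bijection regardless --- but the stated inverse is wrong. Also, in Step 1 the cited Lemma gives $l(w[1,n]v^{-1})=l(w[1,n])-l(v)$, whereas what you actually need to make $T_vT_{v^{-1}w[1,n]}$ a nonzero product is $l(v^{-1}w[1,n])=l(w[1,n])-l(v)$; the two are indeed equal (pass to inverses and apply the lemma to $v^{-1}$), but the one-line derivation conceals a step.
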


\begin{proof}
Let $w,w'\in\GS_n$. We have $t_0(T_wT_{w'})=0$ unless $w'=w^{-1}w[1,n]$, in
which case we have $t_0(T_w T_{w^{-1}w[1,n]})=1$. We have
$t_0(\sigma(T_{w'})T_w)=t_0(T_{w[1,n]w'w[1,n]}T_w)$. This is $0$, unless
$w=(w[1,n]w'w[1,n])^{-1}w[1,n]$, or equivalently, unless
$w=w[1,n]w^{\prime -1}$. In that case, we get
$t_0(\sigma(T_{w^{-1}w[1,n]},T_w)=1$. This shows that 
$t_0(T_w T_{w'})=t_0(\sigma(T_{w'})T_w)$ for all $w,w'\in\GS_n$.

Let $p$ be a prime number. The kernel $I$ of
$\hat{t}_0\otimes_\BZ \BF_p$ is a two-sided ideal of $\BF_p {^0H}_n^f$.
On the other hand, $\hat{t}_0(T_{w[1,n]})(1)=t_0(T_{w[1,n]})=1$, hence
$T_{w[1,n]}{\not\in}I$. It follows from Proposition \ref{pr:radsocnilHecke}
that $I=0$. Lemma
\ref{le:isoatclosedpoints} shows now that $\hat{t}_0$ is an isomorphism.
\end{proof}

\subsubsection{Nil affine Hecke algebras}
\label{se:symmnilaffine}
We define a $P_n^{\GS_n}$-linear form $t$ on ${^0H}_n$
$$t:{^0H}_n\to P_n^{\GS_n},\
t(PT_w)=\delta_{w,w[1,n]}\partial_{w[1,n]}(P) \text{ for }
P\in P_n \text{ and } w\in\GS_n.$$

Let $\gamma$ be the $\BZ$-algebra automorphism of ${^0H}_n$ defined by
$$\gamma(X_i)=X_{n-i+1} \text{ and }\gamma(T_i)=-T_{n-i}.$$

\begin{lemma}
\label{le:tracenilaffine}
We have $t(ab)=t(\gamma(b)a)$ for $a,b\in {^0H}_n$.
\end{lemma}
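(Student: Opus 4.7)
\emph{Plan.} Both sides of $t(ab) = t(\gamma(b)a)$ are bilinear in $(a,b)$, and since $\gamma$ is an algebra automorphism the identity is multiplicative in $b$: if it holds (for all $a$) for $b_0$ and for $b_1$, then
$$t(a(b_0 b_1)) = t((ab_0)b_1) = t(\gamma(b_1)(ab_0)) = t((\gamma(b_1)a)b_0) = t(\gamma(b_0)\gamma(b_1)a) = t(\gamma(b_0 b_1)a).$$
So it suffices to check the identity for $b$ running over the generators $\{X_j\}\cup\{T_i\}$ of ${^0H}_n$, with $a$ arbitrary. Throughout I fix the normal form $a = \sum_{w\in\GS_n} P_w T_w$ from Proposition \ref{pr:faithfulnilaffineHecke}, so that computing $t$ of any element amounts to extracting its $T_{w[1,n]}$-coefficient and applying $\partial_{w[1,n]}$.

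\emph{Case $b = X_j$.} Iterating $T_i X_k = s_i(X_k) T_i + \partial_i(X_k)$ along a reduced decomposition of $w$ and inducting on $l(w)$ yields
$$T_w X_j = w(X_j)\, T_w + \sum_{l(v) < l(w)} d_{w,v,j}\, T_v, \qquad d_{w,v,j}\in P_n.$$
The tail cannot produce $T_{w[1,n]}$, so the $T_{w[1,n]}$-coefficient of $aX_j$ equals $P_{w[1,n]}\cdot w[1,n](X_j) = P_{w[1,n]} X_{n-j+1}$. Applying $\partial_{w[1,n]}$ and using that the $X_k$'s commute gives $t(aX_j) = t(X_{n-j+1}a) = t(\gamma(X_j)a)$.

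\emph{Case $b = T_i$.} Using $T_w T_i = T_{ws_i}$ or $0$ according as $l(ws_i) > l(w)$ or not, only $w = w[1,n] s_i$ contributes to the $T_{w[1,n]}$-coefficient of $aT_i$, so $t(aT_i) = \partial_{w[1,n]}(P_{w[1,n] s_i})$. On the other side, expanding $T_{n-i} P_w = s_{n-i}(P_w) T_{n-i} + \partial_{n-i}(P_w)$ via Remark \ref{re:generalcommutation}, the $\partial_{n-i}$-term contributes only from $w = w[1,n]$ but is then killed by $\partial_{w[1,n]}$: writing $w[1,n] = w' s_{n-i}$ with $l(w') = l(w[1,n])-1$, one has $\partial_{w[1,n]}\partial_{n-i} = \partial_{w'}\partial_{n-i}^2 = 0$. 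The term $s_{n-i}(P_w) T_{s_{n-i} w}$ contributes only when $s_{n-i} w = w[1,n]$, i.e.\ when $w = w[1,n] s_i$ (using $w[1,n] s_i w[1,n]^{-1} = s_{n-i}$). Hence $t(T_{n-i} a) = \partial_{w[1,n]}(s_{n-i}(P_{w[1,n] s_i}))$, and the desired identity $t(aT_i) = -t(T_{n-i}a) = t(\gamma(T_i)a)$ reduces to
$$\partial_{w[1,n]}\bigl((1+s_{n-i})(P_{w[1,n] s_i})\bigr) = 0.$$
This holds because $(1+s_{n-i})Q$ is $s_{n-i}$-invariant for any $Q$, hence annihilated by $\partial_{n-i}$, and $\partial_{w[1,n]}$ factors as $\partial_{w'}\partial_{n-i}$ on the right.

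The only delicate point is the sign tracking in the $T_i$ case, where one must recognise the residual expression as an $s_{n-i}$-symmetrisation and invoke the vanishing of $\partial_{w[1,n]}$ on $s_{n-i}$-invariants; everything else is routine manipulation in the normal form of Proposition \ref{pr:faithfulnilaffineHecke}.
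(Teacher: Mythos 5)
Your proof is correct and follows essentially the same route as the paper: reduce by multiplicativity in $b$ to the generators, use the commutation $T_w X_j = w(X_j)T_w + (\text{lower})$ for the $X_j$ case, and in the $T_i$ case use $T_{n-i}P = s_{n-i}(P)T_{n-i} + \partial_{n-i}(P)$ together with the vanishing of $\partial_{w[1,n]}$ on $s_{n-i}$-invariants (and the conjugation relation $w[1,n]s_i w[1,n] = s_{n-i}$). The only cosmetic difference is that the paper proves $t(PT_w\,b) = t(\gamma(b)PT_w)$ for each $w$ and appeals to linearity, whereas you extract the $T_{w[1,n]}$-coefficient directly; the computational content is identical.
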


\begin{proof}
Let $i\in\{1,\ldots,n\}$.
By induction on $l(w)$, one shows that
$$T_wX_i-X_{w(i)}T_w\in\bigoplus_{w'\in\GS_n,l(w')<l(w)} P_nT_{w'}.$$
It follows that
$$T_{w[1,n]}X_i-X_{n-i+1}T_{w[1,n]}\in\bigoplus_{w{\not=}w[1,n]} P_nT_w.$$
We deduce that 
$t(PT_w X_i)=t(PX_{n-i}T_w)$ for $w\in\GS_n$ and $P\in P_n$.

Let $i\in\{1,\ldots,n-1\}$ and $P\in P_n$.
Remark \ref{re:generalcommutation} shows that
$$t(T_{n-i}PT_w)=t(s_{n-i}(P)T_{n-i}T_w)+t(\partial_{n-i}(P)T_w).$$
We have $\partial_{w[1,n]}(\partial_{n-i}(P))=0$, so
$t(\partial_{n-i}(P)T_w)=0$.
We have
$\partial_{w[1,n]}(P+s_{n-i}(P))=0$, hence
$\partial_{w[1,n]}(s_{n-i}(P))=-\partial_{w[1,n]}(P)$.
Since $s_{n-i}w=w[1,n]$ if and only if $ws_i=w[1,n]$, we deduce
that 
$t(s_{n-i}(P)T_{n-i}T_w)=-t(PT_wT_i)$.
This shows that $t(PT_wT_i)=t(-T_{n-i}PT_w)$ for $w\in\GS_n$ and $P\in P_n$.
The lemma follows.
\end{proof}

When viewed as a subalgebra of $\End_\BZ(P_n)$, then
${^0H}_n$ contains $\GS_n$, since the action of $\GS_n$ is trivial
on $P_n^{\GS_n}$ (Proposition \ref{pr:MoritanilHecke}). The injection of
$\GS_n$ in
${^0H}_n$ is given explicitly by $s_i\mapsto (X_i-X_{i+1})T_i+1$.

\smallskip
The following lemma is an immediate calculation involving endomorphisms
of $P_n$.
\begin{lemma}
\label{le:Nakayamainner}
We have
$w[1,n]\cdot a\cdot w[1,n]=\gamma(a) \text{ for all }a\in {^0H}_n$.
\end{lemma}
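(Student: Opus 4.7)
The plan is to exploit the faithful polynomial representation (Proposition \ref{pr:faithfulnilaffineHecke}), so that both sides of the identity are viewed as endomorphisms of $P_n$, where the verification reduces to a direct computation on the generators $X_i$ and $T_i$. Note that $w[1,n]$ is an involution in $\GS_n \subset {^0H}_n$ (via $s_i \mapsto (X_i-X_{i+1})T_i+1$), so $a \mapsto w[1,n]\cdot a\cdot w[1,n]$ is a $\BZ$-algebra automorphism of ${^0H}_n$. It therefore suffices to check that it agrees with $\gamma$ on the generators $X_1,\ldots,X_n,T_1,\ldots,T_{n-1}$; this will simultaneously confirm that $\gamma$ is well-defined as an algebra homomorphism.

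For the $X_i$: as an operator on $P_n$, $X_i$ is multiplication by the variable $X_i$, while $w[1,n]$ acts by the algebra automorphism of $P_n$ sending $X_j \mapsto X_{n-j+1}$. Conjugating multiplication by $X_i$ by this automorphism gives multiplication by $w[1,n](X_i)=X_{n-i+1}$, so $w[1,n]\cdot X_i\cdot w[1,n]=X_{n-i+1}=\gamma(X_i)$.

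For the $T_i$: a direct computation gives $w[1,n]\,s_i\,w[1,n]=s_{n-i}$ in $\GS_n$ (the transposition $(i,i+1)$ is sent by order-reversal to $(n+1-i,n-i)=s_{n-i}$). Setting $\sigma=w[1,n]$ and $Q=\sigma(P)$, the definition $\partial_i(P)=(P-s_iP)/(X_{i+1}-X_i)$ gives
$$\sigma\partial_i\sigma(P)=\sigma\!\left(\frac{Q-s_i Q}{X_{i+1}-X_i}\right)
=\frac{\sigma(Q)-\sigma s_i\sigma^{-1}(P)}{\sigma(X_{i+1})-\sigma(X_i)}
=\frac{P-s_{n-i}(P)}{X_{n-i}-X_{n-i+1}}=-\partial_{n-i}(P).$$
Hence $w[1,n]\cdot T_i\cdot w[1,n]=-T_{n-i}=\gamma(T_i)$.

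Since both $\gamma$ and conjugation by $w[1,n]$ are algebra endomorphisms agreeing on a generating set, and conjugation is an automorphism, we conclude $w[1,n]\cdot a\cdot w[1,n]=\gamma(a)$ for all $a\in{^0H}_n$. There is no real obstacle here, as the paper indicates; the only mild bookkeeping point is the sign in the computation for $T_i$, which comes from the reversal $X_{i+1}-X_i \leadsto X_{n-i}-X_{n-i+1}$ and explains why $\gamma(T_i)=-T_{n-i}$ rather than $+T_{n-i}$.
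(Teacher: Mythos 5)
Your proof is correct and follows exactly the route the paper intends (the paper dismisses it as "an immediate calculation involving endomorphisms of $P_n$"): identify $w[1,n]\in\GS_n\subset{^0H}_n$ with the variable-reversing automorphism of $P_n$, then conjugate the generators $X_i$ and $T_i=\partial_i$ and compare with the defining formulas for $\gamma$; the sign on $T_i$ falls out of the denominator reversal exactly as you computed.
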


Let $t'$ be the linear form on ${^0H}_n$ defined by
$t'(a)=t(a w[1,n])$. 

\begin{prop}
\label{pr:symmnilaffine}
The form $t'$ is symmetrizing for the $P_n^{\GS_n}$-algebra ${^0H}_n$.
\end{prop}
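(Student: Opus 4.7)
The plan is to separate the two conditions in the definition of a symmetrizing form: first verify that $t'$ is a trace, then reduce the non-degeneracy of $\hat{t}'$ to a single scalar computation by exploiting the matrix-algebra structure of ${}^0H_n$ over $R:=P_n^{\GS_n}$.

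For the trace property, I would apply Lemma \ref{le:tracenilaffine} to $t'(ab)=t(ab\cdot w[1,n])$ with $(x,y)=(ab,w[1,n])$; since $\gamma(w[1,n])=w[1,n]\cdot w[1,n]\cdot w[1,n]=w[1,n]$ by Lemma \ref{le:Nakayamainner} combined with $w[1,n]^2=1$ in $\GS_n$, this gives $t'(ab)=t(w[1,n]ab)$. Applying Lemma \ref{le:tracenilaffine} once more with $(x,y)=(w[1,n]a,b)$ and using $\gamma(b)=w[1,n]bw[1,n]$ yields $t(w[1,n]ab)=t(w[1,n]ba)$; the symmetric computation gives $t'(ba)=t(w[1,n]ba)$, whence $t'(ab)=t'(ba)$.

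For the non-degeneracy, set $A:={}^0H_n$. By Proposition \ref{pr:MoritanilHecke}, $A\cong \End_R(P_n)^{\opp}$ is a full matrix algebra over $R$ (since $P_n$ is $R$-free of rank $n!$). For any commutative ring and any $N$, the standard trace $\tau$ is symmetrizing on $M_N(R)$, and the abelianization $M_N(R)/[M_N(R),M_N(R)]$ is free of rank one over $R$, spanned by the class of any rank-one idempotent. Consequently every $R$-linear trace on $A$ equals $\mu\tau$ for a unique $\mu\in R$ and is symmetrizing iff $\mu\in R^\times$; it therefore suffices to evaluate $t'$ on a rank-one idempotent and show the result is a unit.

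Take $b_n$ from Lemma \ref{le:bnidempotent}: under $A\cong \End_R(P_n)^{\opp}$ it acts on $P_n$ as the projector $Q\mapsto \partial_{w[1,n]}(X_2X_3^2\cdots X_n^{n-1}\cdot Q)$ onto $R\cdot 1\subset P_n$ (using $\partial_{w[1,n]}(X_2X_3^2\cdots X_n^{n-1})=1$ from Theorem \ref{pr:basispol}), so $\tau(b_n)=1$. The crucial ingredient is the identity $T_{w[1,n]}\cdot w[1,n]=(-1)^{\binom{n}{2}}T_{w[1,n]}$ in $A$, which I would verify in the faithful representation of Proposition \ref{pr:faithfulnilaffineHecke}: the intertwining $w[1,n]\partial_i=-\partial_{n-i}w[1,n]$ (a direct computation from $w[1,n]s_iw[1,n]=s_{n-i}$ and $w[1,n](X_{i+1}-X_i)=-(X_{n-i+1}-X_{n-i})$) iterates along a reduced expression for $w[1,n]$ to $w[1,n]\partial_{w[1,n]}=(-1)^{\binom{n}{2}}\partial_{w[1,n]}w[1,n]$, and since $\partial_{w[1,n]}$ lands in $R$ on which $w[1,n]$ acts trivially this simplifies to $\partial_{w[1,n]}\circ w[1,n]=(-1)^{\binom{n}{2}}\partial_{w[1,n]}$. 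Commuting $w[1,n]$ past the polynomial factor in $b_n$ via $P\cdot w[1,n]=w[1,n]\cdot w[1,n](P)$ and applying this identity gives $t'(b_n)=(-1)^{\binom{n}{2}}\partial_{w[1,n]}(w[1,n](X_1^{n-1}X_2^{n-2}\cdots X_{n-1}))=(-1)^{\binom{n}{2}}$, a unit in $R$. Hence $t'=(-1)^{\binom{n}{2}}\tau$ is symmetrizing; the sole obstacle is the sign identity above, which quantifies precisely how far the Frobenius form $t$ deviates from being a trace and is where the nil affine Hecke structure enters.
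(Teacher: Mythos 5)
Your proof of the trace property follows the paper's: both apply Lemma \ref{le:tracenilaffine} and Lemma \ref{le:Nakayamainner} in essentially the same way.

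For the non-degeneracy of $\hat{t}'$ you take a genuinely different route. The paper argues ``locally'': it reduces modulo each maximal ideal $\Gm$ of $R=P_n^{\GS_n}$, checks that $t'\otimes_R(R/\Gm)$ is a nonzero trace on the matrix algebra $M_{n!}(R/\Gm)$ (by the single evaluation $t'(X_2\cdots X_n^{n-1}T_{w[1,n]}w[1,n])=1$), concludes it is symmetrizing there, and lifts via Lemma \ref{le:isoatclosedpoints}. You argue ``globally'': identifying $A\cong M_{n!}(R)$ once and for all, every $R$-linear trace is a unique multiple $\mu\tau$ of the standard trace and is symmetrizing iff $\mu\in R^\times$, so it suffices to compute $\mu=t'(b_n)$. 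Your route buys an explicit sign, $t'=(-1)^{\binom{n}{2}}\tau$, which the paper never makes visible; the paper's route avoids the sign bookkeeping entirely and needs only that $t'$ is nonzero after base change. Both are correct, and the scalar you obtain is consistent with the paper's evaluation (since $t'(b_n)=t'(X_2\cdots X_n^{n-1}T_{w[1,n]})=t(X_2\cdots X_n^{n-1}T_{w[1,n]}w[1,n])\cdot(-1)^{\binom{n}{2}}$ via $T_{w[1,n]}w[1,n]=(-1)^{\binom{n}{2}}T_{w[1,n]}$). One small imprecision in your last step: after writing $b_nw[1,n]=(-1)^{\binom{n}{2}}T_{w[1,n]}\cdot X_1^{n-1}\cdots X_{n-1}$, you still have a polynomial to the \emph{right} of $T_{w[1,n]}$, and $t$ is defined on the basis $\{PT_w\}$ with the polynomial on the left; the jump to $\partial_{w[1,n]}(w[1,n](X_1^{n-1}\cdots X_{n-1}))$ implicitly commutes it across. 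The cleanest fix is to use the trace property of $t'$ you have already established: $t'(b_n)=t'(X_2\cdots X_n^{n-1}T_{w[1,n]})=(-1)^{\binom{n}{2}}t(X_2\cdots X_n^{n-1}T_{w[1,n]})=(-1)^{\binom{n}{2}}$, with no further commutation needed. Your intertwining identity $w[1,n]\circ\partial_i=-\partial_{n-i}\circ w[1,n]$ and its iteration are correct, and can equivalently be packaged as $\gamma(T_{w[1,n]})=(-1)^{\binom{n}{2}}T_{w[1,n]}$ straight from $\gamma(T_i)=-T_{n-i}$ and the fact that $(n-i_1,\ldots,n-i_r)$ is again a reduced word for $w[1,n]$.
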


\begin{proof}
Lemmas \ref{le:tracenilaffine} and \ref{le:Nakayamainner} show that
$t'(ab)=t'(ba)$ for all $a,b\in {^0H}_n$.

Let $\Gm$ be a maximal ideal of $P_n^{\GS_n}$ and $k=P_n^{\GS_n}/\Gm$.
We have $k{^0H}_n\simeq M_{n!}(k)$ by Proposition \ref{pr:MoritanilHecke}.
We have $t'(X_2\cdots X_n^{n-1}T_{w[1,n]} w[1,n])=1$ (cf the proof of
Theorem \ref{pr:basispol}), hence the form
$t'\otimes_{P_n^{\GS_n}}1_k$ is not zero. As a consequence, it
is a symmetrizing form, since
$k{^0H}_n\simeq M_{n!}(k)$ by Proposition \ref{pr:MoritanilHecke}.
We deduce that $\hat{t}'\otimes_{P_n^{\GS_n}}k$ is an isomorphism,
so $\hat{t}'$ is an isomorphism by Lemma \ref{le:isoatclosedpoints}.
\end{proof}

\begin{proof}[Proof of Proposition \ref{pr:symmpolynomials}]
Let $\Gm$ be a maximal ideal of $P_n^{\GS_n}$ and $k=P_n^{\GS_n}/\Gm$.
Let $P$ be a non-zero element of $P_n\otimes_{P_n^{\GS_n}}k$.
By Proposition \ref{pr:symmnilaffine}, there
is $a\in k{^0H}_n$ such that $t'(PT_{w[1,n]}a)\not=0$.
So, $t(P T_{w[1,n]}aw[1,n])\not=0$. There are
elements $Q_w\in P_n\otimes_{P_n^{\GS_n}}k$
such that $aw[1,n]=\sum_w T_w Q_w$. Then
$$t(PT_{w[1,n]}aw[1,n])=t(PT_{w[1,n]}Q_1)=t(\gamma(Q_1)PT_{w[1,n]})=
\partial_{w[1,n]}(\gamma(Q_1)P)\not=0.$$
We deduce that $\hat{\partial}_{w[1,n]}\otimes_{P_n^{\GS_n}}
(P_n^{\GS_n}/\Gm)$ is injective for any maximal ideal $\Gm$ of
$P_n^{\GS_n}$. It follows from Lemma \ref{le:isoatclosedpoints} that
$\hat{\partial}_{w[1,n]}$ is an isomorphism.
\end{proof}

\begin{rem}
One can show that the automorphism $\sigma$ of ${^0H}_n^f$ is not inner for
$n\ge 3$.
\end{rem}

\section{Quiver Hecke algebras}
\subsection{Representations of quivers}
We refer to \cite{GaRoi} for a general discussion of quivers and their
representations.

\subsubsection{Quivers}
\label{sectiongalcarquois}
Let $Q$ be a {\em quiver} (= an oriented graph), \ie,
\begin{itemize}
\item a finite set $Q_0$ (the vertices) 
\item a finite set $Q_1$ (the arrows)
\item maps $p,q:Q_1\to Q_0$ (tail=source and head=target of an arrow).
\end{itemize}

Let $k$ be a commutative ring. A {\em representation}
of $Q$ over $k$ is the data of
$(V_s,\phi_a)_{s\in Q_0, a\in Q_1}$ where $V_s$ is a $k$-module and
$\phi_a\in\Hom_k(V_{p(a)},V_{q(a)})$.

A morphism from
$(V_s,\phi_a)_{s,a}$ to $(V'_s,\phi'_a)_{s,a}$ is the data of a family
$(f_s)_{s\in Q_0}$, where $f_s\in \Hom_k(V_s,V'_s)$,
such that for all $a\in Q_1$, the following diagram commutes:
$$\xymatrix{
V_{p(a)} \ar[r]^{\phi_a} \ar[d]_{f_{p(a)}} & V_{q(a)} \ar[d]^{f_{q(a)}} \\
V'_{p(a)} \ar[r]_{\phi'_a} & V'_{q(a)}
}$$

\smallskip

The {\em quiver algebra} $k(Q)$ associated to $Q$ is
the $k$-algebra generated by the set $Q_0\cup Q_1$ with relations
$$sa=\delta_{q(a),s}a,\ \ as=\delta_{p(a),s}a,\ \ ss'=\delta_{s,s'}s\
\text{ for }s,s'\in Q_0 \text{ and }a\in Q_1\
\text{ and }\ 1=\sum_{t\in Q_0}t$$

Let $\gamma=(s_1,a_1,s_2,a_2,\ldots,s_n)$ be a {\em path}
in $Q$, \ie,
a sequence of vertices $s_i\in Q_0$ and arrows $a_i\in Q_1$ such that
$p(a_i)=s_i$ and $q(a_i)=s_{i+1}$.
We put $\tgamma=s_n a_{n-1}\cdots a_2 s_2 a_1 s_1\in k(Q)$.

\smallskip
The following proposition is clear.

\begin{prop}
The set of $\tgamma$, where $\gamma$ runs over the set of paths of $Q$,
is a basis of $k(Q)$.
\end{prop}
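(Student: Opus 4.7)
The plan is to prove this proposition by separately establishing that $\{\tgamma\}$ spans $k(Q)$ and that it is linearly independent.

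For spanning, I first observe that every element of $k(Q)$ is by construction a $k$-linear combination of monomials in the generators $Q_0\cup Q_1$. The relation $ss'=\delta_{s,s'}s$ lets me reduce runs of vertex symbols, and the relations $sa=\delta_{q(a),s}a$ and $as=\delta_{p(a),s}a$ let me push every vertex symbol to the edge of the monomial or annihilate it, reducing any nonzero monomial to a product of arrows possibly bracketed by the appropriate source/target idempotents. Using $1=\sum_{t}t$ to insert an idempotent between consecutive arrows $a_i$ and $a_{i+1}$ in such a product shows that the product vanishes unless $q(a_i)=p(a_{i+1})$, so the arrows concatenate to form a path. Thus every nonzero monomial in the generators equals $\tgamma$ for a unique path $\gamma$ (with trivial one-vertex paths corresponding to the idempotents $s\in Q_0$), and $\{\tgamma\}$ spans $k(Q)$.

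For linear independence, I would build a concrete model. Let $A$ be the free $k$-module on the set of paths of $Q$, where we include a length-zero path $e_s$ at each vertex $s$. Define a bilinear product on $A$ by concatenation of paths, with the convention that the product is zero when the target of the right factor does not match the source of the left factor. Associativity follows from associativity of concatenation when it is defined, and the elements $e_s$ act as local identities. One then verifies directly that sending $s\mapsto e_s$ and $a\mapsto a$ (as a length-one basis path in $A$) respects all four families of defining relations, and so extends to a $k$-algebra homomorphism $\phi:k(Q)\to A$. By construction $\phi(\tgamma)$ is the basis element of $A$ attached to $\gamma$, and distinct paths yield distinct basis elements, so the $\phi(\tgamma)$ are $k$-linearly independent; hence so are the $\tgamma$, and $\phi$ is in fact an isomorphism.

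The main obstacle is nothing conceptually deep but rather the careful bookkeeping needed to show that the partially defined concatenation product on $A$ is associative and that the defining relations of $k(Q)$ hold in $A$, especially keeping the source/target conventions consistent with $\tgamma=s_n a_{n-1}\cdots a_1 s_1$ (so that multiplication in $k(Q)$ corresponds to concatenation on the left in $A$). Once that model is in place, the proposition follows from combining the spanning argument with the identification of $\phi(\tgamma)$ as a basis element of $A$.
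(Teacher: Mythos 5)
Your argument is correct: the spanning step via reduction of monomials using the four relations, followed by linear independence via the explicit "path-concatenation" model $A$ and the surjection $\phi:k(Q)\to A$, is the standard way to make this rigorous. The paper itself offers no proof, simply stating the proposition is clear, so you have supplied exactly the kind of careful verification that is being taken for granted; the only bookkeeping point worth stressing in a write-up is that $\phi$ is well-defined because the relations of $k(Q)$ hold in $A$ (in particular $1=\sum_t t$ maps to $\sum_t e_t$, which really is the two-sided identity of $A$), after which injectivity on the span of $\{\tgamma\}$ is immediate since distinct paths already are distinct basis elements of $A$.
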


Note that $k(Q)$ is a graded algebra, with $Q_0$ in degree $0$ and $Q_1$ in
degree $1$. In general, a path of length $n$ is homogeneous of degree $n$.

\medskip
There is an equivalence from the category of representations of $Q$ to
the category of (left) $k(Q)$-modules: given $(V_s,\phi_a)$ a representation
of $Q$, let
$M=\bigoplus_s V_s$. We define a structure of $k(Q)$-module as
follows: $s\in Q_0$ is the projection onto $V_s$. An element
$a\in Q_1$ acts by zero on $\bigoplus_{s\not=p(a)}V_s$ and sends
$V_{p(a)}$ to $V_{q(a)}$ via $\phi_a$.

\smallskip
Assume $k$ is a field.
Given $s\in Q_0$, there is a simple representation $S=S(s)$ of $Q$ given by
$S_t=0$ for $t\not=s$, $S_s=k$ and $\phi_a=0$ for all $a\in Q_1$.
When $k(Q)$ is finite dimensional, we obtain all simple representations of
$Q$, up to isomorphism.

\begin{example}
\label{ex:examplesquivers}
For each of the following quivers, we give the list of finite dimensional
indecomposable
representations (up to isomorphism) and we indicate the isomorphism type
of the quiver algebra. We assume $k$ is a field.
\begin{itemize}
\item[(i)]
$\bullet$~: $(k)$. The quiver algebra is $k$.
\item[(ii)]
$\xymatrix{1\ar[r]&2}$~: $S(1)=(S_1=k,S_2=0,\phi=0)$, $S(2)=
(S_1=0,S_2=k,\phi=0)$
and $M=(M_1=k,M_2=k,\phi=1)$. The quiver algebra is isomorphic to the
algebra of $2\times 2$ upper triangular matrices.
\item[(iii)]
$\ \ \ \ \xymatrix{\bullet\ar@(ul,dl)}$~:
$(k^n,\phi(n,\lambda))_{n\ge 1,\lambda\in k}$
with $\phi(n,\lambda)=
\small{\left(\begin{matrix}
 \lambda & 1 \\
         & \ddots & \ddots \\
         &        & \ddots & 1\\ 
         &        &        & \lambda
\end{matrix}\right)}$,
assuming $k$ is an algebraically closed. The quiver algebra is isomorphic to
$k[x]$.
\end{itemize}
\end{example}

\subsubsection{Quivers with relations}
Let $Q$ be a quiver and $k$ a commutative ring. A set $R$ of relations for $Q$
over $k$ is a finite set of elements of $k(Q)_{\ge 2}$. We denote by
$I=(R)$ the two-sided ideal of $k(Q)$ generated by $R$ and we put $A=k(Q)/I$.

\begin{rem}
Assume $k$ is an algebraically closed field. Let $A$ be a basic finite
dimensional $k$-algebra (\ie, all simple $A$-modules have dimension $1$).
One shows that there is a quiver $Q$ with relations $R$ such that
$A\simeq k(Q)/(R)$. The vertices of $Q$ are in bijection with the set
of simple $A$-modules, up to isomorphism, while the set of arrows is
in bijection with a basis of $\rad(A)/\rad(A)^2$.
\end{rem}

\subsection{Quiver Hecke algebras}
We review some constructions and results of \cite[\S 3.2]{Rou3} and
provide some complements.
We will give three definitions of quiver Hecke algebras:
\begin{itemize}
\item By generators and relations, modulo polynomial torsion
\item By (more complicated) generators and relations
\item As a subalgebra of a ring of endomorphisms of a polynomial ring (over
a quiver)
\end{itemize}

\subsubsection{Wreath and nil-wreath products}
\label{se:wreath}
Let $k$ be a commutative ring.
Let $I$ be a finite set and $n\ge 0$. We define
a quiver $\Psi_{I,n}$ with vertex set $I^n$.
We will use the action of the symmetric group $\GS_n$ on $I^n$.
The arrows are
\begin{itemize}
\item $s_i=s_{i,v}:v\to s_i(v)$ for $1\le i\le n-1$ and $v\in I^n$
\item $x_i=x_{i,v}:v\to v$ for $1\le i\le n$ and $v\in I^n$.
\end{itemize}
We define the quiver algebra
$A=A(\Psi_{I,n},R_1)$ over $k$ with the quiver above and relations $R_1$:
$$s_i^2=1,\ s_is_j=s_js_i \text{ if }|i-j|>1,\
s_is_{i+1}s_i=s_{i+1}s_is_{i+1}$$
$$x_ix_j=x_jx_i,\ s_ix_j=x_js_i \text{ if }j\not=i,i+1 \text{ and }
s_i x_i=x_{i+1}s_i.$$

When $|I|=1$, we have $A=k[x_1,\ldots,x_n]\rtimes\GS_n=k[x]\wr\GS_n$. In
general, $A=k[x]^I\wr \GS_n$.

\medskip
We can construct a similar algebra, based on the nil Hecke algebra instead
of $k\GS_n$. We use $T_i$ to denote the arrow called $s_i$ earlier.
We define a quiver algebra
$A'=A(\Psi_{I,n},R_0)$ with relations $R_0$:
$$T_i^2=0,\ T_iT_j=T_jT_i \text{ if }|i-j|>1,\
T_iT_{i+1}T_i=T_{i+1}T_iT_{i+1}$$
$$x_ix_j=x_jx_i,\ T_ix_j=x_jT_i \text{ if }j\not=i,i+1,
T_i x_i=x_{i+1}T_i \text{ and } T_i x_{i+1}=x_iT_i.$$

\begin{rem}
Let $B$ be a $k$-algebra and $n\ge 0$. There is a (unique) $k$-algebra
structure on $(B^{\otimes_k n})\otimes ({^0H}_n^f)$ such that 
$B^{\otimes_k n}=\underbrace{B\otimes_k B\otimes_k\cdots\otimes_k B}_{n
\text{ factors}}$
and ${^0H}_n^f$ are subalgebras and
$(1\otimes T_w)\cdot\bigl((a_1\otimes\cdots\otimes a_n)\otimes 1\bigr)=
\bigl((a_{w(1)}\otimes\cdots\otimes a_{w(n)})\otimes T_w\bigr)$.
We denote by $B\wr ({^0H}_n^f)$ the corresponding algebra.

We have $A'=k[x]^I\wr ({^0H}_n^f)$.
\end{rem}

\subsubsection{Definition}
\label{se:defquiverHecke}
We come now to the definition of the quiver Hecke algebras \cite[\S 3.2]{Rou3}.
Fix a matrix $Q=(Q_{ij})_{i,j\in I}$ in $k[u,u']$. Assume
\begin{itemize}
\item $Q_{ii}=0$
\item $Q_{ij}$ is not a zero-divisor in $k[u,u']$ for $i\not=j$ and
\item $Q_{ij}(u,u')=Q_{ji}(u',u)$.
\end{itemize}

We define the algebra $H_n'(Q)=A(\Psi_{I,n},R'_Q)$ with relations $R'_Q$
(we use $\tau_i$ to denote the arrow called $s_i$ earlier):
$$\tau_{i,s_i(v)}\tau_{i,v}=Q_{v_i,v_{i+1}}(x_{i,v},x_{i+1,v})
,\ \tau_i\tau_j=\tau_j\tau_i \text{ if }|i-j|>1$$
$$\tau_{i+1,s_is_{i+1}(v)}\tau_{i,s_{i+1}(v)}\tau_{i+1,v}=
\tau_{i,s_{i+1}s_i(v)}\tau_{i+1,s_i(v)}\tau_{i,v}
\text{ if }v_i\not=v_{i+2} \text{ or }v_i=v_{i+1}=v_{i+2}$$
$$x_ix_j=x_jx_i,\ 
\tau_{i,v}x_{a,v}-x_{s_i(a),s_i(v)}\tau_{i,v}=\begin{cases}
-1_v & \text{ if }a=i \text{ and } v_i=v_{i+1} \\
1_v & \text{ if }a=i+1 \text{ and } v_i=v_{i+1} \\
0 & \text{ otherwise.}
\end{cases}$$

We define $H_n(Q)=A(\Psi_{I,n},R_Q)$ where $R_Q$ consist of the
relations $R'_Q$ together with
\begin{multline*}
\tau_{i+1,s_is_{i+1}(v)}\tau_{i,s_{i+1}(v)}\tau_{i+1,v}-
\tau_{i,s_{i+1}s_i(v)}\tau_{i+1,s_i(v)}\tau_{i,v}=\\
(x_{i+2,v}-x_{i,v})^{-1}\left(Q_{v_i,v_{i+1}}(x_{i+2,v},x_{i+1,v})-
Q_{v_i,v_{i+1}}(x_{i,v},x_{i+1,v})\right)
\end{multline*}
when $v_i=v_{i+2}\not=v_{i+1}$

Note that the relation is written using
$(x_{i+2,v}-x_{i,v})^{-1}$ to simplify the expression: the fraction is
actually a polynomial in $x_{i,v}$, $x_{i+1,v}$ and $x_{i+2,v}$.

\smallskip
The following lemma shows that, up to multiplication by a polynomial, these
extra relations follow from the ones in $R'_Q$. Since
$H_n(Q)$ has no polynomial torsion, it follows that $H_n(Q)$ is the quotient
of $H'_n(Q)$ by the polynomial torsion.

\begin{lemma}
The kernel of the canonical surjective morphism of quiver algebras
$H'_n(Q)\twoheadrightarrow H_n(Q)$ is the subspace of elements $a$
such that there is $P\in k[x_1,\ldots,x_n]$ that is not a
zero-divisor and such that $Pa=0$.
\end{lemma}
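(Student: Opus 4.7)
Let $\pi : H'_n(Q)\twoheadrightarrow H_n(Q)$ denote the canonical surjection, write $K = \ker\pi$, and let $T$ be the claimed subspace, i.e.\ the set of $a \in H'_n(Q)$ annihilated on the left by some non-zero-divisor polynomial. The plan is to prove the two inclusions $T\subseteq K$ and $K\subseteq T$ separately.

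The inclusion $T\subseteq K$ will rely on the faithful polynomial representation of $H_n(Q)$ to be constructed in the next subsection: in it, every polynomial $P \in k[x_1,\ldots,x_n]$ acts by multiplication by $P$, which is injective when $P$ is a non-zero-divisor. Thus $\pi(P)$ is itself a non-zero-divisor of $H_n(Q)$, so $Pa = 0$ in $H'_n(Q)$ forces $\pi(P)\pi(a) = 0$ and hence $\pi(a) = 0$. For the converse $K\subseteq T$, I would first verify that $T$ is a two-sided ideal. Right-closure is immediate from associativity; for left-closure, if $Pa=0$ and $b$ is one of the generators $\tau_j$, $x_k$, $1_v$, relation (v) of $R'_Q$ lets me commute $P$ past $b$ modulo a polynomial-valued correction (the divided-difference term), so multiplying by $s_j(P)$ or the like produces a fresh non-zero-divisor polynomial annihilating $ba$.

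Having $T$ as a two-sided ideal, it suffices to show that each generator $g_v$ of $K$ (the braid-defect minus $P_v$, for $v$ with $v_i=v_{i+2}\neq v_{i+1}$) lies in $T$. The key computation is a direct manipulation inside $H'_n(Q)$: commute the polynomial $x_{i+2,v}-x_{i,v}$ (and, if needed, a higher polynomial) through the two braids $\tau_{i+1}\tau_i\tau_{i+1}$ and $\tau_i\tau_{i+1}\tau_i$ using relation (v), and collect the squares $\tau_{i+1}^2$ and $\tau_i^2$ coming from relation (i). The symmetry $Q_{ij}(u,u') = Q_{ji}(u',u)$ produces exactly the polynomial $P_v(x_{i+2,v}-x_{i,v}) = Q_{v_i,v_{i+1}}(x_{i+2,v},x_{i+1,v}) - Q_{v_i,v_{i+1}}(x_{i,v},x_{i+1,v})$, and crucial simplification occurs at the intermediate vertex of shape $(\ldots,v_i,v_i,v_{i+1},\ldots)$, where $Q_{v_iv_i}=0$ makes $\tau_j^2$ vanish. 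The upshot is an identity $R_v\cdot g_v = 0$ with $R_v$ an explicit non-zero-divisor polynomial, placing $g_v$ in $T$.

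The main obstacle is pinning down $R_v$: a straightforward commutation yields the super-central identity $(x_{i+2,v}-x_{i,v})g_v + g_v(x_{i+2,v}-x_{i,v}) = 0$, i.e.\ anti-commutation with $Z = x_{i+2,v}-x_{i,v}$, together with the intertwining relations $x_{i+2,v}g_v = g_v x_{i,v}$ and $x_{i,v}g_v = g_v x_{i+2,v}$ obtained by the same bookkeeping, but neither of these by itself kills $g_v$. Combining these constraints — $g_v$ commutes with every $\sigma$-symmetric polynomial in $(x_{i,v},x_{i+2,v})$ where $\sigma$ swaps these two variables, and anti-commutes with any $\sigma$-antisymmetric one — together with a further use of the braid-3 relation at adjacent vertices, is what delivers an explicit polynomial $R_v$ annihilating $g_v$ and completes the argument.
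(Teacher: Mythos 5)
Your overall architecture is right and even fills a gap the paper leaves implicit: you correctly split the claim into $T\subseteq K$ (forced by torsion-freeness of $H_n(Q)$, which you cleanly deduce from the faithful polynomial representation), and $K\subseteq T$, for which you rightly note that $T$ must first be checked to be a two-sided ideal, a step the paper omits. The left-ideal property does go through as you sketch: if $Pa=0$ and $b=\tau_j$, then $\tau_j P = s_j(P)\tau_j + c_j\partial_j(P)$, so $0=s_j(P)\tau_j a + c_j\partial_j(P)a$, and multiplying by $P$ gives $P\,s_j(P)\,\tau_j a=0$ with $P\,s_j(P)$ a non-zero-divisor.

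The genuine gap is in the central computation, and you essentially flag it yourself. Your strategy is to commute a polynomial directly across the braid-defect $g_v$ and read off an annihilator $R_v$, but the relations you actually obtain are intertwining and anti-commutation identities ($x_{i+2,v}g_v=g_vx_{i,v}$, $Z g_v=-g_v Z$). These say $g_v$ commutes with the $\GS_2$-invariants in $\{x_{i,v},x_{i+2,v}\}$ and anti-commutes with the anti-invariants — but no such relation of the form $Rg_v=0$ follows from them, and your closing sentence ("a further use of the braid-3 relation at adjacent vertices is what delivers an explicit polynomial $R_v$") is precisely the step that is missing. Moreover, carrying out the commutation carefully one sees extra non-polynomial correction terms of the shape $\tau_{i+1,s_{i+1}(v)}\tau_{i+1,v}$ arising from the divided-difference constants at the middle vertex $s_{i+1}(v)$ (where the $i$-th and $(i{+}1)$-st entries are both $v_i$), so even the intertwining identities are not quite as clean as stated.

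The paper avoids this dead end with a different move: instead of commuting a polynomial across $g_v$, it multiplies $g_v$ on the left by $\tau_{i,v}$ (a non-polynomial element) and shows $\tau_{i,v}\,g_v=0$, using one legal braid relation in $H'_n(Q)$ at the vertex $s_{i+1}(v)$ (which lies in $R'_Q$ precisely because its $i$-th and $(i{+}2)$-th entries differ) together with the $\tau^2=Q$ relations at both ends. A second left multiplication by $\tau_{i,s_i(v)}$ then converts $\tau_{i,s_i(v)}\tau_{i,v}$ into the polynomial $Q_{v_i,v_{i+1}}(x_{i,v},x_{i+1,v})$, yielding $Q_{v_i,v_{i+1}}(x_{i,v},x_{i+1,v})\,g_v=0$ and hence $g_v\in T$. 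This is the idea your sketch is missing; without it, the proposal does not close.
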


\begin{proof}
The property of $H_n(Q)$ to have no polynomial torsion is a
consequence of Proposition \ref{pr:PBW} below. We show here that the kernel of
the canonical map $H'_n(Q)\to H_n(Q)$ is made of torsion elements, \ie,
that suitable polynomial multiples of the relations in $R_Q$ but no in
$R'_Q$ come from relations in $R'_Q$.

Consider $v\in I^n$ with $v_i=v_{i+2}\not=v_{i+1}$. We have the
following equalities in $H'_n(Q)$:
$$(\tau_{i,v}\tau_{i+1,s_{i+1}(v)}\tau_{i,s_{i+1}(v)})\tau_{i+1,v}=
(\tau_{i+1,s_i(v)}\tau_{i,v}\tau_{i+1,s_{i+1}(v)})\tau_{i+1,v}=
\tau_{i+1,s_i(v)}\tau_{i,v}Q_{v_{i+1},v_i}(x_{i+1,v},x_{i+2,v})$$
\begin{multline*}
(\tau_{i,v}\tau_{i,s_i(v)})\tau_{i+1,s_i(v)}\tau_{i,v}=
Q_{v_{i+1},v_i}(x_{i,s_i(v)},x_{i+1,s_i(v)})
\tau_{i+1,s_i(v)}\tau_{i,v}=\\
\tau_{i+1,s_i(v)}\tau_{i,v}Q_{v_{i+1},v_i}(x_{i+1,v},x_{i+2,v})
+\tau_{i,v}(x_{i+2,v}-x_{i+1,v})^{-1}(Q_{v_i,v_{i+1}}(x_{i+2,v},x_{i+1,v})-
Q_{v_i,v_{i+1}}(x_{i,v},x_{i+1,v}))
\end{multline*}
Let 
\begin{multline*}
a=\tau_{i+1,s_is_{i+1}(v)}\tau_{i,s_{i+1}(v)}\tau_{i+1,v}-
\tau_{i,s_{i+1}s_i(v)}\tau_{i+1,s_i(v)}\tau_{i,v}-\\
-(x_{i+2,v}-x_{i,v})^{-1}(Q_{v_i,v_{i+1}}(x_{i+2,v},x_{i+1,v})-
Q_{v_i,v_{i+1}}(x_{i,v},x_{i+1,v}))
\end{multline*}
We have shown that $\tau_{i,v}a=0$, hence
$$0=\tau_{i,s_i(v)}\tau_{i,v}a=Q_{v_i,v_{i+1}}(x_{i,v},x_{i+1,v})a$$
and the lemma follows.
\end{proof}

\smallskip
When $|I|=1$, we have $A=k{^0H}_n$.

\medskip
Let $J$ be a set of finite sequences of elements of
$\{1,\ldots,n-1\}$ such that
$\{s_{i_r}\cdots s_{i_1}\}_{(i_1,\ldots,i_r)\in J}$ is a set of minimal
length representatives of elements of $\GS_n$.

The following lemma is straightforward.
\begin{lemma}
\label{le:generatingsetquiverHecke}
The set
$$S=\{\tau_{i_r,s_{i_{r-1}}\cdots s_{i_1}(v)}\cdots
 \tau_{i_1,v}x_{1,v}^{a_1}\cdots x_{n,v}^{a_n}
\}_{(i_1,\ldots,i_r)\in J,(a_1,\ldots,a_n)\in\BZ_{\ge 0}^n,v\in I^n}$$
generates $H_n(Q)$ as a $k$-module.
\end{lemma}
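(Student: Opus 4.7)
The plan is to show by induction on the total number of $\tau$-factors in a monomial that every element of $H_n(Q)$ lies in the $k$-span of $S$. The algebra is generated by the $1_v$, the $x_{a,v}$, and the $\tau_{i,v}$, so every element is a $k$-linear combination of monomials in these generators. Call the number of $\tau$-factors the $\tau$-length; I aim to reduce each monomial, modulo terms of strictly smaller $\tau$-length, to one in which the $\tau$-word forms a fixed reduced expression from $J$ (based at the appropriate vertex) followed by a monomial in the $x$'s.

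First I would push all $x$'s to the right of all $\tau$'s. The $x$'s commute among themselves and commute exactly with $\tau_{i,v}$ when the index $a$ is different from $i,i+1$. The mixed relation $\tau_{i,v}x_{a,v} - x_{s_i(a),s_i(v)}\tau_{i,v} \in \{0, 1_v, -1_v\}$ allows an $x$ to slide past a $\tau$, either exactly (preserving the $\tau$-length) or at the cost of a summand with one fewer $\tau$. After finitely many such moves, every monomial reduces to a $k$-linear combination of products $\tau_{i_r,\ldots}\cdots \tau_{i_1,v}\cdot x^\alpha$ modulo terms of strictly smaller $\tau$-length.

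Next I would normalize the $\tau$-word, which represents an expression $s_{i_r}\cdots s_{i_1}$ for an element $w\in\GS_n$. If $r>l(w)$, the word is not reduced; by Matsumoto's theorem and the strong exchange property, some sequence of commutations $s_is_j=s_js_i$ ($|i-j|>1$) and braid moves $s_is_{i+1}s_i=s_{i+1}s_is_{i+1}$ produces a subword of the form $s_is_i$. In $H_n(Q)$ the analogous commutation relations between $\tau$'s hold exactly, and the braid relations hold exactly in the cases $v_i\ne v_{i+2}$ or $v_i=v_{i+1}=v_{i+2}$; in the remaining case $v_i=v_{i+2}\ne v_{i+1}$ they hold modulo a polynomial in $x_i,x_{i+1},x_{i+2}$, which has $\tau$-length zero. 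So the same sequence of moves carries the $\tau$-word to one containing a factor $\tau_{i,s_i(v')}\tau_{i,v'}$, modulo terms of strictly smaller $\tau$-length. The quadratic relation $\tau_{i,s_i(v')}\tau_{i,v'}=Q_{v'_i,v'_{i+1}}(x_{i,v'},x_{i+1,v'})$ then replaces this factor by a polynomial, strictly decreasing the $\tau$-length, and we close by the inductive hypothesis.

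If instead $r=l(w)$, the same commutation and (polynomial-corrected) braid moves let us transform the given reduced word into the chosen reduced expression for $w$ from $J$, once again modulo terms of strictly smaller $\tau$-length. A final pass sliding the $x$'s to the right then puts the result in the $k$-span of $S$. The main technical obstacle is the bookkeeping in the non-reduced case: one must verify that the sequence of Matsumoto moves used in $\GS_n$ can be realized in $H_n(Q)$ in such a way that all error terms arising from the polynomial-corrected braid relations have strictly smaller $\tau$-length, so that the induction genuinely closes and the polynomials produced by $\tau_i^2=Q_{v_i,v_{i+1}}$ are not later re-absorbed into something of the original $\tau$-length.
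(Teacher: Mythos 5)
Your proof is correct and follows essentially the same induction as the paper's one-line sketch: primary induction on the number of $\tau$-factors, with a secondary pass sliding the $x$'s to the right (each slide either preserves or reduces the $\tau$-count). The bookkeeping issue you flag at the end is in fact automatic: every polynomial correction — from the quadratic relation $\tau_i^2=Q$, or from the braid relation in the case $v_i=v_{i+2}\ne v_{i+1}$ — replaces two or three $\tau$'s by an $x$-polynomial, so all error terms have strictly smaller $\tau$-length and the outer induction closes.
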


\begin{proof}
Given a product $a$ of
generators $\tau_i$ and $x_i$, one shows by induction on the number of
$\tau_i$'s in the product, then on the number of pairs of an $x_i$ to the left
of a $\tau_j$, that $a$ is a linear combination of elements in $S$.
\end{proof}

Note that the generating set is compatible with the quiver algebra structure,
as it is made of paths. Given $v,v'\in I^n$, 
the set
$$\{\tau_{i_r,s_{i_{r-1}}\cdots s_{i_1}(v)}\cdots
 \tau_{i_1,v}x_{1,v}^{a_1}\cdots x_{n,v}^{a_n}
\}_{(i_1,\ldots,i_r)\in J,(a_1,\ldots,a_n)\in\BZ_{\ge 0}^n,
s_{i_r}\cdots s_{i_1}(v)=v'}$$
generates $1_{v'}H_n(Q)1_v$ as a $k$-module.

\medskip
The algebra $H_n(Q)$ is filtered with $1_v$ and $x_{i,v}$ in degree $0$
and $\tau_{i,v}$ in degree $1$. The relations $R_Q$ become
the relations $R_0$ after neglecting terms of lower degree, \ie,
the morphism $A(\Psi_{I,n})\to H_n(Q)$
gives rise to a surjective algebra morphism
$f:A'=k[x]^I\wr {^0H}_n^f\to \gr H_n(Q)$, where
$\gr H_n(Q)=\bigoplus_{i\ge 0} F^iH_n(Q)/F^{i-1}H_n(Q)$ is the graded
algebra associated with the filtration.

\begin{prop}
\label{pr:PBW}
The algebra $H_n(Q)$ satisfies the Poincar\'e-Birkhoff-Witt property, \ie,
the morphism $f$ is an isomorphism.
Furthermore, $H_n(Q)$ is a free $k$-module with basis $S$
\end{prop}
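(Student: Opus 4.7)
The plan is to deduce both assertions simultaneously from the construction of a faithful ``polynomial representation'' of $H_n(Q)$, in the spirit of Lemma \ref{le:polynomialrep} and Proposition \ref{pr:faithfulnilaffineHecke}. First, observe that the two claims are equivalent to the single statement that the generating set $S$ of Lemma \ref{le:generatingsetquiverHecke} is $k$-linearly independent. Indeed, the algebra $A'=k[x]^I\wr {^0H}_n^f$ is free as a $k$-module on the analogous set $S'$ obtained by replacing $\tau_i$ by $T_i$ (since ${^0H}_n^f$ is free on the $T_w$ and Proposition \ref{pr:faithfulnilaffineHecke} extends formally to the $I$-coloured wreath situation), and $f$ sends $S'$ onto the symbol of $S$. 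Thus $f$ is an isomorphism, and $S$ is a $k$-basis of $H_n(Q)$, as soon as $S$ is linearly independent.

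To prove independence I would construct a representation $\rho$ of $H_n(Q)$ on $P=\bigoplus_{v\in I^n}P_v$, where each $P_v$ is a copy of $k[x_1,\ldots,x_n]$. The vertex idempotent $1_v$ acts as the projector onto $P_v$; the $x_{i,v}$ act by multiplication on $P_v$; and $\tau_{i,v}:P_v\to P_{s_i(v)}$ is defined in the BGG--Demazure spirit, with a case split depending on whether $v_i=v_{i+1}$ (where one takes the divided difference $\partial_i$ of Lemma \ref{le:Leibniz}) or $v_i\neq v_{i+1}$ (where one takes an appropriate $Q_{v_i,v_{i+1}}$-twisted version of the transposition). Verifying the relations $R_Q$ amounts to an extension of Lemmas \ref{le:Leibniz} and \ref{le:demazurebraid}: the commutation and $\tau x$--relations are immediate from the Leibniz rule, the quadratic relation $\tau_{i,s_iv}\tau_{i,v}=Q_{v_i,v_{i+1}}$ is checked on monomials, and the braid-type relation is the only subtle point.

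The main obstacle is precisely the braid relation in the case $v_i=v_{i+2}\neq v_{i+1}$, where the two sides of the relation differ by the explicit rational function $(x_{i+2}-x_i)^{-1}\bigl(Q_{v_i,v_{i+1}}(x_{i+2},x_{i+1})-Q_{v_i,v_{i+1}}(x_i,x_{i+1})\bigr)$. One has to expand both triple products on a general polynomial, keep careful track of which $P_{v'}$ each term lives in, and observe that the discrepancy is of the required divided-difference shape; this is the polynomial identity that forces the extra term in $R_Q$. The other braid case $v_i=v_{i+1}=v_{i+2}$ reduces to Lemma \ref{le:demazurebraid}, and $v_i\neq v_{i+2}$ is handled by direct computation.

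Once the representation $\rho$ is defined, faithfulness on $S$ is shown as in Proposition \ref{pr:faithfulnilaffineHecke}. Suppose $a=\sum c_{\Bi,\Ba,v}\,\tau_{i_r}\cdots\tau_{i_1}x^{\Ba}1_v$ with not all $c$ zero, and fix $v$ and a path $(\Bi,\Ba)$ with $c_{\Bi,\Ba,v}\neq 0$ such that $r=l(s_{i_r}\cdots s_{i_1})$ is minimal among those with non-zero coefficient at the same target vertex $v'=s_{i_r}\cdots s_{i_1}(v)$. Applying $\rho(a)$ to $\partial_{(s_{i_r}\cdots s_{i_1})^{-1}w[1,n]}(X_2X_3^2\cdots X_n^{n-1})\in P_v$ and using that the extra $Q$-factors produced by the non-diagonal $\tau$'s are non-zero-divisors in $k[x_1,\ldots,x_n]$, one reads off $c_{\Bi,\Ba,v}$ as a non-zero polynomial, contradicting $\rho(a)=0$. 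Hence $\rho$ is faithful on the span of $S$, so $S$ is $k$-linearly independent, which completes the proof.
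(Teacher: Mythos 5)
Your plan is essentially the paper's proof: the paper establishes Propositions \ref{pr:PBW} and \ref{pr:polrep} simultaneously by constructing the polynomial representation and checking relations, and your reduction of both claims to linear independence of $S$ in $\End_{k^{I^n}}(M)$ is exactly what is done there. Two of your steps, however, gloss over points that are not routine and should be flagged.

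First, the formula for $\tau_{i,v}$ when $v_i\neq v_{i+1}$ cannot simply be ``a $Q_{v_i,v_{i+1}}$-twisted transposition.'' If you define $\tau_{i,v}=Q_{v_i,v_{i+1}}(x_{i+1},x_i)\,s_i$ the quadratic relation fails (you get $Q^2$ instead of $Q$), and if you define it as $s_i$ alone you get $1$. The paper avoids this by choosing a factorization $Q_{ij}(u,u')=P_{ij}(u,u')P_{ji}(u',u)$ with $P_{ij}$ non-zero-divisors and setting $\tau_{i,v}=P_{v_i,v_{i+1}}(x_{i+1},x_i)\,s_i$, which yields $\tau_{i,s_i(v)}\tau_{i,v}=Q_{v_i,v_{i+1}}(x_i,x_{i+1})$. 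One such choice is $P_{ij}=Q_{ij}$, $P_{ji}=1$ for $i<j$. Without this ``square-root'' choice the construction does not work, so your verification of $R_Q$ would stall at the very first relation.

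Second, your faithfulness step by direct evaluation is considerably more delicate than in Proposition \ref{pr:faithfulnilaffineHecke}, and your description (``one reads off $c_{\Bi,\Ba,v}$ as a non-zero polynomial'') hides the real difficulty. When $v\in I^n$ has repeated entries, the set of $w\in\GS_n$ with $w(v)=v'$ is a full coset of $\Stab(v)$, and the operators $\tau_{i_r}\cdots\tau_{i_1}$ along different such words mix $\partial$-type and $P\cdot s_i$-type factors in a way that is not obviously diagonalized by applying everything to a single test vector $\partial_{w^{-1}w[1,n]}(X_2X_3^2\cdots X_n^{n-1})$. The paper instead extends scalars to the fraction field $k(x_1,\ldots,x_n)$, where every $\tau_{i,v}$ becomes a unit multiple of $s_i$ plus a multiple of the identity; one then compares the bases $\{\partial_w\}_{w}$ and $\{w\}_{w}$ of $\End_{k(x_1,\ldots,x_n)^{\GS_n}}(k(x_1,\ldots,x_n))$ via a triangular change of basis. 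That argument handles repeated vertex labels uniformly and is the part I would not call ``as in Proposition \ref{pr:faithfulnilaffineHecke}''; if you prefer direct evaluation you would need, at minimum, to choose test vectors adapted to the cosets $\Stab(v)\backslash\GS_n$ and carefully use that the accumulated $P$-products are non-zero-divisors.
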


We will prove this proposition by constructing a faithful
polynomial representation. This is similar to the case of the nil affine
Hecke algebra (case $|I|=1$).

\smallskip
Assume there is $d\in\BZ^I$ such that $Q_{ij}(u^{d_i},v^{d_j})$
is a homogeneous polynomial for all $i\not=j$. We denote by
$p_{ij}$ the degree of $Q_{ij}(u^{d_i},v^{d_j})$.
Then, the algebra $H_n(Q)$ is a graded $k$-algebra with
$\deg x_i=2d_{\nu_i}$ and $\deg\tau_{ij}=p_{\nu_i\nu_j}$.

\smallskip
The quiver Hecke algebras have been introduced and studied
independently by Khovanov and Lauda \cite{KhoLau1,KhoLau2} for particular $Q$'s.

\subsubsection{Polynomial realization}
\label{se:isographs}
Let $P=(P_{ij})_{i,j\in I}$ be a matrix in
$k[u,u']$ with $P_{ii}=0$ for all $i\in I$ and
such that $P_{ij}$ is not a zero-divisor for $i\not=j$.
Let $Q_{i,j}(u,u')=P_{i,j}(u,u')P_{j,i}(u',u)$.

We consider the following representation $M=(M_v)_{v\in I^n}$ of
our quiver algebra. We put $M_v=k[x_1,\ldots,x_n]$. We let
$x_i$ act by multiplication by $x_i$ and
$$\tau_{i,v}:M_v\to M_{s_i(v)} \textrm{ acts by }
\begin{cases}
(x_i-x_{i+1})^{-1}(s_i-1) & \text{ if } s_i(v)=v \\
P_{v_i,v_{i+1}}(x_{i+1},x_i)s_i & \text{ otherwise.}
\end{cases}$$

\begin{prop}
\label{pr:polrep}
The construction above defines a faithful representation of
$H_n(Q)$ on $M$.
\end{prop}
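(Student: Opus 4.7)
My plan is to establish the proposition in two stages: verifying that the operators on $M$ satisfy all the defining relations of $H_n(Q)$, and then proving faithfulness by an argument modeled on Proposition \ref{pr:faithfulnilaffineHecke}.

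For the relations, I would proceed by case analysis on the equalities among consecutive entries of $v$. The commutation relations involving only $x_i$'s, or $\tau_i$ and $x_j$ with $j\neq i,i+1$, or $\tau_i\tau_j$ for $|i-j|>1$, are immediate, since the operators act on essentially disjoint variables. For $\tau_{i,v}x_a-x_{s_i(a),s_i(v)}\tau_{i,v}$: when $v_i\neq v_{i+1}$, commuting multiplication past $s_i$ gives no correction; when $v_i=v_{i+1}$, $\tau_{i,v}=(x_i-x_{i+1})^{-1}(s_i-1)$ coincides with the BGG-Demazure operator $\partial_i$, and Lemma \ref{le:Leibniz} produces the required $\pm 1_v$. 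The quadratic relation $\tau_{i,s_i(v)}\tau_{i,v}=Q_{v_i,v_{i+1}}(x_i,x_{i+1})$ follows from the collapse $P_{v_{i+1},v_i}(x_{i+1},x_i)s_i\cdot P_{v_i,v_{i+1}}(x_{i+1},x_i)s_i=Q_{v_i,v_{i+1}}(x_i,x_{i+1})$ when $v_i\neq v_{i+1}$, and from $\partial_i^2=0=Q_{v_iv_i}$ otherwise.

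The main obstacle is the braid relations. When $(v_i,v_{i+1},v_{i+2})$ are all equal one reduces to Lemma \ref{le:demazurebraid}; when they are pairwise distinct, a direct computation with the $P$'s and the $s_j$'s yields the untwisted braid identity; the cases with exactly two consecutive entries equal are also straightforward after applying the distant-commutation relations. The delicate case is $v_i=v_{i+2}\neq v_{i+1}$, where on each side a single Demazure operator is interspersed with two multiplication-by-$P$ operators. Systematic application of the Leibniz rule shows that the difference between the two three-term compositions is exactly $(x_{i+2}-x_i)^{-1}(Q_{v_i,v_{i+1}}(x_{i+2},x_{i+1})-Q_{v_i,v_{i+1}}(x_i,x_{i+1}))$, reproducing the extra relation in $R_Q$. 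This computation, which pivots on the factorisation $Q_{ij}(u,u')=P_{ij}(u,u')P_{ji}(u',u)$, is the technical heart of the verification.

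For faithfulness, I would argue as in the proof of Proposition \ref{pr:faithfulnilaffineHecke}, using the generating set $S$ from Lemma \ref{le:generatingsetquiverHecke}. Suppose $a=\sum_{v,\mathbf{i},\alpha}c_{v,\mathbf{i},\alpha}\tau_{\mathbf{i},v}x^\alpha$, expanded in the elements of $S$, acts by zero on $M$. Fix a source idempotent $1_v$; among the terms with $c_{v,\mathbf{i},\alpha}\neq 0$, select a word $\mathbf{i}=(i_1,\dots,i_r)$ for which $w:=s_{i_r}\cdots s_{i_1}$ has minimal length. Applying $a$ to the polynomial $x_2x_3^2\cdots x_n^{n-1}\in M_v$ and then composing with the auxiliary $\tau$-chain representing $\partial_{w[1,n]w^{-1}}$ (exactly as in the proof of Proposition \ref{pr:faithfulnilaffineHecke}, the intermediate Demazure applications producing zero on all other words of the same or smaller length, and the multiplication-by-$P$ factors being non-zero-divisors by hypothesis on $Q$), isolates the polynomial $\sum_\alpha c_{v,\mathbf{i},\alpha}x^\alpha$, which must vanish. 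Descending induction on $l(w)$ forces all coefficients to be zero, simultaneously proving faithfulness and the linear independence of $S$, hence the PBW property of Proposition \ref{pr:PBW}.
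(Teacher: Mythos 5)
Your verification of the defining relations matches the paper's approach: write out $\tau'_{i,v}$ and $\tau'_{i,s_{i+1}(v)}\tau'_{i+1,v}$ case by case according to which of $v_i,v_{i+1},v_{i+2}$ coincide, and check the relations directly, the factorisation $Q_{ij}(u,u')=P_{ij}(u,u')P_{ji}(u',u)$ being what makes the extra term in $R_Q$ appear in the case $v_i=v_{i+2}\neq v_{i+1}$. That part is sound.

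The faithfulness argument, however, has a genuine gap. You try to port the argument of Proposition \ref{pr:faithfulnilaffineHecke}: pick a word $\mathbf{i}$ of minimal length among those with nonzero coefficient, hit the test polynomial $x_2x_3^2\cdots x_n^{n-1}$, and post-compose with a ``$\tau$-chain representing $\partial_{w[1,n]w^{-1}}$'' to isolate the minimal-length coefficient. But the two ingredients that make that argument work in the nil affine Hecke case both fail here. First, that proof relies on the nilpotency $T_i^2=0$, which is what makes $T_{w[1,n]w^{-1}}T_{w'}=0$ for all $w'$ of length $\geq l(w)$ other than $w$ itself; in $H_n(Q)$ the analogous product is $\tau_{i,s_i(v)}\tau_{i,v}=Q_{v_i,v_{i+1}}(x_i,x_{i+1})$, which for $v_i\neq v_{i+1}$ is a non-zero-divisor, not zero, so the ``other terms vanish'' claim is false. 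Second, the degree-count evaluation $\partial_w(x_2x_3^2\cdots x_n^{n-1})$ crucially uses that each Demazure operator drops degree by $2$, whereas when $v_i\neq v_{i+1}$ the operator $\tau_{i,v}=P_{v_i,v_{i+1}}(x_{i+1},x_i)s_i$ \emph{raises} degree, so the degree bookkeeping does not carry over. Finally, the ``auxiliary $\tau$-chain representing $\partial_{w[1,n]w^{-1}}$'' is not, in general, an element of $H_n(Q)$ at all, since the $\tau$'s permute the colour of the idempotent and there is no colour-fixed Demazure available when $v_i\neq v_{i+1}$, so it is not even clear what operator you are composing with.

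The paper's actual argument is different and cleaner: extend scalars to the fraction field $K=k(x_1,\ldots,x_n)$. Over $K$, each $\tau_{i,v}$ is a $K^\times$-multiple of $s_i$ plus a term supported on the identity (when $v_i=v_{i+1}$, since $\partial_i=(x_i-x_{i+1})^{-1}(s_i-1)$) or nothing (when $v_i\neq v_{i+1}$, since $P_{v_i,v_{i+1}}$ becomes a unit). Hence the generating set $S$, expanded in the basis $\{w\}_{w\in\GS_n}$ of $\End_{K^{\GS_n}}(K)$ as a left $K$-module, is related to that basis by a (Bruhat-)triangular matrix with units on the diagonal, and is therefore $K$-linearly independent; a fortiori it is $k$-linearly independent in $\End_{k^{I^n}}(M)$. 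This simultaneously gives faithfulness and the PBW basis. You should replace your inductive degree argument by this change-of-basis argument, or supply a genuinely different justification that does not rely on nilpotency or degree-lowering.
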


\begin{proof}[Proof of Propositions \ref{pr:PBW} and \ref{pr:polrep}]
Let $\tau'_{i,v}=\begin{cases}
(x_i-x_{i+1})^{-1}(s_i  -1) & \text{ if }v_i=v_{i+1} \\
P_{v_i,v_{i+1}}(x_{i+1},x_i)s_i &
\text{ otherwise}.
\end{cases}$

We have $\tau_{i,s_{i+1}(v)}'\tau_{i+1,v}'=$
$$\begin{cases}
(x_i-x_{i+1})^{-1}\left((x_i-x_{i+2})^{-1}(s_is_{i+1}-s_i)-
(x_{i+1}-x_{i+2})^{-1}(s_{i+1}-1)\right) &
 \text{ if }v_i=v_{i+1}=v_{i+2} \\
P_{v_,v_{i+1}}(x_{i+1},x_i)(x_i-x_{i+2})^{-1}(s_is_{i+1}-s_i) &
 \text{ if }v_{i+1}=v_{i+2}\not=v_i \\
(x_i-x_{i+1})^{-1}\left(P_{v_{i+1},v_{i+2}}(x_{i+2},x_i)s_is_{i+1}-
P_{v_{i+1},v_{i+2}}(x_{i+2},x_{i+1})s_{i+1} \right)  &
 \text{ if }v_i=v_{i+2}\not=v_{i+1} \\
P_{v_i,v_{i+2}}(x_{i+1},x_i)P_{v_{i+1},v_{i+2}}(x_{i+2},x_i)s_is_{i+1}&
 \text{ if }v_{i+2}\not\in\{v_i,v_{i+1}\}.
\end{cases}$$

One checks then easily that the defining relations of $H_n(Q)$ hold.

\smallskip
It is easy to check that the image of $S$ in $\End_{k^{I^n}}(M)$ is
linearly independent over $k$: this can be done by extending
scalars to $k(x_1,\ldots,x_n)$ and relating by a triangular base
change the bases $\{\partial_w\}_{w\in\GS_n}$ and $\{w\}_{w\in\GS_n}$ of 
$\End_{k(x_1,\ldots,x_n)^{\GS_n}}(k(x_1,\ldots,x_n))$ as a left
$k(x_1,\ldots,x_n)$-module. It follows that 
the canonical map $H_n(Q)\to \End_{k^{I^n}}(M)$ is injective and that
$S$ is a basis of $H_n(Q)$ over $k$. Also, the image of $S$ in
$\gr H_n(Q)$ lifts to a basis of $A'$.

\smallskip
Note finally that given $(Q_{ij})$, we can construct a matrix $(P_{ij})$ as
follows: for $i\not=j$, choose an order. When $i<j$, we define $P_{ij}=Q_{ij}$ 
and $P_{ji}=1$.
\end{proof}

Let $\nu\in I^n$. We put $1_{|\nu|}=\sum_{\sigma\in\GS_n/\Stab(\nu)}
1_{\sigma(\nu)}$ and $H(|\nu|)=1_{|\nu|}H_n(Q)1_{|\nu|}$.
The next proposition shows that $H(|\nu|)$ doesn't have ``non-obvious''
quotients that remain torsion-free over polynomials.

Let $n_i=\#\{r|\nu_r=i\}$ and let
$\gamma_i:\{1,\ldots,n_i\}\to\{1,\ldots,n\}$ be the
increasing map such that $\nu_{\gamma_i(r)}=i$ 
for all $r$.

 For every $\sigma\in\GS_n$, we have a morphism
of algebras
$$\bigotimes_i k[X_{i,1},\ldots,X_{i,n_i}]\to 
1_{\sigma(\nu)}H_n(\Gamma)1_{\sigma(\nu)},\
X_{i,r}\mapsto x_{i,\sigma(\gamma_i(r))}.$$

The diagonal map restricts to an algebra
morphism $\bigotimes_i k[X_{i,1},\ldots,X_{i,n_i}]^{\GS_{n_i}}\to
Z(H(|\nu|))$
(this is actually an isomorphism by \cite[Proposition 3.9]{Rou3}).

\begin{prop}
\label{pr:ideals}
Let $J$ be a non-zero two-sided ideal of $H(|\nu|)$. Then,
there is a non-zero
$P\in \bigotimes_i k[X_{i,1},\ldots,X_{i,n_i}]^{\GS_{n_i}}$
 such that $P\cdot \id_M\in J$.
\end{prop}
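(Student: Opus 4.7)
The plan is to invert the center $Z := \bigotimes_i k[X_{i,1},\ldots,X_{i,n_i}]^{\GS_{n_i}}$ of $H(|\nu|)$ and to recognize $A := H(|\nu|) \otimes_Z K$, where $K := \operatorname{Frac}(Z)$, as a split matrix algebra; any nonzero two-sided ideal of such an algebra is the whole algebra, and a clearing-denominators argument then produces a nonzero element of $J \cap Z$, which is the desired $P$. For transparency I assume $k$ is a domain (so that $Z$ is a domain); the general case reduces to this by base change to residue fields at primes of $k$.

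The first step is to establish that $H(|\nu|)$ is free of rank $(n!)^2$ over $Z$ and that $M_{|\nu|} := \bigoplus_{\sigma \in \GS_n/\Stab(\nu)} k[x_1,\ldots,x_n]$ (the relevant summand of the faithful polynomial representation from Proposition \ref{pr:polrep}) is free of rank $n!$ over $Z$. The PBW basis from Proposition \ref{pr:PBW} and Lemma \ref{le:generatingsetquiverHecke} makes $1_{\sigma'(\nu)} H_n(Q) 1_{\sigma(\nu)}$ a free left $k[x_1,\ldots,x_n]$-module of rank $|\Stab(\nu)|$, with basis indexed by $w \in \sigma' \Stab(\nu) \sigma^{-1}$. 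The image of $Z$ acting on the $\sigma'(\nu)$-component is the invariant subring $k[x_1,\ldots,x_n]^{\Stab(\sigma'(\nu))}$, over which $k[x_1,\ldots,x_n]$ is free of rank $|\Stab(\nu)|$. Summing over the $|\GS_n/\Stab(\nu)|^2$ pairs of cosets yields the rank for $H(|\nu|)$; the same analysis gives the rank for $M_{|\nu|}$. In particular both are $Z$-torsion-free, so embed into their localizations at $K$.

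The key input is that $A \simeq \End_K(V)$ for $V := M_{|\nu|} \otimes_Z K$, a $K$-vector space of dimension $n!$. The polynomial representation makes $V$ an $A$-module; this action is faithful because any $b/s \in A$ (with $b \in H(|\nu|)$ and $s \in Z \setminus \{0\}$) annihilating $V$ has $b$ annihilate $M_{|\nu|} \hookrightarrow V$, forcing $b = 0$ by Proposition \ref{pr:polrep}. Hence $A \hookrightarrow \End_K(V)$, and since $\dim_K A = (n!)^2 = \dim_K \End_K(V)$ by the ranks above, this injection is an isomorphism. In particular $A$ is isomorphic to $M_{n!}(K)$ and is simple.

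To conclude, $JK := J \otimes_Z K$ is a nonzero ideal of $A$ (nonzero because $H(|\nu|) \hookrightarrow A$ by torsion-freeness), so by simplicity $JK = A$. Writing $1 = \sum_i a_i/s_i$ with $a_i \in J$ and $s_i \in Z \setminus \{0\}$, and setting $s := \prod_i s_i$, the identity $s = \sum_i (s/s_i) a_i$ holds in $A$; both sides lie in $H(|\nu|)$, so by injectivity $H(|\nu|) \hookrightarrow A$ it holds in $H(|\nu|)$. Since each $s/s_i \in Z$ is central and $J$ is two-sided, $(s/s_i) a_i \in J$, giving $s \in J \cap Z$. Setting $P := s$, the element $P \cdot \id_M$ lies in $J$ under the central embedding. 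The main obstacle will be the dimension count making $A = \End_K(V)$: it requires careful PBW bookkeeping together with freeness of $k[x_1,\ldots,x_n]$ over Young-subgroup invariants, exactly as in Proposition \ref{pr:MoritanilHecke}. Once this is in place, the rest is a routine localize-and-descend argument.
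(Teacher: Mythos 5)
Your approach is genuinely different from the paper's, and over a domain it is correct and arguably cleaner. You localize the central subalgebra $Z=\bigotimes_i k[X_{i,1},\ldots,X_{i,n_i}]^{\GS_{n_i}}$ at the whole nonzero set to get a field $K=\operatorname{Frac}(Z)$, identify $H(|\nu|)\otimes_Z K$ with $\End_K(V)$ by a dimension count ($H(|\nu|)$ free of rank $(n!)^2$ over $Z$, $M_{|\nu|}$ free of rank $n!$, faithfulness from Proposition~\ref{pr:polrep}), and finish by simplicity of matrix algebras and clearing denominators. The paper instead localizes only at the non-zero-divisors $x_a-x_b$ for equal-color indices, recognizes the result inside a (localized) wreath product $B$, and invokes the fact that $B$ is Morita-equivalent to its commutative center, so that two-sided ideals of $B$ are controlled by $Z(B)$. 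The paper's version keeps everything functorial in $k$, at the cost of running through the Galois/Morita theory of wreath products; yours trades that for a direct matrix-algebra recognition, at the cost of needing a field of fractions.

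The gap is precisely that trade-off. The statement makes no hypothesis on the commutative base ring $k$, and your reduction of the general case by ``base change to residue fields at primes of $k$'' does not work when $k$ is non-reduced. For instance, take $k=\BZ[\epsilon]/(\epsilon^2)$ and $J=\epsilon\,H(|\nu|)$: this $J$ is a nonzero two-sided ideal, but $J\otimes_k\kappa(\mathfrak p)=0$ for every prime $\mathfrak p$ of $k$, so no residue field sees $J$. (The conclusion is nevertheless true here with $P=\epsilon\in Z$.) Even for reduced but non-integral $k$ you would first need an argument to split into factors. The paper's device of inverting only the \emph{non-zero-divisors} $x_a-x_b$ is exactly what makes the localization step harmless over an arbitrary $k$. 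To repair your proof you could either restrict the statement to $k$ a domain (which covers the paper's later applications over $\bar\BQ_l$), or replace $\operatorname{Frac}(Z)$ by the localization of $Z$ at the multiplicative set generated by the same-color $x_a-x_b$ and redo the matrix-algebra identification over that ring, which is essentially the paper's route.
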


\begin{proof}
Consider the algebra $A=k^{I}[x]\wr \GS_n=\left(\bigoplus_{\mu\in I^n}
k[x_1,\ldots,x_n]1_\mu\right)\rtimes\GS_n$. Let 
$\CO=\bigoplus_{\mu} k[x_1,\ldots,x_n][\{(x_i-x_j)^{-1}\}]_{i\not=j,
\mu_i=\mu_j}1_\mu$ and $A'=\CO\otimes_{(k^{I}[x])^{\otimes n}}A$.
Let $B=\bigoplus_{\sigma,\sigma'\in\GS_n}1_{\sigma(\nu)}A'1_{\sigma'(\nu)}$.
The algebra $B$ is Morita-equivalent
to its center which is isomorphic to
$$\bigotimes_{i\in I} \left(k[x_1,\ldots,x_{n_i}]
[\{(x_a-x_b)^{-1}\}]_{a\not=b}\right)^{\GS_{n_i}}.$$
It
follows that any non-zero ideal of $B$ intersects non-trivially $Z(B)$.
The proposition follows now from the embedding of
$\End_{\CB}(M)$ in $B$ and the properties of that embedding
 \cite[Proposition 3.12]{Rou3}.
\end{proof}

\subsubsection{Cartan matrices and quivers}
\label{se:Cartan}
A generalized Cartan matrix is a matrix
$C=(a_{ij})_{i,j\in I}$ such that $a_{ii}=2$, $a_{ij}\le 0$ for $i\not=j$ and
$a_{ij}=0$ if and only if $a_{ji}=0$.
The matrix $C$ is symmetrizable if in addition
there is a diagonal matrix $D$ with diagonal coefficients in $\BZ_{>0}$
such that $DC$ is symmetric.

Consider now a graph with vertex set $I$ and with no loop. We
define a symmetric Cartan matrix by putting $a_{ij}=-m_{ij}$ for $i\not=j$,
where $m_{ij}$ is the number of edges between $i$ and $j$.
This correspondence gives a bijection between graphs with no loops and
symmetric Cartan matrices.

\smallskip
Let $\Gamma$ be a quiver (with no loops) and $I$ its vertex set. This
defines a graph by forgetting the orientation, hence a symmetric
Cartan matrix.
Let $d_{ij}$ be the number of arrows $i\to j$, so that
$m_{ij}=d_{ij}+d_{ji}$.
Let $Q_{ij}=(-1)^{d_{ij}}(u-v)^{m_{ij}}$ for $i\not=j$.
We put $H_n(\Gamma)=H_n(Q)$, where $k=\BZ$.

This is a graded algebra, with $\deg x_i=2$ and $\deg \tau_{v,i}=
-a_{v_i,v_{i+1}}$.

\smallskip
 Let $v,v'\in I^n$. Let
$n_i=\#\{r|v_r=i\}$ and
$n'_i=\#\{r|v'_r=i\}$. We have $1_{v'}H_n(\Gamma)1_v=0$ unless
$n_i=n'_i$ for all $i$. Assume this holds.
Define $\gamma_i,\gamma'_i:\{1,\ldots,n_i\}\to\{1,\ldots,n\}$ to be the
increasing maps such that $v_{\gamma_i(r)}=v'_{\gamma'_i(r)}=i$ 
for all $r$. Let $W=\prod_i \GS_{n_i}$.

\begin{lemma}
\label{le:grdim}
The left (resp. right) $\BZ[x_1,\ldots,x_n]$-module
$1_{v'}H_n(\Gamma)1_v$ is free: there is a graded $\BZ$-module $L$ such that
\begin{itemize}
\item
$1_{v'}H_n(\Gamma)1_v\simeq \BZ[x_1,\ldots,x_n]\otimes_\BZ L$ as
graded left $\BZ[x_1,\ldots,x_n]$-modules and
\item
$1_{v'}H_n(\Gamma)1_v\simeq L\otimes_\BZ\BZ[x_1,\ldots,x_n]$ as graded
right $\BZ[x_1,\ldots,x_n]$-modules.
\end{itemize}
We have 
$$\mathrm{grdim} L=\sum_{w\in W} q^{\frac{1}{2}\sum_{s,t\in I}
a_{st}\cdot \#\{a,b| \gamma_s(a)<\gamma_t(b)\text{ and }
\gamma_s'(w_s(a))> \gamma_t'(w_t(b))\}}.$$
\end{lemma}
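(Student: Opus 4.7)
The plan is to deduce both bullets of the lemma from the Poincar\'e--Birkhoff--Witt theorem (Proposition \ref{pr:PBW}), and then to read off the graded dimension of $L$ via a wiring-diagram analysis of $\deg\tau_\sigma$.

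First, PBW gives a $\BZ$-basis $\{\tau_\sigma P 1_v\}$ of $H_n(\Gamma)$, with $\sigma\in\GS_n$ (one reduced expression fixed for each $\sigma$), $P$ a monomial in $\BZ[x_1,\ldots,x_n]$, and $v\in I^n$. Using the commutation relation $\tau_{i,v}x_{a,v}-x_{s_i(a),s_i(v)}\tau_{i,v}\in\{-1_v,0,+1_v\}$, an induction on $l(\sigma)$ shows that $\{P\tau_\sigma 1_v\}$ is equally a $\BZ$-basis of $H_n(\Gamma)$: each move of an $x$ past a $\tau$ either produces the analogous expression with $x$'s on the left or creates a correction supported on strictly shorter words of $\GS_n$. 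The condition that $\tau_\sigma P 1_v$ lies in $1_{v'}H_n(\Gamma)1_v$ reduces to $\sigma(v)=v'$. Setting
\[
L=\bigoplus_{\sigma\in\GS_n,\,\sigma(v)=v'}\BZ\cdot\tau_\sigma 1_v
\]
with the induced $\BZ$-grading, the two multiplication maps $L\otimes_\BZ\BZ[x_1,\ldots,x_n]\to 1_{v'}H_n(\Gamma)1_v$ and $\BZ[x_1,\ldots,x_n]\otimes_\BZ L\to 1_{v'}H_n(\Gamma)1_v$ are graded isomorphisms of right, respectively left, $\BZ[x_1,\ldots,x_n]$-modules. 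This establishes the first two bullets.

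It remains to compute $\mathrm{grdim}(L)$. The set $\{\sigma\in\GS_n:\sigma(v)=v'\}$ is in bijection with $W=\prod_i\GS_{n_i}$ via $w=(w_i)\mapsto\sigma_w$, where $\sigma_w\in\GS_n$ is uniquely determined by $\sigma_w(\gamma_i(a))=\gamma'_i(w_i(a))$ for all $i$ and $a$; this is well-defined because the images of the $\gamma_i$'s partition $\{1,\ldots,n\}$ (and similarly for $\gamma'_i$). To compute $\deg\tau_{\sigma_w}$, I read off a reduced word $\sigma_w=s_{i_r}\cdots s_{i_1}$ as a wiring diagram with $n$ strands joining top positions (carrying labels $v_1,\ldots,v_n$) to bottom positions (carrying $v'_1,\ldots,v'_n$). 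In a reduced word the strands originating at top positions $p<q$ cross exactly once precisely when $(p,q)$ is an inversion of $\sigma_w$, and at their unique crossing they carry labels $v_p$ and $v_q$; each crossing contributes $-a_{v_p,v_q}$ to the total degree. Therefore
\[
\deg\tau_{\sigma_w}=-\sum_{p<q,\,\sigma_w(p)>\sigma_w(q)} a_{v_p,v_q}.
\]
Re-indexing $p=\gamma_s(a)$, $q=\gamma_t(b)$ with $s=v_p$, $t=v_q$ (so $\sigma_w(p)=\gamma'_s(w_s(a))$ and $\sigma_w(q)=\gamma'_t(w_t(b))$) and splitting the sum according to $(s,t)\in I\times I$ rewrites this as the exponent in the statement; taking $q^{\deg/2}$ and summing over $w\in W$ yields $\mathrm{grdim}(L)$.

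The main technical hurdle is really the wiring-diagram identity for $\deg\tau_{\sigma_w}$: it rests on the claim that $\sum_k -a_{v^{(k-1)}_{i_k},v^{(k-1)}_{i_k+1}}$ is independent of the reduced expression chosen for $\sigma_w$, which in turn follows from the braid relation in $H_n(\Gamma)$ preserving both total degree and the set of resolved inversions. Once that is in hand, the rest is bookkeeping, and the freeness halves of the lemma come essentially for free from PBW.
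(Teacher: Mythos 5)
Your proof follows the same route as the paper's: PBW gives the freeness (the paper cites Proposition~\ref{pr:PBW} ``and its right counterpart,'' which is exactly what your triangular base-change argument supplies); your $w\mapsto\sigma_w$ is the paper's bijection $W\iso E=\{\sigma\in\GS_n\mid\sigma(v)=v'\}$, $w\mapsto w\circ g$, with $g$ the minimal-length element (so $g(\gamma_i(a))=\gamma'_i(a)$); and the degree count is precisely ``the variant of Proposition~\ref{pr:lengthequalsr}'' the paper alludes to, which you usefully spell out as the wiring-diagram/inversion identity. For the well-definedness of $\sum_k -a_{v^{(k-1)}_{i_k},v^{(k-1)}_{i_k+1}}$ across reduced words, the cleanest justification is Matsumoto's theorem together with the observation that braid and commutation moves preserve the multiset of unordered crossing pairs; your appeal to ``the braid relation in $H_n(\Gamma)$'' is a bit circular as phrased, but the fact is standard.

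There is, however, a sign you should not have glossed over. Your derivation gives $\deg\tau_{\sigma_w}=-\sum_{(p,q)\in R_{\sigma_w}}a_{v_p,v_q}$, so after re-indexing and halving the exponent of $q^{\deg\tau_{\sigma_w}/2}$ is $-\frac12\sum_{s,t\in I}a_{st}\cdot\#\{\cdots\}$, which is the \emph{negative} of the exponent printed in the lemma — it does not, as you assert, ``rewrite this as the exponent in the statement.'' Your sign is the correct one: for $|I|=1$ and $v=v'$ one has $1_vH_n(\Gamma)1_v={^0H}_n$ with $\deg T_w=-2\,l(w)$, so $\mathrm{grdim}\,L=\sum_{w\in\GS_n}q^{-l(w)}$, whereas the printed formula (with $a_{ss}=2$) would give $\sum_w q^{+l(w)}$. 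The lemma as printed thus has a sign typo (the coefficient should be $-a_{st}$), and you should say you are correcting it rather than silently claiming agreement with the statement.
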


\begin{proof}
The first part of the lemma follows from Proposition \ref{pr:PBW} (and its
right counterpart).
We have 
$$\mathrm{grdim}L=\sum_{\substack{(i_1,\ldots,i_r)\in J\\
s_{i_r}\cdots s_{i_1}(v)=v'}}
q^{\frac{1}{2}
\deg(\tau_{i_r,s_{i_{r-1}}\cdots s_{i_1}(v)}\cdots \tau_{i_1,v})}.$$
The set $E$ of elements $h\in\GS_n$ such that $h(v)=v'$ is a left (and right)
principal homogeneous set under the action of $W$, via the maps $\gamma_i$
and $\gamma'_i$. Denote by $g\in\GS_n$
the unique element of minimal length such that $g(v)=v'$. Then,
we obtain a bijection $W\iso E,\ w\mapsto w\circ g$. The formula
follows from a variant of Proposition \ref{pr:lengthequalsr}.
\end{proof}

\subsubsection{Relation with (degenerate) affine Hecke algebras}
We show in this section that quiver Hecke algebras associated with
quivers of type $A$ (finite or affine) are connected with (degenerate) affine
Hecke algebras for $\GL_n$. 

\smallskip

Let $\bar{R}=\BZ[q^{\pm 1}]=R[q^{\pm 1}]/(q_1-q,q_2+1)$.
Let $H_n$ be the {\em affine Hecke algebra}: it
is the $\bar{R}$-algebra
generated by elements $X_1,\ldots,X_n$, $T_1,\ldots,T_{n-1}$ where
the $X_i$ are invertible and the relations are
$$(T_i-q)(T_i+1)=0,\ T_iT_j=T_jT_i \text{ if }|i-j|>1,
T_iT_{i+1}T_i=T_{i+1}T_iT_{i+1}$$
$$X_iX_j=X_jX_i,\ T_i X_j=X_j T_i \text{ if } j-i\not=0,1 \text{ and }
T_iX_{i+1}-X_iT_i=(q-1)X_{i+1}.$$

As in the nil affine Hecke case (Proposition \ref{pr:faithfulnilaffineHecke}),
we have a decomposition as $\bar{R}$-modules
$$H_n=\bar{R}[X_1^{\pm 1},\ldots,X_n^{\pm 1}]\otimes_{\bar{R}} \bar{R}H_n^f$$
and $\bar{R}[X_1^{\pm 1},\ldots,X_n^{\pm 1}]$ and $\bar{R}H_n^f$ are
subalgebras.

\medskip
Let $k$ be field endowed with an $\bar{R}$-algebra structure and assume
$q 1_k\not=1_k$.

Given $M$ a $k$-vector space and $x$ an endomorphism of $M$, we say that
$x$ is {\em locally nilpotent} on $M$ if $M$ is the union of subspaces on
which $x$ is nilpotent or, equivalently, if for every $m\in M$, there
is $N>0$ such that $x^N m=0$.

\smallskip
Let $M$ be a $kH_n$-module. Given $v\in (k^\times)^n$, we denote by $M_v$ the
subspace of $M$
on which $X_i-v_i$ acts locally nilpotently for $1\le i\le n$.

\smallskip
Let $I$ be a subset of $k^\times$ and let $\CC_I$ be the category of
$kH_n$-modules $M$ such that $M=\bigoplus_{v\in I^n}M_v$. Note that
a finite dimensional $kH_n$-module is in $\CC_I$ if and only if the
eigenvalues of the $X_i$ acting on $M$ are in $I$.

\smallskip
We define a quiver $\Gamma$ with vertex set $I$ and arrows $i\to qi$.
Assume $q\not=1$. Denote by $e$ the multiplicative order of $q$. 
When $\Gamma$ is connected, the possible types of the underlying graph are
\begin{itemize}
\item $A_n$ if $|I|=n<e$.
\item $\tilde{A}_{e-1}$ if $|I|=e$.
\item $A_{\infty}$ if $I$ is bounded in one direction but not finite.
\item $A_{\infty,\infty}$ if $I$ is unbounded in both directions.
\end{itemize}

\smallskip
We denote by $\CC_\Gamma^0$ the category of $kH_n(\Gamma)$-modules
$M$ such that for every $v\in I^n$, then $x_{i,v}$ acts locally nilpotently
on $M_v$.

The proof of the following Theorem (and the next one)
relies on checking relations and writing
formulas for an inverse functor.
\begin{thm}[{\protect \cite[Theorem 3.20]{Rou3}}]
\label{th:equivaffineHeckeandquiver}
There is an equivalence of categories $\CC_\Gamma^0\iso \CC_I$ given by
$(M_v)_v\mapsto \bigoplus_v M_v$ and where
$X_i$ acts on $M_v$ by $(x_i+v_i)$ and
$T_i$ acts on $M_v$ by
\begin{itemize}
\item $(qx_i-x_{i+1})\tau_i+q$ if $v_i=v_{i+1}$ \\
\item $(q^{-1}x_i-x_{i+1})^{-1}(\tau_i+(1-q)x_{i+1})$ if $v_{i+1}=qv_i$ \\
\item $(v_ix_i-v_{i+1}x_{i+1})^{-1}\left(
(qv_ix_i-v_{i+1}x_{i+1})\tau_i+
(1-q)v_{i+1}x_{i+1}\right)$
otherwise.
\end{itemize}
\end{thm}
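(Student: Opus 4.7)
The plan is to construct an explicit inverse functor $G:\CC_I\to \CC_\Gamma^0$ and verify that $F$ (defined by the stated formulas on each $M_v$) and $G$ are mutually inverse. First I would check that $F$ is well-defined on objects. On each summand $M_v$, the operator $X_i = x_i + v_i$ is invertible: since $v_i\in k^\times$ and $x_i$ acts locally nilpotently, the truncated Neumann series $v_i^{-1}\sum_{k\ge 0}(-v_i^{-1}x_i)^k$ terminates on any vector and yields the inverse. The apparent denominators $(q^{-1}x_i-x_{i+1})$ in the case $v_{i+1}=qv_i$ and $(v_ix_i-v_{i+1}x_{i+1})$ in the generic case have nonzero constant terms in $k^\times$ (using $q1_k\ne 1_k$ and $v_i\ne v_{i+1}$ respectively) and are invertible on $M_v$ by the same local-nilpotence argument.

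Next, one checks the defining relations of $H_n$ on each $M_v$. The relations $X_iX_j=X_jX_i$, $T_iX_j=X_jT_i$ for $j\ne i,i+1$, and $T_iT_j=T_jT_i$ for $|i-j|>1$ are immediate from the formulas and the corresponding relations in $H_n(\Gamma)$. The quadratic relation $(T_i-q)(T_i+1)=0$ and the cross relation $T_iX_{i+1}-X_iT_i=(q-1)X_{i+1}$ are verified case-by-case using $\tau_i^2=Q_{v_i,v_{i+1}}(x_i,x_{i+1})$ (which is $0$ if $v_i=v_{i+1}$ and $(-1)^{d_{v_iv_{i+1}}}(x_i-x_{i+1})^{m_{v_iv_{i+1}}}$ otherwise) together with the commutation relation between $\tau_i$ and the $x_a$.

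The main obstacle is the braid relation $T_iT_{i+1}T_i=T_{i+1}T_iT_{i+1}$, which splits into several cases indexed by the multiset $\{v_i,v_{i+1},v_{i+2}\}$ and by the arrows of $\Gamma$ between these vertices. After substituting the formulas, each case reduces to a polynomial identity that can be checked in the faithful polynomial representation of Proposition \ref{pr:polrep}. The subtlest case is $v_i=v_{i+2}\ne v_{i+1}$: here the inhomogeneous term on the right-hand side of the affine Hecke braid relation, obtained after clearing denominators, matches precisely the extra term in the quiver Hecke braid relation coming from $R_Q\setminus R'_Q$. This case-by-case matching is the computational heart of the proof.

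For the inverse functor $G$: given $M\in\CC_I$, decompose $M=\bigoplus_{v\in I^n} M_v$ into generalized joint eigenspaces of $X_1,\ldots,X_n$ (available by the hypothesis on $\CC_I$), set $x_{i,v}=(X_i-v_i)|_{M_v}$ (locally nilpotent by definition), and define $\tau_{i,v}:M_v\to M_{s_i(v)}$ by formally solving the piecewise formula for $T_i$ for $\tau_i$, using invertibility of the scalars and denominators identified above. That these $\tau_{i,v}$ satisfy the quiver Hecke relations follows from the affine Hecke relations for $T_i$ via the same polynomial identities used in the forward direction. The identities $FG=\id_{\CC_I}$ and $GF=\id_{\CC_\Gamma^0}$ are then immediate, since the formulas defining $F$ and $G$ are algebraic inverses of one another on each $M_v$.
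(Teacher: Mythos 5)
Your overall strategy---define $F$ on each $M_v$ by the stated formulas, verify the affine Hecke relations case-by-case (with the braid relation in the case $v_i=v_{i+2}\ne v_{i+1}$ as the computational crux, where the correction term in $R_Q\setminus R'_Q$ must be matched), and then construct an explicit inverse $G$---is the same in outline as what the paper indicates, namely ``checking relations and writing formulas for an inverse functor.'' The forward verification you sketch is plausible.

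The gap is in the inverse direction. You propose to define $\tau_{i,v}$ by ``formally solving the piecewise formula for $T_i$ for $\tau_i$, using invertibility of the scalars and denominators.'' This breaks precisely in the case $v_i=v_{i+1}$, the one case with no denominator in the forward formula. There $T_i|_{M_v}=(qx_i-x_{i+1})\tau_i+q$, so solving would give $\tau_i=(qx_i-x_{i+1})^{-1}(T_i-q)$; but $x_i=X_i-v_i$ and $x_{i+1}=X_{i+1}-v_{i+1}$ are, by definition of $M_v$, locally nilpotent, so $qx_i-x_{i+1}$ has generalized eigenvalue $0$ and is not invertible. The same phenomenon undercuts your blanket claim that the ``apparent denominators'' have ``nonzero constant terms'': under the identification $x_j=X_j-v_j$, an expression like $q^{-1}x_i-x_{i+1}$ also has generalized eigenvalue $0$ on the relevant weight space, and $q 1_k\ne 1_k$ does not fix this. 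The equation $T_i-q=(qx_i-x_{i+1})\tau_i$ is a left-divisibility constraint on $T_i-q$, not a formula for $\tau_i$. To construct $G$ one must write down a genuinely different expression for $\tau_{i,v}$ in terms of $T_i$ and the $X_j$ (one that exhibits the required divisibility rather than inverting a locally nilpotent operator), and then one must actually \emph{verify} $FG=\id$ and $GF=\id$; this is not ``immediate'' because the two families of formulas are not mutual inverses in the naive algebraic sense. This elided construction and verification is the essential content of the proof, and without it your argument does not establish the equivalence.
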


\medskip
There is yet another version of affine Hecke algebras: the {\em degenerate 
affine Hecke algebra} $\bar{H}_n$, a $\BZ$-algebra generated by
$X_1,\ldots,X_n$ and $s_1,\ldots,s_{n-1}$ with relations
$$s_i^2=1,s_is_j=s_js_i \text{ if }|i-j|>1,\
s_is_{i+1}s_i=s_{i+1}s_is_{i+1}$$
$$X_iX_j=X_jX_i,\ s_i X_j=X_j s_i \text{ if } j-i\not=0,1 \text{ and }
s_iX_{i+1}-X_is_i=1.$$
We have a $\BZ$-module decomposition
$$\bar{H}_n=\BZ[X_1,\ldots,X_n]\otimes \BZ\GS_n$$
and $\BZ[X_1,\ldots,X_n]$ and $\BZ\GS_n$ are subalgebras.

Let $k$ be a field.

Let $I$ a subset of $k$. We denote by
$\Gamma$ the quiver with set of vertices $I$ and with arrows $i\to i+1$.

When $\Gamma$ is connected, the possible types of the underlying graph are
\begin{itemize}
\item $A_n$ if $|I|=n$ and $k$ has characteristic $0$ or $p>n$.
\item $\tilde{A}_{p-1}$ if $|I|=p$ is the characteristic of $k$.
\item $A_{\infty}$ if $I$ is bounded in one direction but not finite.
\item $A_{\infty,\infty}$ if $I$ is unbounded in both directions.
\end{itemize}

Given $M$ a $k\bar{H}_n$-module and $v\in k^n$, we denote by $M_v$ the
subspace of $M$ where $X_i-v_i$ acts locally nilpotently for all $i$.
Let $I$ be a subset of $k^\times$ and let $\bar{\CC}_I$ be the category of
$k\bar{H}_n$-modules $M$ such that $M=\bigoplus_{v\in I^n}M_v$.

\begin{thm}[{\protect \cite[Theorem 3.17]{Rou3}}]
\label{th:equivdegenaffineHeckeandquiver}
There is an equivalence of categories $\CC_\Gamma^0\iso \bar{\CC}_I$ given by
$(M_v)_v\mapsto \bigoplus_v M_v$ and where
 $X_i$ acts on $M_v$ by $(x_i+v_i)$ and
$s_i$ acts on $M_v$ by
\begin{itemize}
\item $(x_i-x_{i+1}+1)\tau_i+1$ if $v_i=v_{i+1}$ \\
\item $(x_i-x_{i+1}-1)^{-1}(\tau_i-1)$
if $v_{i+1}=v_i+1$ \\
\item $(x_i-x_{i+1}+v_{i+1}-v_i+1)
(x_i-x_{i+1}+v_{i+1}-v_i)^{-1}(\tau_i-1)+1$
otherwise.
\end{itemize}
\end{thm}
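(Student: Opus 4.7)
The plan is to construct the functor $F: \CC_\Gamma^0 \to \bar{\CC}_I$ given by the stated formulas, build an inverse functor $G$ by algebraically solving those formulas, and verify that each direction respects the defining relations of the relevant algebra.

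For $F$: given $(M_v)_v$, set $M = \bigoplus_v M_v$ with $X_i$ and $s_i$ defined by the formulas in the statement. Since $x_{j,v}$ acts locally nilpotently on $M_v$ and $X_j$ acts there as $x_{j,v} + v_j$, the operator $X_j - v_j$ is locally nilpotent on $M_v$; hence $M_v$ is the generalized $v$-eigenspace and $M \in \bar{\CC}_I$. The relations $X_iX_j=X_jX_i$ and $s_iX_j=X_js_i$ for $j \neq i, i+1$ translate directly from $x_ix_j=x_jx_i$ and the corresponding quiver Hecke commutations. The deformed commutation $s_iX_{i+1}-X_is_i=1$ unfolds in each of the three cases for $(v_i,v_{i+1})$ into a short identity obtained from $\tau_{i,v}x_{i+1,v}-x_{i,s_iv}\tau_{i,v} \in \{0, \pm 1_v\}$ (the defining relation of $R_Q$). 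The far commutation and the braid relation among the $s_i$'s reduce, case by case on $(v_i,v_{i+1},v_{i+2})$, to polynomial identities that are cleanest to verify inside the faithful polynomial realization of Proposition \ref{pr:polrep}. The inverses $(x_i-x_{i+1}-1)^{-1}$ and $(x_i-x_{i+1}+v_{i+1}-v_i)^{-1}$ appearing in the formulas are well-defined on $M$ because each is of the form ``nonzero scalar plus locally nilpotent''.

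For $G$: given $M \in \bar{\CC}_I$, decompose $M = \bigoplus_v M_v$ into generalized $(X_1,\ldots,X_n)$-eigenspaces and put $x_{j,v} = X_j - v_j$, automatically locally nilpotent on $M_v$. Define $\tau_{i,v} : M_v \to M_{s_iv}$ by inverting the three formulas for $s_i$; e.g.\ in the case $v_i = v_{i+1}$ one obtains $\tau_{i,v} = (x_i-x_{i+1}+1)^{-1}(s_i-1)$, with the inverse well-defined as $1 + \text{(locally nilpotent)}$, and analogous expressions in the other two cases. Verifying the relations $R_Q$ of $H_n(\Gamma)$ with $Q_{ij}(u,v)=(-1)^{d_{ij}}(u-v)^{m_{ij}}$ is then a mechanical check: the $x$-commutations and the mixed $\tau x$-commutations translate directly from the $\bar H_n$ relations, while the quadratic relation $\tau_{i,s_iv}\tau_{i,v}=Q_{v_i,v_{i+1}}(x_i,x_{i+1})$ follows from $s_i^2=1$ by multiplying the three possible shapes of $\tau$ pairwise.

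The main obstacle is the length-three braid relation for the $\tau$'s, which must be verified sub-case by sub-case according to the pattern $(v_i,v_{i+1},v_{i+2})$ and the orientation of the arrows among these vertices in $\Gamma$; one must check that $\tau_{i+1,s_is_{i+1}v}\tau_{i,s_{i+1}v}\tau_{i+1,v} - \tau_{i,s_{i+1}s_iv}\tau_{i+1,s_iv}\tau_{i,v}$ equals either $0$ or the prescribed divided difference from $R_Q$. Via the polynomial realization this becomes finitely many polynomial identities in $k[x_i,x_{i+1},x_{i+2}]$, each confirmed after clearing the invertible denominators. Once both sets of relations are checked, $F$ and $G$ are mutually inverse: on objects this is the canonicity of the generalized eigenspace decomposition, and on morphisms it is immediate since any $\bar H_n$- or $H_n(\Gamma)$-linear map commutes with all generators, and each side's generators are rational expressions in the other's.
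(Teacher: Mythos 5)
Your plan---define $F$ by the stated formulas, define $G$ by algebraically inverting them (the denominators being invertible as ``nonzero scalar plus locally nilpotent''), and verify each algebra's relations by a case analysis using the faithful polynomial representation---is exactly the strategy the paper indicates for this result, which it introduces with ``\emph{The proof of the following Theorem (and the next one) relies on checking relations and writing formulas for an inverse functor}'' and otherwise delegates to \cite[Theorem 3.17]{Rou3}. Your proposal is correct and matches the paper's approach; the glossed-over computations (the braid relation sub-cases, and the quadratic relation from $s_i^2=1$) are precisely the mechanical verifications the paper leaves implicit.
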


\subsection{Half $2$-Kac-Moody algebras}
\subsubsection{Monoidal categories}
Recall that a {\em strict monoidal category} is a category equipped with a 
tensor product with a unit and
satisfying $(V\otimes W)\otimes X=V\otimes (W\otimes X)$.
We will write $XY$ for $X\otimes Y$. We will also denote by $X$ the
identity endomorphism of an object $X$.

We have a canonical map $\Hom(V_1,V_2)\times \Hom(W_1,W_2)\to
\Hom(V_1\otimes W_1,V_2\otimes W_2)$. Given $f:V_1\to V_2$ and
$g:W_1\to W_2$, there is a commutative diagram
$$\xymatrix{
V_1\otimes W_1\ar[rr]^{f\otimes W_1} \ar[d]_{V_1\otimes g}
 \ar[drr]^{f\otimes g} &&
V_2\otimes W_1 \ar[d]^{V_2\otimes g} \\
V_1\otimes W_2\ar[rr]_{f\otimes W_2} &&
V_2\otimes W_2
}$$

\smallskip
A typical example of a monoidal category is the category of vector
spaces over a field (or more generally, modules over a commutative
algebra).

A more interesting example is the following. Let $\CA$ be a category
and $\CC$ be the category of functors $\CA\to\CA$. Then, $\CC$ is
a strict monoidal category where the product is the composition
of functors. The $\Hom$-spaces are given by natural transformations
of functors.

\smallskip
Let $\CC$ be an additive category. We define the {\em idempotent completion}
$\CC^i$ of $\CC$ as the additive category obtained from $\CC$ by adding
images of idempotents: its objects are pairs $(M,e)$ where $M$ is
an object of $\CC$ and $e$ is an idempotent of $\End_{\CC}(M)$.
We put $\Hom_{\CC^i}((M,e),(N,f))=f\Hom_\CC(M,N)e$.
We have a fully faithful functor $\CC\to\CC^i$ given by
$M\mapsto (M,\id_M)$.
If $A$ is an algebra, the idempotent completion of the category of
free $A$-modules is equivalent to the category of projective $A$-modules.

We say that $\CC$ is {\em idempotent complete} if the canonical functor 
$\CC\to\CC^i$ is an equivalence, \ie, if every idempotent has an image.

\subsubsection{Symmetric groups}
Let us start with an example of monoidal category based on symmetric
groups. We define $\CC$ to be the strict monoidal $\BZ$-linear category
generated by an object $E$ and by an arrow $s:E^2\to E^2$ subject to the
relations
$$s^2=E^2,\ (Es)\circ(sE)\circ(Es)=(sE)\circ(Es)\circ(sE).$$

This category is easy to describe: its objects are direct sums of copies of
$E^n$ for various $n$'s. We have
$\Hom(E^n,E^m)=0$ if $m\not=n$ and $\End(E^n)=\BZ[\GS_n]$:
this is given by $s_i\mapsto E^{n-i-1}sE^{i-1}$.

Note that, as a monoidal category, $\CC$ is equipped with maps
$\End(E^m)\times\End(E^n)\to \End(E^{m+n})$. They correspond to
the embedding $\GS_m\times \GS_n\hookrightarrow \GS_{m+n}$,
where $\GS_n$ goes to $\GS\{1,\ldots,n\}$ and
$\GS_m$ goes to $\GS\{n+1,\ldots,n+m\}$.

\begin{rem}
The category $\CC$ can also be defined as the free symmetric
monoidal $\BZ$-linear category on one object $E$.
\end{rem}

\subsubsection{Half}
\label{se:defhalf}
Let us follow now \cite[\S 4.1.1]{Rou3}.
Let $C=(a_{ij})_{i,j\in I}$ be a generalized Cartan matrix.
We construct a matrix $Q$ satisfying the conditions of
\S \ref{se:defquiverHecke}.

Let $\{t_{i,j,r,s}\}$ be a family of indeterminates with
$i\not=j\in I$, $0\le r<-a_{ij}$ and $0\le s<-a_{ji}$ and such that
$t_{j,i,s,r}=t_{i,j,r,s}$. Let $\{t_{ij}\}_{i\not=j}$ be a family of
indeterminates with $t_{ij}=t_{ji}$ if $a_{ij}=0$.

Let $k=k^C=\BZ[\{t_{i,j,r,s}\}\cup\{t_{ij}^{\pm 1}\}]$. Given $i,j\in I$,
we put
$$Q_{ij}=\begin{cases}
0 & \text{ if } i=j \\
t_{ij} &\text{ if }i\not=j \text{ and }a_{ij}=0\\
t_{ij}u^{-a_{ij}}+\sum_{\substack{0\le r<-a_{ij}\\ 0\le s<-a_{ji}}}
t_{i,j,r,s} u^r v^s + t_{ji}v^{-a_{ji}} & \text{ if }i\not=j
\text{ and }a_{ij}\not=0.
\end{cases}$$

We define $\CB=\CB(C)$ as
the strict monoidal $k$-linear category generated by objects
$F_s \text{ for } s\in I$
and by arrows
$$x_s:F_s\to F_s \text{ and }
\tau_{st}:F_sF_t\to F_tF_s \text{ for }s,t\in I$$
with relations

\begin{enumerate}
\item 
\label{en:half1}
$\tau_{st}\circ \tau_{ts}=Q_{st}(F_tx_s,x_tF_s)$
\item
\label{en:half2}
$\tau_{tu}F_s\circ F_t \tau_{su}\circ \tau_{st}F_u-
F_u \tau_{st}\circ \tau_{su}F_t\circ F_s \tau_{tu}=
\begin{cases}
\frac{Q_{st}(x_sF_t,F_sx_t)F_s-F_sQ_{st}(F_tx_s,x_tF_s)}{x_sF_tF_s-F_sF_tx_s}F_s
 & \text{ if } s=u\vspace{0.2cm}\\
0  & \text{ otherwise.}
\end{cases}$
\item
\label{en:half3}
$\tau_{st}\circ x_s F_t-F_t x_s\circ \tau_{st}=\delta_{st}$
\item
\label{en:half4}
$\tau_{st}\circ F_sx_t-x_tF_s\circ \tau_{st}=-\delta_{st}$
\end{enumerate}

Given $n\ge 0$, we denote by $\CB_n$ the full subcategory of $\CB$ whose
objects are direct sums of objects of the form
$F_{v_n}\cdots F_{v_1}$ for $v_1,\ldots,v_n\in I$. We have
$\CB=\bigoplus_n \CB_n$ (as $k$-linear categories).

\smallskip
The relations state that the maps $x_s$ and $\tau_{st}$ give an action of
the quiver Hecke algebra associated with $Q$ on sum of
products of $F_s$.
More precisely, we have an isomorphism of algebras
\begin{align*}
H_n(Q)&\iso \bigoplus_{v,v'\in I^n} \Hom_{\CB}(F_{v_n}\cdots
F_{v_1},F_{v'_n}\cdots F_{v'_1})\\
1_v&\mapsto \id_{F_{v_n}\cdots F_{v_1}} \\
x_{i,v}&\mapsto F_{v_n}\cdots F_{v_{i+1}}x_{v_i}F_{v_{i-1}}\cdots F_{v_1}\\
\tau_{i,v}&\mapsto F_{v_n}\cdots F_{v_{i+2}}\tau_{v_{i+1},v_i}
F_{v_{i-1}}\cdots F_{v_1}
\end{align*}

Note that divided powers can be defined in $\CB^i$, following
\cite[\S 5.2.1]{ChRou}. We have an isomorphism $k\otimes_\BZ{^0H}_n\iso
\End(E_i^n)$. The endomorphism of $E_i^n$ induced by $T_{w[1,n]}$ has an
image $E_i^{(n)}\in\CB^i$. We have
$E_i^n\simeq E_i^{(n)}\otimes_k k^{n!}$.

\medskip
Assume now $C$ is symmetrizable, with $D=\diag(d_i)_{i\in I}$.
We define $k^{\mathrm{gr}}=k^C/(\{t_{i,j,r,s}\}_{d_ir+d_js\not=-d_ia_{ij}})$
and
$\CB^{\mathrm{gr}}=\CB\otimes_k k^{\mathrm{gr}}$.
This category can be graded by setting
$\deg x_s=2d_s$,  $\deg\tau_{st}=d_sa_{st}$ and $\deg \eps_{s,\lambda}=
d_s(1-\langle\lambda,\alpha_s^\vee\rangle)$.

\smallskip
Let $\Gamma$ be quiver with no loops whose underlying graph corresponds
to $C$ (which is then symmetric).
Let $J$ be the ideal of $k$ generated by the coefficients of the
polynomials
$Q_{ij}-(-1)^{d_{ij}}(u-v)^{-a_{ij}}$ (cf \S \ref{se:Cartan}) and
$k^\Gamma=k^C/J=\BZ$.
We put $\CB(\Gamma)=\CB\otimes_k k^\Gamma$. This is again graded, as above.

\smallskip
Similar monoidal categories have been constructed independently by Khovanov
and Lauda \cite{KhoLau1,KhoLau2}, who have shown that, over a field, they
provide a categorification of $U_\BZ(\Gn^-)$ and $U_{\BZ[q^{\pm 1/2}]}(\Gn^-)$,
where $\Gn^-$ is the half Kac-Moody algebra associated to $C$
(cf \S \ref{se:defKM}).

\medskip
Given an additive category $\CC$, we denote by $K_0(\CC)$ the
Grothendieck group of $\CC$. Assume $\CC$ is enriched in graded
$\BZ$-modules. We can define a new additive category
$\CC\,\mgr$ with objects families $\{M_i\}_{i\in\BZ}$ of objects of
$\CC$ with $M_i=0$ for almost all $i$. We put
$\Hom_{\CC\,\mgr}(\{M_i\},\{N_i\})=\bigoplus_{m,n}\Hom_{\CC}(M_m,N_n)_{n-m}$.
The category $\CC\,\mgr$ is $\BZ$-graded, \ie, it is equipped with an
automorphism $T$ given by $T(\{M_i\})_n=M_{n+1}$. The action of $T$
on $K_0(\CC\,\mgr)$ endows it with a structure of
$\BZ[q^{\pm 1/2}]$-module, where $q^{1/2}$ acts by $[T]$.

\begin{thm}[{\protect\cite[Corollary 7 and Theorem 8]{KhoLau2}}]
\label{th:categB}
Given $s\not=t\in I$, there are isomorphisms in $\CB^i$
$$\bigoplus_{i\text{ even}}F_s^{(-a_{st}-i+1)}F_tF_s^{(i)}\simeq
\bigoplus_{i\text{ odd}}F_s^{(-a_{st}-i+1)}F_tF_s^{(i)}.$$
Let $K$ be a field that is a $k$-algebra. The relations above provide
isomorphisms of rings
$$U_{\BZ}(\Gn^-)\iso K_0(\CB^i\otimes_k K).$$
When $C$ is symmetrizable and $K$ is in addition a $k^{\mathrm{gr}}$-algebra,
this gives an isomorphism of $\BZ[q^{\pm 1/2}]$-algebras
$$U_{\BZ[q^{\pm 1/2}]}(\Gn^-)\iso K_0((\CB(\Gamma)^i\otimes_k K)\,\mgr).$$
\end{thm}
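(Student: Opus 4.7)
The plan is to define a ring homomorphism
$$\phi:U_\BZ(\Gn^-)\to K_0(\CB^i\otimes_k K)$$
sending the Chevalley generator $f_s$ to $[F_s]$ and, more carefully, sending the divided power $f_s^{(n)}$ to $[F_s^{(n)}]$ (whose existence as an object of $\CB^i$ was recorded via the idempotent $T_{w[1,n]}/n!$ in $\End(F_s^n)\simeq k\otimes_\BZ{^0H}_n$, already constructed in the text above). To show $\phi$ is well-defined I must verify that the defining relations of $U_\BZ(\Gn^-)$ lift to $\CB^i$: the quadratic relations $F_s^{(a)}F_s^{(b)}\simeq F_s^{(a+b)}\otimes k^{\binom{a+b}{a}}$ follow directly from the ${^0H}_n^f$-module structure via Proposition \ref{pr:MoritanilHecke}, and the substantial relation is the quantum Serre relation, which is exactly the displayed isomorphism in $\CB^i$ to be proved.

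For the Serre isomorphism, I would fix $s\neq t$, set $m=-a_{st}$, and analyse $\End_\CB(F_s^{m+1}F_t)$. By the isomorphism in \S\ref{se:defhalf} this is a direct summand of $H_{m+2}(Q)$, and Proposition \ref{pr:PBW} together with the faithful polynomial representation of Proposition \ref{pr:polrep} let me produce a complete set of orthogonal idempotents corresponding to the sequences $(s,\ldots,s,t,s,\ldots,s)$ obtained by inserting $t$ in each of the $m+2$ positions. Concretely, I would use the diagram $F_s^{(m+1-r)}\tau_{st}^{\text{slide}}F_s^{(r)}F_t\to F_s^{(m+1-r)}F_tF_s^{(r)}$ built by repeatedly applying $\tau_{st}$ and $\tau_{ts}$; their composites evaluate to $Q_{st}(x,x')$, and the signed alternating sum over $r$ yields a polynomial that vanishes in the symmetrized quotient corresponding to the divided-power idempotents. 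This produces an isomorphism between the even-$r$ and odd-$r$ summands, not only a $K_0$-identity.

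Surjectivity of $\phi$ is then immediate: every object of $\CB^i$ is a summand of a sum of products $F_{v_n}\cdots F_{v_1}$, so its class lies in the subring generated by the $[F_s]$. For injectivity, my approach is a pairing argument. I would construct right adjoints (restriction functors) to each $F_s\otimes -$ using the symmetrizing structure on quiver Hecke algebras (the analogue of Proposition \ref{pr:symmnilaffine}, which induces left-right adjunctions via Lemma \ref{le:2outof3} and the discussion of \S\ref{se:defsymmetrizing}). Frobenius reciprocity then endows $K_0(\CB^i\otimes_k K)$ with a bialgebra structure and a non-degenerate symmetric bilinear form, which I would match with Lusztig's form on $U_\BZ(\Gn^-)$. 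Non-degeneracy is checked on the PBW-type generating set provided by Proposition \ref{pr:PBW}; in the graded case, Lemma \ref{le:grdim} gives the graded dimension of each weight component of $K_0((\CB(\Gamma)^i\otimes_k K)\mgr)$, matching the Poincar\'e series of the corresponding weight space of $U_{\BZ[q^{\pm 1/2}]}(\Gn^-)$.

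The main obstacle is the Serre isomorphism itself at the level of objects (not just classes): the cancellation between even and odd summands has to be realized by an explicit morphism in $\CB^i$, and its construction is an intricate combinatorial computation inside $H_{m+2}(Q)$ using the exact form of $Q_{st}$. A secondary subtlety is correct bookkeeping of grading shifts to obtain the isomorphism of $\BZ[q^{\pm 1/2}]$-algebras in the symmetrizable case; this is handled by the degree assignments $\deg x_s=2d_s$, $\deg\tau_{st}=d_sa_{st}$ fixed in \S\ref{se:defhalf}, together with the parity of the shift in $F_s^{(r)}F_tF_s^{(m+1-r)}$.
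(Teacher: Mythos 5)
This theorem is cited from Khovanov--Lauda (\cite[Corollary 7 and Theorem 8]{KhoLau2}); the paper gives no proof of it, so there is no in-text argument to compare against. That said, your sketch is recognizably an outline of the Khovanov--Lauda strategy (a ring map $\phi$ sending divided powers to divided-power objects $F_s^{(n)}$, the categorified Serre isomorphism for well-definedness, and a non-degenerate bilinear form for injectivity), and the overall plan is sound.

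Two substantive points need repair. First, your surjectivity argument is not valid as written: in the idempotent completion $\CB^i$, the class of a direct summand of $F_{v_n}\cdots F_{v_1}$ need not lie in the $\BZ$-span of the classes $[F_{v_n}\cdots F_{v_1}]$, since $K_0$ of an additive idempotent-complete category is free on indecomposables and knowing that $P$ is a summand of $M$ only gives $[M]=[P]+[M']$, not an expression of $[P]$ in terms of classes of products of $F_s$'s. Surjectivity instead has to come out of the same pairing/rank count you invoke for injectivity (or, in the graded case, from a Nakayama-type argument showing that the indecomposable graded projectives over the quiver Hecke algebras are, up to shift, cut out by diagram idempotents that are already linear combinations of products of divided powers). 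Second, your description of the Serre isomorphism as a ``signed alternating sum'' of slide maps whose composites vanish ``in the symmetrized quotient'' is both the crux and the least precise step: the actual construction in \cite{KhoLau2} exhibits explicit thick-calculus morphisms $F_s^{(-a_{st}-i+1)}F_tF_s^{(i)}\to F_s^{(-a_{st}-i)}F_tF_s^{(i+1)}$ and their near-inverses and verifies matrix identities to get an honest isomorphism of objects, not merely an identity of classes. You flag this yourself as the main obstacle; it is, and without it the whole argument does not get off the ground. Everything else (the use of Proposition \ref{pr:MoritanilHecke} for $F_s^{(a)}F_s^{(b)}$, adjunctions via the symmetrizing form, matching graded dimensions against Lemma \ref{le:grdim} and Lusztig's form) is appropriate and consistent with how the Khovanov--Lauda theorem is actually proved.
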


\section{$2$-Kac-Moody algebras}
\subsection{Kac-Moody algebras}
\label{se:KacMoody}
We recall some basic facts on Kac-Moody algebras and their representations
\cite{Kac} and quantum counterparts \cite{Lu1}.

\smallskip
Given an algebra $A$, we denote by $A\mMOD$ the category of $A$-modules,
by $A\mMod$ the category of finitely generated $A$-modules and by
$A\mproj$ the category of finitely generated projective $A$-modules.

\subsubsection{Data}
\label{se:defKM}
Let $C=(a_{ij})_{i,j\in I}$ be a generalized Cartan matrix.
Let 
$$(X,Y,\langle-,-\rangle,\{\alpha_i\}_{i\in I},\{\alpha^\vee_i\}_{i\in I})$$
be a root datum of type $C$, \ie,
\begin{itemize}
\item $X$ and $Y$ are finitely generated free abelian groups and
$\langle-,-\rangle:X\times Y\to\BZ$ is a perfect pairing
\item $\{\alpha_i\}$ is a linearly independent set in $X$ and
$\{\alpha_i^\vee\}$ is a linearly independent set in $Y$
\item $\langle\alpha_j,\alpha_i^\vee\rangle=a_{ij}$.
\end{itemize}

Associated with this data, there is a Kac-Moody algebra $\Gg$ (over $\BC$)
generated
by elements $e_i$, $f_i$ and $h_\zeta$ for $i\in I$ and $\zeta\in Y$.
When $C$ is symmetrizable, there
is also a quantum enveloping algebra $U_q(\Gg)$.
When $C$ corresponds to a Dynkin
diagram, we recover complex reductive Lie algebras and their
corresponding quantum groups.

\smallskip
We denote by $\Gn^-$ the Lie subalgebra of $\Gg$ generated by the $f_i$,
$i\in I$. We denote by $U_{\BZ}(\Gn^-)$ the subring of $U(\Gn^-)$
generated by the elements $\frac{f_i^n}{n!}$ for $i\in I$ and $n\ge 1$.
Assume $C$ is symmetrizable. The quantum algebra $U_q(\Gn^-)$
is the $\BC(q^{1/2})$-algebra with generators $f_i$, $i\in I$, and
relations
$$\sum_{r=0}^{1-a_{ij}}(-1)^r\left[\begin{matrix}1-a_{ij}\\r\end{matrix}
\right]_q
f_i^rf_jf_i^{1-a_{ij}-r}=0$$
for $i\not=j$, 
where
$$[n]_q=\frac{q^{n/2}-q^{-n/2}}{q^{1/2}-q^{-1/2}},\
 \left[\begin{matrix} n\\ r\end{matrix}\right]_q=
\frac{[n]_q!}{[r]_q![n-r]_q!} \text{ and }[n]_q!=[2]_q\cdots [n]_q.$$

In the symmetrizable case, we denote by
$U_{\BZ[q^{\pm 1/2}]}(\Gn^-)$ the $\BZ[q^{\pm 1/2}]$-subalgebra
 of $U_q(\Gn^-)$ generated by the
$\frac{f_i^n}{[n]_q}$.

\begin{example}
Let $C=(2)$, $X=Y=\BZ$, $\alpha=2$ and $\alpha^\vee=1$. Then
$\Gg=\Gsl_2(\BC)$.
\end{example}

\subsubsection{Integrable representations}
Let $M$ be a representation of $\Gg$. We say that it is {\em integrable}
if 
\begin{itemize}
\item $M=\bigoplus_{\lambda\in X}M_\lambda$, where $M_\lambda=
\{m\in M | h_\zeta\cdot m=\langle\lambda,\zeta\rangle m,\ \forall\zeta\in Y\}$
\item
$e_i$ and $f_i$ are locally nilpotent on $M$ for every $i$, \ie,
given $m\in M$, there is an integer $n$ such that $f^l(m)=e^l(m)=0$ for
$l\ge n$.
\end{itemize}

\smallskip
Define a quiver with vertex set $X$ and arrows
$e_i=e_{i,\lambda}:\lambda\to\lambda+\alpha_i$ and
$f_i=f_{i,\lambda}:\lambda\to\lambda-\alpha_i$. Let $A(\Gg)$ be its quiver
algebra over $\BC$, subject to the relations
$$e_{i,\lambda-\alpha_j}f_{j,\lambda}-f_{j,\lambda+\alpha_i}
e_{i,\lambda}=\delta_{ij}\langle\lambda,\alpha_i^\vee\rangle 1_{\lambda}.$$

\smallskip
The following proposition has a proof based on the representation theory of
$\Gsl_2$.
It says that the Serre relations are automatically satisfied
under integrability conditions.

\begin{prop}
The functor $M\mapsto (M_\lambda)_{\lambda\in X}$ is an equivalence from the
category of integrable representations
of $\Gg$ to the category of representations of the quiver algebra
$A(\Gg)$ on which the $e_i$'s and $f_i$'s are locally nilpotent.
\end{prop}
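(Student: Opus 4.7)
The plan is to construct a quasi-inverse functor and verify the Kac-Moody relations. The forward functor $M\mapsto(M_\lambda)_\lambda$ is well-defined: if $M$ is integrable, then by definition $M=\bigoplus_\lambda M_\lambda$, and the operators $e_i,f_i$ restrict to arrows $e_{i,\lambda}:M_\lambda\to M_{\lambda+\alpha_i}$ and $f_{i,\lambda}:M_\lambda\to M_{\lambda-\alpha_i}$. The $U(\Gg)$-relation $[e_i,f_j]=\delta_{ij}h_{\alpha_i^\vee}$ translates into the defining relation of $A(\Gg)$, since $h_{\alpha_i^\vee}$ acts on $M_\lambda$ as the scalar $\langle\lambda,\alpha_i^\vee\rangle$; local nilpotence of $e_i,f_i$ is inherited trivially.

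For the inverse functor, given an $A(\Gg)$-representation $(M_\lambda)_\lambda$ with each $e_i,f_i$ locally nilpotent, I put $M=\bigoplus_\lambda M_\lambda$, let $e_i,f_i$ act by summing the $e_{i,\lambda},f_{i,\lambda}$, and let $h_\zeta$ act on $M_\lambda$ by the scalar $\langle\lambda,\zeta\rangle$. The relations $[h_\zeta,h_{\zeta'}]=0$, $[h_\zeta,e_i]=\langle\alpha_i,\zeta\rangle e_i$, and $[h_\zeta,f_i]=-\langle\alpha_i,\zeta\rangle f_i$ are immediate from the weight grading of the quiver $A(\Gg)$, and $[e_i,f_j]=\delta_{ij}h_{\alpha_i^\vee}$ is exactly the $A(\Gg)$-relation. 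Once the Serre relations are established (see below), $M$ becomes a $\Gg$-module, integrable by construction. The two functors are then quasi-inverse tautologically, with morphisms on either side corresponding to families of weight-preserving linear maps intertwining the $e_i,f_i$.

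The substantive step, and the main obstacle, is to verify the Serre relations $(\ad e_i)^{1-a_{ij}}(e_j)=0$ and $(\ad f_i)^{1-a_{ij}}(f_j)=0$ as endomorphisms of $M$, using only local nilpotence together with the Chevalley relations already established. This is the classical theorem of Gabber and Kac. For each $i\in I$, the triple $e_i,f_i,h_{\alpha_i^\vee}$ generates a copy of $\Gsl_2$ acting on $M$, and since $\langle\lambda,\alpha_i^\vee\rangle\in\BZ$ and $e_i,f_i$ are locally nilpotent, $M$ decomposes as a direct sum of finite-dimensional irreducible $\Gsl_2(i)$-modules. The argument now combines two ingredients: first, a direct computation in the ``pre-Kac-Moody'' algebra (the Lie algebra with only Chevalley relations) showing that for every $k\neq i$, the element $\theta_{ij}^-=(\ad f_i)^{1-a_{ij}}(f_j)$ satisfies $[e_k,\theta_{ij}^-]=0$; second, an $\Gsl_2(i)$-string argument which, given a weight vector $v\in M_\lambda$, analyzes the finite-dimensional $\Gsl_2(i)$-submodule generated by $f_j v$ (whose $h_{\alpha_i^\vee}$-weight is $\langle\lambda,\alpha_i^\vee\rangle-a_{ji}$) and uses the standard structure of integrable $\Gsl_2$-modules together with the commutation identity to conclude $\theta_{ij}^-v=0$. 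The argument for $\theta_{ij}^+$ is entirely symmetric, and the essential surjectivity and full faithfulness of the functor follow once both Serre relations hold on $M$.
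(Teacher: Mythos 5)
The paper gives no detailed proof here—it only remarks that the proposition ``has a proof based on the representation theory of $\Gsl_2$'' and that the Serre relations are automatic under integrability—and your Gabber--Kac-style argument is exactly that argument, so the approach matches. Two small points to tighten: with the paper's convention $\langle\alpha_j,\alpha_i^\vee\rangle=a_{ij}$ the $h_{\alpha_i^\vee}$-weight of $f_jv$ is $\langle\lambda,\alpha_i^\vee\rangle-a_{ij}$ (not $-a_{ji}$); and the $\Gsl_2(i)$-string step is cleanest not by looking at the $\Gsl_2(i)$-module generated by $f_jv$ (which need not contain the vectors $f_jf_i^kv$ appearing in $\theta_{ij}^-v$) but by taking $N$ to be the finite-dimensional $\Gsl_2(i)$-submodule generated by $v$, letting $N'$ be one containing $f_jN$, and viewing $f_j$ as an $\ad e_i$-highest weight vector of weight $-a_{ij}\ge 0$ in the finite-dimensional $\Gsl_2(i)$-module $\Hom(N,N')$, whence $(\ad f_i)^{1-a_{ij}}(f_j)=0$ there.
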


We define a category $\CU(\Gg)$ with $\BC$-linear $\Hom$-spaces.
Its set of objects
is $X$. The morphisms are generated by $e_i:\lambda\to\lambda+\alpha_i$,
$f_i:\lambda\to\lambda-\alpha_i$, subject to the relations
$$[e_i,f_j]_{|\lambda}=\delta_{ij}\langle\lambda,\alpha_i^\vee\rangle.$$

Then, a $\BC$-linear functor $\CU(\Gg)\to\BC\mMOD$
is the same as a representation of the quiver algebra $A(\Gg)$.

\begin{rem}
A representation of a $k$-algebra $A$ is the same as a functor
compatible with the $k$-linear structure
$\CC\to k\mMOD$, where $\CC$ is the category with one object
$\ast$ and $\End(\ast)=A$.
\end{rem}

\subsubsection{Quantum counterpart}
Assume $C$ is symmetrizable.
One can proceed similarly and show that the category of
integrable representations of $U_q(\Gg)$ is equivalent to the category
of representations of the quiver algebra $A_q(\Gg)$, defined as the
$\BC(\sqrt{q})$-algebra with the same quiver as above and relations
\begin{equation}
\label{re:quantumKM}
e_{i,\lambda-\alpha_j}f_{j,\lambda}-f_{j,\lambda+\alpha_i}
e_{i,\lambda}=\delta_{ij}[\langle\lambda,\alpha_i^\vee\rangle]_q 1_{\lambda}.
\end{equation}

\subsubsection{Category $\CO^{\mathrm{int}}$}
\label{se:Oint}
Define the set of integral dominant weights
$X^+=\{\lambda\in X | \langle\lambda,\alpha_i^\vee\rangle\ge 0 \ \forall i\in I\}$.
We denote by $\CO_\Gg^{\mathrm{int}}$ the category of
integrable highest weight modules $M$ of $\Gg$, \ie, $\Gg$-modules such that
\begin{itemize}
\item $M=\bigoplus_{\lambda\in X}M_\lambda$ and $\dim M_\lambda<\infty$ 
for all $\lambda$
\item $e_i$ and $f_i$ are locally nilpotent on $M$ for $i\in I$
\item there is a finite set $K\subset X$ such that 
$\{\lambda\in X | M_\lambda{\not=}0\}\subset\bigcup_{\mu\in K}
(\mu+\sum_{i\in I}\BZ_{\le 0}\alpha_i)$.
\end{itemize}

\smallskip
Let $\Gb$ be the Lie subalgebra of $\Gg$ generated by the elements
$e_i$, $i\in I$ and $h_\zeta$, $\zeta\in Y$.
Let $\lambda\in X^+$. We denote by $\BC v_\lambda$ the
one-dimensional representation of $\Gb$ where $e_i$ acts by $0$ and
$h_i$ acts by $\langle\lambda,\alpha_i^\vee\rangle$. We define the
Verma module
$$\Delta(\lambda)=\Ind_{U(\Gb)}^{U(\Gg)}\BC v_\lambda\simeq U(\Gn^-)$$
and 
$$L^{\max}(\lambda)=\Delta(\lambda)/\Bigl(\sum_{i\in I}U(\Gn^-)
f_i^{\langle\lambda,\alpha_i^\vee\rangle+1}v_\lambda\Bigr).$$
This is the largest quotient of $\Delta(\lambda)$ in $\CO_\Gg^{\mathrm{int}}$.
The Verma module
$\Delta(\lambda)$ has a unique simple quotient $L(\lambda)$ and there is
a surjection $L^{\max}(\lambda)\twoheadrightarrow L(\lambda)$. When $C$ is
symmetrizable, $L^{\max}(\lambda)=L(\lambda)$.
The set $\{L(\lambda)\}_{\lambda\in X^+}$ is a complete set of representatives
of isomorphism classes of simple integrable highest weight modules. These
are the finite-dimensional simple $U(\Gg)$-modules when $\Gg$ is 
finite-dimensional.
When $C$ is symmetrizable, 
integrable highest weight modules are semi-simple.

\smallskip
Note that $L^{\max}(\lambda)$ is characterized by the fact that it represents
the functor 
$$\CO_\Gg^{\mathrm{int}}\to \BC\mMOD,\  
M\mapsto M_\lambda^{\mathrm{hw}}:=\{m\in M_\lambda\ |\ e_i(m)=0\ \forall i\in I\},$$
\ie,
$$\Hom_\Gg(L^{\max}(\lambda),M)\iso M_\lambda^{\mathrm{hw}},\ f\mapsto f(v_\lambda).$$

\subsubsection{$\hat{\Gsl}_n$}
Consider the complex simple Lie algebra $\Gsl_n(\BC)$. This is a Kac-Moody
algebra associated with the following graph:
$$A_{n-1}=
\xy
(-10,0) *\cir<3pt>{}="0" *+!U(1.5){\scriptstyle 1},
(0,0)   *\cir<3pt>{}="1" *+!U(1.5){\scriptstyle 2},
(10,0)  *\cir<3pt>{}="2" *+!U(1.5){\scriptstyle 3},
(30,0)  *\cir<3pt>{}="3" *+!U(1.5){\scriptstyle n-2},
(40,0)  *\cir<3pt>{}="4" *+!U(1.5){\scriptstyle n-1},
"0";"1" **\dir{-},
"1";"2" **\dir{-},
"2";"3" **\dir{.},
"3";"4" **\dir{-},
\endxy$$
The Lie algebra $\Gsl_n(\BC)$ is generated
by the elements $e_i=e_{i,i+1}$, $f_i=e_{i+1,i}$ and $h_i=e_{i,i}-e_{i+1,i+1}$
for $1\le i\le n-1$. We have
$Y_{\Gsl_n}=\BZ h_1\oplus\cdots\oplus\BZ h_{n-1}$,
$X_{\Gsl_n}=\BZ \Lambda_1\oplus\cdots\oplus\BZ\Lambda_{n-1}$ where
$\langle\Lambda_i,h_j\rangle=\delta_{ij}$. We have
$\alpha_i=e_{i,i}^*-e_{i+1,i+1}^*$ and $\alpha_i^\vee=h_i$ for $1\le i\le n-1$.

\medskip
Consider now the Lie algebra $\Gg_l=\Gsl_n(\BC)\otimes_\BC \BC[t,t^{-1}]$.
We define
$\Gg'=\Gg_l\oplus\BC c$, a central extension of $\Gg_l$ given by
$$[a\otimes t^m,b\otimes t^n]=[a,b]\otimes t^{m+n}+
m\delta_{m,-n}\mathrm{tr}(ab)c \text{ for }a,b\in\Gsl_n(\BC).$$
Finally, we define 
$\Gg=\Gg'\oplus\BC d$. We endow it with a Lie
algebra structure where 
\begin{itemize}
\item $\Gg'$ is a Lie subalgebra
\item $[d,at^n]=nat^n$ for $a\in \Gsl_n(\BC)$
\item $[d,c]=0$.
\end{itemize}
Note that $\Gg'=[\Gg,\Gg]$.

Let $e_0=e_{n,1}\otimes t$, $f_0=e_{1,n}\otimes t^{-1}$
and $h_0=(e_{n,n}-e_{1,1})+c$.

Let $Y=\BZ h_0\oplus\cdots\oplus\BZ h_{n-1}\oplus\BZ d=Y_{\Gsl_n}\oplus
\BZ c\oplus\BZ d$ and
$X=\Hom_\BZ(Y,\BZ)$, with
$(\Lambda_0,\ldots,\Lambda_{n-1},\partial)$ the basis dual to
$(h_0,\ldots,h_{n-1},d)$.

Let $\alpha_0=\partial-(\alpha_1+\cdots+\alpha_{n-1})$
and $\alpha_0^\vee=h_0$.
This provides an identification of $\Gg$ with a Kac-Moody algebra of graph
$$\hat{A}_{n-1}=
\xy 
(-10,0) *\cir<3pt>{}="0"*+!U(1.5){\scriptstyle 1},
(0,0)   *\cir<3pt>{}="1"*+!U(1.5){\scriptstyle 2},
(15,10)  *\cir<3pt>{}="2"*+!D(1.5){\scriptstyle 0},
(30,0)  *\cir<3pt>{}="3"*+!U(1.5){\scriptstyle n-2},
(40,0)  *\cir<3pt>{}="4"*+!U(1.5){\scriptstyle n-1},
"0";"1" **\dir{-},
"1";"3" **\dir{.},
"3";"4" **\dir{-},
"0";"2" **\dir{-},
"4";"2" **\dir{-},
\endxy
$$

\subsubsection{Fock spaces}
\label{se:Fock}
We recall in this section some classical results on representations of
symmetric groups and related Hecke algebras, and the relation with Fock spaces
\cite{Ari,Gr,Kl,Ma}.

\smallskip
Let $\CF$ be the complex vector space with basis all partitions. Let
$p\ge 2$ be an integer.

Let us construct an action of $\hat{\Gsl}_p$ on $\CF$. Let
$\lambda$ be a partition. We consider the associated Young diagram, whose
boxes we number modulo $p$. We define
$e_i(\lambda)$ (resp. $f_i(\lambda)$)
as the sum of the partitions obtained by removing (resp.
adding) an $i$-node to $\lambda$.
We put $d(\lambda)=N_0(\lambda)\lambda$, where $N_0(\lambda)$ is the
number of $0$-nodes of $\lambda$.

\begin{example}
Let us consider for example $p=3$ and $\lambda=(3,1)$.
$$\xymatrix{
0 && {\protect \young(0120,2)}+{\protect\young(012,20)}\\
0 & {\protect \young(012,2)} \ar[ul]_-{e_0} \ar[l]_-{e_1} \ar[dl]_-{e_2}
 \ar[ur]^{f_0} \ar[r]^{f_1} \ar[dr]^{f_2} & {\protect \young(012,2,1)} \\
{\protect\young(01,2)}+{\protect\young(012)} && 0
}$$
\end{example}

The construction above defines an action of $\hat{\Gsl}_p$ on $\CF$, where
$c$ acts by $1$ (\ie, the level is $1$). This defines an object
of $\CO^{\mathrm{int}}$.

\medskip
Let $K_0(\BQ\GS_n)$ be the Grothendieck group of the category 
$\BQ\GS_n\mMod$. It is a
free abelian group with basis the isomorphism classes of irreducible
representations of $\GS_n$ over $\BQ$.

There is an isomorphism 
$$\CF\iso \bigoplus_{n\ge 0}\BC\otimes K_0(\BQ\GS_n)$$
It sends a partition $\lambda$ to the class of the corresponding simple
module $S_\lambda$ of $\BQ\GS_n$. We identify $\CF$ with the sum of
Grothendieck groups via this isomorphism.

We consider now a prime number $p$ and $\bar{\CF}=
\bigoplus_{n\ge 0}\BC\otimes K_0(\BF_p\GS_n)$.
The decomposition map defines a surjective morphism of abelian groups
$\mathrm{dec}:\CF\to\bar{\CF}$. There is a $\BZ\GS_n$-module  $\tS_\lambda$, free
over $\BZ$, such that $S_\lambda\simeq \BQ\otimes_\BZ \tS_\lambda$. We have
$\mathrm{dec}([S_\lambda])=[\BF_p\otimes_\BZ \tilde{S}_\lambda]$.

\smallskip
The action of $\hat{\Gsl}_p$ on $\CF$ stabilizes the kernel
of the decomposition map: this provides us with an action on
$\bar{\CF}$. The action of $\hat{\Gsl}_p$ on $\bar{\CF}$ is irreducible and
$\bar{\CF}\simeq L(\Lambda_0)$.

\medskip
Let $\CV=\bigoplus_{n\ge 0}\BF_p\GS_n\mMod$. Define
$E=\bigoplus_{n\ge 0}\Res_{\GS_n}^{\GS_{n+1}}$ and
$F=\bigoplus_{n\ge 0}\Ind_{\GS_n}^{\GS_{n+1}}$, two exact endofunctors
of $\CV$. They are left and right adjoint. We
have $\Ind_{\GS_n}^{\GS_{n+1}}=\BF_p\GS_{n+1}\otimes_{\BF_p\GS_n}-$.

Left multiplication by $(1,n+1)+\cdots+(n,n+1)$ defines an
endomorphism of the $(\BF_p\GS_{n+1},\BF_p\GS_n)$-bimodule 
$\BF_p\GS_{n+1}$, hence an endomorphism of the functor
$\Ind_{\GS_n}^{\GS_{n+1}}$. We denote by $X$ the corresponding
endomorphism of $F$.

\smallskip
Given $M$ an $\BF_p\GS_n$-module, all eigenvalues of $X$ acting
on $F(M)=\Ind_{\GS_n}^{\GS_{n+1}}M$ are in $\BF_p$. We denote by
$F_i(M)$ the generalized $i$-eigenspace of $X$, for $i\in\BF_p$. This
gives us a decomposition $F=\bigoplus_{i\in\BF_p}F_i$. Similarly,
we have a decomposition
$E=\bigoplus_{i\in\BF_p}E_i$, where $E_i$ is left and right adjoint to
$F_i$.

The following proposition shows that the action of $\hat{\Gsl}_n'$ on
$\bar{\CF}$ comes from the $i$-induction and $i$-restriction functors.

\begin{prop}
Given $M\in\BF_p\GS_n\mMod$, we
have $[E_i(M)]=e_i([M])$ and $[F_i(M)]=f_i([M])$.
\end{prop}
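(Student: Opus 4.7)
The plan is to reduce, by exactness, to checking the identities on a spanning set of $\bar{\CF}$ consisting of classes of reductions of Specht modules, and then to combine the classical Pieri branching rule with the computation of Jucys-Murphy eigenvalues on the resulting filtration. Since $F_i$ and $E_i$ are direct summands of the exact functors $F$ and $E$, they are themselves exact and induce well-defined additive endomorphisms of $\bar{\CF}$. As the decomposition map $\mathrm{dec}\colon\CF\to\bar{\CF}$ is surjective, the classes $[\BF_p\otimes_\BZ\tS_\lambda]=\mathrm{dec}([S_\lambda])$ span $\bar{\CF}$, so it suffices to verify $[F_i(\BF_p\otimes\tS_\lambda)]=f_i([\BF_p\otimes\tS_\lambda])$ for every $\lambda$, together with the analogous identity for $E_i$.

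Next, I would invoke the integral Pieri branching rule: $\Ind_{\GS_n}^{\GS_{n+1}}\tS_\lambda$ carries a $\BZ\GS_{n+1}$-module filtration whose successive quotients are the Specht modules $\tS_\mu$ for $\mu$ obtained from $\lambda$ by adding a single box. Reducing modulo $p$ yields a filtration of $F(\BF_p\otimes\tS_\lambda)$ with quotients $\BF_p\otimes\tS_\mu$. The key computation is then that the Jucys-Murphy element $L_{n+1}=(1,n+1)+\cdots+(n,n+1)$---whose left action on $\BF_p\GS_{n+1}\otimes_{\BF_p\GS_n}(-)$ is exactly the operator $X$---acts on each Specht quotient $\tS_\mu$ by the scalar $c(\mu/\lambda)$, where $c(b)=j-i$ is the content of the box $b=(i,j)$. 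Over $\BQ$ this is immediate from the Young seminormal decomposition: the standard tableaux of shape $\mu$ whose restriction to $\{1,\ldots,n\}$ has shape $\lambda$ are simultaneous eigenvectors for $L_1,\ldots,L_{n+1}$, with $L_{n+1}$-eigenvalue $c(\mu/\lambda)$. Integrally, $L_{n+1}$ is block-upper-triangular with respect to the Specht filtration, with diagonal block $c(\mu/\lambda)\cdot\id$ on the quotient $\tS_\mu$, and this scalar action is preserved by reduction modulo $p$.

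Consequently $X$ acts on the quotient $\BF_p\otimes\tS_\mu$ of $F(\BF_p\otimes\tS_\lambda)$ by $c(\mu/\lambda)\bmod p$, so the Specht filtration is compatible with the generalized eigenspace decomposition $F=\bigoplus_i F_i$. Hence $F_i(\BF_p\otimes\tS_\lambda)$ inherits a filtration whose quotients are exactly those $\BF_p\otimes\tS_\mu$ with $c(\mu/\lambda)\equiv i\pmod p$, giving in $\bar{\CF}$
$$[F_i(\BF_p\otimes\tS_\lambda)]=\sum_{\mu:\, c(\mu/\lambda)\equiv i\,(p)}[\BF_p\otimes\tS_\mu]=f_i([\BF_p\otimes\tS_\lambda]),$$
the last equality being the definition of $f_i$ on $\CF$ together with its compatibility with $\mathrm{dec}$. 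The $E_i$ statement follows symmetrically from the analogous Specht filtration of $\Res_{\GS_n}^{\GS_{n+1}}\tS_\mu$, on which $L_{n+1}$ acts by $c(\mu/\lambda)$ on the summand indexed by $\lambda\subset\mu$.

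The hard part will be the content-as-eigenvalue claim at the integral level: one must argue that the $\BQ$-diagonalization of $L_{n+1}$ on $\BQ\otimes\Ind\tS_\lambda$ lifts to a Specht filtration over $\BZ$ on which $L_{n+1}$ acts by $c(\mu/\lambda)$ on each graded piece without any nilpotent error, so that reduction modulo $p$ produces exactly the generalized eigenspace decomposition $F=\bigoplus_i F_i$ rather than some coarser splitting.
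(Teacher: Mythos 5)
The paper states this proposition without proof, in the section recalling ``classical results on representations of symmetric groups, and the relation with Fock spaces,'' and points to \cite{Ari,Gr,Kl,Ma}; so there is no in-paper argument to compare with. Your proposal is the standard proof from that literature, and it is correct. The step you flag at the end as the ``hard part'' dissolves by purity of the Specht filtration and is also partly a red herring. Purity: each term $M_j$ of the integral Specht filtration of $\Ind_{\GS_n}^{\GS_{n+1}}\tS_\lambda$ is a saturated $\BZ$-sublattice, because the graded pieces $\tS_{\mu_l}$ are $\BZ$-free, so by downward induction the quotient $\Ind\tS_\lambda/M_j$ is free; hence $M_j=\Ind\tS_\lambda\cap(M_j\otimes\BQ)$. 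Since $\Ind S_\lambda=\bigoplus_\mu S_\mu$ is multiplicity-free over $\BQ$, $M_j\otimes\BQ$ is a sum of isotypic summands, and each $S_\mu$ is an $X$-eigenspace with eigenvalue $c(\mu/\lambda)$ by the seminormal-form computation you cite; thus $M_j\otimes\BQ$ is $X$-stable, so $X$ preserves $M_j$ over $\BZ$, and on the torsion-free quotient $M_j/M_{j-1}\hookrightarrow S_{\mu_j}$ it acts by the exact scalar $c(\mu_j/\lambda)$, with no nilpotent correction. The red herring: even if a nilpotent error were present on the graded pieces, the $K_0$-identity would still come out, because passing to the generalized $i$-eigenspace of the $\GS_{n+1}$-equivariant endomorphism $X$ is an exact operation, so the Specht filtration induces one on $F_i(\BF_p\otimes\tS_\lambda)$ whose graded pieces are $\BF_p\otimes\tS_{\mu_j}$ exactly when $c(\mu_j/\lambda)\equiv i\pmod p$ and $0$ otherwise. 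What genuinely needs the integrality argument is only that those eigenvalues are the correct contents modulo $p$, and that is what purity delivers.
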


We denote by $\hat{\GS}_p$ the affine symmetric group, a Coxeter group
of type $\hat{A}_{p-1}$.

\begin{prop}
The decomposition of $\bar{\CF}$ into weight spaces corresponds to
the decomposition into blocks. Two blocks are in the same orbit under
the adjoint action of $\hat{\GS}_p$ if and only if they have
the same defect.
\end{prop}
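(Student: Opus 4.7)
The plan is to identify the weight of each basis vector of $\bar{\CF}$ explicitly in terms of residue contents, then use the classical parametrization of $p$-blocks of $\GS_n$ by $p$-cores and of $\hat{\GS}_p$-orbits by the $\delta$-coefficient. First, I would compute the weight of $[D^\lambda] \in \bar{\CF}$. The $\hat{\Gsl}_p$-action induced from $i$-induction and $i$-restriction has been constructed so that, on the Fock space $\CF$, $f_i$ adds and $e_i$ removes $i$-nodes, and the level is $1$. A simple induction on $|\lambda|$ using the Chevalley relations from \eqref{re:quantumKM} shows
\[
\mathrm{wt}(\lambda) = \Lambda_0 - \sum_{i \in \BF_p} n_i(\lambda)\,\alpha_i,
\]
where $n_i(\lambda)$ is the number of boxes of residue $i$ in the Young diagram of $\lambda$.

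Next, I would invoke the Nakayama conjecture, which asserts that $D^\lambda$ and $D^\mu$ lie in the same $p$-block of $\BF_p\GS_n$ iff $\lambda$ and $\mu$ have the same $p$-core. Because each removable $p$-rim hook deletes exactly one box of each residue, two partitions have the same $p$-core iff they share the content vector $(n_i)_{i \in \BF_p}$; by the formula above, this is iff $\mathrm{wt}(\lambda) = \mathrm{wt}(\mu)$. This gives the first half of the proposition. For the second half, if $\lambda$ has $p$-core $\lambda^{(0)}$ and $p$-weight $w$, then $n_i(\lambda) = n_i(\lambda^{(0)}) + w$ for all $i$, so
\[
\mathrm{wt}(\lambda) = \mathrm{wt}(\lambda^{(0)}) - w\delta
\]
with $\delta = \sum_i \alpha_i$ the null root. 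The affine Weyl group $\hat{\GS}_p$ fixes $\delta$, so the $\delta$-coefficient $w$ is an orbit invariant; conversely, any two maximal weights $\mathrm{wt}(\lambda^{(0)})$ and $\mathrm{wt}(\mu^{(0)})$ of $L(\Lambda_0)$ are $\hat{\GS}_p$-conjugate, so two weights with the same $w$ are conjugate. Since the block defect is a strictly increasing function of $w$ (namely $v_p((pw)!)$), sharing defect is equivalent to sharing $w$, completing the argument.

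The main obstacle is proving that the maximal weights of $L(\Lambda_0)$ form a single $\hat{\GS}_p$-orbit in bijection with $p$-cores. This is classical but combinatorial: the stabilizer of $\Lambda_0$ in $\hat{\GS}_p$ is the finite parabolic $\GS_p$ generated by $s_1, \ldots, s_{p-1}$ (since $\langle \Lambda_0, \alpha_i^\vee\rangle = 0$ for $i \neq 0$), so the orbit is parametrized by $\hat{\GS}_p/\GS_p$, which is identified via the translation part with the coroot lattice of $\Gsl_p$. James' beta-number/abacus combinatorics then yields the bijection between this lattice and the set of all $p$-cores; under this bijection, the orbit element corresponding to a $p$-core $\lambda^{(0)}$ is precisely $\mathrm{wt}(\lambda^{(0)}) = \Lambda_0 - \sum_i n_i(\lambda^{(0)})\alpha_i$, matching our formula and closing the argument.
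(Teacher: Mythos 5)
The paper does not prove this proposition: it is stated among the classical facts being recalled at the start of \S\ref{se:Fock}, with references to Ariki, Grojnowski, Kleshchev and Mathas, so there is no in-text argument to compare against. Your proposal is correct and is the standard route. The weight formula $\mathrm{wt}(\lambda)=\Lambda_0-\sum_i n_i(\lambda)\alpha_i$ on $\CF$, Nakayama's conjecture, and the fact that a $p$-rim hook carries exactly one node of each residue (so $p$-cores are determined by their residue content, and $n_i(\lambda)=n_i(\lambda^{(0)})+w$) give the first half. For the second half, $\hat{\GS}_p$ preserves $\delta$, acts transitively on the maximal weights of $L(\Lambda_0)$ (the orbit of $\Lambda_0$, parametrised by $\hat{\GS}_p/\GS_p$ and hence by $p$-cores via the abacus), and the $\delta$-depth of $\mathrm{wt}(\lambda^{(0)})-w\delta$ is $w$, which determines and is determined by the defect of a weight-$w$ block (the $p$-adic valuation of $(pw)!$, strictly increasing in $w$). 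Two small points. First, the commutation relation you want for the weight computation on $\CF$ is the classical one displayed just before~\eqref{re:quantumKM}, not the quantum relation~\eqref{re:quantumKM} itself: your argument lives at $q=1$. Second, the very statement that each $[D^\lambda]$ is a weight vector of the indicated weight already uses that the weight spaces of $\bar{\CF}$ are the images under $\mathrm{dec}$ of those of $\CF$, combined with Nakayama's conjecture; since the first half of the proposition is essentially a reformulation of Nakayama, it is worth making explicit that Nakayama is the input being invoked rather than something the weight computation derives.
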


\medskip
Let $k$ be a field and $q\in k^\times$ be an element with finite order
$p\ge 2$ ($p$ needs not be a prime). The construction above extends with
$\BQ\GS_n$ replaced by 
$H_n^f\otimes_{k[q_1,q_2]}\bigl(k[q_2]/(q_2+1)\bigr)(q_1)$ and 
$\BF_p\GS_n$ replaced by $H_n^f\otimes_{k[q_1,q_2]}k[q_1,q_2]/(q_2+1,q_1-q)$.
This provides a realization of $L(\lambda_0)$ as 
$\bigoplus_{n\ge 0}\BC\otimes
K_0(H_n^f\otimes_{k[q_1,q_2]}k[q_1,q_2]/(q_2+1,q_1-q))$, via $i$-induction
and $i$-restriction functors.

\subsection{$2$-categories}

\subsubsection{Duals}
Let $\CC$ be a strict monoidal category and $V\in\CC$. A {\em right
dual} to $V$ is an object $V^*\in\CC$ together with maps
$\eps_V:V\otimes V^*\to 1$ and $\eta_V:1\to V^*\otimes V$ such that
the following two compositions are the identity maps:
$$V\xrightarrow{V\otimes\eta_V}V\otimes V^*\otimes V
\xrightarrow{\eps_V\otimes V}V \text{ and }
V^*\xrightarrow{\eta_V\otimes V^*}V^*\otimes V\otimes V^*
\xrightarrow{V^*\otimes\eps_V}V^*.$$

When $\CC$ is the category of finite dimensional vector spaces,
we obtain the usual dual.

When $\CC$ is the category of endofunctors of a category, the notion
of right dual coincides with that of right adjoint.

\subsubsection{$2$-categories}
A strict $2$-category $\FC$ is a category enriched in categories:
\ie, it is the data of a set of objects, and given
$M,N$ two objects, the data of a category $\CHom(M,N)$ together
with composition
functors $\CHom(L,M)\times\CHom(M,N)\to \CHom(L,N)$ satisfying
associativity conditions. We also require that $\CEnd(M)$ comes with an
identity object for the composition, which makes it into a strict monoidal
category.

We can think of this as the data of objects, $1$-arrows (the objects
of the categories $\CHom(M,N)$) and $2$-arrows (the arrows of the
categories $\CHom(M,N)$).

\smallskip
A strict monoidal category $\CC$ is the same data as a strict $2$-category
with one object $\ast$ and $\CEnd(\ast)=\CC$.

\smallskip
While the typical example of a category is the category of sets,
the typical example of a strict $2$-category is the $2$-category 
$\FC at$ of categories: its objects are categories, and
$\CHom(\CC,\CC')$ is the category of functors $\CC\to\CC'$.

A related example of a $2$-category is that $\FB imod$
of bimodules: its objects are
algebras over a fixed commutative ring $k$ and $\CHom(A,B)$
is the category of $(B,A)$-bimodules. Composition is given by
tensor product.

\subsubsection{$2$-Kac Moody algebras}
\label{se:def2km}
We come now to the definition of $2$-Kac-Moody algebras \cite[\S 4.1.3]{Rou3}.
Our aim now is to add $E_s$'s to $\CB$ and construct maps
which we will make formally invertible to enforce the relations
$[e_i,f_i]_{|\lambda}=\langle\lambda,\alpha_i^\vee\rangle$. In order to make
sense of this, we will need to add formally ``idempotents'' corresponding
to weights $\lambda\in X$: this requires moving from a monoidal
category to a $2$-category.

\smallskip
Let $C$ be a generalized Cartan matrix and let
$\CB'$ be the strict monoidal $k$-linear category obtained from
$\CB=\CB(C)$ by adding $E_s$
right dual to $F_s$ for every $s\in I$.
Define
$$\eps_s=\eps_{F_s}:F_s E_s\to \idun\text{ and } \eta_s=\eta_{F_s}:
\idun\to E_s F_s.$$

\smallskip
Consider now a root datum $(X,Y,\langle-,-\rangle,\{\alpha_i\}_{i\in I},
\{\alpha_i^\vee\}_{i\in I})$ of type $C$.

Consider the strict $2$-category $\FA'$ with set of objects $X$ and
where $\CHom(\lambda,\lambda')$ is
the full $k$-linear subcategory of $\CB'$
with objects direct sums of objects of the form
$E_{s_n}^{a_n}F_{t_n}^{b_n}\cdots E_{s_1}^{a_1}F_{t_1}^{b_1}$
where $a_l,b_l\ge 0$, $s_l,t_l\in I$ and
$\lambda'-\lambda=\sum_l (a_l\alpha_{s_l}-b_l\alpha_{t_l})$.

\smallskip
Let
$\FA=\FA(\Gg)$ be the $k$-linear strict
$2$-category deduced from $\FA'$ by inverting the following
$2$-arrows:
\begin{itemize}
\item  when $\langle\lambda,\alpha_s^\vee\rangle\le 0$,
$$\rho_{s,\lambda}=
\sigma_{ss}+\sum_{i=0}^{-\langle\lambda,\alpha_s^\vee\rangle+1}\eps_s\circ
(x_s^i E_s):
F_s E_s\idun_\lambda\to E_s F_s\idun_\lambda \oplus
\idun_\lambda^{-\langle\lambda,\alpha_s^\vee\rangle}$$
\item  when $\langle\lambda,\alpha_s^\vee\rangle\ge 0$,
$$\rho_{s,\lambda}=
\sigma_{ss}+\sum_{i=0}^{-1+\langle\lambda,\alpha_s^\vee\rangle}
(E_s x_s^i)\circ \eta_s:
F_s E_s\idun_\lambda\oplus
 \idun_\lambda^{\langle\lambda,\alpha_s^\vee\rangle}
\to E_s F_s\idun_\lambda$$
\item $\sigma_{st}:F_sE_t\idun_\lambda\to E_tF_s\idun_\lambda$
 for all $s\not=t$ and all $\lambda$
\end{itemize}

where we define
$$\sigma_{st}=(E_t F_s\eps_t)\circ (E_t \tau_{ts}E_s)\circ
(\eta_t F_sE_t):F_s E_t\to E_t F_s.$$

\smallskip
We put $\FA^{\mathrm{gr}}=\FA\otimes_k k^{\mathrm{gr}}$ as in
\S \ref{se:defhalf}.
The grading defined on $\CB^{\mathrm{gr}}$ extends to a grading on 
$\FA^{\mathrm{gr}}$ with
$\deg \eps_{s,\lambda}=
d_s(1+\langle\lambda,\alpha_s^\vee\rangle)$ and
$\deg \eta_{s,\lambda}=d_s(1-\langle\lambda,\alpha_s^\vee\rangle)$.

Given a quiver $\Gamma$ with no loops corresponding to $C$, we put
$\FA^\Gamma=\FA\otimes_k k^\Gamma$, a $2$-category with graded spaces
of $2$-arrows.

\begin{rem}
One shows easily by passing to the Grothendieck groups, the graded
category
$\FA^{\mathrm{gr}}\,\mgr$ gives rise to the relations (\ref{re:quantumKM}) of
\S \ref{se:KacMoody}. Khovanov and Lauda have constructed a related
$2$-category and shown that the canonical morphism from the integral form
of $U_q(\Gg)$ to the $K_0$ is an isomorphism in type $A_n$
\cite{Lau1,KhoLau3}.
\end{rem}

It can be shown that the isomorphisms 
$\rho_{i,\lambda}$ give rise to commutation isomorphisms between
$E_i^m$ and $F_i^n$ (cf \cite[Lemma 4.12]{Rou3} for a version with explicit
isomorphisms).

\begin{lemma}[{\protect \cite[[Lemma 4.12]{Rou3}}]
\label{le:commutationEF}
Let $m,n\ge 0$, $\lambda\in X$ and $i\in I$. Let $r=m-n+\langle \lambda,
\alpha_i^\vee\rangle$. There are isomorphisms
$$F_i^nE_i^m  \idun_{\lambda}\iso \bigoplus_{l=0}^{\min(m,n)}
E_i^{m-l}F_i^{n-l} \idun_{\lambda}\otimes_k
k^{\frac{m!n!}{(m-l)!(n-l)!}{-r\choose l}}
 \text{ if }r\le 0$$
$$\bigoplus_{l=0}^{\min(m,n)}
F_i^{n-l}E_i^{m-l} \idun_{\lambda}\otimes_k
k^{\frac{m!n!}{(m-l)!(n-l)!}{r\choose l}}
\iso E_i^mF_i^n\idun_{\lambda}
\text{ if }r\ge 0.$$
\end{lemma}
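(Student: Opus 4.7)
The plan is to prove the isomorphism by double induction, with the special case $n=1$ handled first and then bootstrapped to general $n$. I work throughout with the case $r\le 0$; the case $r\ge 0$ is analogous via the right-duality between $E_i$ and $F_i$ built into $\FA$ (indeed the two formulas are equivalent statements in the Grothendieck group, and one can even stay within a single formula once translated through divided powers).

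\emph{Special case $n=1$.} I proceed by induction on $m$. For $m=0$ the claim is trivial. For the inductive step, write $F_iE_i^m\idun_\lambda=(F_iE_i)E_i^{m-1}\idun_\lambda$ and apply the invertible $2$-arrow $\rho_{i,\mu}$ at the weight $\mu=\lambda+(m-1)\alpha_i$. Since $\langle\mu,\alpha_i^\vee\rangle=r+m-1$ can be of either sign, I pick the appropriate form of $\rho_{i,\mu}$ and obtain
$$F_iE_i^m\idun_\lambda\;\cong\;E_iF_iE_i^{m-1}\idun_\lambda\;\oplus\;E_i^{m-1}\idun_\lambda^{|\langle\mu,\alpha_i^\vee\rangle|}.$$
Iterating on the first summand, $F_iE_i^m\idun_\lambda\cong E_i^mF_i\idun_\lambda\oplus E_i^{m-1}\idun_\lambda^{c}$ with $c=\sum_{k=0}^{m-1}|\langle\lambda+k\alpha_i,\alpha_i^\vee\rangle|=m(-r)$, matching $\tfrac{m!\,1!}{(m-1)!\,0!}\binom{-r}{1}$.

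\emph{General case.} I induct on $\min(m,n)$. Factor $F_i^nE_i^m\idun_\lambda=F_i^{n-1}(F_iE_i^m)\idun_\lambda$, apply the $n=1$ case to the inner expression, and then apply the inductive hypothesis to each of the resulting terms (which have the shape $F_i^{n-1}E_i^{m'}\idun_{\lambda'}$ for $m'\in\{m-1,m\}$). Collecting summands of shape $E_i^{m-l}F_i^{n-l}\idun_\lambda$ yields a decomposition whose multiplicities are a nested sum of binomial terms indexed by the choice, at each inductive step, of which branch of $\rho$ is used.

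The main obstacle is the combinatorial verification that this nested sum equals the stated closed form $\tfrac{m!n!}{(m-l)!(n-l)!}\binom{-r}{l}$. This is cleanest after passing to the idempotent completion $\FA^i$ and the divided powers $E_i^{(m)},F_i^{(n)}$ built from the nil affine Hecke action (cf.\ \S\ref{se:defhalf}), for which the formula takes the cleaner shape
$$F_i^{(n)}E_i^{(m)}\idun_\lambda\;\cong\;\bigoplus_{l=0}^{\min(m,n)}E_i^{(m-l)}F_i^{(n-l)}\idun_\lambda^{\binom{-r}{l}}.$$
At this level the induction reduces to the Pascal identity $\binom{-r}{l}=\binom{-r-1}{l}+\binom{-r-1}{l-1}$: the first term arises from moving $F_i^{(n-1)}$ past $E_i^{(m)}$ by the inductive hypothesis, while the second comes from the correction contributed by $\rho_{i,\mu}$, whose weight computation yields exactly the shifted binomial index $-r-1$. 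The factors $\tfrac{m!n!}{(m-l)!(n-l)!}$ then appear automatically from the decompositions $E_i^m\cong(E_i^{(m)})^{m!}$ and $F_i^n\cong(F_i^{(n)})^{n!}$, converting the divided-power formula into the stated one.
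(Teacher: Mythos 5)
There is a genuine gap in the ``$n=1$'' base case, and it propagates to the general case. Pushing a single $F_i$ past $E_i^m\idun_\lambda$ you apply $\rho_{i,\mu_k}$ at weights $\mu_k=\lambda+k\alpha_i$, $k=m-1,\dots,0$, where $\langle\mu_k,\alpha_i^\vee\rangle=(r-m+1)+2k$; even under the hypothesis $r\le 0$ these pairings do not all share a sign. Already for $r=0$, $m=2$, $n=1$ (so $\langle\lambda,\alpha_i^\vee\rangle=-1$) one gets $\langle\mu_0,\alpha_i^\vee\rangle=-1$ and $\langle\mu_1,\alpha_i^\vee\rangle=1$. At any step where $\langle\mu_k,\alpha_i^\vee\rangle>0$, the isomorphism $\rho_{i,\mu_k}$ reads $F_iE_i\idun_{\mu_k}\oplus\idun_{\mu_k}^{\langle\mu_k,\alpha_i^\vee\rangle}\iso E_iF_i\idun_{\mu_k}$; it is \emph{not} the case that $F_iE_i\idun_{\mu_k}\cong E_iF_i\idun_{\mu_k}\oplus\idun_{\mu_k}^{|\cdots|}$, and one cannot ``subtract'' a direct summand in a $k$-linear additive category. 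Your count $c=\sum_{k=0}^{m-1}|\langle\lambda+k\alpha_i,\alpha_i^\vee\rangle|=m(-r)$ silently equates the absolute-value sum with the signed sum $-\sum_k\langle\mu_k,\alpha_i^\vee\rangle=m(-r)$; these differ precisely when some pairing is positive: for $r=0$, $m=2$ the first is $2$ and the second is $0$, whereas the lemma requires $F_iE_i^2\idun_\lambda\iso E_i^2F_i\idun_\lambda$ since $\binom{0}{1}=0$.

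What termwise application of $\rho$ actually yields in that example is $F_iE_i^2\idun_\lambda\oplus E_i\idun_\lambda\iso E_i^2F_i\idun_\lambda\oplus E_i\idun_\lambda$, and recovering the lemma requires \emph{cancelling} the common summand $E_i\idun_\lambda$. Such cancellation is not automatic in $\FA$ over an arbitrary $k$ (no Krull--Schmidt); one must either pass to a completion where it is licit, or construct an explicit $2$-isomorphism landing directly in the desired shape and prove its invertibility. Your sketch supplies neither, and the divided-power reformulation with Pascal's identity inherits the same problem: its inductive step still commutes a single $F_i$ past $E_i^{(m)}$ across intermediate weights of mixed sign. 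As written, the argument establishes the decategorified identity in $K_0$ but not the categorical isomorphism of Lemma \ref{le:commutationEF}.
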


\begin{rem}
We have chosen to switch the roles of $E$ and $F$, compared to
\cite{Rou3}, as we will deal here with highest weight representations,
while in \cite{Rou3} we dealt with lowest weight representations. The
two definitions are equivalent, as there is a strict equivalence
of $2$-categories
$$I:\FA^{\opp}\iso\FA,\ \idun_\lambda\mapsto\idun_{-\lambda},\
E_s\mapsto F_s,\ F_s\mapsto E_s,\ \tau_{st}\mapsto -\tau_{ts},\
x_s\mapsto x_s.$$
Given a $2$-category $\FC$, we have denoted by $\FC^{\opp}$ the
$2$-category with the same set of objects as $\FC$ and with
$\CHom_{\FC^\opp}(c,c')=\CHom_{\FC}(c,c')^\opp$.
\end{rem}

\subsection{$2$-representation theory}
We are now reaching our main object of study. We review 
\cite[\S 5]{Rou3} (based in part on \cite{ChRou}) and provide some complements.

\subsubsection{Integrable $2$-representations}
A representation of $\FA$ on $k$-linear categories
is defined to be a strict $2$-functor from $\FA$ to the
strict $2$-category of $k$-linear categories.
This is the same thing as the data of
\begin{itemize}
\item a $k$-linear category $\CV_\lambda$ for $\lambda\in X$
\item a $k$-linear functor $F_i:\CV_\lambda\to\CV_{\lambda-\alpha_i}$ for
$\lambda\in X$ and $i\in I$ admitting a right adjoint $E_i$
\item $x_i\in\End(F_i)$ and $\tau_{ij}\in\Hom(F_iF_j,F_jF_i)$
\end{itemize}
such that
\begin{itemize}
\item the quiver Hecke algebra relations for $x_i$ and $\tau_{ij}$
\item the maps $\rho_{i,\lambda}$ and $\sigma_{ij}$ ($i\not=j$)
are isomorphisms.
\end{itemize}
A representation of $\FA$ such that 
$E_i$ and $F_i$ are locally nilpotent for all $i$ will be called 
an {\em integrable $2$-representation} of $\FA$ (or of $\Gg$).

\smallskip
In the definition on an
integrable $2$-representations, the condition that the maps $\sigma_{ij}$
are isomorphisms for $i\not=j$ is a consequence of the other conditions
\cite[Theorem 5.25]{Rou3}.

\begin{rem}
One can equivalently start with the functors $E_i$'s, and natural
transformations $x_i$ and $\tau_{ij}$ between products of $E$'s.
\end{rem}

The definition provides $E_i$ as a right adjoint of $F_i$, but the next
result shows that $E_i$ will actually also be a left adjoint
(cf \cite[\S 4.1.4]{Rou2} for the explicit units and counits).

\begin{thm}[{\protect \cite[Theorem 5.16]{Rou2}}]
Let $\CV$ be an integrable $2$-representation of $\FA$.
Then $E_i$ is a left adjoint of $F_i$, for all $i$.
\end{thm}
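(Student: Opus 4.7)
The goal is to construct, for each $i\in I$, $2$-arrows $\tilde{\eta}_i:\idun\to F_iE_i$ and $\tilde{\eps}_i:E_iF_i\to\idun$ in the $2$-representation $\CV$, satisfying the zigzag identities $(F_i\tilde{\eps}_i)\circ(\tilde{\eta}_iF_i)=\id_{F_i}$ and $(\tilde{\eps}_iE_i)\circ(E_i\tilde{\eta}_i)=\id_{E_i}$. The inputs are the given adjunction $F_i\dashv E_i$ with unit $\eta_i:\idun\to E_iF_i$ and counit $\eps_i:F_iE_i\to\idun$, the invertibility in $\CV$ of all maps $\rho_{i,\lambda}$ built into the definition of a $2$-representation of $\FA$, and the integrability hypothesis, namely the local nilpotence of $E_i$ and $F_i$ on every object of $\CV$.

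The construction proceeds weight-by-weight using the components of $\rho_{i,\lambda}^{-1}$. When $\langle\lambda,\alpha_i^\vee\rangle\leq 0$, $\rho_{i,\lambda}^{-1}$ exhibits $\idun_\lambda^{-\langle\lambda,\alpha_i^\vee\rangle}$ as a direct summand of $F_iE_i\idun_\lambda$, and the inclusion of any one copy furnishes a candidate $\tilde{\eta}_i\idun_\lambda$. Dually, when $\langle\lambda,\alpha_i^\vee\rangle\geq 0$, the summand $\idun_\lambda^{\langle\lambda,\alpha_i^\vee\rangle}$ of $E_iF_i\idun_\lambda$ supplies $\tilde{\eps}_i\idun_\lambda$ by projection. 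The missing map at each weight --- namely $\tilde{\eps}_i\idun_\lambda$ when $\langle\lambda,\alpha_i^\vee\rangle<0$, $\tilde{\eta}_i\idun_\lambda$ when $\langle\lambda,\alpha_i^\vee\rangle>0$, and both at $\langle\lambda,\alpha_i^\vee\rangle=0$ --- is constructed by invoking integrability: for any object $M\in\CV_\lambda$, choose $n$ large enough that $E_i^n M=F_i^n M=0$, and apply Lemma~\ref{le:commutationEF} to the composites $F_i^n E_i^n$ and $E_i^n F_i^n$ at $M$; the binomial factors produce $\idun_\lambda$-summands even when $\langle\lambda,\alpha_i^\vee\rangle=0$, and combined with the existing $(\eta_i,\eps_i)$ these summands determine the required map on $M$. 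Independence of choices follows because all candidate summands arise from components of a single decomposition.

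The principal technical step, and the main obstacle, is the verification of the zigzag identities. Each identity, restricted to $\idun_\lambda$, must be checked separately in the three cases $\langle\lambda,\alpha_i^\vee\rangle$ positive, negative, and zero. The check relies on the explicit form of $\rho_{i,\lambda}$ as an assembly of $\sigma_{ii}$ with the families $\{\eps_i\circ(x_i^jE_i)\}_j$ and $\{(E_ix_i^j)\circ\eta_i\}_j$, together with the relations (\ref{en:half1})--(\ref{en:half4}) in $\CB$ and the original adjunction identities for $(F_i,E_i)$. Integrability is essential precisely in the critical case $\langle\lambda,\alpha_i^\vee\rangle=0$, where no $\idun_\lambda$-summand is visible through $\rho_{i,\lambda}$ alone; without local nilpotence, $E_i$ need not be a left adjoint of $F_i$ in a general representation of $\FA$, and it is the vanishing of sufficiently high powers $E_i^n M$ and $F_i^n M$ that makes the construction valid on every object.
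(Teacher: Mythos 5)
Your plan---extract the new adjunction data from the components of $\rho_{i,\lambda}^{\pm1}$ and then verify the zigzag identities---matches the spirit of the cited reference (which gives explicit units and counits in \cite[\S 4.1.4]{Rou2}, really \cite{Rou3}; the present paper states the theorem without proof). However, your construction breaks down at its central step: ``the inclusion of any one copy furnishes a candidate $\tilde{\eta}_i\idun_\lambda$'' is not a valid move. The copies of $\idun_\lambda$ appearing in $\rho_{i,\lambda}$ are indexed by the exponent $j$ in $\eps_i\circ(x_i^jE_i)$; the corresponding components of $\rho_{i,\lambda}^{-1}$ are pairwise distinct morphisms $\idun_\lambda\to F_iE_i\idun_\lambda$ (in the graded setting they even land in different degrees), so ``any one'' is both ambiguous and, for a generic choice, wrong. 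The correct new unit and counit are specific compositions encoding a definite power of $x_i$ and a sign, selected precisely so that the zigzag identities hold, and that choice is the actual content of the theorem; it is not made in your sketch.

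The zigzag verification, which you leave at ``relies on the explicit form of $\rho_{i,\lambda}$ \dots together with the relations (\ref{en:half1})--(\ref{en:half4})'', is where essentially all the work lies and cannot be waved away. Each zigzag restricted to $\idun_\lambda$ composes a putative $\tilde{\eta}_i$ built at weight $\lambda\mp\alpha_i$ (so governed by $\langle\lambda,\alpha_i^\vee\rangle\mp 2$) with a putative $\tilde{\eps}_i$ built at weight $\lambda$; these come from different branches of your casework, and showing the composite is the identity demands a genuine computation with $\sigma_{ii}$, $\eps_i$, $\eta_i$ and the nil affine Hecke relations, none of which you carry out. The treatment of $\langle\lambda,\alpha_i^\vee\rangle=0$ via Lemma~\ref{le:commutationEF} is likewise too thin: the lemma only asserts the \emph{existence} of certain isomorphisms and does not by itself single out a morphism $\idun_\lambda\to F_iE_i\idun_\lambda$, nor do you give a criterion making your choice functorial in $M$ and independent of the chosen $n$. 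As it stands the proposal identifies the right raw materials but does not constitute a proof.
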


\smallskip
It is often unreasonable to check directly that the maps
$\rho_{i,\lambda}$ and $\sigma_{ij}$ are isomorphisms in examples. It turns
out that, under finiteness assumptions, it is enough to check that the
$\Gsl_2$-relations hold on the Grothendieck group
(the crucial part is \cite[Theorem 5.27]{ChRou}).

\begin{thm}[{\protect  \cite[Theorem 5.27]{Rou3}}]
\label{th:enoughK0}
Let $K$ be a field that is a $K$-algebra
and let $\CV=\bigoplus_{\lambda\in X}\CV_\lambda$
be a $K$-linear abelian category such that
all objects have finite composition series and simple objects have
endomorphism ring $K$.
Assume given
\begin{itemize}
\item a $K$-linear exact
functor $F_i:\CV_\lambda\to\CV_{\lambda-\alpha_i}$ for
$\lambda\in X$ and $i\in I$ with an exact right adjoint $E_i$
\item $x_i\in\End(F_i)$ and $\tau_{ij}\in\Hom(F_iF_j,F_jF_i)$
\end{itemize}
such that
\begin{itemize}
\item $E_i$ and $F_i$ are locally nilpotent
\item $E_i$ is left adjoint to $F_i$
\item the quiver Hecke algebra relations for $x_i$ and $\tau_{ij}$
\item the endomorphisms $[E_i]$ and $[F_i]$ define an integrable
representation of $\Gsl_2$ on $\BC\otimes K_0(\CV)$, and
$[E_i][F_i]-[F_i][E_i]$ acts by $\langle\lambda,\alpha_i^\vee\rangle$ on
$K_0(\CV_\lambda)$, for all $i$ and $\lambda$.
\end{itemize}
Then, the data above defines an integrable $2$-representation of $\Gg$ on $\CV$.
\end{thm}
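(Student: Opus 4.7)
The plan is to reduce the verification vertex by vertex to the $\Gsl_2$-case, apply the Chuang--Rouquier $\Gsl_2$-theorem to upgrade the Grothendieck-group condition to genuine isomorphisms $\rho_{i,\lambda}$, and then invoke the earlier result that the $\sigma_{ij}$ for $i\neq j$ come for free once the $\Gsl_2$-relations hold at every vertex.

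First, fix $i\in I$ and extract the ``$\Gsl_2$-piece'' at $i$: the category $\CV$ together with the functors $E_i$, $F_i$, the endomorphism $x_i\in\End(F_i)$, and the element $\tau_{ii}\in\End(F_i^2)$. Specializing the quiver Hecke relations $R_Q$ to the one-vertex case recovers the defining relations of the nil affine Hecke algebra (as in Proposition~\ref{pr:faithfulnilaffineHecke}), so these data constitute a candidate $\Gsl_2$-categorification in the sense of \cite{ChRou}. By hypothesis $E_i$ is both a left and right adjoint of $F_i$, both functors are locally nilpotent, and the classes $[E_i]$, $[F_i]$ define an integrable $\Gsl_2$-representation on $\BC\otimes K_0(\CV)$ with $[E_i][F_i]-[F_i][E_i]$ acting by $\langle\lambda,\alpha_i^\vee\rangle$ on $K_0(\CV_\lambda)$. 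These are exactly the hypotheses of the main $\Gsl_2$-categorification theorem of \cite{ChRou}, and applying it at each vertex $i$ separately yields that $\rho_{i,\lambda}$ is an isomorphism for every $\lambda\in X$.

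I expect this Chuang--Rouquier step to be the main obstacle, as it is the deep content of the argument. Its proof proceeds by a Jordan--H\"older analysis of $\CV$: the hypotheses that simple objects have endomorphism ring $K$ and that composition series are finite allow one to track the images of simples under $E_i$ and $F_i$; the nil affine Hecke action supplies divided powers $F_i^{(n)}$ and eigenspace decompositions of $F_i^n$ at the categorical level; and the numerical $\Gsl_2$-identity on $K_0$ is upgraded to a categorical isomorphism by comparing composition multiplicities on a simple-by-simple basis and using biadjunction to construct an explicit inverse to $\rho_{i,\lambda}$ out of the Hecke-theoretic endomorphisms already in hand.

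Once $\rho_{i,\lambda}$ is shown to be an isomorphism for each $i$ and $\lambda$, the full system of $\Gsl_2$-relations holds at every vertex of the candidate $2$-representation. At that point, the earlier general result \cite[Theorem~5.25]{Rou3} (already cited in the remark following the definition of integrable $2$-representation) implies that the remaining commutation maps $\sigma_{ij}$ with $i\neq j$ are automatically invertible. Combined with the given quiver Hecke relations and local nilpotence of the $E_i$, $F_i$, this verifies all the defining axioms of an integrable $2$-representation of $\Gg$ on $\CV$.
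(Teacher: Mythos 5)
The paper states this theorem without proof (it is cited directly from \cite{Rou3}), but the surrounding text does tell the reader exactly which two ingredients are at work: it says ``the crucial part is \cite[Theorem 5.27]{ChRou}'' and, just before, that the $\sigma_{ij}$ for $i\neq j$ being isomorphisms ``is a consequence of the other conditions \cite[Theorem 5.25]{Rou3}.'' Your proposal identifies precisely this two-step reduction---vertex-by-vertex application of the Chuang--Rouquier $\Gsl_2$ recognition theorem to upgrade the $K_0$-condition to isomorphisms $\rho_{i,\lambda}$, followed by invoking \cite[Theorem 5.25]{Rou3} for the cross-term $\sigma_{ij}$---so the skeleton matches what the paper signals.

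The one point you gloss over, and which is where the actual technical content of \cite[Theorem 5.27]{Rou3} lives, is the passage between the Hecke structures. The $\Gsl_2$-categorification hypotheses in \cite{ChRou} are formulated in terms of degenerate or non-degenerate \emph{affine} Hecke algebra relations (with $T^2 = 1$ or $(T-q)(T+1)=0$), whereas the one-vertex specialization of the quiver Hecke relations gives the \emph{nil} affine Hecke algebra ($\tau_i^2 = 0$). These are not the same relations, so the data $(E_i,F_i,x_i,\tau_{ii})$ are not literally ``a candidate $\Gsl_2$-categorification in the sense of \cite{ChRou}'' as you assert. What makes the reduction work is the explicit dictionary noted in \S 2 of this paper just before Lemma \ref{le:Nakayamainner}: the nil affine Hecke algebra contains the symmetric group via $s_i\mapsto(x_i-x_{i+1})\tau_i+1$, which lets one manufacture a degenerate affine Hecke action from the nil one over a field (the single generalized eigenvalue being $0$). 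Either you perform that translation explicitly, or you run the ChRou Jordan--H\"older/block-by-block argument directly in the nil setting, which is what \cite[Theorem 5.27]{Rou3} does. Without acknowledging this mismatch the first step of your argument does not go through as written, even though the overall strategy is the right one.
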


Let us give a variant, based on ``abstract'' $\Gsl_2$-relations between
functors.

\begin{cor}
\label{cor:additiveK0}
Let $k'$ be a commutative $k$-algebra.
Let $\{\CV_\lambda\}_{\lambda\in X}$ be
a family of $k'$-linear categories whose $\Hom$'s are finitely generated
$k'$-modules.

Assume given
\begin{itemize}
\item
$F_s:\CV_\lambda\to\CV_{\lambda-\alpha_s}$ with a right adjoint $E_s$
for $s\in I$
\item
$x_s\in\End(F_s)$ and $\tau_{st}\in\Hom(F_sF_t,F_tF_s)$ for every $s,t\in I$.
\end{itemize}

We assume that
\begin{itemize}
\item $E_s$ is a left adjoint of $F_s$
\item $E_s$ and $F_s$ are locally nilpotent
\item given $\lambda\in X$, there are isomorphisms of functors
$$(E_sF_s)_{|\CV_\lambda}\simeq (F_sE_s)_{|\CV_\lambda}\oplus
\Id_{\CV_\lambda}^{\langle\lambda,\alpha_s^\vee\rangle} \text{ if }
\langle\lambda,\alpha_s^\vee\rangle\ge 0$$
$$(F_sE_s)_{|\CV_\lambda}\simeq (E_sF_s)_{|\CV_\lambda}\oplus
\Id_{\CV_\lambda}^{-\langle\lambda,\alpha_s^\vee\rangle} \text{ if }
\langle\lambda,\alpha_s^\vee\rangle\le 0$$
\item the quiver Hecke algebra relations hold.
\end{itemize}

Then, the data above
defines an integrable action of $\FA(\Gg)$ on
$\CV=\bigoplus_\lambda\CV_\lambda$.
\end{cor}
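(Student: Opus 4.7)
The plan is to deduce Corollary \ref{cor:additiveK0} from Theorem \ref{th:enoughK0} by embedding the additive data in an abelian setting. The only axioms of an integrable $2$-representation not immediate from the hypotheses are the invertibility of the $2$-arrows $\rho_{s,\lambda}$ and $\sigma_{st}$ for $s\ne t$. As noted after the definition in \S\ref{se:def2km}, the invertibility of $\sigma_{st}$ for $s\ne t$ is a formal consequence of the quiver Hecke relations together with the invertibility of the $\rho_{s,\lambda}$; the problem thus reduces to the $\Gsl_2$-type assertion that each $\rho_{s,\lambda}$ is an isomorphism.

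Applied to any fixed object, $\rho_{s,\lambda}$ is a morphism between finitely generated $k'$-modules of morphisms in $\CV$, so by Lemma \ref{le:isoatclosedpoints} its invertibility can be tested after base change along $k'\to K:=k'/\Gm$ for every maximal ideal $\Gm$. We may therefore assume $k'=K$ is a field and that all $\Hom$-spaces are finite dimensional over $K$. I would then enlarge each $\CV_\lambda$ to an abelian $K$-linear category $\tilde{\CV}_\lambda$ to which Theorem \ref{th:enoughK0} applies: fix a sufficiently large object $X_\lambda\in\CV_\lambda$ and let $\tilde{\CV}_\lambda$ be the category of finitely presented right $\End_{\CV_\lambda}(X_\lambda)^{\opp}$-modules, with $\CV_\lambda$ embedded fully faithfully via the Yoneda functor $Y\mapsto\Hom_{\CV_\lambda}(X_\lambda,Y)$. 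This is an abelian $K$-linear category whose objects have finite composition series and whose simple objects have endomorphism ring $K$. The adjoint pairs $(F_s,E_s)$, the natural transformations $x_s,\tau_{st}$, local nilpotency, and the abstract isomorphisms of functors all transport to $\tilde{\CV}=\bigoplus_\lambda\tilde{\CV}_\lambda$. Passing those abstract isomorphisms to $K_0$ yields $[E_s][F_s]-[F_s][E_s]=\langle\lambda,\alpha_s^\vee\rangle$ on $K_0(\tilde{\CV}_\lambda)$, and combined with local nilpotency this produces an integrable $\Gsl_2$-representation on $\BC\otimes K_0(\tilde{\CV})$ for each $s$. Theorem \ref{th:enoughK0} applied to $\tilde{\CV}$ then gives an integrable $2$-representation of $\Gg$, and in particular the invertibility of $\rho_{s,\lambda}$ there; full faithfulness of the embedding pulls this back to $\CV_\lambda$.

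The main obstacle I expect is the abelian enlargement step: one must choose $X_\lambda$ large enough, possibly passing to an ind-completion since $\CV_\lambda$ need not admit a single additive generator, so that $F_s,E_s$ and their natural transformations extend canonically to $\tilde{\CV}_\lambda$ preserving the adjunction, local nilpotency, and, crucially, the abstract functorial isomorphisms. Once this compatibility is established, invertibility of $\rho_{s,\lambda}$ in $\tilde{\CV}_\lambda$ is equivalent to invertibility in $\CV_\lambda$ and the corollary follows.
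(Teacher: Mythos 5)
Your strategy is the right one and is the same as the paper's: the content to be checked is the invertibility of the $\rho_{s,\lambda}$ (and $\sigma_{st}$, $s\neq t$), this can be tested after base change to fields by a variant of Lemma~\ref{le:isoatclosedpoints} since the $\Hom$'s are finitely generated $k'$-modules, and over a field one wants to embed the additive data into an abelian category so that Theorem~\ref{th:enoughK0} applies.

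Where you diverge — and where your proposal as written has a gap you yourself half-identify — is the choice of abelianization. Fixing a single object $X_\lambda$ and passing to finitely presented $\End(X_\lambda)^{\opp}$-modules via $Y\mapsto\Hom(X_\lambda,Y)$ does not in general give a fully faithful embedding of all of $\CV_\lambda$, since $\CV_\lambda$ need not have an additive generator; nor is there any reason for $F_s$ and $E_s$ to send the image of one $\CV_\lambda$ to the image of another under such an embedding. Your suggested fix, ``passing to an ind-completion,'' is exactly the point: the paper takes $\CW=\CV\mMod_K$ to be the category of $k'$-linear functors $\CV^{\opp}\to K\mMod$. That choice is canonical, requires no generator, makes $\CV$ (after base change) a full subcategory, and the adjoint pair $(F_s,E_s)$, the endomorphisms $x_s$, $\tau_{st}$, local nilpotency and the abstract $\Gsl_2$-isomorphisms all extend formally by (co)Kan extension / evaluation on representables; the conditions of Theorem~\ref{th:enoughK0} then hold for $\CW$, and invertibility is read off on the representable objects.

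One further point you should repair: Theorem~\ref{th:enoughK0} requires the simple objects to have endomorphism ring $K$, which forces $K$ to be (at least taken to be) algebraically closed — this is why the paper says ``let $K$ be an algebraically closed field that is a $k'$-algebra.'' Base changing merely to $k'/\Gm$ is not sufficient; you must pass to an algebraic closure, and then invoke a variant of Lemma~\ref{le:isoatclosedpoints} adapted to finitely generated (not necessarily free) modules and to field extensions of the residue fields. With these two adjustments — the functor-category abelianization in place of a single-object Yoneda embedding, and algebraically closed residue fields — your argument coincides with the paper's.
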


\begin{proof}
Let $K$ be an algebraically closed field that is a $k'$-algebra.
Let $\CW=\CV\mMod_K$ be the category of $k'$-linear
functors $\CV^\opp\to K\mMod$. The functors $F_s$ and
$E_s$ induce adjoint exact functors on $\CW$ satisfying the conditions
of Theorem \ref{th:enoughK0}. Consequently, the maps
$\rho_{s,\lambda}$ and $\sigma_{st}$ (for $s\not=t$), taken in $\CV$,
are isomorphisms after applying $-\otimes_{k'}K$. Since this holds
for all $K$, a variant of Lemma \ref{le:isoatclosedpoints} shows that those
maps are isomorphisms.
\end{proof}

\subsubsection{Some $2$-representations of $\Gsl_2$}
\label{se:repsl2}
We assume here $|I|=1$, $X=\BZ$ and $\alpha=2$. We have $k=\BZ$.
Fix a field $K$.

The most obvious example of a $2$-representation is
$\bar{\CL}(0)$ defined by $\bar{\CL}(0)_\lambda=0$ for $\lambda\not=0$ and
$\bar{\CL}(0)_0=K\mMod$. All the extra data vanishes.

\smallskip
Consider now $\bar{\CL}(1)$, a categorification of the simple $2$-dimensional
representation of $\Gsl_2$. We put
$$\bar{\CL}(1)_\lambda=0 \text{ for } \lambda{\not=}\pm 1,\ 
\bar{\CL}(1)_1=K\mMod \text{ and }
\bar{\CL}(1)_{-1}=K\mMod.$$
We define $E$ and $F$ to be the identity functors between $\bar{\CL}(1)_1$ and
$\bar{\CL}(1)_{-1}$ and we set $x=\tau=0$.

\smallskip
A categorification of the simple $3$-dimensional representation is given
by
$$\bar{\CL}(2)_\lambda=0 \text{ for } \lambda{\not=}-2,0,2,\ 
\bar{\CL}(2)_{-2}=\bar{\CL}(2)_2=K\mMod \text{ and }
\bar{\CL}(2)_0=(K[y]/y^2)\mMod.$$
The functors $E$ and $F$ are induction and restriction functors.
We define $x$ as multiplication by $-y$ on $F=\Ind:\bar{\CL}(2)_{2}\to\bar{\CL}(2)_0$ and
as multiplication by $y$ on $F=\Res:\bar{\CL}(2)_0\to\bar{\CL}(2)_{-2}$.
We define $\tau\in\End_K(K[y]/y^2)$ by $\tau(1)=0$ and $\tau(y)=1$.

\medskip
Let us construct more generally $\bar{\CL}(n)$.
Let $H_{i,n}$ be the subalgebra
of ${^0H}_n$ generated by ${^0H}_i$ and 
$P_n^{\GS_n}$. We have
$H_{i,n}={^0H}_i^f\otimes_\BZ P_n^{\GS\{i+1,\ldots,n\}}$ as 
$\BZ$-modules and
$H_{i,n}={^0H}_i\otimes_\BZ \BZ[X_{i+1},\ldots,X_n]^{\GS\{i+1,\ldots,n\}}$ as 
algebras. By Proposition \ref{pr:MoritanilHecke}, we have a Morita equivalence
between
the $P_n^{\GS_n}$-algebras $H_{i,n}$ and $P_n^{\GS\{1,\ldots,i\}\times
\GS\{i+1,\ldots,n\}}$. Since ${^0H}_i$ is a symmetric algebra over
$P_i^{\GS_i}$ (Proposition \ref{pr:symmnilaffine}) and 
$P_n^{\GS\{1,\ldots,i\}\times \GS\{i+1,\ldots,n\}}$ is symmetric over
$P_n^{\GS_n}$ (Corollary \ref{co:symmparab}), we deduce from
Lemma \ref{le:2outof3} that $H_{i,n}$ is a symmetric algebra over $P_n^{\GS_n}$.

 We have a chain of algebras
$$H_{0,n}=P_n^{\GS_n}\subset H_{1,n}\subset\cdots\subset
H_{n,n}={^0H}_n$$
and $H_{i+1,n}$ is a free left (and right) $H_{i,n}$-module of
rank $(i+1)(n-i)$.

\smallskip
Let $\bar{H}_{i,n}=H_{i,n}\otimes_{P_n^{\GS_n}}K$, where
the morphism of rings $P_n^{\GS_n}\to K$ is given by sending
homogeneous polynomials of positive degree to $0$. This is a finite-dimensional
$K$-algebra Morita-equivalent to its center
$P_n^{\GS\{1,\ldots,i\}\times \GS\{i+1,\ldots,n\}}\otimes_{P_n^{\GS_n}}K$.
That center is $\BZ_{\ge 0}$-graded, with degree $0$ part of dimension $1$,
hence it is local. It follows that $\bar{H}_{i,n}$ has a unique simple
module, of dimension $i!$.

We put $\bar{\CL}(n)_\lambda=\bar{H}_{(n-\lambda)/2,n}\mMod$
for $\lambda\in\{n,n-2,\ldots,2-n,-n\}$ and $\bar{\CL}(n)_\lambda=0$
otherwise.

We denote by $E$ the restriction functor and
$F$ the induction functor: they are both exact functors. Since the
algebras $\bar{H}_{i,n}$ are symmetric over $K$, we deduce that
$E$ is both right and left adjoint to $F$.
It is immediate to check that $[E]$ and $[F]$ induce an action
of $\Gsl_2(\BC)$ on $\BC\otimes K_0(\bar{\CL}(n))=\BC^{n+1}$.

We denote by $x$ the endomorphism
of the $(\bar{H}_{i+1,n},\bar{H}_{i,n})$-bimodule
$\bar{H}_{i+1,n}$ given by right multiplication by $X_{i+1}$: this
provides a corresponding endomorphism of the functor $F$. Similarly,
we define an endomorphism $\tau$ of $F^2$ corresponding to
the right multiplication by $T_i$ on
the $(\bar{H}_{i+2,n},\bar{H}_{i,n})$-bimodule
$\bar{H}_{i+2,n}$. 

\smallskip
Theorem \ref{th:enoughK0} provides the following result. These
are the ``minimal categorifications'' of \cite[\S 5.3]{ChRou}.

\begin{prop}
\label{pr:minsl2}
The data above defines an action of $\FA$ on $\bar{\CL}(n)$.
\end{prop}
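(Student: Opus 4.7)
The plan is to apply Theorem \ref{th:enoughK0}; most hypotheses are built into the construction, and the real work lies in the Grothendieck group computation. First I would verify the categorical hypotheses: each $\bar H_{i,n}$ is a finite-dimensional $K$-algebra with unique simple module $S_i$ of dimension $i!$, so $\bar\CL(n)_\lambda=\bar H_{(n-\lambda)/2,n}\mMod$ is a finite-length abelian $K$-linear category whose simples have endomorphism ring $K$. Restriction $E$ is exact, and induction $F$ is exact because $\bar H_{i+1,n}$ is free of rank $(i+1)(n-i)$ over $\bar H_{i,n}$. Since each $\bar H_{i,n}$ is symmetric over $K$ (via Proposition \ref{pr:symmnilaffine}, Corollary \ref{co:symmparab} and Lemma \ref{le:2outof3}), the discussion of \S\ref{se:defsymmetrizing} yields that $E$ is both a left and a right adjoint of $F$. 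Local nilpotency of $E$ and $F$ is immediate from $\bar\CL(n)_\lambda=0$ for $|\lambda|>n$.

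For the nil affine Hecke relations on $x$ and $\tau$, the iterated functor $F^k$ corresponds to tensoring with the $(\bar H_{i+k,n},\bar H_{i,n})$-bimodule $\bar H_{i+k,n}$, and the endomorphisms of $F^k$ induced by $x$ and $\tau$ at various positions come from right multiplication by $X_{i+1},\ldots,X_{i+k}$ and $T_1,\ldots,T_{k-1}$ on this bimodule. Right multiplications satisfy the opposite nil affine Hecke relations; but the defining relations of ${^0H}_n$ are invariant under an anti-involution fixing every generator (a direct check shows that the pair $T_iX_{i+1}-X_iT_i=1$, $T_iX_i-X_{i+1}T_i=-1$ is swapped upon reversing multiplication, while the remaining relations are symmetric), so the opposite relations coincide with the original ones.

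The heart of the proof is to show that $[E]$ and $[F]$ define an integrable $\Gsl_2$-representation on $\BC\otimes K_0(\bar\CL(n))\simeq\BC^{n+1}$ with $[E][F]-[F][E]$ acting by $\lambda$ on the weight-$\lambda$ piece. Writing $v_{n-2i}=[S_i]\in K_0(\bar\CL(n)_{n-2i})$, a dimension count gives $\dim F(S_i)=(i+1)(n-i)\cdot i!$; dividing by $\dim S_{i+1}=(i+1)!$ gives the composition length $n-i$, so $[F]v_{n-2i}=(n-i)v_{n-2i-2}$, and similarly $[E]v_{n-2i}=i\cdot v_{n-2i+2}$. A direct arithmetic yields $([E][F]-[F][E])v_{n-2i}=\bigl((n-i)(i+1)-i(n-i+1)\bigr)v_{n-2i}=(n-2i)v_{n-2i}$, as required; the resulting $\Gsl_2$-representation on $\BC^{n+1}$ is the irreducible one of highest weight $n$, hence automatically integrable. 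Theorem \ref{th:enoughK0} now delivers the desired $2$-representation. The main obstacle is genuinely this $K_0$ computation: every other input is close to tautological, while here one must correctly identify multiplicities via dimensions of simples and check the $\Gsl_2$ commutator by a small but careful calculation.
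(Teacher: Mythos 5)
Your proof is correct and takes essentially the same route as the paper: the paper simply remarks that $[E]$ and $[F]$ give an $\Gsl_2(\BC)$-action on $\BC\otimes K_0(\bar{\CL}(n))=\BC^{n+1}$ and then invokes Theorem~\ref{th:enoughK0}. You have filled in exactly the checks that the paper leaves implicit — adjunctions via symmetricity of $\bar{H}_{i,n}$, exactness via freeness, the anti-involution argument justifying that right multiplication realizes the nil affine Hecke relations, and the dimension/multiplicity computation for the $K_0$-action.
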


Let us now consider a deformed additive version $\CL(n)$. They are
necessary to have Jordan-H\"older type decompositions in the
additive setting. We
put $\CL(n)_\lambda=H_{(n-\lambda)/2,n}\mproj$, and we define
$E$, $F$, $X$ and $T$ as above. Proposition \ref{pr:minsl2} shows
that the morphisms of bimodules corresponding to the
maps $\rho_{\lambda}$ become isomorphisms after applying
$\otimes_{P_n^{\GS_n}}K$, for any field $K$. A graded version of
Lemma \ref{le:isoatclosedpoints} enables us to deduce that the maps
$\rho_{\lambda}$ are
isomorphisms. We obtain consequently the following proposition.

\begin{prop}
The data above defines an action of $\FA$ on $\CL(n)$.
\end{prop}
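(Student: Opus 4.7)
The plan is to check the conditions defining an integrable $2$-representation of $\FA$ on $\CL(n)$ by reducing each of them to the corresponding statement for $\bar{\CL}(n)$ via a graded Nakayama argument. Since $|I|=1$, the $\sigma_{st}$ maps and mixed quiver Hecke relations are vacuous; what must be verified is that the nil affine Hecke relations hold between $x$ and $\tau$, that $E$ and $F$ are locally nilpotent, that $E$ is both left and right adjoint to $F$, and that $\rho_\lambda$ is invertible for every $\lambda$.

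The first three items are routine. The nil affine Hecke relations hold because $x$ and $\tau$ act by right multiplication by $X_{i+1}$ and $T_i$ inside ${^0H}_n$, so the relations transfer from Proposition \ref{pr:faithfulnilaffineHecke}. Local nilpotency follows from $\CL(n)_\lambda=0$ for $|\lambda|>n$. For the biadjunction, write $F=H_{i+1,n}\otimes_{H_{i,n}}-$ and $E$ as the restriction functor along $H_{i,n}\subset H_{i+1,n}$: since $H_{i+1,n}$ is free of finite rank on both sides over $H_{i,n}$, both functors preserve finitely generated projectives, and biadjointness follows from the symmetrizing form on $H_{i,n}$ over $P_n^{\GS_n}$ (discussed just before Proposition \ref{pr:minsl2}) via the general formalism of \S\ref{se:defsymmetrizing}.

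The substantive step is invertibility of $\rho_\lambda$. By construction $\rho_\lambda$ corresponds to a graded morphism $\varphi_\lambda$ of $(H_{i,n},H_{i,n})$-bimodules (with $i=(n-\lambda)/2$), whose source and target are finitely generated projective on each side over $H_{i,n}$, and hence are finitely generated graded $P_n^{\GS_n}$-modules with finite-dimensional graded components. For every field $K$ viewed as a $P_n^{\GS_n}$-algebra through the augmentation $P_n^{\GS_n}\twoheadrightarrow \BZ\to K$, tensoring with $K$ identifies the data $(E,F,x,\tau)$ on $\CL(n)\otimes_{P_n^{\GS_n}}K$ with that on $\bar{\CL}(n)$ over $K$ and carries $\varphi_\lambda$ to the corresponding bimodule map, which is an isomorphism by Proposition \ref{pr:minsl2}. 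A graded version of Lemma \ref{le:isoatclosedpoints} — essentially graded Nakayama applied degree by degree, using that source and target are finitely generated graded $P_n^{\GS_n}$-modules whose graded ranks over $\BZ$ agree — then lifts this fibrewise statement to an isomorphism of the bimodules themselves. The main obstacle I foresee is purely organizational: one must pin down $\varphi_\lambda$ as an explicit map of $H_{i,n}$-bimodules (and check it is actually $P_n^{\GS_n}$-linear) so that base change really produces the corresponding map for $\bar{\CL}(n)$ and so that the graded-finiteness hypotheses of Nakayama are visibly satisfied.
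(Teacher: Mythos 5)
Your proof follows the same route as the paper: the crucial step — invertibility of the $\rho_\lambda$ — is deduced from Proposition \ref{pr:minsl2} via a graded Nakayama argument (a graded version of Lemma \ref{le:isoatclosedpoints}, applied to the bimodule maps after base change along $P_n^{\GS_n}\to K$), which is exactly the argument given in the sentence preceding the proposition. The additional points you make explicit — the nil affine Hecke relations coming from right multiplication inside $^0H_n$, local nilpotency from $\CL(n)_\lambda=0$ for $|\lambda|>n$, and biadjointness from the symmetric-algebra structure of $H_{i,n}$ over $P_n^{\GS_n}$ via the formalism of \S\ref{se:defsymmetrizing} — are precisely the routine verifications the paper treats implicitly, and your treatment of them is correct.
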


The categories $\CL(n)$ and $\bar{\CL}(n)$ are enriched in graded
vector spaces, and the actions are compatible with the gradings.

\subsubsection{Simple $2$-representations $\CL(\lambda)$}
\label{se:simplerep}
Let $\lambda\in X$. Given
$\CV$ a $2$-representation, we put
$$\CV_\lambda^{\mathrm{hw}}=\{M\in\CV_\lambda | E_i(M)=0
\ \forall i\}.$$
Note that $\CV_{\lambda}^{\mathrm{hw}}=0$ if $\lambda{\not\in}X^+$.

A consequence of the relations in Lemma \ref{le:commutationEF} is the
following description of highest weight objects.
\begin{lemma}
\label{le:criterionhw}
Let $\lambda\in X^+$ and $i\in I$. Let 
$d=\langle \lambda,\alpha_i^\vee\rangle+1$. Then
\begin{itemize}
\item $E_i \idun_\lambda$ is a direct summand of $E_i^{d+1}F_i^d \idun_\lambda$
\item $F_i^d \idun_\lambda$ is a direct summand of $F_i^{d+1}E_i \idun_\lambda$.
\end{itemize}
As a consequence, given $\CV$ a $2$-representation and $M\in\CV_\lambda$,
we have $M\in\CV^{\textrm{hw}}_\lambda$ if and only if
$F_i^{\langle \lambda,\alpha_i^\vee\rangle+1}(M)=0$ for all $i\in I$.
\end{lemma}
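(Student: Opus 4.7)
The plan is to extract both summand assertions directly from the commutation isomorphisms of Lemma \ref{le:commutationEF}, and then deduce the criterion on $M$ by a two-sided argument applying these summand relations to $M$.

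First I set $a = \langle \lambda, \alpha_i^\vee\rangle = d-1 \ge 0$ (using $\lambda \in X^+$). For the first bullet, I apply Lemma \ref{le:commutationEF} with $m = d+1$, $n = d$. Then $r = m - n + a = 1 + a = d \ge 0$, so the relevant isomorphism reads
$$\bigoplus_{l=0}^{d} F_i^{d-l}E_i^{d+1-l}\idun_\lambda \otimes_k k^{\frac{(d+1)!\,d!}{(d+1-l)!(d-l)!}\binom{d}{l}}\iso E_i^{d+1}F_i^d \idun_\lambda.$$
Reading off the $l=d$ summand gives $E_i\idun_\lambda$ with multiplicity $(d+1)!\,d!$, which realizes $E_i\idun_\lambda$ as a direct summand of $E_i^{d+1}F_i^d\idun_\lambda$.

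For the second bullet I apply the same lemma with $m = 1$, $n = d+1$, so $r = 1 - (d+1) + a = -1 \le 0$, and the isomorphism reads
$$F_i^{d+1}E_i\idun_\lambda \iso \bigoplus_{l=0}^{1} E_i^{1-l}F_i^{d+1-l}\idun_\lambda \otimes_k k^{\frac{(d+1)!}{(1-l)!(d+1-l)!}\binom{1}{l}}.$$
The $l=1$ term is $F_i^d\idun_\lambda \otimes_k k^{d+1}$, exhibiting $F_i^d\idun_\lambda$ as a direct summand of $F_i^{d+1}E_i\idun_\lambda$.

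For the consequence, I argue both directions by applying the summand relations to $M \in \CV_\lambda$. If $M \in \CV_\lambda^{\mathrm{hw}}$, then $E_i(M) = 0$, hence the second bullet gives that $F_i^d(M)$ is a direct summand of $F_i^{d+1}E_i(M) = 0$, so $F_i^d(M) = 0$. Conversely, if $F_j^{d_j}(M) = 0$ for all $j \in I$, where $d_j = \langle \lambda, \alpha_j^\vee\rangle + 1$, then applying the first bullet with $i = j$ shows that $E_j(M)$ is a direct summand of $E_j^{d_j+1}F_j^{d_j}(M) = 0$, so $E_j(M) = 0$ for every $j$, i.e.\ $M \in \CV_\lambda^{\mathrm{hw}}$. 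No step is a real obstacle here: everything is a direct consequence of Lemma \ref{le:commutationEF}, the only care being to check the relevant binomial coefficient $\binom{d}{d}$ and $\binom{1}{1}$ is nonzero so that the summand really appears.
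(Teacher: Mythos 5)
Your proof is correct and follows exactly the route the paper indicates: the paper states the lemma with only the remark that it is ``a consequence of the relations in Lemma~\ref{le:commutationEF},'' and your argument supplies those details, using the two commutation isomorphisms with the choices $(m,n)=(d+1,d)$ and $(m,n)=(1,d+1)$, reading off the extreme summand $l=\min(m,n)$ in each case, and then running the two implications of the criterion by applying the bullet statements to $M$.
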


Assume
$\lambda\in X^+$.
There is a $2$-representation $\CL(\lambda)$ with an object 
$v_\lambda\in\CL(\lambda)_\lambda$ that has the following
property: given $\CV$ a $2$-representation, there is an equivalence
$$\CHom_{\FA}(\CL(\lambda),\CV)\iso \CV_\lambda^{\mathrm{hw}},
\ \Phi\mapsto \Phi(v_\lambda).$$
So, $\CL(\lambda)$ represents the $2$-functor
$\CV\mapsto\CV_\lambda^{\mathrm{hw}}$ and it thus unique up to an
equivalence unique up to a unique isomorphism.

\smallskip
Let us provide a construction of $\CL(\lambda)$.
We define a $2$-representation $\CM(\lambda)$ by setting
$\CM(\lambda)_\mu=\CHom_{\FA}(\lambda,\mu)$. The composition in
$\FA$ provides a natural action of $\FA$ on $\CM(\lambda)$.
Define now $\CN(\lambda)$, a sub-$2$-representation, by setting
$\CN(\lambda)_\mu$ as the full additive subcategory of
$\CM(\lambda)_\mu$ generated by objects of the form
$RE_i$, where $R$ is a $1$-arrow in $\FA$ from $\lambda+\alpha_i$ to
$\mu$ and $i\in I$.

We put now $\CL(\lambda)=\CM(\lambda)/\CN(\lambda)$ (quotient as
additive categories) and we denote by $v_\lambda$ the image of
$\idun_\lambda$. We put $Z_\lambda=\End_{\CL(\lambda)}(v_\lambda)$.

\begin{rem}
A important fact is that this construction provides a higher version of
$L^{\max}(\lambda)$ ($=L(\lambda)$ in the symmetrizable case),
 not of $\Delta(\lambda)$: this is a consequence of Lemma
\ref{le:criterionhw}.
\end{rem}

\subsubsection{Isotypic $2$-representations}
\label{se:isotypic}
An isotypic representation is a multiple of a simple representation, or
equivalently, the tensor product of a simple representation by a 
multiplicity vector space. The $2$-categorical version of that requires
to take a tensor product by a multiplicity category.

Let $A$ be a commutative ring and
$\CC$ be an $A$-linear category. Let $B$ a commutative $A$-algebra.
We denote by $\CC\otimes_AB$ the $B$-linear category with same
objects as $\CC$ and with $\Hom_{\CC\otimes_AB}(M,N)=
\Hom_{\CC}(M,N)\otimes_AB$.
Let now $\CC'$ be another $A$-linear category. We denote by
$\CC\otimes_A\CC'$ the $A$-linear category with objects finite
families $((M_1,M'_1),\ldots,(M_m,M'_m))$ where $M_i\in\CC$ and
$M'_i\in\CC'$. We put
$$\Hom(((M_1,M'_1),\ldots,(M_m,M'_m)),
((N_1,N'_1),\ldots,(N_n,N'_n)))=\bigoplus_{i,j}
\Hom_{\CC}(M_i,N_j)\otimes_A \Hom_{\CC'}(M'_i,N'_j).$$

Let $\CV$ be a $k$-linear $2$-representation of $\FA$ and let
$\lambda\in X^+$. There is a canonical fully faithful functor compatible
with the $\FA$-action
$$R_\lambda:\CL(\lambda)\otimes_{Z_\lambda}\CV_\lambda^{\mathrm{hw}}\to \CV,\
M\otimes N\mapsto M(N).$$
If $\CV$ is idempotent-closed and every object of $\CV$ is a direct
summand of an object of $\FA(\CV_\lambda^{\mathrm{hw}})$, then
$R_\lambda$ induces an equivalence
$\bigl(\CL(\lambda)\otimes_{Z_\lambda}\CV_\lambda^{\mathrm{hw}}\bigr)^i\iso
\CV$.

\smallskip
The only full sub-$2$-representations of isotypic $2$-representations are the
obvious ones.

\begin{prop}
Let $\lambda\in X^+$, let $\CM$ be a $Z_\lambda$-linear category, and
let $\CW$ be an idempotent-complete full $k$-linear
sub-$2$-representation of
$\bigl(\CL(\lambda)\otimes_{Z_\lambda}\CM\bigr)^i$.
Let $\CN$ be the subcategory of $\CM^i$ image of $\CW_\lambda$
under the canonical equivalence 
$\bigl(\CL(\lambda)\otimes_{Z_\lambda}\CM\bigr)^i_\lambda\iso\CM^i$

Then
$\CW=\bigl(\CL(\lambda)\otimes_{Z_{\lambda}} \CN\bigr)^i$.
\end{prop}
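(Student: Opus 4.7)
The plan is to apply the isotypic structure theorem stated in \S\ref{se:isotypic} directly to $\CW$ itself: I aim to verify that $\CW$ satisfies the hypotheses making $R_\lambda$ an equivalence, which will identify $\CW$ with $\bigl(\CL(\lambda)\otimes_{Z_\lambda}\CW_\lambda^{\mathrm{hw}}\bigr)^i$, and then identify $\CW_\lambda^{\mathrm{hw}}$ with $\CN$. Three steps are needed.

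First, I would show $\CW_\lambda=\CW_\lambda^{\mathrm{hw}}=\CN$. The identification with $\CN$ holds by the definition of $\CN$ as the image of $\CW_\lambda$ under the canonical equivalence. For $\CW_\lambda\subseteq\CW_\lambda^{\mathrm{hw}}$, given $M\in\CW_\lambda\subseteq\bigl(\CL(\lambda)\otimes_{Z_\lambda}\CM\bigr)^i_\lambda$ and $i\in I$, the object $E_i(M)$ lies in weight $\lambda+\alpha_i$; but $\CL(\lambda)_{\lambda+\alpha_i}=0$ since $L^{\max}(\lambda)$ is supported in $\lambda-\sum_j\BZ_{\ge 0}\alpha_j$, so $E_i(M)=0$.

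Second, and this is the main step, I would show that every object of $\CW$ is a direct summand of an object of the form $R(N)$ with $R$ a $1$-arrow of $\FA$ and $N\in\CW_\lambda$. Given $M\in\CW_\mu$, I may assume $\mu=\lambda-\sum a_i\alpha_i$ with $a_i\ge 0$ (otherwise $M=0$ since $\CL(\lambda)_\mu=0$). I would apply a sequence of $E_i$'s raising the weight from $\mu$ to $\lambda$: since $\CW$ is stable under the $\FA$-action, the object $N=E_{i_k}\cdots E_{i_1}(M)$ lies in $\CW_\lambda$. The claim is then that $M$ is a summand of $F_{i_1}\cdots F_{i_k}(N)$, proved by induction on $k$ using the commutation isomorphism of Lemma~\ref{le:commutationEF}: at each step, for an intermediate object $M'$ of weight $\nu$ with $\langle\nu,\alpha_i^\vee\rangle<0$, the functor $F_iE_i$ splits off $\Id^{-\langle\nu,\alpha_i^\vee\rangle}$, extracting $M'$ as a summand of $F_iE_i(M')$. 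One chooses the sequence $(i_1,\ldots,i_k)$ so that at each intermediate weight the relevant Cartan pairing is strictly negative, which is possible since $\mu-\lambda\in-\sum\BZ_{\ge 0}\alpha_i$.

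Third, I would invoke the structure theorem of \S\ref{se:isotypic}: since $\CW$ is idempotent-complete by hypothesis and, by Step~2, every object of $\CW$ is a summand of $\FA(\CW_\lambda^{\mathrm{hw}})$, the canonical comparison functor induces an equivalence $\bigl(\CL(\lambda)\otimes_{Z_\lambda}\CW_\lambda^{\mathrm{hw}}\bigr)^i\iso\CW$ of sub-$2$-representations of $\bigl(\CL(\lambda)\otimes_{Z_\lambda}\CM\bigr)^i$. Combined with $\CW_\lambda^{\mathrm{hw}}=\CN$ from Step~1, this yields $\CW=\bigl(\CL(\lambda)\otimes_{Z_\lambda}\CN\bigr)^i$.

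The main obstacle is Step~2: choosing an $E$-sequence so that the Cartan pairing is non-positive at every intermediate weight, and carefully tracking the multiplicities when iteratively splitting $F_iE_i\simeq E_iF_i\oplus\Id^{-\langle\nu,\alpha_i^\vee\rangle}$, so that exactly one copy of $M$ (and not merely a multiple) is recovered as a direct summand of $F_{i_1}\cdots F_{i_k}(N)$. An alternative route is to use that the same summand property already holds in the ambient $\bigl(\CL(\lambda)\otimes_{Z_\lambda}\CM\bigr)^i$ with $\CM^i$ in place of $\CW_\lambda$, and to refine the resulting decomposition by exploiting stability of $\CW$ under $E_i$ to force the weight-$\lambda$ factor to land in $\CW_\lambda$.
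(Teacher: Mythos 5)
Your Step~2 primary route does not work: in general one cannot choose a sequence $(i_1,\ldots,i_k)$ taking $\mu$ up to $\lambda$ so that the pairing of the intermediate weight with $\alpha_{i_j}^\vee$ is strictly negative at every step. Already for $\Gsl_2$ with $\lambda=2$ and $\mu=0$ the very first step would require $\langle 0,\alpha^\vee\rangle<0$, which fails; more generally any $\mu\in X^+$ with $\mu\ne\lambda$ (and such weights do occur with $\CW_\mu\ne 0$, e.g., $\bar{\CL}(2)_0\ne 0$) blocks the first step of the induction. So the iterated splitting $F_iE_i\simeq E_iF_i\oplus\Id^{-\langle\nu,\alpha_i^\vee\rangle}$ from Lemma~\ref{le:commutationEF} cannot be used to climb from $\mu$ to $\lambda$ in this way, and the obstacle you flag at the end is a genuine obstruction, not a bookkeeping matter of tracking multiplicities.

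Your ``alternative route'' is the correct one, and it is what the paper does; you should carry it out, and it is in fact simpler than your primary route because it avoids the commutation isomorphisms entirely. Since every object $M$ of $\CV=\bigl(\CL(\lambda)\otimes_{Z_\lambda}\CM\bigr)^i$ is a retract of a finite direct sum of objects $F_{i_r}\cdots F_{i_1}(N)$ with $N\in\CV_\lambda$ (this comes from the construction of $\CL(\lambda)$ as an additive quotient of $\CM(\lambda)$), the only tool needed is the adjunction $F_{i_r}\cdots F_{i_1}\dashv E_{i_1}\cdots E_{i_r}$. Write $F=F_{i_r}\cdots F_{i_1}$ and $E=E_{i_1}\cdots E_{i_r}$, with unit $\eta$ and counit $\eps$. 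If $a:M\to F(N)$ and $b:F(N)\to M$ satisfy $ba=\id_M$, then by naturality of $\eps$ and a zig-zag identity,
$$\eps_M\circ FE(b)\circ F\eta_N\circ a \;=\; b\circ\eps_{F(N)}\circ F\eta_N\circ a \;=\; b\circ a \;=\;\id_M,$$
so $M$ is a retract of $FE(M)$. Since $\CW$ is a full sub-$2$-representation closed under direct summands, $E(M)\in\CW_\lambda$, and every object of $\CW$ is a retract of an object of $\FA(\CW_\lambda)$. This works for any weight $\mu$ with no positivity hypothesis and is where the stability of $\CW$ under the $E_i$'s is actually used (your Step~1 only used the weight grading). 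Steps~1 and~3 of your proposal are fine as stated.
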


\begin{proof}
Let $\CV=\bigl(\CL(\lambda)\otimes_{Z_\lambda}\CM\bigr)^i$.
Every object $M$
of $\CV$ is a direct summand of a direct sum of objects of the form
$F_{i_r}\cdots F_{i_1}(N)$, where $N\in\CV_\lambda$. Since
$E_{i_1}\cdots E_{i_r}$ is right adjoint to
$F_{i_r}\cdots F_{i_1}$, we deduce that 
$F_{i_r}\cdots F_{i_1}$ is a direct summand of 
$F_{i_r}\cdots F_{i_1}E_{i_1}\cdots E_{i_r}F_{i_r}\cdots F_{i_1}$ (in $\FA$).
As a consequence, any $M\in \CV$ is a direct summand 
of a direct sum of objects of the form
$F_{i_r}\cdots F_{i_1}E_{i_1}\cdots E_{i_r}(M)$, where
$E_{i_1}\cdots E_{i_r}(M)\in\CV_\lambda$.

This shows that every object of $\CW$ is a direct summand of an object of
$\FA(\CW_\lambda)$, hence the canonical functor
$\bigl(\CL(\lambda)\otimes_{Z_\lambda}\CW_\lambda\bigr)^i\to\CW$ is an
equivalence.
\end{proof}

\subsubsection{Structure}
We explain here a counterpart of Jordan-H\"older series.
This provides a powerful tool to reduce
statements to the case of $\CL(\lambda)$'s and this is one the key ideas 
of \cite{ChRou}.

\smallskip
Let $\CV$ be a $2$-representation. Given
$\xi\in X/(\bigoplus_i \BZ\alpha_i)$, let
$\CV_\xi=\bigoplus_{\lambda\in\xi}\CV_\lambda$. Then
$\CV=\bigoplus_{\xi}\CV_\xi$ is a decomposition as a direct sum
of $2$-representations. This gives a direct sum decomposition of 
the $2$-category of $2$-representations.

\begin{thm}[{\protect \cite[Theorem 5.8]{Rou3}}]
\label{th:JordanHolder}
Let $\CV$ be a $k$-linear category acted on by $\FA$.
Assume that given $\lambda\in X$ and $M\in\CV_\lambda$, there is $r>0$ such
that $E_{i_1}\cdots E_{i_r}(M)=0$ for all $i_1,\ldots,i_r\in I$.

Then $\CV$ is integrable and
there are $Z_\lambda$-linear categories $\CM_{\lambda,r}$ for
$\lambda\in X^+$ and $r\in\BZ_{\ge 1}$, there is a filtration
by full $k$-linear sub-$2$-representations closed under
taking direct summands
$$0=\CV\{0\}\subset \CV\{1\}\subset\cdots\subset\CV$$
with $\bigcup_r \CV\{r\}=\CV$, 
and there are isomorphisms of $2$-representations
$$\bigl(\CV\{r+1\}/\CV\{r\}\bigr)^i\iso
 \bigoplus_{\lambda\in X^+}
\left(\CL(\lambda)\otimes_{Z_\lambda}
\CM_{\lambda,r}\right)^i.$$
\end{thm}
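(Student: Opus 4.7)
The plan is a Jordan--Hölder-style construction: I iteratively peel off highest weight layers, using the depth hypothesis to force termination, and invoke the isotypic decomposition of \S\ref{se:isotypic} to identify each graded piece.

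I construct the filtration inductively. Set $\CV\{0\}=0$, and, having defined $\CV\{r-1\}$ as a full sub-$2$-representation of $\CV$ closed under direct summands, let $\CG_r$ be the full subcategory of those $M\in\CV$ with $E_i(M)\in\CV\{r-1\}$ for every $i\in I$, and define $\CV\{r\}$ to be the smallest full additive subcategory of $\CV$ containing $\CG_r$, closed under direct summands and under the action of all $F_j$. The key claim is that $\CV\{r\}$ is automatically closed under the $E_i$'s too, hence is genuinely a sub-$2$-representation. I verify this by induction on the length $n$ of an $F$-word applied to a generator $N\in\CG_r$: applying $E_i$ to $F_{j_1}\cdots F_{j_n}(N)$ and using the commutation isomorphisms of Lemma \ref{le:commutationEF} to move $E_i$ past $F_{j_1}$, the result decomposes as a direct sum of $F_{j_1}(E_i(F_{j_2}\cdots F_{j_n}(N)))\in\CV\{r\}$ (by induction, together with closure under $F_{j_1}$) and, when $j_1=i$, copies of $F_{j_2}\cdots F_{j_n}(N)\in\CV\{r\}$ coming from $\rho_{i,\mu}$; the case $\langle\mu,\alpha_i^\vee\rangle\le 0$, where $E_iF_i$ appears only as a direct summand of $F_iE_i$ rather than as a direct sum of it with identity terms, is absorbed by closure of $\CV\{r\}$ under summands.

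Exhaustiveness is immediate from the hypothesis. The depth $d(M)=\min\{d\mid E_{i_1}\cdots E_{i_d}(M)=0\text{ for all sequences}\}$ is finite, and since $d(E_iM)\le d(M)-1$, an induction on $d(M)$ shows $M\in\CG_{d(M)}\subset\CV\{d(M)\}$, so $\bigcup_r\CV\{r\}=\CV$. The additive quotient $\CV\{r\}/\CV\{r-1\}$ carries an induced $\FA$-action since $\CV\{r-1\}$ is a sub-$2$-representation, and by construction the image $\bar N$ of each generator $N\in\CG_r$ satisfies $E_i(\bar N)=\bar 0$, i.e., is highest weight. Setting $\CM_{\lambda,r}:=(\CV\{r\}/\CV\{r-1\})_\lambda^{\mathrm{hw}}$, naturally $Z_\lambda$-linear via the universal functor $\CL(\lambda)\to\CV\{r\}/\CV\{r-1\}$ attached to each of its objects, the isotypic decomposition of \S\ref{se:isotypic} then provides the equivalence
$$\bigl(\CV\{r\}/\CV\{r-1\}\bigr)^i \iso \bigoplus_{\lambda\in X^+}\bigl(\CL(\lambda)\otimes_{Z_\lambda}\CM_{\lambda,r}\bigr)^i.$$
Integrability of $\CV$ falls out a posteriori: each $\CL(\lambda)$ is integrable by the Verma-style bound of Lemma \ref{le:criterionhw} (combined with a commutation argument to propagate $F_i$-nilpotence from $v_\lambda$ to all of $\CL(\lambda)$), so the $F_i$'s act locally nilpotently on each graded piece, hence on each $\CV\{r\}$ by induction, hence on $\CV$.

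The main technical obstacle, I expect, will be the closure of $\CV\{r\}$ under the $E_i$'s: the commutation $E_iF_i\simeq F_iE_i$ holds only up to identity direct summands whose direction depends on the sign of $\langle\mu,\alpha_i^\vee\rangle$, so the inductive argument must carefully track each error term and invoke both closure of $\CV\{r-1\}$ under $F_j$ (available from the inductive hypothesis on $r$) and closure of $\CV\{r\}$ under summands (built into its definition). A secondary subtlety is ensuring that the isotypic decomposition on the quotient assembles cleanly into a direct sum indexed by $\lambda\in X^+$, for which one uses that any highest weight object of non-dominant weight vanishes (an instance of the $\rho_{i,\mu}$-isomorphism applied to such an object).
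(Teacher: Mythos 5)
Your filtration construction is the right one: defining $\CG_r$ as the objects whose $E_i$-images lie in $\CV\{r-1\}$, taking $\CV\{r\}$ to be the closure under the $F_j$'s and direct summands, verifying $E_i$-closure inductively by moving $E_i$ through $F$-words via the $\rho_{i,\mu}$ and $\sigma_{ij}$ isomorphisms (with the sign-dependent direction of $\rho_{i,\mu}$ absorbed by summand-closure, exactly as you note), and using the depth function for exhaustiveness. The identification of $\CM_{\lambda,r}$ with $(\CV\{r\}/\CV\{r-1\})_\lambda^{\mathrm{hw}}$ and the $Z_\lambda$-linearity are also correct.

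Two steps, however, are genuinely incomplete. First, \S\ref{se:isotypic} gives the isotypic equivalence for a \emph{single} $\lambda$; to get a direct sum over $\lambda\in X^+$ applied to $\CW=\CV\{r\}/\CV\{r-1\}$ you must first show that the $\lambda$-isotypic subcategories for distinct $\lambda,\lambda'\in X^+$ are mutually orthogonal: $\Hom_{\CW}(M,M')=0$ when $M$ is a summand of an $F$-word applied to a highest weight object of weight $\lambda$ and $M'$ likewise for $\lambda'\ne\lambda$. This is provable by biadjunction (commute an $E$-word coming from one side past the $F$-word generating the other, using that $E$'s annihilate the highest weight generators, and treat both orders of comparability of $\lambda$ and $\lambda'$), but it is not what you cite: your ``secondary subtlety'' --- that highest weight objects of non-dominant weight vanish --- only explains why the index set is $X^+$ rather than $X$, not why the $\lambda$-pieces assemble into a direct sum. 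Second, your deduction of integrability rests on integrability of each $\CL(\lambda)$, which you attribute to Lemma \ref{le:criterionhw} ``combined with a commutation argument to propagate $F_i$-nilpotence from $v_\lambda$.'' Lemma \ref{le:criterionhw} only gives $F_i^{\langle\lambda,\alpha_i^\vee\rangle+1}(v_\lambda)=0$; for a general object, a summand of $F_{j_n}\cdots F_{j_1}(v_\lambda)$, one cannot push $F_i^N$ through the $F_{j}$'s to reach $v_\lambda$ --- the $\tau$-relations permute $F$'s but do not shorten an $F$-word. What is actually needed is the categorified analogue of the classical fact that $\Delta(\lambda)$ modulo the $f_i^{n_i+1}v_\lambda$ is integrable, which requires (categorified) Serre relations or a separate argument, and cannot be dispatched as a commutation.
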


Note that the assumption of the theorem is automatically satisfied if
$\Gg$ is a finite-dimensional Lie algebra.

\begin{prop}
Assume $C$ is a finite Cartan matrix, \ie, $\Gg$ is a finite-dimensional
Lie algebra. Let $\CV$ be an integrable $2$-representation of $\FA$.
Then  given $\lambda\in X$ and $M\in\CV_\lambda$, there is $r>0$ such
that $E_{i_1}\cdots E_{i_r}(M)=0$ for all $i_1,\ldots,i_r\in I$.
\end{prop}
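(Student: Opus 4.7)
The plan is to reduce the statement to the classical representation theory of $\Gg$. First, it suffices to prove that the set
$$T_M := \{\mu \in X \mid E_{i_1}\cdots E_{i_r}(M)\neq 0 \text{ in } \CV_\mu \text{ for some } (i_1,\ldots,i_r)\}$$
is finite: every length-$r$ word $E_{i_1}\cdots E_{i_r}$ applied to $M$ lands in $\CV_{\lambda+\alpha_{i_1}+\cdots+\alpha_{i_r}}$, and this weight has height exactly $r$ relative to $\lambda$, so taking $r$ strictly larger than the maximum value of $\mathrm{ht}(\mu-\lambda)$ over $T_M$ forces all length-$r$ words to annihilate $M$.

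Next, I would pass to the split Grothendieck group $K := \BC\otimes_\BZ K_0^\oplus(\CV)$. The commutation isomorphisms in Lemma \ref{le:commutationEF} yield the relation $[E_i][F_i]-[F_i][E_i]=\langle\lambda,\alpha_i^\vee\rangle$ on the $\lambda$-weight component of $K$, and integrability provides local nilpotence of each $[E_i]$ and $[F_i]$. Thus each $i\in I$ gives an integrable $\Gsl_2$-action on $K$; in finite type, Serre's classical criterion then promotes the collection of these $\Gsl_2$-triples to an integrable action of the full Kac-Moody algebra $\Gg$. By Weyl complete reducibility in finite type, $K$ decomposes as a direct sum of finite-dimensional simple $\Gg$-modules, so $[M]$ lies in a finite subsum and the $U(\Gn^+)$-orbit of $[M]$ is supported in a finite set of weights.

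The main obstacle is bridging from Grothendieck-group vanishing to object-level vanishing, since $[N]=0$ in the split Grothendieck group does not imply $N\simeq 0$. To overcome this, I would exploit the categorical Serre-type relations from Theorem \ref{th:categB}, transferred to the $E$-side via the involution $I$ described in the remark following Lemma \ref{le:commutationEF}, to rewrite (up to isomorphism) any word in the $E_i$ as a direct sum of monomials associated with a fixed PBW-type ordering of positive roots in $\Delta^+$. Using the categorical reflection/braid action built from the isomorphisms $\rho_{i,\lambda}$ of \S\ref{se:def2km}, the local nilpotence of each simple $E_i$ on $M$ should transfer to local nilpotence of each root functor $E_\alpha$ on $M$. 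Combining this with the finiteness of $\Delta^+$ in finite type yields the required finiteness of $T_M$. The hardest step is making the PBW-type object-level rewriting precise and checking that Weyl-group transport preserves local nilpotence on $M$; essentially all of the technical work lies here, and an alternative (perhaps cleaner) route is to proceed by induction on the rank of $\Gg$, splitting off one vertex $i$ and using the induction hypothesis for the Levi-type subalgebra generated by $I\setminus\{i\}$ to control mixed words via commutation with $E_i$.
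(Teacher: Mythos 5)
Your proposal follows a genuinely different route from the paper, and the step you yourself flag as the hardest is where it breaks. The paper never considers $K_0(\CV)$ at all. Switching $E$'s to $F$'s, it uses local nilpotence to choose $n_i$ with $F_i^{n_i}(M)=0$, observes that the $\CB$-functor $L\mapsto L(M)$ factors through the additive quotient $\bar{\CB}$ of $\CB$ by the ideal generated by the $\CB F_i^{n_i}$, and identifies $\BC\otimes K_0(\bar{\CB})$ with a quotient of $U(\Gn^-)/\sum_i U(\Gn^-)f_i^{n_i}$, which is $L(\mu)$ for some $\mu\in X^+$ and hence finite-dimensional in finite type. Thus $K_0(\bar{\CB}_r)=0$ for $r$ large. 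The crucial gain of working in $\CB$ rather than in $\CV$ is that the $\Hom$-spaces of $\CB_r$, hence of its quotient $\bar{\CB}_r$, are finitely generated modules over a noetherian ring; there, vanishing of the Grothendieck group \emph{does} force $\bar{\CB}_r=0$, so every length-$r$ word in the $F$'s kills $M$.

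Your plan instead argues in $K_0(\CV)$ directly, and this is exactly where the gap sits: in an arbitrary integrable $2$-representation $\CV$ there is no finiteness hypothesis on $\Hom$-spaces, so vanishing of $[N]$ in the split Grothendieck group says nothing about vanishing of $N$. The repair you propose is not available in this framework. The $\rho_{s,\lambda}$ of \S\ref{se:def2km} are the $\Gsl_2$-commutation isomorphisms, not reflection functors; a categorical Weyl-group/braid action is built from Rickard complexes and lives in homotopy or derived categories, which is a substantial separate construction this paper does not supply. Moreover $\CB^i$ has no object-level ``PBW factorization'' into root operators: Theorem \ref{th:categB} only furnishes the categorical Serre isomorphisms, which equate two direct sums but do not express a long $E$-word as a direct sum of root monomials, so there is no way to transfer local nilpotence from the $E_i$ to putative $E_\alpha$. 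Making these notions precise would be at least as difficult as the statement itself. The paper's proof sidesteps all of this by moving the problem from the uncontrolled $\CV$ to the explicitly controlled quotient $\bar{\CB}$, where the $K_0$-to-category implication holds.
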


\begin{proof}
We switch the roles of $E$'s and $F$'s in the proof.
There are positive integers $n_i$, such that $F_i^{n_i}(M)=0$.
It follows that the canonical $\CB$-functor $\CB\to\CV,\ L\mapsto
L(M)$, factors through the additive quotient $\bar{\CB}$ of $\CB$ by
its full additive subcategory generated by the $\CB F_i^{n_i}$ for
$i\in I$. We will be done by showing that
there is $r>0$ such that 
$\bar{\CB}_r=0$

Note that the corresponding property holds in
$\BC\otimes K_0(\bar{\CB})$, which is a quotient of the vector space
$U(\Gn^-)/(\sum_i U(\Gn^-)f_i^{n_i})$ by Theorem \ref{th:categB}:
 this is an $L(\mu)$ for some
$\mu\in X^+$, hence it is finite-dimensional (cf \S \ref{se:Oint}).
We deduce that $K_0(\bar{\CB}_r)=0$ for some $r$. The $\Hom$-spaces in
$\CB_r$ are modules of finite rank modules over, which is a noetherian
ring, and the same holds for $\bar{\CB}_r$. So, the vanishing of
$K_0$ forces $\bar{\CB}_r=0$.
\end{proof}

Extensions between $\CL(\lambda)$'s can occur only in one direction.

\begin{lemma}
Let $\CV$ be a $k$-linear $2$-representation of $\FA$ and
$\CW$ a full $k$-linear sub-$2$-representation closed under taking
direct summands. Let $\lambda\in X^+$ and let $\CM$ be a $Z_\lambda$-linear
category.

Assume there is a morphism 
$\Phi:\CV\to \CL(\lambda)\otimes_{Z_\lambda}\CM$
of $2$-representations of $\FA$ with $\Phi(\CW)=0$ and inducing an
isomorphism $\CV/\CW\iso \CL(\lambda)\otimes_{Z_\lambda}\CM$.

If $\CW_{\lambda+\alpha}=0$ for all $\alpha\in\bigoplus_{i\in I}\BZ_{\ge 0}
\alpha_i$, then there is a morphism of $2$-representations
$\Psi:\CL(\lambda)\otimes_{Z_\lambda}\CM\to\CV$ that is a right inverse
to $\Phi$. As a consequence, 
$\CV\simeq\CW\oplus \CL(\lambda)\otimes_{Z_\lambda}\CM$ as
$2$-representations of $\FA$.
\end{lemma}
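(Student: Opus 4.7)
The plan is to construct $\Psi$ using the universal property of $\CL(\lambda)\otimes_{Z_\lambda}\CM$: for any $2$-representation $\CX$, giving a $2$-morphism $\CL(\lambda)\otimes_{Z_\lambda}\CM\to\CX$ is equivalent to giving a $Z_\lambda$-linear functor $\CM\to\CX_\lambda^{\mathrm{hw}}$. This is a direct extension of the representing property of $\CL(\lambda)$ recalled in \S\ref{se:simplerep}, and is the universal property underlying the construction of $R_\lambda$ in \S\ref{se:isotypic}. So my task reduces to building such a $Z_\lambda$-linear functor $\CM\to\CV_\lambda^{\mathrm{hw}}$.

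First I would analyze the weight spaces of $\CV$ at and above $\lambda$. Taking $\alpha=0$ in the hypothesis yields $\CW_\lambda=0$, so $\Phi$ restricts to a $Z_\lambda$-linear equivalence $\Phi_\lambda:\CV_\lambda\iso(\CL(\lambda)\otimes_{Z_\lambda}\CM)_\lambda\simeq\CM$, the last identification being $v_\lambda\otimes N\mapsto N$; the $Z_\lambda$-linearity on $\CV_\lambda^{\mathrm{hw}}$ comes from the universal property of $\CL(\lambda)$ and is preserved by $\Phi$ since $\Phi$ is a morphism of $2$-representations. For $\alpha\in\bigoplus_i\BZ_{\ge 0}\alpha_i$ with $\alpha\ne 0$, I would argue that $(\CL(\lambda)\otimes_{Z_\lambda}\CM)_{\lambda+\alpha}=0$ because $\CL(\lambda)$ has no objects of weight strictly above $\lambda$; combined with the hypothesis $\CW_{\lambda+\alpha}=0$, this forces $\CV_{\lambda+\alpha}=0$. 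In particular $E_i(\CV_\lambda)\subset\CV_{\lambda+\alpha_i}=0$ for every $i$, so $\CV_\lambda=\CV_\lambda^{\mathrm{hw}}$.

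Next I would define $\Psi:\CL(\lambda)\otimes_{Z_\lambda}\CM\to\CV$ as the $2$-morphism associated by the universal property to $\Phi_\lambda^{-1}:\CM\iso\CV_\lambda=\CV_\lambda^{\mathrm{hw}}$. To verify $\Phi\Psi=\id$, I would use that $\Phi\Psi$ is an endo-$2$-morphism of $\CL(\lambda)\otimes_{Z_\lambda}\CM$, so by the same universal property it is determined by its restriction to weight $\lambda$, which is $\Phi_\lambda\circ\Phi_\lambda^{-1}=\id_\CM$. For the decomposition, the idempotent $2$-natural transformation $e=\Psi\Phi$ on $\id_\CV$ provides, for each $M\in\CV$, a splitting $M\simeq\Psi\Phi(M)\oplus M'$; since $\Phi(M')=(\Phi-\Phi\Psi\Phi)(M)=0$, we have $M'\in\CW$ (using closure of $\CW$ under direct summands), giving $\CV\simeq\CW\oplus\CL(\lambda)\otimes_{Z_\lambda}\CM$.

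The main obstacle I expect is establishing $\CL(\lambda)_{\lambda+\alpha}=0$ for $\alpha\in\bigoplus_i\BZ_{\ge 0}\alpha_i\setminus\{0\}$: this support property of the universal highest-weight $2$-representation must be read off from the construction $\CL(\lambda)=\CM(\lambda)/\CN(\lambda)$, by showing that every $1$-arrow $\idun_\lambda\to\idun_{\lambda+\alpha}$ in $\FA$ is, up to direct summand after idempotent completion, of the form $RE_i$. This should follow by induction via repeated application of the commutation isomorphisms of Lemma \ref{le:commutationEF}, which allow one to move an $E_i$ to the right past $F$'s at the cost of terms with strictly fewer $F$'s that can be rewritten in turn.
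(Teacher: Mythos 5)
Your proposal is correct and follows essentially the same route as the paper's (very compressed) proof: establish $\CV_{\lambda+\alpha_i}=0$ for all $i$ from the hypothesis on $\CW$ together with the highest-weight nature of $\CL(\lambda)$, conclude that $\CV_\lambda=\CV_\lambda^{\mathrm{hw}}$ and that $\Phi$ restricts to an equivalence $\CV_\lambda^{\mathrm{hw}}\iso\CM$, then produce $\Psi$ from the inverse equivalence via the representing property of $\CL(\lambda)$ (i.e.\ via the canonical functor $R_\lambda$ of \S \ref{se:isotypic}). The additional material you supply — the explicit use of the universal property to check $\Phi\Psi=\id$, the splitting $M\simeq\Psi\Phi(M)\oplus M'$ with $M'\in\CW$, and the discussion of why $\CL(\lambda)$ vanishes above $\lambda$ — are all points the paper leaves implicit; your account is fuller but not a genuinely different argument.
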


\begin{proof}
By assumption, $\CV_{\lambda+\alpha_i}=0$ for all $i$. It follows that
the restriction of $\Phi$ to $\CV_\lambda$ is an equivalence
$\Phi':\CV_\lambda^{\mathrm{hw}}\iso \CM$. The functor
$\Phi^{\prime -1}$ induces a fully faithful functor
$\Psi:\CL(\lambda)\otimes_{Z_\lambda}\CM\to\CV$ that is a right inverse to
$\Phi$.
\end{proof}

As a consequence, we can order terms and obtain a Jordan-H\"older
filtration under stronger finiteness assumptions from Theorem
\ref{th:JordanHolder}.

\begin{thm}
\label{th:JordanHolderfinite}
Let $\CV$ be an integrable $k$-linear $2$-representation of $\FA$.
Assume there is a finite set $K\subset X$ such that 
$\{\lambda\in X | \CV_\lambda{\not=}0\}\subset\bigcup_{\mu\in K}
(\mu+\sum_{i\in I}\BZ_{\le 0}\alpha_i)$.

Then there are
\begin{itemize}
\item $\lambda_1,\lambda_2,\ldots\in X^+$ such that
$\lambda_a-\lambda_b\in\sum_i \BZ_{\ge 0}\alpha_i$ implies
$a<b$
\item $Z_{\lambda_r}$-linear categories $\CM_r$
\item and a filtration by full $k$-linear sub-$2$-representations closed
under taking direct summands
$$0=\CV\{0\}\subset \CV\{1\}\subset\cdots\subset\CV$$
\end{itemize}
such that $\bigcup_r \CV\{r\}=\CV$
and 
$\bigl(\CV\{r+1\}/\CV\{r\}\bigr)^i\iso \bigl(\CL(\lambda_r)
\otimes_{Z_{\lambda_r}}\CM_r\bigr)^i$ as $2$-representations of $\FA$.
\end{thm}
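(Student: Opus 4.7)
The plan is to enumerate the set $\Lambda = \{\lambda \in X^+ : \CV_\lambda^{\mathrm{hw}} \ne 0\}$ with larger weights first and to build the filtration by successively extracting the top isotypic component. First I would verify that the hypothesis of Theorem \ref{th:JordanHolder} holds: for $M \in \CV_\lambda$, the object $E_{i_1}\cdots E_{i_r}(M)$ has weight $\lambda + \sum_j \alpha_{i_j}$, which by linear independence of the $\alpha_i$ and finiteness of $K$ escapes the support of $\CV$ once $r$ exceeds the maximal height of $\nu-\lambda$ for those $\nu \in K$ with $\nu \ge \lambda$. The same finiteness forces the upset $\{\mu \in \Lambda : \mu \ge \lambda\}$ to be finite for each $\lambda \in \Lambda$, since it is contained in $\bigcup_{\nu \in K}\{\mu : \lambda \le \mu \le \nu\}$. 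As $\Lambda \subseteq X$ is countable, I can enumerate $\Lambda = \{\lambda_1, \lambda_2, \ldots\}$ with $\lambda_a - \lambda_b \in \sum_i \BZ_{\ge 0}\alpha_i \Rightarrow a < b$, by repeatedly picking a maximal element of the unchosen portion; every element is eventually picked, by induction on the size of its upset.

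Next I would construct the filtration inductively. Set $\CV\{0\} = 0$. Given $\CV\{r\}$, consider the quotient $\CV/\CV\{r\}$, in which $\lambda_{r+1}$ is maximal among weights with nonzero highest-weight content (by the ordering of $\Lambda$ together with an inductive claim that $(\CV/\CV\{s\})_{\lambda_t}^{\mathrm{hw}} = 0$ for $t \le s$). Let $\CM_{r+1} = (\CV/\CV\{r\})_{\lambda_{r+1}}^{\mathrm{hw}}$; the canonical fully faithful functor $R_{\lambda_{r+1}}$ of \S\ref{se:isotypic} embeds $(\CL(\lambda_{r+1}) \otimes_{Z_{\lambda_{r+1}}} \CM_{r+1})^i$ as the $\lambda_{r+1}$-isotypic sub-$2$-representation of $(\CV/\CV\{r\})^i$, and I would define $\CV\{r+1\}$ as the preimage in $\CV$. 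The isomorphism $(\CV\{r+1\}/\CV\{r\})^i \iso (\CL(\lambda_{r+1}) \otimes_{Z_{\lambda_{r+1}}} \CM_{r+1})^i$ is then immediate, and the inductive claim is maintained because an object of $(\CV/\CV\{r+1\})_{\lambda_t}^{\mathrm{hw}}$ for $t \le r+1$ lifts to $\CV_{\lambda_t}$ whose $E_i$-images land in $\CV\{r+1\}$, which has no weights strictly above $\lambda_t$; this forces the lift to sit in $\CV_{\lambda_t}^{\mathrm{hw}}$, which has already been accounted for by $R_{\lambda_t}$.

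The hard part is proving exhaustion $\bigcup_r \CV\{r\} = \CV$. Given $M \in \CV_\lambda$, I would choose $r_0$ so that every $\lambda_s \ge \lambda$ lies in $\{\lambda_1, \ldots, \lambda_{r_0}\}$, possible by the upset finiteness. It then suffices to show $(\CV/\CV\{r_0\})_\lambda = 0$. Applying Theorem \ref{th:JordanHolder} to $\CV/\CV\{r_0\}$, whose weight-support is bounded above in the same way, I obtain a filtration with subquotients $\CL(\mu) \otimes \CN$ where $(\CV/\CV\{r_0\})_\mu^{\mathrm{hw}} \ne 0$. The inductive weight-vanishing claim places these $\mu$ in $\Lambda \setminus \{\lambda_1, \ldots, \lambda_{r_0}\}$, so by the ordering of $\Lambda$ none of them satisfies $\mu \ge \lambda$; hence $\CL(\mu)_\lambda = 0$, every subquotient vanishes in weight $\lambda$, and $(\CV/\CV\{r_0\})_\lambda = 0$ as required. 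The subtle point throughout this construction is maintaining the inductive claim $(\CV/\CV\{s\})_{\lambda_t}^{\mathrm{hw}} = 0$ for $t \le s$, which is the key technical step and rests on the interplay between highest-weight objects of a quotient and their lifts to the ambient $2$-representation.
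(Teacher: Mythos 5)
Your overall strategy --- enumerate highest-weight types from the top down, strip off isotypic layers using $R_\lambda$, and invoke Theorem \ref{th:JordanHolder} for exhaustion --- is in the spirit the paper intends (the preceding extension-ordering lemma is there precisely to let one reorder the layers of Theorem \ref{th:JordanHolder}), but there are two real gaps in the execution.

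In maintaining the inductive claim $(\CV/\CV\{s\})_{\lambda_t}^{\mathrm{hw}} = 0$ for $t \le s$, you lift an object of $(\CV/\CV\{r+1\})_{\lambda_t}^{\mathrm{hw}}$ all the way up to $\CV_{\lambda_t}$ and assert that $\CV\{r+1\}$ ``has no weights strictly above $\lambda_t$.'' This is false: $\CV\{r+1\}$ absorbs the isotypic parts of $\lambda_1,\dots,\lambda_{r+1}$, and for $s<t$ one can have $\lambda_s>\lambda_t$, so $\CV\{r+1\}_{\lambda_t+\alpha_i}$ need not vanish and the lift need not be highest weight in $\CV$. The repair is to lift only one stage, to $(\CV/\CV\{r\})_{\lambda_t}$: the $E_i$-images then land in $\CV\{r+1\}/\CV\{r\}\simeq(\CL(\lambda_{r+1})\otimes_{Z_{\lambda_{r+1}}}\CM_{r+1})^i$, which is isotypic of highest weight $\lambda_{r+1}$, and $\lambda_{r+1}\not>\lambda_t$ since $t\le r+1$ and your enumeration puts bigger weights first; so the one-stage lift is highest weight in $\CV/\CV\{r\}$, and one concludes by the inductive hypothesis at stage $r$ (for $t\le r$) or by definition of $\CM_{r+1}$ (for $t=r+1$).

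In the exhaustion argument you claim the highest-weight types $\mu$ appearing in the Theorem \ref{th:JordanHolder} filtration of $\CV/\CV\{r_0\}$ satisfy $(\CV/\CV\{r_0\})_\mu^{\mathrm{hw}}\ne 0$, and hence lie in your $\Lambda=\{\mu:\CV_\mu^{\mathrm{hw}}\ne 0\}$. Neither implication holds in general: the $\mu$'s in the $s$-th layer ($s\ge 2$) of that filtration are highest weights of an intermediate quotient, not of $\CV/\CV\{r_0\}$, and highest weight objects of quotients need not lift to highest weight objects of the source. The set you should enumerate is not $\Lambda$ but the set $\Lambda'$ of highest-weight types $\mu$ appearing in the Theorem \ref{th:JordanHolder} filtration of $\CV$ itself (those with $\CM_{\mu,r}\ne 0$ for some $r$): this set contains $\Lambda$, has finite upsets (it is contained in the weight support of $\CV$, which is bounded above), and is suited to the reduction to subquotients. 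With $\Lambda'$ in place of $\Lambda$, your exhaustion argument (via Theorem \ref{th:JordanHolder} applied to $\CV/\CV\{r_0\}$ and the vanishing $\CL(\mu)_\lambda=0$ unless $\mu\ge\lambda$) goes through.
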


\subsection{Cyclotomic quiver Hecke algebras}
\subsubsection{Construction of $\CB(\lambda)$}
Let $\lambda\in X^+$. Let
$n_i=\langle \lambda,\alpha_i^\vee\rangle$,
let $A_\lambda=\BZ[\{z_{i,r}\}_{i\in I,1\le r\le n_i}]$ and let
$k_\lambda=k\otimes_\BZ A_\lambda$.
We define the
additive category quotient
$$\CB(\lambda)=(\CB\otimes_\BZ A_\lambda)/
(\sum_{r=0}^{n_i}x_i^{n_i-r}\otimes z_{i,r})_{i\in I}$$
where we put $z_{i,0}=1$ for $i\in I$.

We define now the {\em cyclotomic quiver Hecke algebras}
$$H_n(\lambda)=\End_{\CB(\lambda)}(
\bigoplus_{(i_1,\ldots,i_n)\in I^n}F_{i_1}\cdots F_{i_n}).$$

We have
$$H_n(\lambda)=(H_n(Q)\otimes_\BZ A_\lambda)/
(\sum_{r=0}^{n_i}x_{1,i}^{n_i-r}\otimes z_{i,r})_{i\in I}$$
and in particular $H_0(\lambda)=k_\lambda$.

One can also consider
the {\em reduced cyclotomic quiver Hecke algebras}
$\bar{H}_n(\lambda)=H_n(\lambda)\otimes_{A_\lambda}\BZ$, where
$z_{i,r}$ acts by $0$ on $\BZ$ for $r\not=0$.

Note that these constructions depend only on 
$\{n_i\}_{i\in I}$, not on $\lambda$.

\subsubsection{Fock spaces}
\label{se:Fock2}
We explain how to construct a $2$-representation of affine Lie algebras
of type $A$, following \cite{ChRou}, and we explain the relation
with $\CB(\Lambda_0)$. We consider the setting of \S \ref{se:Fock}.

Similarly to the construction of $X$, we construct an endomorphism $T$ of 
$F^2=\bigoplus_{n\ge 0}\Ind_{\GS_n}^{\GS_{n+2}}$ by left multiplication
by $(n+1,n+2)$ on the $(\BF_p\GS_{n+2},\BF_p\GS_n)$-bimodule 
$\BF_p\GS_{n+2}$.

We have a morphism of algebras
$$\bar{H}_n\to\End(F^n),\
X_i\mapsto F^{n-i}XF^{i-1},\ s_i\mapsto F^{n-i-1}TF^{i-1}.$$

Let $\Gamma$ be the quiver with vertex set $I=\BF_p$ and
with arrows $i\to i+1$.
Theorem \ref{th:equivdegenaffineHeckeandquiver} shows how to deduce a
morphism of algebras
$H_n(\Gamma)\to \End(\bigoplus_{\nu\in I^n}F_{\nu_1}\cdots F_{\nu_n})$.

\begin{thm}
\label{se:thmFock}
The constructions above endow $\CV$ with a $2$-representation of
$\FA(\hat{\Gsl}_p)$. We have an equivalence of $2$-representations
$$\bigl(\BF_p\otimes_{Z_{\Lambda_0}}\CL(\Lambda_0)\bigr)^i
\simeq\bigoplus_{n\ge 0}\BF_p\GS_n \mproj$$
\end{thm}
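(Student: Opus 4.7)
The plan is to first verify that the data $(E_i,F_i,x,\tau)$ defines an integrable $2$-representation of $\FA(\hat\Gsl_p)$, and then to identify this $2$-representation, after base-changing by the center acting on the highest weight vector and completing idempotents, with $\CL(\Lambda_0)$.

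For the first part, the morphism $\bar H_n\to\End(F^n)$, combined with the equivalence of Theorem~\ref{th:equivdegenaffineHeckeandquiver}, transports the relations of the degenerate affine Hecke algebra to the quiver Hecke relations on the weight summands $F_{\nu_n}\cdots F_{\nu_1}$; so the polynomial generators $x$ and intertwiners $\tau$ satisfy the relations for the quiver $\Gamma$ of type $\hat A_{p-1}$. Local nilpotence of $E_i,F_i$ is automatic from finite-dimensionality, and the bi-adjointness of $E$ and $F$ comes from the symmetry of $\BF_p\GS_n$; it is preserved by the decomposition into $i$-parts because the decomposition is given by spectral projection for $X$. The proposition of \S\ref{se:Fock} identifies $[E_i]$ and $[F_i]$ with $e_i,f_i$ acting on $\bar\CF\simeq L(\Lambda_0)$, so the commutator $[E_i,F_i]_{|\lambda}$ acts on $K_0(\CV_\lambda)$ by the scalar $\langle\lambda,\alpha_i^\vee\rangle$. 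Corollary~\ref{cor:additiveK0} then upgrades all this to the full integrable $2$-representation structure.

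For the equivalence, the trivial $\BF_p\GS_0$-module $\BF_p$ lies in $\CV_{\Lambda_0}^{\mathrm{hw}}$ since every $E_i$ kills it trivially. By the universal property of $\CL(\Lambda_0)$ recalled in \S\ref{se:simplerep}, there is a morphism of $2$-representations $\Phi:\CL(\Lambda_0)\to\CV$ with $\Phi(v_{\Lambda_0})=\BF_p$. The induced ring map $Z_{\Lambda_0}=\End_{\CL(\Lambda_0)}(v_{\Lambda_0})\to\End_{\BF_p\GS_0}(\BF_p)=\BF_p$ allows one to descend $\Phi$ to a morphism $\bar\Phi:\BF_p\otimes_{Z_{\Lambda_0}}\CL(\Lambda_0)\to\CV$. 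Since $E_i,F_i$ are exact and biadjoint they preserve projectives, so $\bar\Phi$ factors through $\CV^{\mathrm{proj}}:=\bigoplus_n\BF_p\GS_n\mproj$.

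It remains to show that the idempotent completion of $\bar\Phi$ is an equivalence $(\BF_p\otimes_{Z_{\Lambda_0}}\CL(\Lambda_0))^i\iso\CV^{\mathrm{proj}}$. Apply the isotypic formalism of \S\ref{se:isotypic} to $\CV^{\mathrm{proj}}$: the canonical functor $R_{\Lambda_0}$ is always fully faithful, and becomes an equivalence after idempotent completion as soon as every object of $\CV^{\mathrm{proj}}$ is a direct summand of an object of $\FA(\CV^{\mathrm{proj},\mathrm{hw}}_{\Lambda_0})=\FA(\BF_p\mproj)$. This essential surjectivity reduces to the observation that
\[
\bigoplus_{v\in I^n}F_{v_n}\cdots F_{v_1}(\BF_p)\;=\;F^n(\BF_p)\;=\;\Ind_{\GS_0}^{\GS_n}\BF_p\;\simeq\;\BF_p\GS_n,
\]
and every indecomposable projective $\BF_p\GS_n$-module is a summand of the regular module. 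Finally, to match the target with $(\BF_p\otimes_{Z_{\Lambda_0}}\CL(\Lambda_0))^i$ rather than $(\CL(\Lambda_0)\otimes_{Z_{\Lambda_0}}\BF_p\mproj)^i$, note that after idempotent completion these coincide because $\BF_p\mproj$ is the Karoubi envelope of the one-object category $\BF_p$. The main obstacle is the first step, namely the compatibility between the change-of-variables formulas of Theorem~\ref{th:equivdegenaffineHeckeandquiver} and the natural quiver Hecke generators on $\CV$; once this is in place the structural input from Theorem~\ref{th:JordanHolderfinite} automatically rules out any $\CL(\lambda)$ with $\lambda\neq\Lambda_0$ since $K_0(\CV)=L(\Lambda_0)$.
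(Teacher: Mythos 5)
Your overall strategy coincides with the paper's: verify the $\Gsl_2$ relations on $K_0$ and promote them to honest isomorphisms of functors via the paper's criterion, then deduce the equivalence from the fact that $\CV$ is a highest weight $2$-representation of highest weight $\Lambda_0$. The second half of your argument (spotting $\BF_p\in\CV_{\Lambda_0}^{\mathrm{hw}}$, invoking the universal property of $\CL(\Lambda_0)$, and using $F^n(\BF_p)\simeq\BF_p\GS_n$ together with the isotypic formalism for essential surjectivity) is a faithful unwinding of the paper's one-line appeal to that highest weight property.

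There is, however, a genuine error in the first half: you cite Corollary \ref{cor:additiveK0} to ``upgrade'' $K_0$-level equalities to a $2$-representation structure, but that corollary takes as a \emph{hypothesis} actual isomorphisms of functors $(E_sF_s)_{|\CV_\lambda}\simeq(F_sE_s)_{|\CV_\lambda}\oplus\Id^{\langle\lambda,\alpha_s^\vee\rangle}$, not their images in $K_0$. You have only produced the $K_0$ statement, so the corollary does not apply as invoked. The result you want is Theorem \ref{th:enoughK0} -- the criterion for abelian categories with finite composition series -- applied to $\CV=\bigoplus_n\BF_p\GS_n\mMod$; its hypotheses (simple objects have endomorphism ring the base field) are met because $\BF_p$ is a splitting field for $\GS_n$. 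A smaller imprecision in the same spirit: local nilpotence of $F_i$ is not ``automatic from finite-dimensionality'' -- $F_i$ raises $n$, so this needs an argument. One deduces it from the local nilpotence of $f_i$ on $K_0$ together with the observation that $F_i$ is exact and a nonzero object in a category with finite composition series has nonzero class in $K_0$. With these two corrections the argument is sound and agrees with the paper's proof.
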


\begin{proof}
We need to show that
the maps $\rho_{i,\lambda}$ and
are isomorphisms. The corresponding
relations hold at the level of the Grothendieck groups. It follows from
Theorem \ref{th:enoughK0} that the required maps are isomorphisms.
The equivalence follows from the fact that $\CV$ is a highest weight
$2$-representation, with highest weight $\Lambda_0$.
\end{proof}

Since the left side of the equivalence is graded, this provides
us with gradings of group algebras of symmetric groups
over $\BF_p$. This can be made explicit. Indeed, Theorem
\ref{th:equivdegenaffineHeckeandquiver} induces an isomorphism of
algebras $\BF_p\otimes_\BZ\bar{H}_n(\Gamma)\iso \BF_p\GS_n$, and
the right hand side has homogeneous generators.

\smallskip
The constructions above extend to arbitrary $p\ge 2$, with the group
algebra of the symmetric group replaced by its Hecke algebra, as
explained in \S \ref{se:Fock}.

\smallskip
The isomorphism and the gradings above 
have been constructed and studied independently by Brundan and
Kleshchev \cite{BrKl1,BrKl2}. They have built a new approach to the
representation
theory of symmetric groups and their Hecke algebras using these gradings.

Such gradings had been shown to exist earlier (using
derived equivalences and good blocks) for blocks with abelian defect
\cite[Remark 3.11]{Rou2}.
Leonard Scott had raised the question in the mid-nineties to construct
gradings for group algebras of symmetric groups and more generally
Hecke algebras of finite Coxeter groups.

\subsubsection{Simple $2$-representations}
We explain here how cyclotomic quiver Hecke algebras provide a vast
generalization of the constructions of \S \ref{se:Fock} and \S \ref{se:Fock2},
as conjectured by Khovanov-Lauda and ourselves.

\smallskip
The left action of $\CB$ on itself induces an action of $\CB$ on 
$\CB(\lambda)^i=\bigoplus_n H_n(\lambda)\mproj$.

\begin{thm}[Kang-Kashiwara, Webster]
Given $s\in I$, the functor $F_s:\CB(\lambda)^i\to\CB(\lambda)^i$ has a right
adjoint. This provides an action of $\FA$ on $\CB(\lambda)^i$, with
highest weight
$(\CB(\lambda)^i)_{\lambda}=k_\lambda\mproj$.

There is an isomorphism of $\Gg$-modules $\BC\otimes_\BZ K_0(\CB(\lambda)^i)
\iso L(\lambda)$.
\end{thm}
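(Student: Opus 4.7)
The plan is to exploit the fact that $\CB(\lambda)^i$ is built as a quotient of the additive hull of $\CB\otimes_\BZ A_\lambda$, so that the quiver Hecke relations for $x_s$ and $\tau_{st}$ hold automatically. This reduces the construction of the $\FA$-action to three ingredients: existence of right adjoints $E_s$ to the $F_s$'s, verification of the $\Gsl_2$-commutation isomorphisms required in Corollary~\ref{cor:additiveK0}, and computation of the weight $\lambda$ component.

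First I would show that the natural inclusion $H_n(\lambda)\hookrightarrow H_{n+1}(\lambda)$ (embedding at positions $2,\ldots,n+1$) makes $H_{n+1}(\lambda)$ into a finitely generated projective right $H_n(\lambda)$-module. This is where the cyclotomic relation $\sum_{r=0}^{n_i} z_{i,r} x_{1,i}^{n_i-r}=0$ plays its crucial role: combined with the PBW basis of $H_{n+1}(Q)$ from Proposition~\ref{pr:PBW}, it bounds the exponent of $x_1$ and produces a finite basis of $H_{n+1}(\lambda)$ over $H_n(\lambda)$. Once this is known, tensor-hom adjunction on the corresponding $(H_{n+1}(\lambda),H_n(\lambda))$-bimodule provides a right adjoint $E_s$ for $F_s$ on projective modules.

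Second, I would verify the commutation isomorphisms required in Corollary~\ref{cor:additiveK0}: for each weight $\mu$ appearing in $\CB(\lambda)^i$,
\[
(E_sF_s)_{|\mu}\simeq (F_sE_s)_{|\mu}\oplus \Id_{|\mu}^{\langle\mu,\alpha_s^\vee\rangle}\quad\text{if }\langle\mu,\alpha_s^\vee\rangle\ge 0,
\]
and symmetrically in the negative case. This is the heart of the Kang--Kashiwara argument: one writes explicit units and counits built from the generators $x_s$, $\tau_{st}$, and uses the cyclotomic relation to extract the complementary summand isomorphic to several copies of the identity, coming from the highest $x_1$-power that has been killed. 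Given these, Corollary~\ref{cor:additiveK0} yields the integrable $\FA$-action. The highest weight statement $(\CB(\lambda)^i)_\lambda=H_0(\lambda)\mproj=k_\lambda\mproj$ is immediate since weight $\lambda$ forces $n=0$, and $k_\lambda$ is automatically killed by every $E_s$ as $H_{-1}(\lambda)=0$.

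Finally, for the Grothendieck group isomorphism, the highest weight object $k_\lambda\in(\CB(\lambda)^i)_\lambda^{\mathrm{hw}}$ gives, by the universal property of \S\ref{se:simplerep}, a morphism of $2$-representations $\Phi\colon\CL(\lambda)\to\CB(\lambda)^i$ sending $v_\lambda\mapsto k_\lambda$. Applying $\BC\otimes K_0(-)$ yields a $\Gg$-linear map $L(\lambda)=L^{\max}(\lambda)\to\BC\otimes_\BZ K_0(\CB(\lambda)^i)$ (in the symmetrizable case $L^{\max}=L$ by \S\ref{se:Oint}). It is non-zero in weight $\lambda$ (sending $v_\lambda$ to $[k_\lambda]$), hence injective by simplicity of $L(\lambda)$. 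Surjectivity follows from the fact that every indecomposable projective summand of $H_n(\lambda)$ is a direct summand of some $F_{i_n}\cdots F_{i_1}(k_\lambda)$, because $H_n(\lambda)=\bigoplus_{\nu\in I^n} H_n(\lambda)1_\nu$ and $H_n(\lambda)1_\nu$ is precisely the image of $F_{\nu_n}\cdots F_{\nu_1}$ applied to $k_\lambda$. The main obstacle is the pair of first two steps: the projectivity of $H_{n+1}(\lambda)$ over $H_n(\lambda)$ and the explicit construction of the $\Gsl_2$-commutation isomorphisms, both of which rely on a delicate Mackey-style analysis of the cyclotomic relation and are carried out by Kang--Kashiwara and Webster by induction on $n$.
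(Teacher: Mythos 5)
The paper does not give a proof of this theorem; it cites Kang--Kashiwara and Webster directly, noting only that the Kang--Kashiwara result is stated in the symmetrizable graded setting and extends without change. So there is no in-paper argument to compare to. Your sketch does follow the Kang--Kashiwara template: projectivity of $H_{n+1}(\lambda)$ over $H_n(\lambda)$ to build the adjoint, the commutation isomorphisms, and then reduction via Corollary \ref{cor:additiveK0}. That outline is correct.

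Two concrete issues. First, the assertion that the cyclotomic relation together with the PBW basis ``produces a finite basis of $H_{n+1}(\lambda)$ over $H_n(\lambda)$'' is where essentially all the work lives. The cyclotomic relation only caps the power of $x_1$; propagating that bound through the $\tau$'s to constrain the other $x_i$'s and $\tau$-words requires a delicate induction on $n$ (this is the substance of Kang--Kashiwara's Theorem 4.5 and the lemmas preceding it), and is not a formal consequence of the PBW theorem as your phrasing suggests. Saying ``the cyclotomic relation bounds the exponent'' is true only after that induction has been carried out.

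Second, your Grothendieck group argument is circular within the logic of this paper. You apply $\BC\otimes K_0(-)$ to the morphism $\Phi:\CL(\lambda)\to\CB(\lambda)^i$ and label the source $L(\lambda)=L^{\max}(\lambda)$; but the identification $\BC\otimes K_0(\CL(\lambda))\simeq L^{\max}(\lambda)$ is not established anywhere before this theorem --- it is a consequence of it, obtained via Theorem \ref{th:LequivB} ($\CL(\lambda)^i\simeq\CB(\lambda)^i$), which in turn uses the present statement. The non-circular route is direct: once the $\FA$-action is in place, $\BC\otimes K_0(\CB(\lambda)^i)$ is an integrable $\Gg$-module; it is generated by $[k_\lambda]$, a highest weight vector of weight $\lambda$ (by your surjectivity argument); the weight-$\lambda$ space is one-dimensional because $k_\lambda$ is a connected commutative ring, so $K_0(k_\lambda\mproj)\cong\BZ$; and in the symmetrizable case an integrable module generated by a one-dimensional space of highest weight vectors of weight $\lambda$ is $L(\lambda)$.
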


Kang-Kashiwara's result \cite[Theorem 4.6]{KanKas} is given in the
case of symmetrizable Cartan matrices and in a graded setting, but it extends
with no change to our setting. Webster's result \cite{We1}
is also in a graded setting.

\smallskip
Note that the algebras $H_n(\lambda)$ are finitely generated
projective $k_\lambda$-modules \cite[Remark 4.20(ii)]{KanKas}.

\smallskip
There is a canonical morphism of $k$-algebras $Z_\lambda\to k_\lambda$
and an equivalence of additive $\FA$-categories (cf \S \ref{se:isotypic})
$$\Psi:\bigl(\CL(\lambda)\otimes_{Z_\lambda}k_\lambda\bigr)^i\iso
\CB(\lambda)^i$$

\begin{thm}
\label{th:LequivB}
The canonical map gives an isomorphism $Z_\lambda\iso k_\lambda$.
In particular,
there is an equivalence of categories $\CL(\lambda)^i\iso \CB(\lambda)^i$
compatible with the action of $\FA$.
\end{thm}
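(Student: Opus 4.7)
The statement has two parts: that the canonical map $\phi \colon Z_\lambda \to k_\lambda$ is an isomorphism, and that $\CL(\lambda)^i \iso \CB(\lambda)^i$ as $\FA$-categories. The second part is a formal consequence of the first, since the equivalence $\Psi \colon (\CL(\lambda) \otimes_{Z_\lambda} k_\lambda)^i \iso \CB(\lambda)^i$ established just before the theorem (via the Kang--Kashiwara/Webster computation of $(\CB(\lambda)^i)_\lambda^{\mathrm{hw}} = k_\lambda\mproj$ together with the isotypic decomposition of \S \ref{se:isotypic}) degenerates to $\CL(\lambda)^i \iso \CB(\lambda)^i$ as soon as $Z_\lambda = k_\lambda$. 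So the entire proof reduces to exhibiting a two-sided inverse $\psi$ to $\phi$.

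To construct $\psi$, I would exploit that in $\CL(\lambda)$ the object $E_i v_\lambda$ lies in $\CN(\lambda)$ and is therefore zero, hence so is $F_iE_i v_\lambda$. The $\Gsl_2$-type isomorphism of Lemma \ref{le:commutationEF}, applied with $m=n=1$ and $r=n_i := \langle\lambda,\alpha_i^\vee\rangle \ge 0$, then collapses to an isomorphism
$$E_i F_i v_\lambda \iso v_\lambda^{n_i} \quad\text{in } \CL(\lambda).$$
Under this identification the induced endomorphism $E_i(x_i) \in \End(E_iF_iv_\lambda)$ becomes an $n_i\times n_i$-matrix with entries in $Z_\lambda = \End_{\CL(\lambda)}(v_\lambda)$. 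I would let $\tilde z_{i,r} \in Z_\lambda$ (for $1\le r\le n_i$) be the coefficients of its characteristic polynomial and set $\psi(z_{i,r}) = \tilde z_{i,r}$, extending to a $k$-algebra map $k_\lambda \to Z_\lambda$. Tracking the construction through $\Phi$: the image of the Cayley--Hamilton relation for $E_i(x_i)$ on $v_\lambda^{n_i}$ is, via $\Psi$, the cyclotomic relation $\sum_r z_{i,r} x_i^{n_i-r} = 0$ in $H_1(\lambda)$. Matching coefficients yields $\phi(\tilde z_{i,r}) = z_{i,r}$, hence $\phi\circ\psi = \id_{k_\lambda}$.

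The hard direction is $\psi\circ\phi = \id_{Z_\lambda}$, equivalently that the $\tilde z_{i,r}$ generate $Z_\lambda$ as a $k$-algebra (algebraic independence follows for free from $\phi\circ\psi = \id$ together with the known polynomial structure of $k_\lambda$). The plan is to combine Khovanov--Lauda bubble calculus in $\FA$ with the definition of $\CL(\lambda)$ as a quotient of $\CM(\lambda)$ by $\CN(\lambda)$: every element of $\End_\FA(\idun_\lambda)$ is a polynomial in the real bubbles $\eps_i\circ (x_i^r E_i)\circ\eta_i$, and passage to $\CL(\lambda)$ kills all morphisms factoring through $\CN(\lambda)$, so that the surviving bubbles are computable, via the collapse $E_iF_iv_\lambda \simeq v_\lambda^{n_i}$ and the inverse of $\rho_{i,\lambda}$, as polynomials in the $\tilde z_{i,r}$. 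The main obstacle is organising this bubble-reduction precisely: one must verify that the quiver Hecke relations of \S \ref{se:defquiverHecke} together with the vanishing of $E_i v_\lambda$ in $\CL(\lambda)$ suffice to rewrite an arbitrary word in bubbles as a polynomial in the $\tilde z_{i,r}$, with no hidden extra generators arising from the non-trivial mixing of different indices $i\in I$.
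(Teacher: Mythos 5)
Your reduction of the second statement to the isomorphism $Z_\lambda\iso k_\lambda$ is correct, and your construction of the candidate inverse $\psi$ is sound and is essentially the same as the paper's: the paper uses the isomorphism $F_i^{(n_i)}E_i^{(n_i)}(v_\lambda)\iso v_\lambda$ and sends $z_{i,r}$ to $(-1)^re_r(x_{1,i},\ldots,x_{n_i,i})$ acting through $E_i^{(n_i)}$, which is just the symmetric-function packaging of your characteristic polynomial of $E_i(x_i)$ on $E_iF_iv_\lambda\simeq v_\lambda^{n_i}$. The verification $\phi\circ\psi=\id$ is also in spirit what the paper does.

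The gap is your step for $\psi\circ\phi=\id$ (equivalently, surjectivity of $k_\lambda\to Z_\lambda$). Invoking bubble calculus and a reduction of $\End_\FA(\idun_\lambda)$ to polynomials in real bubbles is not available in this framework: the $2$-category $\FA$ of \S\ref{se:def2km} is presented by generators and a localization, and the paper nowhere computes $\End_\FA(\idun_\lambda)$, let alone shows it is spanned by bubbles. Moreover that assertion, even if imported from the Khovanov--Lauda literature, is essentially the nondegeneracy statement which is downstream of (not prior to) identifications of the type $Z_\lambda\iso k_\lambda$; so the step as sketched is either unavailable or circular. You also flag the real difficulty yourself ("no hidden extra generators arising from the non-trivial mixing of different indices $i\in I$"), and that is precisely where such a bubble-rewrite argument would need a hard new input.

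The paper's actual route avoids any understanding of $\End_\FA(\idun_\lambda)$. It constructs the functor $\Phi\colon\CB(\lambda)\to\CL(\lambda)$, $M\mapsto M(v_\lambda)$, which is automatically compatible with the $\CB$-action (the $F_i$'s and their quiver-Hecke structure), and then proves the nontrivial claim that $\Phi$ is compatible with the full $\FA$-action, \emph{i.e.}\ with $E_i$ and the adjunction. This is done by Lemma~\ref{le:moveEi}, an induction on the length of $F$-words producing, for each $L\in\CB(\lambda)$, an $M$ and a map $\gamma\colon F_iM\to L$ along which $E_i\Phi(L)$ can be reconstructed; the three cases $i\neq j$, $n\geq 0$, $n\leq 0$ mirror the invertibility of $\sigma_{ij}$ and $\rho_{i,\lambda}$. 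Once $\Phi$ is an $\FA$-morphism, $\Phi(k_\lambda)=v_\lambda$ gives a $Z_\lambda$-algebra morphism $k_\lambda\to Z_\lambda$ which is a left inverse to the canonical $Z_\lambda\to k_\lambda$, and both are then isomorphisms. This is the content you would need in place of bubble calculus; it is a genuinely different device and is where the technical weight of the theorem sits.
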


\begin{proof}
The proof is similar to that of \cite[Proposition 5.15]{Rou3}.
Let $i\in I$. We have $F_i^{n_i}(v_\lambda)\not=0$, while
$F_i^{n_i+1}(v_\lambda)=0$. It follows that the canonical
map $F_i^{(n_i)}E_i^{(n_i)}(v_\lambda)\to v_\lambda$ is an isomorphism
\cite[Lemma 4.12]{Rou3}. The action of $\BZ[x_{1,i},\ldots,x_{n_i,i}]^{\GS_{n_i}}$
on $E^{(n_i)}$ gives then an action on $v_\lambda$.
We let $z_{i,r}$ act on $v_\lambda$ as $(-1)^re_r(x_{1,i},\ldots,x_{n_i,i})$
for $1\le r\le n_i$. This provides a morphism of 
$k$-algebras $k_\lambda\to \End(v_\lambda)=Z_\lambda$.

We have a canonical functor $\CB\otimes_k k_\lambda\to\CL(\lambda)$ given by 
$M\mapsto M(v_\lambda)$. It induces a functor
$\Phi:\CB(\lambda)\to\CL(\lambda)$ compatible with the $\CB$-action: there
are canonical isomorphisms $\Phi F_i\iso F_i\Phi$ compatible with the
$x_i$'s and $\tau_{ij}$'s. Note that the functor
$\CL(\lambda)\xrightarrow{\can}\CL(\lambda)\otimes_{Z_\lambda}k_\lambda
\xrightarrow{\Psi}\CB(\lambda)$ is a left inverse to $\Psi$.

Let us show that $\Phi$ can be endowed with a compatibility for the
$\FA$-action. We need to show that the composition
$$\Phi E_i\xrightarrow{\eta_i\Phi E_i} E_iF_i\Phi E_i
\xrightarrow{E_i (\can^{-1}) E_i} E_i \Phi F_iE_i\xrightarrow{E_i\Phi\eps_i}
E_i\Phi$$
is an isomorphism for all $i\in I$ (cf \cite[\S 5.1.2]{ChRou}).
Let us show that the composition above is an
isomorphism when applied to $L\in\CB(\lambda)$.
Consider $M,\gamma,\alpha$ as in
Lemma \ref{le:moveEi} below. Since $\Phi\Psi(\alpha)$ is an isomorphism,
we deduce that the composition
$$\Phi(M)\xrightarrow{\Phi(\eta_i\bullet)}
\Phi(E_iF_iM)\xrightarrow{\Phi(E_i\gamma)}\Phi(E_iL)$$
is an isomorphism. To clarify the exposition, we identify
$\Phi F_i$ with $F_i\Phi$.
There is a commutative diagram
$$\xymatrix{
\Phi(E_iL)\ar[r]^-{\eta_i\bullet} &
E_iF_i\Phi(E_iL) \ar[r]^-{E_i\Phi(\eps_i\bullet)} &
E_i\Phi(L) & \\
\Phi(E_iF_iM)\ar[u]_{\Phi(E_i\gamma)}\ar[r]^-{\eta_i\bullet} &
E_iF_i\Phi(E_iF_iM)\ar[r]^-{E_i\Phi(\eps_i\bullet)}
\ar[u]_{\bullet\Phi(E_i\gamma)} &
E_i\Phi(F_iM)\ar[u]_{E_i\Phi(\gamma)} & \\
\Phi(M)\ar `d[dr] `[rrr]_-\sim `[uur] [uurr] \ar@/^3pc/[uu]^\sim
 \ar[u]_{\Phi(\eta_i\bullet)} \ar[r]^-{\eta_i\bullet} &
E_iF_i\Phi(M)\ar[ur]_-{\id}\ar[u]_{\bullet\Phi(\eta_iM)} & & \\
&&&
}$$
This completes the proof that $\Phi$ is compatible with the action of
$\FA$. Since $\Phi(k_\lambda)=v_\lambda$,
we obtain a morphism of $Z_\lambda$-algebras $k_\lambda\to Z_\lambda$ that
is a left inverse to the canonical morphism $Z_\lambda\to k_\lambda$. 
Consequently, these morphisms are isomorphisms.
\end{proof}

\begin{lemma}
\label{le:moveEi}
Given $i$ and $L\in\CB(\lambda)$, there are 
$M\in\CB(\lambda)$ and
$\gamma:F_iM\to L$
such that the composition
$$\alpha:\Phi(M)\xrightarrow{\eta_i\bullet}E_iF_i\Phi(M)=
E_i\Phi(F_iM)\xrightarrow{E_i\Phi(\gamma)}
E_i\Phi(L)$$
is an isomorphism.
\end{lemma}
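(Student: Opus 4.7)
The plan is to prove the lemma by induction on $n$, where $L$ is a direct summand of $F_{j_n} F_{j_{n-1}} \cdots F_{j_1} \idun_\lambda$ for some sequence $(j_1,\ldots,j_n) \in I^n$. The property passes to direct summands: if $L = L_1 \oplus L_2$ and $(M,\gamma)$ works for $L$, the iso $\Phi(M) \iso E_i \Phi(L) = E_i \Phi(L_1) \oplus E_i \Phi(L_2)$ induces a splitting $M = M_1 \oplus M_2$ in $\CB(\lambda)^i$ giving the required data for each $L_k$, and conversely direct sums combine. So it suffices to treat $L$ of the word form above.

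In the base case $n = 0$ we have $L = \idun_\lambda$ and $\Phi(L) = v_\lambda$. By the construction of $\CL(\lambda) = \CM(\lambda)/\CN(\lambda)$, the object $E_i \idun_\lambda$ lies in $\CN(\lambda)$, so $E_i(v_\lambda) = 0$ and $(M,\gamma) = (0,0)$ works with $\alpha$ trivially the isomorphism $0 \iso 0$.

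For the inductive step, I would write $L = F_{j_n} L'$ and apply the induction hypothesis to $L'$, obtaining $(M', \gamma')$ with $\alpha'$ an isomorphism. When $j_n \neq i$, I would use the isomorphism $\sigma_{j_n, i} : F_{j_n} E_i \iso E_i F_{j_n}$ in $\CL(\lambda)$, which is invertible by construction of $\FA$ (where all $\sigma_{st}$ with $s \neq t$ are inverted). This identifies $E_i \Phi(L) \iso F_{j_n} \Phi(M') = \Phi(F_{j_n} M')$, and I take $M = F_{j_n} M'$ with $\gamma = F_{j_n} \gamma' \circ \tau_{i, j_n}$ using the morphism $\tau_{i, j_n}: F_i F_{j_n} \to F_{j_n} F_i$ of $\CB$; the explicit formula $\sigma_{j_n,i} = (E_i F_{j_n} \eps_i) \circ (E_i \tau_{i,j_n} E_i) \circ (\eta_i F_{j_n} E_i)$ together with the zig-zag identities then shows that the resulting $\alpha$ matches this composite.

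When $j_n = i$, set $\mu$ to be the weight of $L'$, and use the isomorphism $\rho_{i, \mu}$ to decompose $E_i F_i L'$. If $\langle \mu, \alpha_i^\vee \rangle \geq 0$, this gives $E_i F_i L' \simeq F_i E_i L' \oplus (L')^{\oplus \langle \mu, \alpha_i^\vee\rangle}$, so I take $M = F_i M' \oplus (L')^{\oplus \langle \mu, \alpha_i^\vee\rangle}$: on the $k$-th copy of $L'$ ($k = 0, \ldots, \langle \mu, \alpha_i^\vee\rangle - 1$), the component of $\gamma$ is the $\CB$-morphism $x_i^k : F_i L' \to F_i L'$, which matches the $(E_i x_i^k) \circ \eta_i$ component of $\rho_{i,\mu}$ under adjunction (using $\eps_i F_i \circ F_i E_i x_i^k \circ F_i \eta_i = x_i^k$); on the $F_i M'$ summand, the component of $\gamma$ is a suitable composite built from $F_i \gamma'$, $\tau_{ii}$, and $x_i$ that realizes the $\sigma_{ii}$-component of $\rho_{i,\mu} \circ F_i \alpha'$. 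The hard part will be the remaining case $\langle \mu, \alpha_i^\vee\rangle < 0$: here $\rho_{i, \mu}$ only exhibits $E_i F_i L'$ as a direct summand of $F_i E_i L' \simeq \Phi(F_i M')$, and I must take $M$ to be a direct summand of $F_i M'$ in $\CB(\lambda)^i$ cut out by an idempotent that corresponds to the projection $F_i E_i L' \twoheadrightarrow E_i F_i L'$ coming from $\rho_{i,\mu}^{-1}$. The technical challenge is lifting this idempotent, expressible in $\CL(\lambda)$ in terms of $\eps_i, \eta_i, x_i^k$, to an idempotent in $\End_{\CB(\lambda)^i}(F_i M')$, after which $\gamma$ is obtained by restricting $F_i \gamma'$ (precomposed with appropriate $\tau, x$ data) to this summand; in all cases the verification that $\alpha$ is an isomorphism reduces via the adjunction bijection $\Hom(F_i M, L) \simeq \Hom(M, E_i L)$ to the invertibility of $\sigma$ and $\rho$ in $\FA$.
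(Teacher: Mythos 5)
Your inductive structure matches the paper's proof: reduce to $L$ a word $F_{i_r}\cdots F_{i_1}(k_\lambda)$, handle the base case by $E_i(v_\lambda)=0$, and in the inductive step split on $j=i_{r+1}$ versus $j\neq i$. Your treatment of $j\neq i$ (using the invertibility of $\sigma_{ji}$) and of $j=i$ with $\langle\mu,\alpha_i^\vee\rangle\ge 0$ (adding $n$ copies of $L'$ weighted by $x_i^a$) both agree with what the paper does.

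The genuine gap is the case $j=i$ with $\langle\mu,\alpha_i^\vee\rangle < 0$. You correctly see that the required $M$ must be cut out as a direct summand of $F_iM'$ in $\CB(\lambda)^i$, but you leave ``lifting the idempotent'' as an unresolved ``technical challenge'' --- and the lemma cannot be closed without resolving it. The paper dissolves the difficulty by \emph{not} trying to lift anything from $\CL(\lambda)$: it constructs a morphism directly in $\CB(\lambda)$, namely $g=(\gamma\circ (x_i^l\bullet))_{0\le l<-n}\colon F_iM\to L^{\oplus -n}$ (in the paper's notation, with $M$ the datum from the inductive hypothesis and $L$ the shorter word). It then computes $\Phi(g)$ using the $2$-representation structure of $\CL(\lambda)$ --- here is exactly where the invertibility of $\rho_{i,\mu}$ enters --- and shows $\Phi(g)$ is a split surjection. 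The crucial ingredient you are missing is that $\Phi$ has a left inverse $\Psi$ (namely $\Psi\circ\can\colon\CL(\lambda)\to\CL(\lambda)\otimes_{Z_\lambda}k_\lambda\to\CB(\lambda)^i$, observed earlier in the proof of Theorem~\ref{th:LequivB}); since additive functors preserve split epimorphisms, $g = \Psi\Phi(g)$ is itself a split surjection in $\CB(\lambda)^i$, and one can take $M''=\ker g$ there. It is precisely the existence of $\Psi$ --- not any idempotent-lifting argument --- that transports the problem back from $\CL(\lambda)$ to $\CB(\lambda)^i$. Without noticing $\Psi$, the proposal has no way to conclude.
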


\begin{proof}
It is enough to prove the lemma for $L=F_{i_r}\cdots F_{i_1}(k_\lambda)$
for any $i_1,\ldots,i_r\in I$.
We prove this by induction on $r$.
Assume the lemma holds for $r$. Consider $i_1,\ldots,i_r\in I$.
Let $M$, $\alpha$ and $\gamma$ be provided by the lemma.

Consider now $j=i_{r+1}\in I$. Let $M'=F_jM$. Let
$$\gamma'=(F_iF_jM\xrightarrow{\tau_{ij}\bullet}F_jF_iM
\xrightarrow{F_j\gamma} F_jL).$$
There is a commutative diagram
$$\xymatrix{
\Phi(F_jM)\ar[r]^-{\eta_i\bullet}\ar[d]_{F_j\eta_i\bullet}\ar@/_4pc/[dd]_\sim &
 E_iF_i\Phi(F_jM)\ar[r]^-{E_i\tau_{ij}\bullet}\ar[d]_{\bullet\eta_i\Phi(M)}
& E_i\Phi(F_jF_iM)\ar[r]^-{E_i\Phi(F_j\gamma)}\ar[d]_{\bullet\eta_i\Phi(M)}
\ar@/^5pc/[ddd]^\sim & E_i\Phi(F_jL)
 \ar  `d[ddddl]^-{\id} `[ddddlll] [dddlll]\\
F_jE_iF_i\Phi(M)\ar[d]_{\bullet\Phi(\gamma)}\ar[r]^-{\eta_i\bullet} & 
 E_iF_iF_jE_iF_i\Phi(M)\ar[r]^-{E_i\tau_{ij}\bullet}\ar[d]_{\bullet\Phi(\gamma)} &
E_iF_jF_iE_iF_i\Phi(M)\ar[ddl]^{\bullet\Phi(\gamma)} 
\ar[dd]_{E_iF_j\eps_i\bullet} &\\
F_jE_i\Phi(L) \ar[r]^-{\eta_i\bullet}\ar[d]_{\sigma_{ji}\bullet} &
 E_iF_iF_jE_i\Phi(L)\ar[d]_{E_i\tau_{ij}\bullet}  &&& \\
E_iF_j\Phi(L) & E_iF_jF_iE_i\Phi(L)\ar[l]_-{\bullet\eps_i\Phi(L)} &
E_iF_jF_i\Phi(M)\ar@/^1pc/[ll]^{\bullet\Phi(\gamma)} &\\
&&&
}$$

If $j\not=i$, then $\sigma_{ij}$ is an isomorphism, hence
$(M',\gamma')$ satisfies the requirements.
We assume now $i=j$. Let 
$n=\langle \lambda+\alpha_{i_1}+\cdots+\alpha_{i_r},\alpha_i^\vee\rangle$.

\smallskip
Assume $n\ge 0$.
Let $M''=M'\oplus L^{\oplus n}$ and
$\gamma''=\gamma'+\sum_{a=0}^{n-1}(x_i^a\bullet):F_iM''\to F_iL$.
Then, $(M'',\gamma'')$ satisfies the required properties.

\smallskip
Assume finally $n\le 0$. Consider $g=\sum_{l=0}^{1-n}\gamma\circ(x_i^lM):
F_iM\to L^{\oplus -n}$. The map $\Phi(g)$ is equal to the composition
$$\xymatrix{
F_i\Phi(M)\ar[r]^-{F_i\eta_i\bullet} \ar@/_1pc/[rrr]_{\sim} & 
F_iE_iF_i\Phi(M) \ar[rr]^-{F_iE_i\Phi(\gamma)} &&
F_iE_i\Phi(L) \ar[rr]^-{\sum_l x_i^l\bullet} &&
(F_iE_i\Phi(L))^{\oplus -n} \ar[r]^-{\eps_i\bullet} & \Phi(L)^{\oplus -n}
}$$
which is a split surjection. As a consequence, 
$g=\Psi\Phi(g)$ is a split surjection. Let $M''$ be
its kernel and $\gamma''=\gamma'_{|F_iM'}$. The composition
$$\Phi(M'')\hookrightarrow \Phi(F_iM)\xrightarrow{F_i\eta_i\bullet}
F_iE_iF_i\Phi(M)\xrightarrow{\bullet\Phi(\gamma)}F_iE_i\Phi(L)
\xrightarrow{\sigma_{ji}\bullet}E_iF_i\Phi(L)$$
is an isomorphism and we deduce that $(M'',\gamma'')$ satisfies the
requirements.
\end{proof}

Note that Lauda and Vazirani had shown earlier that $\CB(\lambda)$
gives rise to the crystal graph of $L(\lambda)$, in the symmetrizable case
\cite{LauVa}.

\subsubsection{Cyclotomic Hecke algebras for $\Gsl_2$}
\label{se:Heckesl2}
Let $n\in\BZ_{\ge 0}$. We have $H_n(Q)={^0H}_n$. One can deduce from
Theorem \ref{th:LequivB} that the $2$-representations $\CL(n)$ of
\S \ref{se:repsl2} and \S \ref{se:simplerep} are equivalent. One can also
show this directly without using $\CB(n)$,
by using the same method as in the proof of Theorem \ref{th:LequivB}. Let
us prove the more concrete fact that $\CB(n)$ coincides with the
category $\CL(n)$ of \S \ref{se:repsl2}.

\begin{lemma}
Given $i\le n$, then there is an isomorphism of rings
$$\phi:
H_i(n)\iso H_{i,n},\ T_j\mapsto T_j,\ X_{j'}\mapsto X_{j'}
\text{ and }z_l\mapsto (-1)^l e_l(x_1,\ldots,x_n)$$
for $1\le j\le i-1$, $1\le j'\le i$ and $1\le l\le n$.
If $i>n$, then $H_i(n)=0$.
\end{lemma}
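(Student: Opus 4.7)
The plan is to construct $\phi$ explicitly on generators, verify well-definedness and surjectivity directly, and then establish injectivity (or vanishing, when $i>n$) via a rank comparison over $A_n$.

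\textbf{Well-definedness and surjectivity.} The generators $T_j$ and $X_{j'}$ are sent to the corresponding elements of ${^0H}_i \subseteq H_{i,n} \subseteq {^0H}_n$, so the nil affine Hecke relations are automatic. The image of each $z_l$ lies in $P_n^{\GS_n}$, which is central in ${^0H}_n$ by Proposition \ref{pr:MoritanilHecke}, so the $z_l$ commute with all of $H_{i,n}$. The cyclotomic relation $\sum_{r=0}^n X_1^{n-r} z_r=0$ (with $z_0=1$) maps to $\sum_{r=0}^n(-1)^r X_1^{n-r}e_r(X_1,\ldots,X_n)=\prod_{j=1}^n(X_1-X_j)$, which vanishes in ${^0H}_n$ through the factor $X_1-X_1$. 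For surjectivity, $\operatorname{im}\phi$ contains ${^0H}_i$ and each $e_l(X_1,\ldots,X_n)=\phi((-1)^l z_l)$; from the expansion $e_l(X_1,\ldots,X_n)=\sum_{k+m=l}e_k(X_1,\ldots,X_i)\,e_m(X_{i+1},\ldots,X_n)$ one inductively extracts each $e_m(X_{i+1},\ldots,X_n)\in\operatorname{im}\phi$, and these together with ${^0H}_i$ generate $H_{i,n}={^0H}_i\otimes_\BZ\BZ[X_{i+1},\ldots,X_n]^{\GS\{i+1,\ldots,n\}}$ as an algebra.

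\textbf{Injectivity via rank.} The map $z_l\mapsto(-1)^l e_l(X_1,\ldots,X_n)$ defines a ring isomorphism $A_n\iso P_n^{\GS_n}$ (both are polynomial rings in $n$ variables), under which $\phi$ is $A_n$-linear. From the $\BZ$-module decomposition $H_{i,n}={^0H}_i^f\otimes_\BZ P_n^{\GS\{i+1,\ldots,n\}}$, combined with $\rank_\BZ{^0H}_i^f=i!$ (Proposition \ref{pr:Poincare}) and the classical fact $\rank_{P_n^{\GS_n}}P_n^{\GS\{i+1,\ldots,n\}}=|\GS_n/\GS\{i+1,\ldots,n\}|=n!/(n-i)!$, the target $H_{i,n}$ is free over $A_n$ of rank $r:=i!\cdot n!/(n-i)!$. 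It then suffices to exhibit an $A_n$-spanning set of $H_i(n)$ of cardinality at most $r$: composing a presentation $A_n^r\twoheadrightarrow H_i(n)$ with $\phi$ gives an $A_n$-linear surjection $A_n^r\twoheadrightarrow A_n^r$, and any surjective endomorphism of a finitely generated $A_n$-module is an isomorphism (Nakayama at each maximal ideal), which forces $\phi$ itself to be one. When $i>n$ one takes $r=0$, yielding $H_i(n)=0$ from the same argument.

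\textbf{Main obstacle: the straightening.} The remaining task is to produce such a spanning set, for example of the form $\{X_1^{a_1}\cdots X_i^{a_i}T_w:w\in\GS_i,\ 0\le a_j<n-i+j\}$, whose cardinality is $i!\prod_{j=1}^i(n-i+j)=i!\cdot n!/(n-i)!$ when $i\le n$, and which is empty (some factor $n-i+j\le 0$) when $i>n$. The cyclotomic relation reduces $a_1$ below $n$ at once, but the tighter bounds for $j\ge 2$ require conjugating $P(X_1)=\sum_r z_r X_1^{n-r}$ through successive $T_j$'s. Using the twisted Leibniz identity $T_jP=s_j(P)T_j+\partial_j(P)$ of Remark \ref{re:generalcommutation}, one derives, by induction on $j$, successively stronger monic identities whose leading term is a suitable power of $X_j$, allowing one to reduce $X_j^{a_j}$ for $a_j$ beyond the stated bound modulo terms of lower $X_j$-degree (with $T_w$ tails and lower-indexed $X_l$'s). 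Carrying this induction out cleanly while bookkeeping the error terms — and confirming the PBW-type count (Proposition \ref{pr:PBW}) to rule out spurious additional relations in ${^0H}_i\otimes A_n$ — is the technical heart of the proof and parallels the classical straightening for cyclotomic affine Hecke algebras.
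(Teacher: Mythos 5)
Your overall architecture is sound and parallels the paper's: well-definedness, surjectivity, then injectivity via a rank comparison of free $A_n$-modules together with Nakayama. The well-definedness and surjectivity checks are correct, and the rank computation $\rank_{A_n}H_{i,n}=i!\cdot n!/(n-i)!$ is right. However, you flag the decisive step yourself: producing an $A_n$-spanning set of $H_i(n)$ of size $\le i!\cdot n!/(n-i)!$ is ``the technical heart of the proof,'' and you only sketch it. As written, then, the proof has a genuine gap precisely at the point where all the content lies.

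Two further remarks. First, your proposed spanning set $\{X_1^{a_1}\cdots X_i^{a_i}T_w:\ 0\le a_j<n-i+j\}$ has the exponent bounds in the wrong order: for $i\ge 2$ it imposes $a_1\le n-i<n-1$, a constraint tighter than what the degree-$n$ cyclotomic relation on $X_1$ provides, whereas the natural straightening (and the paper's set) gives $a_j\le n-j$, i.e., $a_1\le n-1$ and then progressively tighter bounds as $j$ increases. The cardinalities coincide, $\prod_j(n-j+1)=n!/(n-i)!$, but the set you named would require justification it is unlikely to have; this is a signal that the straightening has not actually been attempted.

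Second, the paper avoids the error-term bookkeeping you anticipate by not working in the algebra at all. It passes to the faithful polynomial representation: setting $A_i=\BZ[z_1,\dots,z_n]\otimes{^0H}_i$ and $V_i=\BZ[z_1,\dots,z_n]\otimes P_i$, it lets $M_i$ be the $A_i$-submodule generated by the single element $X_1^n+z_1X_1^{n-1}+\cdots+z_n$, sets $L_i=\sum_{a_j\le n-j}\BZ[z]\,x_1^{a_1}\cdots x_i^{a_i}$, and proves $V_i=L_i+M_i$ by a one-line induction on $i$: apply $T_i$ (i.e., $\partial_i$) to $X_i^{n-i+1}\in V_i'$ to force $X_{i+1}^{n-i}$ into $V'_{i+1}$, hence $V'_{i+1}$ is $X_{i+1}$-stable and therefore $A_{i+1}$-stable, hence equal to $V_{i+1}$. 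This produces a cyclic $H_i(n)$-module of $\BZ[z]$-rank $\le n!/(n-i)!$, which bounds the image of $P_i\otimes\BZ[z]$ in $H_i(n)$, and then the $\GS_i$-factor contributes at most $i!$. The commutator identity $T_jP=s_j(P)T_j+\partial_j(P)$ that you invoke is exactly what drives this induction, but run in the module it requires no bookkeeping of tails. If you want to finish your argument, emulating that module-level induction on $i$ will be cleaner than straightening in $H_i(n)$ directly, and will also yield the $i>n$ vanishing for free, since then $L_i$ has a factor with $n-j<0$ and one shows $1\in M_i$.
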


\begin{proof}
Assume $i\le n$.
In order to prove that the map $\phi$ of the lemma is well defined,
it is enough to consider
the case $i=n$. We have
$$x_1^n-e_1(x_1,\ldots,x_n)+\cdots+(-1)^n e_n(x_1,\ldots,x_n)=0,$$
hence the map is well defined. It is clear that the map is surjective.

\smallskip
Let $A_i=\BZ[z_1,\ldots,z_n]\otimes_{\BZ} {^0H}_i$ and
$V_i=\BZ[z_1,\ldots,z_n]\otimes_\BZ P_i$, a faithful 
$A_i$-module.
Let $M_i$ be the $A_i$-submodule of $V_i$ generated by 
$X_1^n+X_1^{n-1}z_1+\cdots+X_1z_{n-1}+z_n$ and 
let $L_i=\sum_{a_j\le n-j} \BZ[z_1,\ldots,z_n]
x_1^{a_1}\cdots x_i^{a_i}$.
Let $V'_i=L_i+M_i$.
Let us show by induction on $i$ that $V_i=V'_i$.
This is true for $n=1$. Assume $V'_i=V_i$.
Note that $V'_{i+1}$ is stable under the action of $A_i$ and the
action of $T_i$.
It follows that $\partial_i(X_i^{n-i+1})\in V'_{i+1}$,
hence $X_{i+1}^{n-i}\in V'_{i+1}$. This shows that $V'_{i+1}$ is stable
under multiplication by $X_{i+1}$, hence under the action of $A_{i+1}$.
Since $V_{i+1}$ is generated by $1\in V'_{i+1}$ under the action of
$A_{i+1}$, we deduce that $V'_{i+1}=V_{i+1}$. 

We deduce that $H_i(n)$ has a faithful module of rank $\le \frac{n!}{(n-i)!}$
over $\BZ[z_1,\ldots,z_n]$. As a consequence, the image of
$P_i\otimes\BZ[z_1,\ldots,z_n]$ in $H_i(n)$ has rank $\le \frac{n!}{(n-i)!}$
over $\BZ[z_1,\ldots,z_n]$, hence $H_i(n)$ has rank $\le \frac{i!n!}{(n-i)!}$
over $\BZ[z_1,\ldots,z_n]$. That is the rank of $H_{i,n}$ over
$P_n^{\GS_n}$: it follows that $\phi$ is an isomorphism.
\end{proof}

\section{Geometry}

\subsection{Hall algebras}
We refer to \cite{Sch} for a general text on Hall algebras.
\subsubsection{Definition}
Let $\CA$ be an abelian category such that given $M,N\in\CA$, then
$\Hom_\CA(M,N)$ and $\Ext^1_\CA(M,N)$ are finite sets.
One can take for example
the category of finite dimensional representations of a quiver over a finite
field.

Given $M,N\in\CA$, let
$F_{M,N}^L$ be the number of submodules $N'$ of $L$ such that
$N'\simeq N$ and $L/N'\simeq M$.

Let $P_{M,N}^L$ denote the number of exact sequences
$0\to N\to L\to M\to 0$.
Then,
\begin{equation}
\label{relationPF}
F_{M,N}^L=\frac{P_{M,N}^L}{|\Aut(M)|\cdot |\Aut(N)|}.
\end{equation}

\smallskip
Let $H'_\CA$ be the free abelian group with basis the isomorphism classes of
objects of $\CA$
$$H'_\CA=\bigoplus_{L\in \CA/\sim} \BZ [L].$$

We define a product in $H'_\CA$ by
$$[M]\ast [N]=\sum_{L\in \CA/\sim} F_{M,N}^L [L].$$

The class $[0]$ is a unit for the product.
The algebra $H'_\CA$ is the {\em Hall algebra} of $\CA$.

One shows that the product is associative and more generally, that an iterated 
product counts filtrations.

Given $N_1,\ldots,N_n,L\in\CA$, let
$F_{N_1,\ldots,N_n}^L$ be the number of filtrations
$$L=L_0\supset\cdots\supset L_n=0$$
with $L_{i-1}/L_i\simeq N_i$.

\begin{prop}
We have
$[N_1]\ast\cdots\ast[N_n]={\displaystyle
\sum_{L\in\CA/\sim} F_{N_1,\ldots,N_n}^L [L]}$.
\end{prop}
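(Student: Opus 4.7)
The plan is to prove the formula by induction on $n$, using the definition of $\ast$ as the base case and a combinatorial bijection between filtrations and nested pairs (submodule + filtration of submodule) for the inductive step. As a byproduct, this will also establish the associativity stated just before the proposition: indeed, associativity is exactly the case $n=3$ of the formula (both iterated products being equal to $\sum_L F_{N_1,N_2,N_3}^L[L]$), so it is natural to treat both assertions together.

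For the base case $n=1$, we have $F_{N_1}^L=\delta_{[L],[N_1]}$, so both sides equal $[N_1]$. The case $n=2$ is the definition of $\ast$. For the inductive step, assume the formula for $n-1$ terms. Then
$$[N_1]\ast[N_2]\ast\cdots\ast[N_n]=[N_1]\ast\Bigl(\sum_{M\in\CA/\sim}F^M_{N_2,\ldots,N_n}[M]\Bigr)=\sum_{M,L\in\CA/\sim} F^M_{N_2,\ldots,N_n}F^L_{N_1,M}[L].$$
The required identity therefore reduces to the combinatorial equality
$$F^L_{N_1,\ldots,N_n}=\sum_{M\in\CA/\sim}F^L_{N_1,M}F^M_{N_2,\ldots,N_n}.$$

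The heart of the proof is this combinatorial identity, which I would establish by a direct bijection. Specifying a filtration $L=L_0\supset L_1\supset\cdots\supset L_n=0$ with $L_{i-1}/L_i\simeq N_i$ is the same as specifying (a) a submodule $L_1\subset L$ with $L/L_1\simeq N_1$, together with (b) a filtration of $L_1$ of type $(N_2,\ldots,N_n)$. Partitioning the set of such $L_1$'s by isomorphism class $M$, the number of $L_1\subset L$ with $L_1\simeq M$ and $L/L_1\simeq N_1$ is by definition $F^L_{N_1,M}$, and for each such $L_1$ the number of sub-filtrations is $F^M_{N_2,\ldots,N_n}$ (this count depends only on the isomorphism class of $L_1$). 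Summing yields the identity.

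The main (minor) obstacle is to be careful that, although the inner count $F^M_{N_2,\ldots,N_n}$ depends only on the isomorphism class of $L_1$, the sum over submodules $L_1$ of a fixed class $M$ does produce the factor $F^L_{N_1,M}$ in line with the conventions fixed in the excerpt (namely, $F^L_{N_1,M}$ counts submodules rather than short exact sequences, cf.\ relation (\ref{relationPF}) above); no normalization by automorphism groups is needed here. A natural alternative is to prove the analogous identity first for the quantities $P^L$ counting short exact sequences, and then translate via (\ref{relationPF}); this variant also makes the associativity of $\ast$ transparent and avoids any hidden coherence issue. Either way, the induction closes and delivers both associativity (case $n=3$) and the general multi-factor formula.
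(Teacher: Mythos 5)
The paper states this proposition without proof, as a standard fact recalled in the review of Hall algebras; your argument is the expected one. The core point — the bijection between filtrations $L\supset L_1\supset\cdots\supset L_n=0$ of type $(N_1,\ldots,N_n)$ and pairs (a submodule $L_1\subset L$ with $L/L_1\simeq N_1$, together with a filtration of $L_1$ of type $(N_2,\ldots,N_n)$), partitioned by the isomorphism class of $L_1$ — is correct and, because $F^{L_1}_{N_2,\ldots,N_n}$ depends only on that isomorphism class, yields exactly $F^L_{N_1,\ldots,N_n}=\sum_M F^L_{N_1,M}F^M_{N_2,\ldots,N_n}$, closing the induction.

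One point worth sharpening: your induction fixes the right-associating bracketing, so what it literally proves is $[N_1]\ast\bigl([N_2]\ast\cdots\ast[N_n]\bigr)=\sum_L F^L_{N_1,\ldots,N_n}[L]$. To extract genuine associativity (which the unbracketed LHS implicitly presupposes), one also needs the mirror decomposition that first picks out the bottom submodule $L_{n-1}$ with $L_{n-1}\simeq N_n$ and then a filtration of $L/L_{n-1}$ of type $(N_1,\ldots,N_{n-1})$, giving $F^L_{N_1,\ldots,N_n}=\sum_M F^L_{M,N_n}\,F^M_{N_1,\ldots,N_{n-1}}$; with both identities every bracketing evaluates to the same sum. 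You gesture at this via the $P^L$-variant, which is fine, but as written the single bijection only pins down one side of the $n=3$ associativity identity, so it is worth stating the second decomposition explicitly rather than claiming associativity falls out of the one-sided induction alone.
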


\begin{rem}
When $\CA$ is semi-simple, then $H'_\CA$ is commutative. The ``next'' case
is the following.
Let $\CA$ be the category of finite abelian $p$-groups. The algebra
$H'_\CA$ has a basis parametrized by partitions and
$H'_\CA=\BZ[u_1,u_2,\ldots]$ is a polynomial ring in the countably many
variables $u_i=[(\BZ/p)^i]$ (Steinitz-Hall).
\end{rem}

\subsubsection{Hall algebra for an $A_2$ quiver}

Let us now describe the Hall algebra for $\CA$ the category of
finite dimensional representations of the quiver $\Gamma=1\to 2$ over
a finite field $k$ with $q$ elements. 

The indecomposable
representations of $\Gamma$ are $S(1)$, $S(2)$ and $M$
(cf Example \ref{ex:examplesquivers}).
Let $f_1=[S(1)]$, $f_2=[S(2)]$ and
$f_{12}=[M]$. We find
$f_1\ast f_2=f_{12}+[S(1)\oplus S(2)]$ and
$f_2\ast f_1=[S(1)\oplus S(2)]$. The algebra $H'_\CA$ is not commutative.
We have $[f_1,f_2]=f_{12}$.

We have $f_1\ast f_{12}=q[M\oplus S(1)]$ and
$f_{12}\ast f_1=[M\oplus S(1)]$.
So, $f_1\ast f_{12}=q f_{12}\ast f_1$.
If we view $q$ as an indeterminate and specialize it to $1$,
then the Lie subalgebra of $H'_\CA$ generated by $f_1$, $f_2$ and
$f_{12}$ is isomorphic to the Lie algebra of strictly upper triangular
$3\times 3$-matrices:
$$f_1\mapsto \left(\begin{matrix}0&1&0\\0&0&0\\0&0&0\end{matrix}\right),\ 
f_2\mapsto \left(\begin{matrix}0&0&0\\0&0&1\\0&0&0\end{matrix}\right),\ 
f_{12}\mapsto \left(\begin{matrix}0&0&1\\0&0&0\\0&0&0\end{matrix}\right).$$

\subsubsection{Quantum groups as Ringel-Hall algebras}
Let $\Gamma$ be a quiver with vertex set $I$ and assume $\Gamma$ has
no loops. Let $\CA$ be the category of finite dimensional representations
of $\Gamma$ over a finite field $k$ with $q$ elements.

The {\em Euler form} is defined by
$$\langle M,N\rangle=\dim \Hom(M,N)-\dim\Ext^1(M,N)$$
for $M,N\in\CA$.

We define the {\em Ringel-Hall} algebra $H_\CA$ as the $\BC$-vector
space $\BC\otimes_{\BZ}H'_\CA$ with 
the product
$$[M]\cdot [N]=q^{\langle M,N\rangle/2} [M]*[N].$$

\smallskip
The graph underlying $\Gamma$ encodes a symmetric Cartan matrix,
hence give rise to the nilpotent part $\Gn^-$
of a Kac-Moody algebra.

\smallskip
We can now state Ringel's Theorem.

\begin{thm}[Ringel]
\label{th:Ringel}
There is an injective morphism of $\BC$-algebras
$U_q(\Gn^-)\hookrightarrow H_\CA,\ f_i\mapsto [S(i)]$. If $\Gamma$ corresponds
to a Dynkin diagram, then this morphism is an isomorphism.
\end{thm}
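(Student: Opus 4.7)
My plan is to proceed in three stages: establish the quantum Serre relations to define the map, prove injectivity using the natural grading by the positive root cone, and then prove surjectivity in the Dynkin case via Gabriel's theorem.

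\medskip

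\textbf{Step 1 (defining the map).} The first task is to verify that the elements $f_i := [S(i)] \in H_\CA$ satisfy the quantum Serre relations of \S\ref{se:defKM}. For $i \neq j$, the simples $S(i), S(j)$ have disjoint supports, so $\Hom(S(i),S(j)) = 0$ and $\dim\Ext^1(S(i),S(j)) = d_{ji}$ (the number of arrows $j \to i$); in particular $\langle S(i), S(j)\rangle = -d_{ji}$, so the Cartan entry $a_{ij} = -d_{ij} - d_{ji}$ governs both the $q$-twist and the Serre exponents. I would then compute $f_i^{\ast r} * f_j$ directly via the Hall product: every $L$ with $[L]$ appearing is an extension of $S(j)$ by $S(i)^{\oplus r}$ and is thus classified up to isomorphism by a $\GL_r(k)$-orbit in $\Ext^1(S(j), S(i)^{\oplus r}) \cong k^{d_{ij} r}$, which gives an explicit formula for $F^L_{S(i)^{\oplus r}, S(j)}$ in terms of Gaussian binomials. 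Inserting the Euler-form twist $q^{\langle\cdot,\cdot\rangle/2}$ converts the resulting identity into exactly the quantum Serre relation, using the standard $q$-binomial identity $\sum_r (-1)^r \left[\begin{matrix}m+1\\r\end{matrix}\right]_q q^{r(m-r)/2}(\cdots) = 0$. This produces a well-defined algebra map $\phi : U_q(\Gn^-) \to H_\CA$ with $\phi(f_i) = [S(i)]$.

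\medskip

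\textbf{Step 2 (injectivity).} Both algebras are graded by the positive root cone $\BZ_{\geq 0}I$: on the Hall side via dimension vectors, and on the quantum side by the grading with $\deg f_i = \alpha_i$. The map $\phi$ is homogeneous. I would prove injectivity by exhibiting a PBW-type basis on each side. Fix a convex ordering of an enumeration of the positive roots (which exists for any quiver via an associated Coxeter-like procedure, or in the Dynkin case via a reduced expression for the longest element). This gives a PBW basis $\{E_{\beta_1}^{(m_1)} \cdots E_{\beta_N}^{(m_N)}\}$ of $U_q(\Gn^-)$. By induction on the ordering and on the height of the root, I would show that $\phi(E_\beta)$ equals $[M]$ for some distinguished representation $M$ (an indecomposable, if one exists with $\underline{\dim}\,M = \beta$) plus $\BC$-linear combinations of $[L]$ with $L$ having a decomposition strictly ``larger'' in an appropriate dominance order. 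A triangularity argument then shows the images of PBW monomials are linearly independent in $H_\CA$, giving injectivity.

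\medskip

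\textbf{Step 3 (surjectivity when $\Gamma$ is Dynkin).} Here I invoke Gabriel's theorem: indecomposable representations are in bijection with $\Delta^+$ via the dimension vector map. Combined with Krull-Schmidt, the set of isomorphism classes of representations is parametrized by functions $\Delta^+ \to \BZ_{\geq 0}$, matching the Kostant partition function count on the quantum group side. A dimension count per grade then shows $\phi$ is an isomorphism on each homogeneous component once injectivity is known. Alternatively, and more constructively, using an admissible ordering of $\Delta^+$ coming from the Auslander-Reiten quiver one has $\Ext^1(M_{\beta_i}, M_{\beta_j}) = 0$ for $i < j$; then for indecomposables $M_{\beta_1}, \ldots, M_{\beta_N}$ ordered this way, the product $[M_{\beta_1}]^{\ast m_1} \cdots [M_{\beta_N}]^{\ast m_N}$ equals (up to a nonzero $q$-power) the class of $\bigoplus_k M_{\beta_k}^{\oplus m_k}$. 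Inductively on the ordering this shows each $[M_\beta]$ lies in the image of $\phi$ (since by Step 2, $\phi(E_\beta)$ equals $[M_\beta]$ modulo classes that are already in the image), and hence so does every $[L]$.

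\medskip

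The main obstacle I expect is the precise bookkeeping in Step 1: getting the Hall numbers $F^L_{S(i)^{\oplus r}, S(j)}$ right as $q$-binomials and tracking the Euler-form exponents so they combine cleanly into the quantum Serre identity. The construction of a PBW basis and of compatible orderings in Steps 2--3 is technical but standard once Gabriel's theorem and Auslander-Reiten theory are available; conceptually, the content of the theorem is that the twist by the Euler form is the unique deformation that makes the extension-counting on the Hall side compatible with the quantum Serre relations.
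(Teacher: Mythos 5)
The paper states this as Ringel's Theorem with no proof, so there is nothing in-text to compare against; I will assess your sketch on its own terms. Step 1 is essentially Ringel's original argument and is sound: the Hall numbers $F^L_{S(i)^{\oplus r},S(j)}$ come from the $\GL_r$-orbit decomposition of $\Ext^1(S(j),S(i)^{\oplus r})$, are Gaussian binomials, and after the Euler-form twist their alternating sum telescopes into the quantum Serre relation. (One small convention slip: with this paper's definitions $\dim\Ext^1(S(i),S(j))=d_{ij}$, the number of arrows $i\to j$, not $d_{ji}$; this does not affect the end result since $a_{ij}=-d_{ij}-d_{ji}$ is symmetric.) Step 3, surjectivity in Dynkin type via Gabriel's theorem and an Auslander--Reiten ordering with $\Ext^1(M_{\beta_i},M_{\beta_j})=0$ for $i<j$, is also standard and correct.

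The genuine gap is Step 2. The theorem asserts injectivity for \emph{every} loop-free quiver $\Gamma$, and your argument --- fix a finite convex enumeration $\beta_1,\ldots,\beta_N$ of positive roots, form a PBW basis, and match PBW monomials triangularly against classes of distinguished indecomposables --- is only available in finite (Dynkin) type. For non-Dynkin $\Gamma$ the positive root system is infinite, imaginary roots have multiplicities, and there is no canonical indecomposable with a prescribed imaginary dimension vector; moreover, Lusztig's PBW-type bases of $U_q(\Gn^-)$ beyond finite type are themselves usually constructed by way of Hall algebras, so invoking them here would be circular. The standard route to injectivity in full generality is Green's: endow the twisted Hall algebra with its coproduct, show it is a braided bialgebra (Green's theorem on Hall numbers for pullbacks and pushouts), observe that $\phi$ is a bialgebra morphism and an isometry between Green's form and Lusztig's form, and conclude injectivity from the non-degeneracy of Lusztig's bilinear form on $U_q(\Gn^-)$. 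That coalgebra/pairing mechanism, rather than PBW triangularity, is the missing ingredient in your Step 2.
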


In the next section we will explain, following Lusztig, how to
construct directly the non-quantum enveloping algebra $U(\Gn^-)$.

\subsection{Functions on moduli stacks of representations of quivers}

We refer to \cite{ChrGi} for a general introduction to geometric
representation theory.

\subsubsection{Moduli stack of representations of quivers}

Let $\Gamma$ be a quiver with vertex set $I$.
Let $\Rep=\Rep(\Gamma)$ be the moduli stack of representations of $\Gamma$
over $\BC$. This is a geometrical object whose points are isomorphism
classes of finite dimensional representations of $\Gamma$ over $\BC$.
The moduli stack also encodes the information of the group $\Aut(M)$,
given $M$ a representation of $\Gamma$. We have
$\Rep=\coprod_{d\in\BZ_{\ge 0}^I}\Rep_d$, where $\Rep_d$ corresponds
to representations $V$ with $\dim V_i=d_i$ for $i\in I$. We
refer to \S \ref{se:tensorLusztig} for a more precise description.

\smallskip
This stack can be described explicitly as a quotient.
Let $d\in \BZ_{\ge 0}^I$. The data of a representation of
$\Gamma$ with underlying vector spaces $\{\BC^{d_i}\}_i$ is the same
as the data of an element of $\CM_d=\bigoplus_{a:i\to j}
\Hom_{\BC}(\BC^{d_i},\BC^{d_j})$, where $a$ runs
over the arrows of $\Gamma$. This vector space has an action by
conjugation of the group $G_d=\prod_i \GL_{d_i}(\BC)$:
$$g\cdot (f_a)_a=(g_j f_a g_i^{-1})_a \text{ for }g=(g_i)_i.$$

Two representations are isomorphic is they correspond to elements of
$\CM_d$ in the same $G_d$-orbit. Given $f\in\CM_d$, we have
$\Stab_{G_d}(f)=\Aut(M)$, where $M$ is the representation of $\Gamma$
defined by $f$.

We have $\Rep_d=\CM_d/G_d$. 

\subsubsection{Convolution of functions}
\label{se:convolutionsets}
Let $X$ be a set and $\CF(X)$ the vector space of functions $X\to\BC$.

Consider a map $\phi:X\to Y$ between sets.

We define
$\phi^*:\CF(Y)\to\CF(X)$ by $\phi^*(f)(x)=f(\phi(x))$.

Assume $\phi^{-1}(y)$ is finite for all $y\in Y$. Define
$\phi_*:\CF(X)\to\CF(Y)$ by $\phi_*(f)(y)=\sum_{x\in \phi^{-1}(y)}f(x)$.

Now, given a diagram
$$\xymatrix{
& Z \ar[dl]_p \ar[d]_q \ar[dr]^r \\
X & X & X}$$
with the fibers of $r$ finite, we define a convolution of functions
$$\CF(X)\times\CF(X)\to\CF(X),\ (f,g)\mapsto f\circ g=
r_*(p^*(f)\cdot q^*(g)).$$

\subsubsection{Convolution of constructible functions}
We want now to extend the constructions of \S \ref{se:convolutionsets}
to the case of varieties, or rather stacks. The main problem is
to give a sense to $\phi_*$ when $\phi$ doesn't have finite fibers.

Let $X$ be a stack over $\BC$. We define $\CF_c(X)$, the space
of {\em constructible functions}, as the subspace
of $\CF(X)$ generated by the functions $1_V$, where $V$
runs over locally closed subspaces of $X$. Here, $1_V(x)=1$ if
$x\in V$ and $1_V(x)=0$ otherwise.

Given $X$ a stack, we denote by $\chi(X)=\sum_{i\ge 0}(-1)^i
\dim H^i(X)$ the {\em Euler characteristic} of $X$. There is
a unique extension of $\chi$ to disjoint unions of locally closed subsets of
stacks that satisfies $\chi(X)=\chi(V)+\chi(X-V)$.

\smallskip
Given $\phi:X\to Y$ a morphism of stacks, we define $\phi_*$ as follows.
Let $f=\sum_\alpha m_\alpha 1_{V_\alpha}$, where the $V_\alpha$ are
locally closed subsets of $X$ and $m_\alpha\in\BC$. We
put 
$$\phi_*(f)(y)=\sum_\alpha m_\alpha \chi(V_\alpha\cap \phi^{-1}(y)).$$

\subsubsection{Realization of $U(\Gn^-)$}
\label{se:lusztigfunctions}
Denote by $X$ the stack over $\mathrm{Rep}\times\mathrm{Rep}$ of pairs
$(V\subset V')$. We have three morphisms $p,q,r:X\to\Rep$
$$\xymatrix{
& X \ar[dl]_-p \ar[d]_-q \ar[dr]^-r \\
\Rep & \Rep & \Rep}$$
$$p(V\subset V')=V,\ q(V\subset V')=V'/V \text{ and } r(V\subset V')=V'.$$

Define a convolution
$$\CF_c(\Rep)\times\CF_c(\Rep)\to\CF_c(\Rep),\ (f,g)\mapsto f\circ g=
r_*(p^*(f)\cdot q^*(g)).$$

Let $a_i=1_{\Rep_i}$: we have $a_i(S(i))=1$ and $a_i(M)=0$ if
$M$ is a representation of $\Gamma$ not isomorphic to $S(i)$.

\begin{thm}[Lusztig]
There is an injective morphism of $\BC$-algebras
$U(\Gn^-)\to \CF_c(\Rep),\ f_i\mapsto a_i$. If $\Gamma$ corresponds to a
Dynkin diagram, this is an isomorphism.
\end{thm}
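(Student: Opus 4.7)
The plan is to emulate Ringel's proof of Theorem \ref{th:Ringel}, replacing cardinalities over finite fields by Euler characteristics over $\BC$, and proceeds in four steps: associativity, Serre relations, injectivity, and the Dynkin isomorphism.

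\textbf{Associativity.} First I would check that the convolution on $\CF_c(\Rep)$ is associative by identifying an iterated product $f_1\circ\cdots\circ f_n$ with $\pi_{n,*}(p_1^*f_1\cdots p_n^*f_n)$, where $\pi_n$ is the projection to $\Rep$ from the stack $\Fl_n$ of filtrations $0=V_0\subset V_1\subset\cdots\subset V_n=V$, and $p_i$ sends such a flag to the subquotient $V_i/V_{i-1}$. This identity follows from base change and a projection formula for constructible functions, and delivers associativity at once.

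\textbf{Serre relations.} Using the flag description, $(a_{i_1}\circ\cdots\circ a_{i_n})(V)$ is the Euler characteristic of the substack of $\Fl_n(V)$ parametrizing filtrations of $V$ with prescribed subquotient types $S(i_1),\ldots,S(i_n)$. Fix $i\ne j$ and set $m=-a_{ij}$. The substacks underlying $a_i^{\circ r}\circ a_j\circ a_i^{\circ(m+1-r)}$ are affine bundles of common rank over a stack of ordinary flags of subspaces (tracking the $S(i)$-lines at each step with a distinguished $S(j)$-insertion); the affine contribution, controlled by the arrows of $\Gamma$ incident to $i$ and $j$, is the same across all $r$. Their Euler characteristics therefore reduce to those of Grassmannian flag varieties, and a standard alternating-sign binomial identity yields the Serre relation
\[
\sum_{r=0}^{m+1}(-1)^r\binom{m+1}{r}\,a_i^{\circ r}\circ a_j\circ a_i^{\circ(m+1-r)}=0,
\]
together with $a_i\circ a_j=a_j\circ a_i$ when $a_{ij}=0$. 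This produces a morphism of $\BC$-algebras $\Phi:U(\Gn^-)\to\CF_c(\Rep)$ with $\Phi(f_i)=a_i$.

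\textbf{Injectivity and Dynkin case.} The morphism $\Phi$ is graded by $\BZ_{\ge 0}^I$ via dimension vectors. To prove injectivity, I would produce test representations separating PBW monomials: fixing a convex ordering of the positive roots and choosing an indecomposable $M_\beta$ for each (available by Kac's theorem), the evaluation of iterated convolutions associated with reduced expressions on direct sums $\bigoplus_\beta M_\beta^{n_\beta}$ yields an upper-triangular pairing with the PBW basis of $U(\Gn^-)$, hence linear independence of its image. If $\Gamma$ is Dynkin, Gabriel's theorem gives a bijection between positive roots of $\Gg$ and indecomposable representations of $\Gamma$, and every representation decomposes uniquely as a direct sum of indecomposables; hence $\Rep_d$ has only finitely many isomorphism classes, and $\dim\CF_c(\Rep_d)$ equals the Kostant partition function of $d$, matching $\dim U(\Gn^-)_{-d}$. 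Injectivity plus matching graded dimensions forces $\Phi$ to be an isomorphism.

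The main obstacle is the Serre-relation step: one must verify that the substacks of $\Fl_n$ cut out by the condition that each step be a subrepresentation are affine bundles of common rank over pure Grassmannian flag varieties across all terms in the alternating sum. This requires isolating the role of arrows at the vertices $i$ and $j$ and checking that the remaining arrow data contributes the same affine shift everywhere, so that only the combinatorial Grassmannian Euler characteristics remain and Pascal's identity closes the calculation; this is the Euler-characteristic analogue of Ringel's multiplicative formula for filtrations and is where the geometric input genuinely enters.
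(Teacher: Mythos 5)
The paper itself gives no proof of this statement; it is a cited result of Lusztig, so your proposal must stand on its own. Your overall plan—associativity via flag stacks, Serre relations, then injectivity and a graded dimension count for the Dynkin case—is the right shape and mirrors the Ringel/Lusztig strategy, and the associativity and Dynkin-isomorphism steps are sound.

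However, the Serre-relation step, which you yourself identify as the crux, contains a genuine gap. The claim that the term varieties underlying $a_i^{\circ r}\circ a_j\circ a_i^{\circ(m+1-r)}$ are ``affine bundles of common rank over a stack of ordinary flags of subspaces,'' with the same affine contribution for every $r$, is false. Take $\Gamma$ of type $A_2$ with $i=1$, $j=2$ (so $m=1$), and evaluate the three terms at $V$ of dimension vector $(2,1)$ with arrow map $\phi\colon\BC^2\to\BC$ of rank $1$. In the convention of \S 4.2.4 with $r(V\subset V')=V'$, the varieties of filtrations $0\subset V_1\subset V_2\subset V$ realizing $a_1^{\circ r}\circ a_2\circ a_1^{\circ(2-r)}$ are: empty for $r=2$, a point for $r=1$, and $\BP^1$ for $r=0$ (equivalently $0,1,2$ as Euler characteristics), while over a $V$ with $\phi=0$ all three are $\BP^1$. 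So the three term varieties are not affine bundles of any common rank over the common flag variety $\BP^1$, and their Euler characteristics do not separately reduce to $\chi(\BP^1)$; only the alternating sum $0-2\cdot 1+2=0$ (respectively $2-2\cdot 2+2=0$) vanishes. The actual proof requires stratifying these flag varieties by the relative position of the filtration with respect to $\ker\phi$ and $\im\phi$ and verifying the vanishing stratum by stratum (or, as Lusztig does, deducing it from a sheaf-level statement); the naive ``common affine shift'' reduction to Pascal's identity does not hold. Your injectivity argument via evaluation on direct sums of indecomposables along a convex order is also more delicate than stated outside the Dynkin case (imaginary root multiplicities), though this is a sketchiness issue rather than an outright error.
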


\subsection{Flag varieties}
We recall classical facts on affine Hecke algebra actions and flag varieties
in type $A$ (cf e.g. \cite{ChrGi}) and then flags of representations
of quivers \cite{Lu1}.

\subsubsection{Notations}
We fix a prime number $l$ and put $\Lambda=\bar{\BQ}_l$.
By scheme, we mean a separated scheme of finite type over $\BC$. 
Given $X$ a scheme or a stack, we denote by $D(X)$ the bounded derived
category of $l$-adic constructible sheaves on $X$ (cf \cite{LaOl1,LaOl2}).
All quotients will be taken in the category of stacks.

Given $X$ a smooth stack and $i:Z\to X$ a smooth closed substack, both of
pure dimension, we have a Gysin morphism
$i_* \Lambda_Z\to \Lambda_X[2(\dim X-\dim Z)]$. Let $D$ be the duality functor.
Via the canonical identifications
$D(\Lambda_Z)\iso \Lambda_Z[2\dim Z]$, $D(\Lambda_X)\iso \Lambda_X[2\dim X]$ and
$D\circ i_*\iso i_*\circ D$, the Gysin morphism is the dual of the canonical
map $\Lambda_X\to i_* \Lambda_Z$. Note finally that the automorphism of
$\Ext^*(\Lambda_X,\Lambda_X)$ induced by taking $\alpha$ to $D(\alpha)$ is the identity.

\smallskip
Let $\CC$ be a graded additive category and $M,N$ two objects
of $\CC$. We put $\Hom^\bullet_\CC(M,N)=\bigoplus_i \Hom(M,N[i])$.

Given a graded ring $A$, we define the graded dimension of a free
finitely generated graded $A$-module by $\mathrm{grdim}(A[i])=q^{-i/2}$ and
$\mathrm{grdim}(M)=\mathrm{grdim}(M_1)+\mathrm{grdim}(M_2)$ if
 $M\simeq M_1\oplus M_2$.

\subsubsection{Nil affine Hecke algebra and $\BP^1$-bundles}
\label{se:nilP1}
Let $X$ be a stack, $E$ a rank $2$ vector bundle on $X$ and
$\pi:Y=\BP(E)\to X$ the projectivized bundle.

Let $\alpha=c_1(\CO_\pi(-1))$ and
$\beta=c_1(\pi^*E/\CO_\pi(-1))$.
Let $x=\pi_*(\alpha)$ and $y=\pi_*(\beta)$, viewed in
$\Hom(\pi_*\Lambda_Y,\pi_*\Lambda_Y[2])$.

Let $T\in \Hom(\pi_*\Lambda_Y,\pi_*\Lambda_Y[-2])$ be the composition
$$T:\pi_*\Lambda_Y \xrightarrow{t} \Lambda_X[-2]
\xrightarrow{\can} \pi_* \Lambda_Y[-2]$$
where $t:\pi_*\Lambda_Y \xrightarrow{\can} \CH^2(\pi_*\Lambda_Y)[-2]
\xrightarrow[\sim]{\mathrm{tr}}\Lambda_X[-2]$ is the trace map.

\begin{prop}
\label{pr:HeckeP1}
We have $T^2=0$ $yT-Tx=1$ $xy=yx$ and $T(x+y)=(x+y)T$.
This defines a morphism of algebras 
$${^0H}_2\to \End^\bullet(\pi_*\Lambda_Y),\ X_1\mapsto x,\ X_2\mapsto y,\
T_1\mapsto T.$$
\end{prop}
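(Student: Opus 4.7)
The plan is to exploit the projective bundle decomposition $\pi_*\Lambda_Y \cong \Lambda_X \oplus \Lambda_X[-2]$, with the first summand given by the adjunction unit $\can : \Lambda_X \to \pi_*\pi^*\Lambda_X = \pi_*\Lambda_Y$ and the second summand picked out by the trace $t : \pi_*\Lambda_Y \to \Lambda_X[-2]$; under this decomposition $\can$ is the inclusion of the first summand and $t$ the projection onto the second, so in particular $t \circ \can = 0$. With this identification, $T = \can[-2] \circ t$ is by construction the morphism which sends the trace summand isomorphically onto the unit summand with degree shift $-2$ and annihilates the unit summand. The relation $T^2 = 0$ is then immediate, since $T^2$ factors through $t \circ \can = 0$.

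Applying the Whitney sum formula to the tautological sequence $0 \to \CO_\pi(-1) \to \pi^*E \to \pi^*E/\CO_\pi(-1) \to 0$ yields $\alpha + \beta = \pi^*c_1(E)$ and $\alpha\beta = \pi^*c_2(E)$, and in particular the key identity $\alpha^2 = \pi^*c_1(E)\cdot\alpha - \pi^*c_2(E)$. The first of these gives $T(x+y) = (x+y)T$ formally: $x+y$ acts on $\pi_*\Lambda_Y$ as cup product with the pulled back class $\pi^*c_1(E)$, which by the projection formula is $H^*(X)$-linear, while $T$ is also $H^*(X)$-linear (being built from the unit and counit of the $(\pi^*, \pi_*)$-adjunction), so the two commute. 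The relation $xy = yx$ is inherited directly from commutativity of the cup product on $\Lambda_Y$.

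The main obstacle is the Leibniz-type identity $yT - Tx = 1$, which is the only non-formal relation. To establish it I would write out $x$, $y$ and $T$ as explicit $2 \times 2$ matrices in a basis $\{1, \gamma\}$ of $\pi_*\Lambda_Y$ normalized so that $t(\gamma) = 1$ (equivalently $\gamma = -\alpha$), using $\alpha^2 = \pi^*c_1(E)\cdot\alpha - \pi^*c_2(E)$ to compute the actions of $x$ and $y$ on the trace summand. A short matrix multiplication then delivers $yT - Tx = \id$; the essential point is that cup product by $\alpha$ carries the unit summand isomorphically onto the trace summand, reflecting the fact that $\alpha$ restricts to a generator of $H^2(\BP^1)$ on each fibre, and this is the source of the ``$1$'' on the right-hand side.

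Finally, under the correspondence $X_1 \mapsto x$, $X_2 \mapsto y$, $T_1 \mapsto T$, the four verified relations imply the defining relations of ${^0H}_2$: the inhomogeneous Hecke relation $T_1 X_1 - X_2 T_1 = -1$ is precisely $yT - Tx = 1$, its partner $T_1 X_2 - X_1 T_1 = 1$ follows by combining $yT - Tx = 1$ with $T(x+y) = (x+y)T$, and the remaining relations ($T_1^2 = 0$, $X_1 X_2 = X_2 X_1$) match directly. This yields the advertised algebra morphism ${^0H}_2 \to \End^\bullet(\pi_*\Lambda_Y)$.
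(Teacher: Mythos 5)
Your proof is correct and follows essentially the same route as the paper: both use the projective bundle decomposition $\pi_*\Lambda_Y\simeq\Lambda_X\oplus\Lambda_X[-2]$, both derive $T^2=0$ and $t\circ\can=0$ from degree-vanishing, both get $T(x+y)=(x+y)T$ and $xy=yx$ from $H^*(X)$-linearity via the projection formula, and both reduce $yT-Tx=1$ to the fact that $-\alpha$ integrates to $1$ on each fibre. The only cosmetic differences are your choice of basis vector $\gamma=-\alpha$ (the paper uses $\alpha=x\circ\can$) and your packaging of the Leibniz check as a single $2\times 2$ matrix identity rather than as two separate verifications on basis elements.
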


\begin{proof}
We have $\alpha+\beta=c_1(\pi^*E)$ and $\alpha\beta=c_2(\pi^*E)$.
The composition 
$$\pi_*\pi^*\Lambda_X\xrightarrow[\sim]{\can}\pi_*\Lambda_Y
\xrightarrow{T}\pi_*\Lambda_Y[-2]\xrightarrow[\sim]{\can}
\pi_*\pi^*\Lambda_X[-2]$$
comes from natural transformations of functors, hence it commutes
with $\End^\bullet(\Lambda_X)$. It follows that $T$ commutes with
$c_1(E)=x+y$ and $c_2(E)=xy$.

\smallskip
Since $T^2$ factors through a map $\Lambda_X[-2]\to \Lambda_X[-4]$,
we have $T^2=0$.

\smallskip

The composition
$$\Lambda_X\xrightarrow{\can}\pi_* \Lambda_Y\xrightarrow{-x}
\pi_* \Lambda_Y[2]\xrightarrow{t}\Lambda_X$$
is the identity.
Indeed, after taking the fiber at a point $P\in X$, the composition is
$$\Lambda\xrightarrow{c_1(\CO(1))}H^2(\pi^{-1}(P),\Lambda)
\xrightarrow{t}\Lambda$$ 
and $c_1(\CO(1))$ is the class of a point.

So, we have an isomorphism
$$(\can,x\circ\can):
\Lambda_X\oplus \Lambda_X[-2]\iso \pi_*\Lambda_Y.$$

Note that the composition
$$\Lambda_X\xrightarrow{\can}\pi_*\Lambda_Y\xrightarrow{y}
\pi_*\Lambda_Y[2]\xrightarrow{t}\Lambda_X$$
is also the identity.

\smallskip
The composition
$$\Lambda_X\xrightarrow{\can}\pi_*\Lambda_Y\xrightarrow{T}
\pi_*\Lambda_Y[-2]$$
vanishes since it factors through a map $\Lambda_X\to \Lambda_X[-2]$.
It follows that
$$\Lambda_X\xrightarrow{\can}\pi_*\Lambda_Y\xrightarrow{yT-Tx}
\pi_*\Lambda_Y$$
is equal to the canonical map.

\smallskip

We have a commutative diagram
$$\xymatrix{
\Lambda_X\ar[r]^-{\can}\ar[dr]_-{c_2(E)} & \pi_*\Lambda_Y\ar[r]^-{xy} &
\pi_*\Lambda_Y[4]\ar[r]^-{T} & \pi_*\Lambda_Y[2] \\
& \Lambda_X[4] \ar[ur]_-{\can}}.$$
So, the composition
$\Lambda_X\to \pi_*\Lambda_Y[2]$
vanishes since it factors through a map $\Lambda_X[4]\to \Lambda_X[2]$.

It follows that the composition
$$\Lambda_X\xrightarrow{\can}\pi_*\Lambda_Y\xrightarrow{y}
\pi_*\Lambda_Y[2] \xrightarrow{yT-Tx} \pi_*\Lambda_Y[2]$$
is equal to the composition
$$\Lambda_X\xrightarrow{\can}\pi_*\Lambda_Y\xrightarrow{y}
\pi_*\Lambda_Y[2].$$
So, $yT-Tx=1$.
\end{proof}

\subsubsection{Nil affine Hecke algebras and flag varieties}
\label{se:flag}
We recall now the construction of an action of ${^0H}_n$ on
$H^*(\Gr_n/\GL_n)$ (cf \cite{Ku}), where $\Gr_n$ is the variety of
complete flags in $\BC^n$.

Let $\psi:\Gr_n/\GL_n\to\mathrm{pt}/\GL_n$ be the canonical map.

Consider the first Chern class of the line bundle defined by
$V_d/V_{d-1}$ over a complete flag
$(0=V_0\subset V_1\subset\cdots\subset V_n=\BC^n)$ and
let $X_d$ be the corresponding element $X_d:\psi_*\Lambda\to\psi_*\Lambda[2]$.

Let $\Gr_n(d)$ the the variety of flags
$(0=V_0\subset V_1\subset\cdots\subset V_{n-1}=\BC^n)$ such that
$\dim V_r/V_{r-1}=1$ for $r\not=d$ and
$\dim V_d/V_{d-1}=2$. The canonical map
\begin{align*}
\Gr_n&\to \Gr_n(d)\\
(0=V_0\subset V_1\subset\cdots\subset V_n=\BC^n)&\mapsto
(0=V_0\subset V_1\subset\cdots\subset V_{d-1}\subset V_{d+1}\subset\cdots
\subset V_n=\BC^n)
\end{align*}
is the projectivization
of the $2$-dimensional vector bundle $V_d/V_{d-1}$ over $\Gr_n(d)$.
It induces a map $p_d:\Gr_n/\GL_n\to \Gr_n(d)/\GL_n$.

Let $T\in \Hom(p_{d*}\Lambda,p_{d*}\Lambda[-2])$ be the composition
$$T:p_{d*}\Lambda \xrightarrow{t} \Lambda[-2]
\xrightarrow{\can} p_{d*} \Lambda[-2]$$
where $t:p_{d*}\Lambda \xrightarrow{\can} \CH^2(p_{d*}\Lambda)[-2]
\xrightarrow[\sim]{\mathrm{tr}}\Lambda[-2]$ is the trace map.
We denote by $T_d:\psi_*\Lambda\to\psi_*\Lambda[-2]$ the induced map.

\medskip
Let $X$ be a stack. The data of a rank $n$ vector bundle $\CL$ on $X$ is
equivalent to the data of a morphism of stacks $X\to \mathrm{pt}/\GL_n$.
Let $Y$ be the stack of full flags in a rank $n$ vector bundle
$\CL$ on $X$ and $\phi:Y\to X$ be the
associated map: this is the pullback of $\psi$ via $l$.

The following Theorem follows from Proposition \ref{pr:HeckeP1}, together with
a verification of the braid relations between $T_d$'s.

\begin{thm}
The construction above provides by base change a
morphism ${^0H}_n\to \End^\bullet(\phi_*\Lambda)$.
\end{thm}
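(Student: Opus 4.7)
The plan is to reduce the theorem to the universal case and then identify the construction with the polynomial representation of ${^0H}_n$ via BGG-Demazure operators.

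First, I would perform a reduction to the universal case $X = \mathrm{pt}/\GL_n$, $Y = \Gr_n/\GL_n$. The data of a rank-$n$ vector bundle $\CL$ on $X$ is a classifying map $l: X \to \mathrm{pt}/\GL_n$, and $\phi$ is the pullback of $\psi$ along $l$. By proper smooth base change, $\phi_*\Lambda \simeq l^*\psi_*\Lambda$, and both the Chern class operators $X_d$ and the trace operators $T_d$ are pulled back (Chern classes are natural; trace maps factoring through projectivizations of rank-$2$ subquotients commute with base change). Hence every algebra relation valid in the universal case descends to the general case, and it suffices to treat $\psi$.

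Next, in the universal case I would identify $\End^\bullet(\psi_*\Lambda)$ explicitly. The classical identifications give $H^\bullet(\mathrm{pt}/\GL_n) = P_n^{\GS_n}$ and $H^\bullet(\Gr_n/\GL_n) = P_n$, and since $\psi$ is smooth proper with cohomology concentrated in even degrees, $\psi_*\Lambda$ is semisimple and there is a canonical graded $P_n^{\GS_n}$-algebra isomorphism $\End^\bullet(\psi_*\Lambda) \simeq \End^\bullet_{P_n^{\GS_n}}(P_n)$. By Proposition \ref{pr:MoritanilHecke} this target is isomorphic (up to the usual opposite-algebra convention) to ${^0H}_n$ itself, where $X_d$ acts by multiplication by the $d$-th Chern root and $T_d$ acts by the BGG-Demazure operator $\partial_d$.

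With this identification in hand I would check that the geometric $X_d$ and $T_d$ correspond to their algebraic namesakes. For $X_d$ this is immediate: cup product with $c_1(V_d/V_{d-1})$ is multiplication by $X_d \in P_n$. For $T_d$ I would apply Proposition \ref{pr:HeckeP1} to the $\BP^1$-bundle $p_d: \Gr_n/\GL_n \to \Gr_n(d)/\GL_n$, which is the projectivization of the rank-$2$ subquotient bundle $V_{d+1}/V_{d-1}$; the proposition gives that the geometric $T_d$ is $P_n^{s_d}$-linear, lowers degree by $2$, and satisfies the prescribed commutation relations with $X_d$ and $X_{d+1}$. These properties characterize $\partial_d$ among endomorphisms of $P_n$, so $T_d = \partial_d$ under the identification above.

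The main obstacle is the braid relation $T_d T_{d+1} T_d = T_{d+1} T_d T_{d+1}$, which involves a composition of three distinct $\BP^1$-bundles and cannot be handled fiberwise by Proposition \ref{pr:HeckeP1} alone. The polynomial-representation detour bypasses this difficulty: once $T_d$ is identified with $\partial_d$ in the faithful representation $\End^\bullet_{P_n^{\GS_n}}(P_n)$, Lemma \ref{le:demazurebraid} supplies the braid relation, the far commutations $T_d T_e = T_e T_d$ for $|d-e|>1$, and the relations $T_d^2 = 0$ for free, while the commutativity of the $X_i$'s and the $T_d$-$X_j$ commutations for $j \neq d, d+1$ are evident from the geometry. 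An alternative, purely geometric attack on the braid relation would interpret both sides as a trace map factoring through the partial flag bundle $\Gr_n/\GL_n \to \Gr_n(d,d+1)/\GL_n$ (a $\mathrm{Fl}_3$-bundle) corresponding to the longest element of $\GS_3$, but verifying the matching of coefficients there essentially recapitulates the polynomial computation, so the reduction to BGG operators remains the most efficient route.
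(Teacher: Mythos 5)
Your proof is correct, and it fills a gap that the paper leaves implicit. The paper's own argument is terse: it cites Proposition \ref{pr:HeckeP1} for the single-$T_d$ relations and simply states that the braid relations ``need a verification,'' without saying how. Your strategy differs at precisely this point. After the (shared) base-change reduction to the universal stack $\psi:\Gr_n/\GL_n\to\mathrm{pt}/\GL_n$, you go further and identify $\End^\bullet(\psi_*\Lambda)$ with $\End^\bullet_{P_n^{\GS_n}}(P_n)$ via equivariant formality and the decomposition theorem, then pin down the geometric $T_d$ as the BGG--Demazure operator $\partial_d$ using the $P_n^{s_d}$-linearity and degree-$(-2)$ commutation relations supplied by Proposition \ref{pr:HeckeP1}. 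At that point the braid relation is free from Lemma \ref{le:demazurebraid}. The trade-off is that your route imports a nontrivial geometric input (semisimplicity of $\psi_*\Lambda$ and the resulting isomorphism $\End^\bullet(\psi_*\Lambda)\cong\End^\bullet_{P_n^{\GS_n}}(P_n)$) that the paper never states, but in exchange it converts the one genuinely hard relation into a one-line polynomial computation and simultaneously recovers the faithfulness of the resulting representation. Small remarks: the characterization of $\partial_d$ you use--$P_n^{s_d}$-linear, degree $-2$, kills $1$ by degree reasons, and $T_d(X_{d+1})=1$ from $X_{d+1}T_d-T_dX_d=1$--does determine $T_d$ uniquely on the free rank-$2$ $P_n^{s_d}$-module $P_n$, so that step is sound; and the opposite-algebra twist you flag when invoking Proposition \ref{pr:MoritanilHecke} is indeed absorbed by the composition convention in $\End^\bullet(\psi_*\Lambda)$, consistent with how the paper already treats Proposition \ref{pr:HeckeP1} as a morphism from ${^0H}_2$ rather than its opposite.
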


Let $j:X'\to X$ be a closed immersion. Assume $X$ and $X'$ are smooth
of pure dimension. Then, the canonical morphism
$\phi_*\Lambda\to\phi'_*\Lambda$ and the morphism induced by the Gysin map
$\phi'_*\Lambda\to\phi_*\Lambda[2(\dim X-\dim X')]$ commute
with the action of ${^0H}_n$.

\subsubsection{Sheaves on moduli stacks of quivers}
\label{se:tensorLusztig}
We follow Lusztig \cite[\S 9]{Lu1}. Instead of working with equivariant
derived categories of varieties, we work with derived categories of the
corresponding quotient stacks. Given $X$ a variety acted on by $G$,
our perverse sheaves on $X/G$ correspond to shifts by $\dim G$
of the $G$-equivariant perverse sheaves on $X$ considered by Lusztig. The
duality is similarly shifted.

\smallskip
Given $X$ a scheme, we have an abelian category 
$\CO_X[\Gamma]\mMOD$ of representations
of $\Gamma$ over $X$, \ie, sheaves of $(\BZ \Gamma\otimes_{\BZ}\CO_X)$-modules.
Its objects
can be viewed as pairs $V=(\CV,\rho)$ where $\CV$ is an $\CO_X$-module
and $\rho:\BZ \Gamma\to \End(\CV)$ is a morphism of rings.

Given $J$ a subset of $I$, we put
$\CV_J=\sum_{i\in J}\rho(i)\CV$.

\smallskip
We denote by
$\Rep=\Rep(\Gamma)$ the algebraic stack of representations of $\Gamma$.
It is defined by assigning to a scheme $X$ the subcategory
of $\CO_X[\Gamma]\mMOD$ defined as follows:
\begin{itemize}
\item objects are pairs $(\CV,\rho)$ such that
$\CV$ is a vector bundle (of finite rank) over $X$
\item maps are isomorphisms.
\end{itemize}

Given a morphism of schemes $f:X\to Y$, we have a functor
$f^*:\Rep(\Gamma)(Y)\to\Rep(\Gamma)(X)$ given by base change.

\smallskip

We define the rank vector of $V=(\CV,\rho)$ as
$\rk V=\sum_{i\in I}(\rk \CV_i)i\in\BN[I]$.
We have a decomposition into connected components
$$\Rep=\coprod_{\alpha\in\BN[I]}\Rep_\alpha$$
where $\Rep_{\alpha}$ is the substack of representations with rank
vector $\alpha$. We denote by $j_\alpha$ the embedding of
the component $\Rep_\alpha$. We have
$\dim\Rep_{\alpha}=-\langle \alpha,\alpha\rangle$. Note that
$\Rep_0$ is a point.
We denote by $\CW_\alpha$ the tautological vector
bundle $\{V\}$ on $\Rep_\alpha$.

\smallskip

Given $i\in I$, we put $L_i=j_{i*} \Lambda[-1]$, a perverse sheaf on $\Rep$.

\medskip
Given $M\in D(\Rep_\mu)$ and $N\in D(\Rep_{\mu'})$, we put
$M\circ N=r_!(p^*M\otimes q^*N)[-\langle\mu',\mu\rangle]$.
This endows
$D(\Rep)$ with a structure of monoidal category (it is the reverse of the
tensor structure defined by Lusztig).

\subsubsection{Flags of representations}
\label{se:flags}
Given $\nu=(\nu^1,\ldots,\nu^n)\in\BN[I]^n$, we consider
the stack 
$\Rep_\nu$
of flags
$(0=V^0\subset V^1\subset \cdots\subset V^n)$ 
of representations of $\Gamma$
such that $\rk V^r/V^{r-1}=\nu^r$. We have a proper morphism
$$\pi_{\nu}:\Rep_\nu\to \Rep_{\sum_r \nu^r},\ 
(0=V^0\subset V^1\subset \cdots\subset V^n)
\mapsto V^n$$
and we put $\rho_{\nu}=j_{\sum_r \nu^r}\circ \pi_\nu:\Rep_\nu\to \Rep$.

\medskip
Given $\eps_1,\ldots,\eps_n\in\{\mathrm{empty},\mathrm{ss}\}$, we define
$\Rep_{(\nu^1_{\eps_1},\ldots,\nu^n_{\eps_n})}$ as the closed
substack of $\Rep_{(\nu^1,\ldots,\nu^n)}$ of flags
$(0=V^0\subset V^1\subset\cdots\subset V^n)$ such that for any $r$ such
that $\eps_r=\mathrm{ss}$, then
$V^r/V^{r-1}\simeq \bigoplus_i (V^r/V^{r-1})_i$ as a representation of
$\Gamma$.

\subsubsection{Flags and quotients}
Let $\bar{\Gamma}$ be the discrete quiver with vertex set $I$ and
let $\overline{\Rep}=\Rep(\bar{\Gamma})$.
The restriction map defines a morphism
$\Rep_\nu\to \overline{\Rep}_\nu$. This is a vector bundle of rank
$$\sum_{i\not=j} d_{ij}\sum_r \nu^r_i(\nu^1_j+\cdots+\nu^r_j).$$

\medskip
Let $\alpha=\sum_i \alpha_i i\in\BN[I]$. We denote by
$\widetilde{\Rep}_\alpha$ the variety
$\prod_{h:i\to j} \Hom_{\BC}(\BC^{\alpha_i},\BC^{\alpha_j})$, where
$h$ runs over the set of arrows of $\Gamma$.
There is an action of $G_\alpha=\prod_i \GL_{\alpha_i}$ on
$\widetilde{\Rep}_\alpha$ given by
$g\cdot f=(g_jf_hg_i^{-1})_{h:i\to j}$, where $g=(g_i)_{i\in I}$ and
$f=(f_h)_h$. A point of $\widetilde{\Rep}_\alpha$ defines a representation
of $\Gamma$ of dimension vector $\alpha$: this provides an isomorphism
$\widetilde{\Rep}_\alpha/G_\alpha\iso \Rep_\alpha$.
In particular, we obtain 
$\mathrm{B} G_\alpha\iso \overline{\Rep}_\alpha$.

\smallskip
Given $d_1,\ldots,d_r\ge 0$, we denote by $\Gr_{d_1,\ldots,d_r}$ the 
variety of flags $(0=V_0\subset V_1\subset\cdots\subset V_r=\BC^{\sum d_l})$
such that $\dim V_l/V_{l-1}=d_l$.
Let $\nu=(\nu^1,\ldots,\nu^n)\in\BN[I]^n$.
Let $\alpha=\sum_r \nu^r$ and
$n_i=\sum_{r=1}^n \nu^r_i$. We denote by
$\widetilde{\Rep}_\nu$ the subvariety of
$\prod_i \Gr_{\nu^1_i,\ldots,\nu^n_i} \times \widetilde{\Rep}_\alpha$ given by
families $((0=V_{i,0}\subset\cdots\subset V_{i,n}=\BC^{n_i})_i,(f_h)_h)$
such that
$f_h(V_{i,r})\subset V_{j,r}$ for all $h:i\to j$ and all $r$. The diagonal
action of $G=G_\alpha$ restricts to an action on $\widetilde{\Rep}_\nu$.
Sending a point to the associated filtered representation of $\Gamma$
defines an isomorphism
$\widetilde{\Rep}_\nu/G\iso \Rep_\nu$. Let $P_i$ be the parabolic
subgroup of $\GL_{n_i}$ stabilizing the standard flag
$F_i=(V_{i,0}=0\subset V_{i,1}=\BC^{\nu_i^1}\oplus 0\subset\cdots\subset
V_{i,n}=\BC^{\nu_i^1}\oplus\cdots \oplus\BC^{\nu_i^n})$ and let
 $P=\prod_i P_i$. We have a canonical isomorphism
$G/P\iso \prod_i \Gr_{\nu^1_i,\ldots,\nu^n_i}$ inducing an isomorphism
$G\setminus\! G/P\iso \overline{\Rep}_\nu$.

\smallskip
Let $\nu'=(\nu^{\prime 1},\ldots,\nu^{\prime n'})\in\BN[I]^{n'}$. 
We assume $\alpha=\sum_r \nu^{\prime r}$. This
defines as above a parabolic subgroup $P'$ of $G$. We denote
by $W$, $W_P$ and $W_{P'}$ the Weyl groups of $G$, $P$ and $P'$.
We have an isomorphism
$$\left(\widetilde{\Rep}_\nu\times_{\widetilde{\Rep}_\alpha}
\widetilde{\Rep}_{\nu'}\right)/G\iso 
\Rep_\nu\times_{\Rep} \Rep_{\nu'}.$$
The isomorphisms above induce an isomorphism
$$P'\setminus\! G/P\iso \overline{\Rep}_{\nu}\times_{\overline{\Rep}}
 \overline{\Rep}_{\nu'}.$$
Its closed points are in bijection with $W_{P'}\!\setminus\! W/W_P$ and
each such point $w$ defines a locally closed closed substack 
$X_w$.
This corresponds to the  decomposition
$$\prod_i \left(\Gr_{\nu^1_i,\ldots,\nu^n_i}\times 
\Gr_{\nu^{\prime 1}_i,\ldots,\nu^{\prime n'}_i}\right)=
\coprod_w \CO_w$$
into orbits under the action of $G$, \ie, $\CO_w/G \iso X_w$.

The restriction map $V\to \{V_i\}_{i\in I}$ induces a map
$\kappa:\Rep_{\nu}\times_{\Rep} \Rep_{\nu'}\to 
\overline{\Rep}_{\nu}\times_{\overline{\Rep}} \overline{\Rep}_{\nu'}$. 
The restriction of $\kappa$ over each $X_w$ is a vector bundle.
Note that $H^*_c(\Rep_{\nu}\times_{\Rep} \Rep_{\nu'})$ is a free
graded $H^*(BG)$-module of graded rank equal to the graded rank of
$H^*_c(\widetilde{\Rep}_\nu\times_{\widetilde{\Rep}_\alpha}
\widetilde{\Rep}_{\nu'})$ as a $\Lambda$-module.
The pullback of $\kappa$ is the projection map
$$\tilde{\kappa}:\widetilde{\Rep}_\nu\times_{\widetilde{\Rep}_\alpha}
\widetilde{\Rep}_{\nu'}\to 
\prod_i \left(\Gr_{\nu^1_i,\ldots,\nu^n_i}\times 
\Gr_{\nu^{\prime 1}_i,\ldots,\nu^{\prime n'}_i}\right).$$

\smallskip
Assume now $\nu^r\in I$ for all $r$. We have $W_P=W_{P'}=1$.
Define $\gamma_i,\gamma'_i:\{1,\ldots,n_i\}\to\{1,\ldots,n\}$ to be the
increasing maps such that $\nu^{\gamma_i(r)}=\nu^{\prime\gamma'_i(r)}=i$ 
for all $r$.

We identify $W$ with $\prod_i \GS_{n_i}$.
Let $w=(w_i)_i\in W$.
The fiber of $\tilde{\kappa}$ over
$((F_i,w_i(F'_i)))_i\in \CO(w)$ has dimension
$$\sum_{s\not= t} d_{st}\cdot
\#\{a,b|\gamma_t(b)<\gamma_s(a)\text{ and }\gamma'_t(w_t^{-1}(b))<
\gamma'_s(w_s^{-1}(a))\}.$$
We deduce that
$$\mathrm{grdim}H^*_c(\widetilde{\Rep}_\nu\times_{\widetilde{\Rep}_\alpha}
\widetilde{\Rep}_{\nu'})=
P(W,q)\sum_{w\in W}
q^{(l(w)+\sum_{s\not= t} d_{st}\cdot
\#\{a,b|\gamma_t(b)<\gamma_s(a)\text{ and }\gamma'_t(w_t^{-1}(b))<
\gamma'_s(w_s^{-1}(a))\})}$$
where $P(W,q)=\prod_i \prod_{r=1}^{n_i} \frac{q^r-1}{q-1}$ is
the Poincar\'e polynomial of $W$.

\subsection{Quiver Hecke algebras and geometry}
\label{se:quivergeo}
\subsubsection{Monoidal category of semi-simple perverse sheaves}
We denote by $\CP$ the
smallest full additive monoidal subcategory of $D(\Rep)$ closed under
translations and containing the objects $L_i$ for $i\in I$.

\smallskip
The following theorem gives a presentation of $\CP$ by generators and
relations. It has been proven independently by Varagnolo and Vasserot
\cite{VarVas}.

\begin{thm}
\label{th:catB}
There is an equivalence of graded monoidal categories
$R:(\bar{\BQ}_l\otimes_\BZ\CB(\Gamma))^i\,\mgr\iso \CP$.
\end{thm}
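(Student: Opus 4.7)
The plan is to build $R$ on generators and then reduce everything to a graded-dimension count that matches Lemma \ref{le:grdim} against a geometric computation on fibre products of flag stacks. I would set $R(F_i)=L_i=j_{i*}\Lambda[-1]$, so that by proper base change the convolution $L_{\nu^1}\circ\cdots\circ L_{\nu^n}$ is (up to the shift built into $\circ$) the pushforward $\rho_{\nu*}\Lambda$ along the proper map $\rho_\nu:\Rep_\nu\to\Rep$ of \S\ref{se:flags}. The image of the dot $x_s$ would be the first Chern class of the tautological line bundle on $\Rep_s=\mathrm{B}\mathbf{G}_m$, acting on $L_s$; the image of $\tau_{ss}$ would be the nil-Hecke operator $T$ produced by the $\BP^1$-bundle construction of Proposition \ref{pr:HeckeP1}, using the projection $\Rep_{(s,s)}\to \Rep_{(s_{\mathrm{ss}}^2)}$ which \emph{is} such a bundle. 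When $s\not=t$, $\tau_{st}$ would be obtained from the correspondence between $\Rep_{(s,t)}$ and $\Rep_{(t,s)}$ over the common partial-flag stack $\Rep_{(\{s,t\}_{\mathrm{ss}})}$ via adjunction and the Gysin map.

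The verification of the quiver Hecke relations (\ref{en:half1})--(\ref{en:half4}) on these geometric generators falls into two parts. The dot--crossing commutations (\ref{en:half3})--(\ref{en:half4}) and the braid relation (\ref{en:half2}) follow formally: in the case $s=t$ they are Proposition \ref{pr:HeckeP1} pulled back along $\pi_\nu$ and an associativity-of-correspondences argument on three-step flag stacks; for distinct labels they are essentially the identity on the common coarser stack. The serious point is the quadratic relation $\tau_{st}\circ\tau_{ts}=Q_{st}(F_tx_s,x_tF_s)$: the right-hand side must equal the Euler class of the obstruction bundle in $\Rep_{(s,t)}\times_\Rep\Rep_{(t,s)}$ coming from the arrows $s\to t$ and $t\to s$. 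Combining the Chern class contributions of the $d_{st}$ arrows in one direction with the $d_{ts}$ arrows in the other, and tracking the sign coming from orientation reversal in the Gysin morphism, one recovers exactly $Q_{st}=(-1)^{d_{st}}(u-v)^{m_{st}}$ as in \S\ref{se:Cartan}. This step, pinning the sign and the precise polynomial through the equivariant Gysin calculation, is the hard part; the rest is bookkeeping.

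For full faithfulness, I would compute
\[
\Hom^\bullet_{D(\Rep)}(L_\nu,L_{\nu'})\simeq
H^*(\Rep_\nu\times_\Rep\Rep_{\nu'})[\text{shift}]
\]
via the adjunction $(\rho_{\nu}^*,\rho_{\nu*})$ and proper base change. The map $\kappa:\Rep_\nu\times_\Rep\Rep_{\nu'}\to\overline{\Rep}_\nu\times_{\overline{\Rep}}\overline{\Rep}_{\nu'}$ described in \S\ref{se:flags} is a vector bundle over each Bruhat stratum $X_w$, so the cohomology breaks up as a sum over $w\in W_{P'}\!\backslash W/W_P$, and the graded dimension is exactly the formula given at the end of \S\ref{se:flags}. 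Comparing this with $\mathrm{grdim}\,L$ from Lemma \ref{le:grdim} (the two sums over $W$ match term-by-term once one accounts for the length shift), both Hom-spaces are free of the same finite graded rank over $H^*(\mathrm{B}G)=\bigotimes_i\BZ[X_{i,1},\ldots,X_{i,n_i}]^{\GS_{n_i}}$, which via Theorem \ref{th:catB}'s setup is identified with the center of $H(|\nu|)$. The functor $R$ intertwines these polynomial actions by construction, so it suffices to check $R$ is injective on $\Hom$'s after reduction modulo a maximal ideal of this polynomial ring, and then invoke a graded Nakayama/localization argument as in Lemma \ref{le:isoatclosedpoints}; the injectivity at a closed point follows because $R$ already hits a set of morphisms spanning top degree, namely the "longest" compositions $\tau_w$, which correspond to fundamental classes of the dense strata.

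Essential surjectivity is then automatic: $\CP$ is, by definition, the smallest full additive monoidal subcategory of $D(\Rep)$ closed under shifts and containing the $L_i$, and $R$ sends $F_i$ to $L_i$, so the essential image of $R$ contains all generators; after passing to the idempotent completion on both sides, every object of $\CP$ is a direct summand of some $L_\nu[m]$, hence lies in the essential image. The hardest ingredient in the whole argument is the Euler-class computation that matches $Q_{st}$ to the geometry, everything else being a rearrangement of the graded-dimension bookkeeping already prepared in \S\ref{se:flags} and \S\ref{se:Cartan}.
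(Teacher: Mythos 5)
Your construction of $R$ and your verification of the quiver Hecke relations track the paper's proof in \S\ref{se:generators}, \S\ref{se:tau2} and \S\ref{se:tau3} essentially line for line (same geometric images for $x_s$, $\tau_{ss}$, $\tau_{st}$; same $\BP^1$-bundle and Gysin constructions; the braid relation via transversality of the closed substacks $\Rep_{(\cdots)_{\mathrm{ss}}}$). The graded-rank matching you outline is exactly \S\ref{se:poly} against Lemma~\ref{le:grdim}, and your treatment of essential surjectivity is fine.

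Where you diverge from the paper is in faithfulness, and there the argument has a genuine gap. You reduce to injectivity of $R$ on $\Hom$-spaces modulo a maximal ideal of $H^*(BG)$ and then invoke Lemma~\ref{le:isoatclosedpoints}; but your justification for injectivity at the closed point --- that $R$ ``already hits a set of morphisms spanning top degree'' via the images of the longest $\tau_w$'s --- does not establish injectivity. A graded map of finite-dimensional vector spaces can hit top degree and still have a kernel in lower degrees, so this step as written does not close. The paper avoids this issue entirely: it establishes faithfulness via Proposition~\ref{pr:ideals}, which says that \emph{any} nonzero two-sided ideal of $H(|\nu|)$ meets the central polynomial subalgebra $\bigotimes_i k[X_{i,1},\ldots,X_{i,n_i}]^{\GS_{n_i}}$ nontrivially; since the graded-rank count of \S\ref{se:poly} shows $R$ restricts to an injection on this central polynomial ring, the kernel of $R$ on $H(|\nu|)$ cannot contain any nonzero central polynomial and is therefore zero. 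Nakayama then handles fullness from the same rank equality. Your closed-point route could be repaired --- for instance by arguing directly, via the Bruhat stratification and the vector-bundle structure of $\kappa$ over each $X_w$, that the images $R(\tau_w)$ are already a free $H^*(BG)$-basis of the target, in which case you don't need the closed-point reduction at all --- but as written the injectivity claim at the closed fibre is unjustified and you should either argue that triangularity explicitly or fall back on Proposition~\ref{pr:ideals} as the paper does.
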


The category $\CB(\Gamma)$
 is defined by generators and relations and in 
\S \ref{se:generators} we define the images of the generating objects
and arrows. The verification of the relations and the proof that the induced
functor is an equivalence start in \S \ref{se:poly}.

\smallskip
Theorem \ref{th:catB} shows that quiver Hecke algebras $H_n(\Gamma)$
are $\Ext$-algebras of certain sums of shifted simple perverse sheaves
on quiver varieties, as all objects of $\CP$ are of that form.

\subsubsection{Canonical basis}
There is an isomorphism of $\BZ[q^{\pm 1/2}]$-algebras
\cite[\S 14]{Lu1}
\begin{equation}
\label{eq:georeal}
U_{\BZ[q^{\pm 1/2}]}(\Gn^-)\iso K_0(\CP),\ f_s\mapsto [L_s].
\end{equation}

Let $B$ be the set of isomorphism classes of
simple perverse sheaves on $\Rep$ that are contained in $\CP$. Every object of $\CP$ is isomorphic to a direct sum
of shifts of objects of $B$. The canonical
basis $C$ of $U_{\BZ[q^{\pm 1/2}]}(\Gn^-)$ corresponds, via the isomorphism
(\ref{eq:georeal}), to $\{[L]\}_{L\in B}$.

\smallskip
Recall that there is a duality $\Delta$ \cite[\S 4.2.1]{Rou3}
on $\CB(\Gamma)$, \ie, a graded equivalence of monoidal
categories $\CB(\Gamma)^\opp
\iso \CB(\Gamma)$ with $\Delta^2=\Id$ given by
$$F_s[n]\mapsto F_s[-n],\ x_s\mapsto x_s \text{ and }\tau_{st}\mapsto \tau_{ts}.$$

Let $C'$ be the set of classes in $K_0$ of indecomposable objects $M$
of $(\bar{\BQ}_l\otimes_\BZ\CB(\Gamma))^i\mgr$ such that $\Delta(M)\simeq M$.

\begin{cor}
\label{co:canonicalpositive}
We have an isomorphism
$U_{\BZ[q^{\pm 1/2}]}(\Gn^-)\iso
K_0((\bar{\BQ}_l\otimes_\BZ\CB(\Gamma))^i\,\mgr)$. It induces a bijection
$C\iso C'$.
\end{cor}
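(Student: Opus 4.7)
The plan is to derive the Corollary directly from Theorem~\ref{th:catB} and the geometric realization~(\ref{eq:georeal}), using a duality argument for the second assertion. For the first claim, I would simply compose $K_0$ of the inverse equivalence $R^{-1}$ with (\ref{eq:georeal}):
\[
U_{\BZ[q^{\pm 1/2}]}(\Gn^-)\iso K_0(\CP)\iso K_0((\bar{\BQ}_l\otimes_\BZ\CB(\Gamma))^i\,\mgr),
\]
where the first arrow is (\ref{eq:georeal}) and the second is $K_0(R^{-1})$. Since $R$ is an equivalence of graded monoidal categories, the $\BZ[q^{\pm 1/2}]$-algebra structures match, with $q^{1/2}$ acting by the grading shift $T$ on the right and by cohomological shift on $K_0(\CP)$.

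For the bijection $C\iso C'$, the key step is to establish that $R$ intertwines the algebraic duality $\Delta$ on $\CB(\Gamma)$ with Verdier duality $\BD$ on $\CP$, normalized so that simple perverse sheaves are self-dual. Both $\Delta$ and $\BD$ are involutive contravariant graded equivalences of monoidal categories; by the generator-and-relations description of $\CB(\Gamma)$ from \S \ref{se:defhalf}, it suffices to check compatibility on the generating objects $F_s$ and on the $2$-arrows $x_s$ and $\tau_{st}$. On generating objects this amounts to $L_s=j_{s*}\Lambda[-1]$ being a simple perverse sheaf on the smooth stack $\Rep_s$, hence $\BD$-self-dual, which matches $\Delta(F_s)=F_s$. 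On $2$-arrows it reduces to a direct verification against the explicit description of $x_s$ and $\tau_{st}$ as morphisms between pushforwards $\pi_{\nu*}\Lambda$ from flag varieties of representations (\S \ref{se:flags}), in the spirit of the Gysin/duality identifications recalled at the start of \S \ref{se:nilP1}, where it is noted that $\BD$ acts trivially on the relevant $\Ext^*$-groups.

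Granting the duality compatibility, the second part is immediate. By construction $\CP$ consists of semi-simple complexes, so every indecomposable object of $\CP$ is, up to a cohomological shift, a simple perverse sheaf lying in $B$; a shift $L[n]$ with $L\in B$ is $\BD$-fixed (up to isomorphism) precisely when $n=0$. Thus the classes in $K_0(\CP)$ of $\BD$-self-dual indecomposables are exactly $\{[L]\}_{L\in B}$, which under~(\ref{eq:georeal}) is the canonical basis $C$. Transporting back via $K_0(R^{-1})$ and the $\Delta$-$\BD$ compatibility identifies $C$ with $C'$.

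The main obstacle will be the duality compatibility on the $2$-arrows $x_s$ and $\tau_{st}$. Although this is a generator-level verification, one must carefully fix the normalization of $\BD$, the $[-1]$ shift in $L_s$, and the dimension conventions in $\Rep_\nu$ so that the algebraic relations $\Delta(x_s)=x_s$ and $\Delta(\tau_{st})=\tau_{ts}$ translate into the correct statements about $\BD$ applied to the morphisms realizing $x_s$ and $\tau_{st}$ geometrically. Once these normalizations are pinned down and consistent, the identification of the canonical basis with the set of self-dual indecomposable classes is standard and yields the claimed bijection $C\iso C'$.
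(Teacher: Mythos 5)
Your proposal is correct and follows the same route the paper implicitly takes: the paper gives no explicit proof of this Corollary and expects the reader to compose $K_0(R^{-1})$ with the isomorphism~(\ref{eq:georeal}), and to identify $\Delta$ (via $R$) with Verdier duality $D$ on $\CP$, which is precisely what the duality remarks at the start of \S\ref{se:nilP1} and the geometric descriptions of $x_s,\tau_{st}$ in \S\ref{se:generators} are set up to make a generator-level check. Your caution about normalizations is well placed, but those checks are exactly the self-duality of each $L_s=j_{s*}\Lambda[-1]$ (a simple perverse sheaf on a smooth stack of dimension $-1$), the triviality of $D$ on $\Ext^*(\Lambda_X,\Lambda_X)$ (handling $x_s$), and the Gysin/canonical duality swap (handling $\tau_{st}\leftrightarrow\tau_{ts}$), all of which the paper has already recorded.
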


\subsubsection{Hecke generators}
\label{se:generators}
We set $R(F_s)=L_s$. Let us now define the value of $R$ on the generating
arrows of $\CB(\Gamma)$.

\smallskip
$\bullet\ $Let $s\in I$. We denote by $x_s\in\Hom(L_s,L_s[2])$ the image of
$c_1(\CW_s)\in H^2(\Rep_s,\Lambda)$.

\medskip
$\bullet\ $The forgetful morphism 
$\pi_{(s,s)}:\Rep_{(s,s)}\to \Rep_{2s}$
is the $\BP^1$-fibration associated to the rank $2$ bundle
$\CW_{2s}$. We denote by
$\tau_{ss}\in \Hom(L_s\circ L_s,L_s\circ L_s[-2])$ the image of
the composition $\pi_{(s,s)*}\Lambda\xrightarrow{\mathrm{trace}}\Lambda[-2]
\xrightarrow{\can} \pi_{(s,s)*}\Lambda[-2]$.

\medskip
$\bullet\ $
Let $s\not=t\in I$.
Consider the morphism 
$$f_{st}:\Rep_{(s,t)}\to \Rep_s\times\Rep_t,\ (V\subset V')
\mapsto (V,V'/V).$$
Let $\CM_{st}=f_{st*}\CO$: this is the vector bundle $\Ext^1(V',V)$ over
$\Rep_s\times\Rep_t=\{(V,V')\}$.
We have $\CM_{st}\simeq (\CW_s\boxtimes \CW_t^{-1})^{\oplus d_{ts}}$.

The vector bundle $f_{st}^*\CM_{st}$ has a section given
by assigning to $(V\subset V')$ the class of the extension
$0\to V\to V'\to V'/V\to 0$. The zero substack of
that section is $Z_{st}=\Rep_{(s+t)_{\mathrm{ss}}}$, a closed
substack of codimension $d_{ts}$ in $\Rep_{(s,t)}$.

We denote by
$\tau_{st}\in\Hom(L_s\circ L_t,L_t\circ L_s[m_{st}])$
the image of the composition
$$\rho_{(t,s)*}(\Lambda_{Z_{st}}\xrightarrow{\mathrm{Gysin}}
\Lambda_{\Rep_{(t,s)}}[2d_{st}])\circ
\rho_{(s,t)*}(\Lambda_{\Rep_{(s,t)}}\xrightarrow{\can}\Lambda_{Z_{st}}).$$

\subsubsection{Polynomial actions}
\label{se:poly}
Let us first study $\Hom$-spaces in the category $\CP$ under the
action of polynomial rings. 

Let $\nu\in I^n$ and $\nu'\in I^{n'}$. Given $i\in I$,
let $n_i=\#\{r|\nu_r=i\}$ and
$n'_i=\#\{r|\nu'_r=i\}$.
By \cite[\S 8.6]{ChrGi}, there is an isomorphism of
$(\Lambda[x_{\nu_1},\ldots,x_{\nu_n}],\Lambda[x_{\nu'_1},\ldots,x_{\nu'_{n'}}])$-bimodules
$$\Ext^*(L_{\nu_1}\circ\cdots \circ L_{\nu_n},
L_{\nu'_1}\circ\cdots\circ  L_{\nu'_{n'}})\iso
H^{\sigma-*}_c(\Rep_{\nu}\times_{\Rep} \Rep_{\nu'})$$
where $\sigma=\dim\Rep_\nu +\dim\Rep_{\nu'}$.
If $\Ext^*(L_{\nu_1}\circ\cdots\circ  L_{\nu_n},
L_{\nu'_1}\circ\cdots\circ  L_{\nu'_{n'}})\not=0$, then the stack
$\Rep_{\nu}\times_{\Rep} \Rep_{\nu'}$ is non-empty, so
$n_i=n'_i$ for all $i$. Assume this holds. 
 It follows from
\S \ref{se:flags} that $\Ext^*(L_{\nu_1}\circ\cdots \circ L_{\nu_n},
L_{\nu'_1}\circ\cdots\circ  L_{\nu'_{n}})$ is a free graded 
$H^*(BG)$-module of graded rank
$$N=v^{\sigma}P(W,q^{-1})\sum_{w\in W}
q^{-(l(w)+\sum_{s\not= t} d_{st}\cdot
\#\{a,b|\gamma_t(b)<\gamma_s(a)\text{ and }\gamma'_t(w_t^{-1}(b))<
\gamma'_s(w_s^{-1}(a))\})}.$$
We have 
$$\sigma=2\sum_s n_s(n_s-1)+\sum_{s\not= t} d_{st}
\left(\#\{a,b|\gamma_t(b)<\gamma_s(a)\}+
\#\{a,b|\gamma_t'(b)<\gamma_s'(a)\}\right).$$
On the other hand,
$$l(w)=\sum_s \#\{a,b|\gamma_s(b)<\gamma_s(a)\text{ and }
\gamma'_s(w_s^{-1}(b))> \gamma'_s(w_s^{-1}(a))\}.$$
It follows that 
$$N=P(W,q)
\sum_{w\in W} q^{\frac{1}{2}\sum_{s,t\in I}
m_{st}\cdot \#\{a,b| \gamma_s(a)<\gamma_t(b)\text{ and }
\gamma_s'(w_s(a))> \gamma_t'(w_t(b))\}}$$
and we deduce from Lemma \ref{le:grdim} that the graded dimensions of
the free
$\left(\bigotimes_i \Lambda[X_{i,1},\ldots,X_{i,n_i}]^{\GS_{n_i}}\right)$-modules
$\Hom_{\bar{\BQ}_l\otimes_\BZ\CB(\Gamma)}(F_{\nu_1}\cdots F_{\nu_n}, F_{\nu'_1}\cdots F_{\nu'_n})$
and $\Ext^*(L_{\nu_1}\circ\cdots \circ L_{\nu_n},
L_{\nu'_1}\circ\cdots\circ  L_{\nu'_{n}})$
coincide.

\subsubsection{Relations $\tau^2$}
\label{se:tau2}
Let $s\not=t\in I$. The self-intersection formula shows that
$$\Lambda_{Z_{st}}\xrightarrow{\mathrm{Gysin}}\Lambda_{\Rep_{(t,s)}}[2d_{st}]
\xrightarrow{\can}\Lambda_{Z_{st}}[2d_{st}]$$
is equal to
$$c_{d_{st}}(f_{ts}^*\CM_{ts})=
(c_1((\CW_t)_{|Z_{st}})-c_1((\CW_s)_{|Z_{st}}))^{d_{st}}.$$
On the other hand, the composition
$$\Lambda_{\Rep_{(s,t)}}\xrightarrow{\can}\Lambda_{Z_{st}}\xrightarrow{\mathrm{Gysin}}
\Lambda_{\Rep_{(s,t)}}[2d_{ts}]$$
is equal to 
$$[Z_{st}]=c_{d_{ts}}(f_{st}^*\CM_{st})=
(c_1((\CW_s)_{|Z_{st}})-c_1((\CW_t)_{|Z_{st}}))^{d_{ts}}.$$

We have shown that
$$\tau_{ts}\circ\tau_{st}=(-1)^{d_{st}}(x_sL_t-L_s x_t)^{d_{st}+
d_{ts}}.$$

It follows from \S \ref{se:flag} that $\tau_{ss}^2=0$.

\subsubsection{Relations $\tau^3$}
\label{se:tau3}
Consider now $s,t,u\in I$.

$\bullet\ $Assume first $s$, $t$ and $u$ are distinct.
The intersection of the closed substacks
$\Rep_{((s+t)_{\mathrm{ss}},u)}$ and
$\Rep_{(t,(s+u)_{\mathrm{ss}})}$ of
$\Rep_{(t,s,u)}$ is transverse, since the intersection of
$0\times \BC^{d_{us}}$ and $\BC^{d_{st}}\times 0$ in
$\BC^{d_{st}}\times \BC^{d_{us}}$ is transverse.

It follows that the composition
$(L_t\tau_{su})\circ (\tau_{st}L_u)$ is equal to the image of the composition
$$\rho_{(t,u,s)*}(\BC_Z\xrightarrow{\mathrm{Gysin}} \BC_{\Rep_{(t,u,s)}}[2(
d_{su}+d_{st})])\circ
\rho_{(s,t,u)*}(\BC_{\Rep_{(s,t,u)}}\xrightarrow{\can}\BC_Z)$$
where $Z$ is the substack of $\Rep_{(t,u,s)}$ (resp. of
$\Rep_{(s,t,u)}$) of triples
$(L\subset L'\subset L'')$ such that $(L'')_s$ is a direct summand of $L''$.

Similarly, the intersection of $Z$ and $\Rep_{((t+u)_{\mathrm{ss}},s)}$
in $\Rep_{(t,u,s)}$ is transverse and we deduce that
$(\tau_{tu}L_s)\circ(L_t\tau_{su})\circ (\tau_{st}L_u)$ is
equal to the image of
$$\rho_{(u,t,s)*}(\BC_{\Rep_{(s+t+u)_{\mathrm{ss}}}}
\xrightarrow{\mathrm{Gysin}} \BC_{\Rep_{(u,t,s)}}[2(d_{st}+d_{su}+d_{tu})])\circ
\rho_{(s,t,u)*}(\BC_{\Rep_{(s,t,u)}}\xrightarrow{\can}
\BC_{\Rep_{(s+t+u)_{\mathrm{ss}}}}).$$
A similar calculation provides the same description of
$(L_u\tau_{st})\circ(\tau_{su}L_t)\circ (L_s\tau_{tu})$, so we have
$$(\tau_{tu}L_s)\circ(L_t\tau_{su})\circ (\tau_{st}L_u)=
(L_u\tau_{st})\circ(\tau_{su}L_t)\circ (L_s\tau_{tu}).$$

\medskip
$\bullet\ $
We consider now the case where $s=t\not=u$.
As above, we obtain that
the composition $(\tau_{su}L_s)\circ (L_s\tau_{su})$ is equal to
the image of the composition
$$\rho_{(u,s,s)*}(\BC_{\Rep_{(s,s,u)_{\mathrm{ss}}}}
\xrightarrow{\mathrm{Gysin}} \BC_{\Rep_{(u,s,s)}}[4d_{su}])\circ
\rho_{(s,s,u)*}(\BC_{\Rep_{(s,s,u)}}\xrightarrow{\can}
\BC_{\Rep_{(s,s,u)_{\mathrm{ss}}}}).$$
The commutation of the action of the nil affine Hecke algebra in
\S \ref{se:flag} shows that
$$(\tau_{su}L_s)\circ(L_s\tau_{su})\circ (\tau_{ss}L_u)=
(L_u\tau_{ss})\circ(\tau_{su}L_s)\circ (L_s\tau_{su}).$$

\medskip
$\bullet\ $The case $s=t=u$ follows from the results of \S \ref{se:flag}.

\subsubsection{Conclusion}
The relations (3) and (4) are clear when $s\not=t$ and follow from
\S \ref{se:flag} when $s=t$.
The results of \S \ref{se:tau2} and \ref{se:tau3} complete the verification
of the defining relations for the category $\CB(\Gamma)$.
Thanks to
\S \ref{se:poly}, we obtain a monoidal $\bar{\BQ}_l$-linear graded functor
$R:(\bar{\BQ}_l\otimes\CB(\Gamma))^i\,\mgr\to \CP$. That functor is essentially
surjective.
It follows from Proposition \ref{pr:ideals} and from \S \ref{se:poly}
that $R$ is faithful.

Let $\nu\in I^n$ and $\nu'\in I^{n'}$.
By Nakayama's Lemma, it follows from \S \ref{se:poly}
that $R$ induces an
isomorphism
$$\Hom_{\CB(\Gamma)}(F_{\nu_1}\cdots F_{\nu_n}, F_{\nu'_1}\cdots F_{\nu'_n})\iso
\Ext^*(L_{\nu_1}\circ\cdots \circ L_{\nu_n},
L_{\nu'_1}\circ\cdots\circ  L_{\nu'_{n'}}).$$
This completes the proof that $R$ is an equivalence.

\subsection{$2$-Representations}
\label{se:2repgeo}
\subsubsection{Framed quivers and construction of representations}
Nakajima introduced new quiver varieties in order to construct 
irreducible representations $L(\lambda)$ of Kac-Moody algebras. We
present a modification due to Hao Zheng \cite{Zh}.

Let $\hat{\Gamma}$ be the quiver obtained from $\Gamma$ by adding
vertices $\hat{i}$ for $i\in I$ and arrows $i\to \hat{i}$.
We have $\Rep(\hat{\Gamma})=\coprod_{\mu,\nu\in\BZ_{\ge 0}^I}
\Rep_{\hat{\nu}+\mu}(\hat{\Gamma})$.

\smallskip
Assume $i$ is a source of $\Gamma$. Let $U_i$ be the substack
of $\Rep$ of representations $V$ such that the canonical map
$V_i\to\bigoplus_{a:i\to j}V_j$ is injective, where $a$ runs over
arrows of $\hat{\Gamma}$ starting at $i$. Let $\CN_i$ be the
thick subcategory of $D(\Rep(\hat{\Gamma}))$ of complexes of sheaves
with $0$ restriction to $\CN_i$.

If $i$ is not a source, consider a quiver $\Gamma'_i$ corresponding
to a different orientation of $\Gamma$ and such that $i$ is a source of
$\Gamma'_i$. Define $\CN'_i\subset D(\Rep(\hat{\Gamma}_i')$ as above. Now,
there is an equivalence $D(\Rep(\hat{\Gamma}'_i)\iso D(\Rep(\hat{\Gamma}))$
given by Fourier transform and we define $\CN_i$ to be the image
of $\CN'_i$.

Finally, let $\CN$ be the thick subcategory of $D(\Rep(\hat{\Gamma}))$
generated by $\CN_i$ for $i\in I$. This is independent of the choice
of the quivers $\Gamma'_i$. Let
$D=D(\Rep(\hat{\Gamma}))/\CN$.

\medskip

Consider now a root datum $(X,Y,\langle-,-\rangle,\{\alpha_i\}_{i\in I},
\{\alpha_i^\vee\}_{i\in I})$ with Cartan matrix that afforded by
$\Gamma$.

Let $\lambda\in X^+$. Let $\nu_i=\langle\lambda,\alpha_i^\vee\rangle$
and $\nu=(\nu_i)_i\in\BZ_{\ge 0}^I$. We put
$\Rep(\lambda)=\coprod_{\mu\in\BZ_{\ge 0}^I}
\Rep_{\hat{\nu}+\mu}(\hat{\Gamma})$ and we denote by
$D(\lambda)$ the image of $D(\Rep(\lambda))$ in $D$.

The convolution functor $L_i\circ -$ stabilizes $\CN$ and
induces an endofunctor $E_i$ of $D(\lambda)$.

It has a right adjoint
$F_i$. Let $\CP(\lambda)$ be the smallest full subcategory of
$D(\lambda)$ containing $\BC_{\Rep_{\hat{\nu}}}$, stable under
$E_i$ for $i\in I$, and stable under direct summands and direct sums.

\begin{thm}[Zheng]
\label{th:Zheng}
The functors $E_i$ and $F_i$ satisfy Serre relations, and
abstract versions of the $\rho_{i,\lambda}$ and
$\sigma_{ij}$ isomorphisms. In particular, they induce an action
of $U_q(\Gg)$ on
$\BC\otimes_{\BZ}K_0(\CP(\lambda))$ and the resulting module is
isomorphic to the simple module of highest weight $\lambda$.
\end{thm}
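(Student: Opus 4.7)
The plan is to equip $\CP(\lambda)$ with the structure of an integrable $2$-representation of $\FA(\Gg)$, and then identify it with the universal simple $2$-representation $\CL(\lambda)$. Once this identification is established, Theorem~\ref{th:LequivB} combined with the Kang-Kashiwara/Webster description of $\CB(\lambda)^i$ yields $\BC\otimes_{\BZ} K_0(\CP(\lambda))\simeq L(\lambda)$, and the action of $U_q(\Gg)$ is read off from the $2$-structure.

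First, I would transport the quiver Hecke structure from $\CB(\Gamma)$ to $D(\lambda)$ via Theorem~\ref{th:catB}. That theorem provides the endomorphism $x_i$ of $L_i$ and the natural transformation $\tau_{ij}:L_i\circ L_j\to L_j\circ L_i[m_{ij}]$ satisfying the quiver Hecke relations. Since $\CN$ is stable under $L_i\circ -$, these descend to natural transformations between products of the $E_i$'s on $D(\lambda)$, equipping $\bigoplus_{\nu\in I^n} E_{\nu_n}\cdots E_{\nu_1}$ with an $H_n(\Gamma)$-action. This immediately yields the Serre relations for the $E_i$'s and the abstract $\sigma_{ij}$ isomorphisms for $i\ne j$.

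Next, to upgrade this half-Kac-Moody data to a full $\FA(\Gg)$-action, I would apply Corollary~\ref{cor:additiveK0}. Four hypotheses must be checked: biadjointness of $E_i$ and $F_i$ (via Verdier duality and properness of the convolution diagrams, together with compatibility of duality with the quotient by $\CN$); local nilpotence of $E_i$ and $F_i$ (immediate for $F_i$ from the $\BZ_{\ge 0}^I$-grading of $D(\lambda)$, and for $E_i$ enforced precisely by the framing and the cyclotomic-type vanishing built into $\CN$); the quiver Hecke relations (already in hand); and the abstract $\Gsl_2$-commutation isomorphisms $(E_iF_i)_{|\CP(\lambda)_\lambda}\simeq (F_iE_i)_{|\CP(\lambda)_\lambda}\oplus \Id^{\langle\lambda,\alpha_i^\vee\rangle}$ and its counterpart. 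For the last, I would either invoke Theorem~\ref{th:enoughK0} and verify commutation on $K_0$ through a Hecke-correspondence/hyperbolic-restriction computation on the Nakajima--Zheng framed varieties, or produce explicit isomorphisms of functors via base change across the relevant moduli stacks.

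With the $2$-representation in place, $v_\lambda:=\BC_{\Rep_{\hat{\nu}}}$ is a highest weight object of weight $\lambda$, so the universal property of $\CL(\lambda)$ recalled in \S\ref{se:simplerep} furnishes a morphism of $2$-representations $\Phi:\CL(\lambda)\to\CP(\lambda)$. By construction, $\CP(\lambda)$ is generated from $v_\lambda$ under the action of the $E_i$'s, so $\Phi^i$ is essentially surjective; combined with Theorem~\ref{th:LequivB} and the Kang-Kashiwara/Webster identification of $\BC\otimes K_0(\CB(\lambda)^i)$ with $L(\lambda)$, a weight-by-weight dimension count forces $\Phi^i$ to be an equivalence. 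The main obstacle, and the only genuinely geometric input, is to verify the cyclotomic-type vanishing $F_i^{\langle\lambda,\alpha_i^\vee\rangle+1}(v_\lambda)=0$---without which $K_0$ would compute only a quotient of the Verma module---together with the $\Gsl_2$-commutation on $K_0$. Both rely on a careful analysis of the subcategories $\CN_i$, in particular the role of the Fourier transforms used to define $\CN_i$ at non-source vertices, and on the Lusztig--Nakajima theory of Hecke correspondences on framed quiver varieties.
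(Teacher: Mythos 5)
The paper does not prove this statement: it is attributed to Zheng and taken as a black box from \cite{Zh}, entering only as an input to the proof of Theorem~\ref{th:LequivP}. So there is no internal proof to compare against; what can be assessed is whether your attempt would close the gap, and it does not. The crux of what Zheng's theorem asserts --- the abstract $\rho_{i,\lambda}$ and $\sigma_{ij}$ isomorphisms, local nilpotence of $F_i$ enforced by the framing, and the $K_0$ computation --- is exactly what your proposal defers. You plan to obtain the $\Gsl_2$-commutation isomorphisms via Theorem~\ref{th:enoughK0} or Corollary~\ref{cor:additiveK0}, but the former requires a $K$-linear abelian ambient category with finite composition series and split simples, which the additive category $\CP(\lambda)$ of complexes of $\ell$-adic sheaves on a quotient stack is not; and the latter takes the functorial $\Gsl_2$-isomorphisms as a \emph{hypothesis}, so it cannot be used to produce them. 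The ``Hecke-correspondence/hyperbolic-restriction computation on the Nakajima--Zheng framed varieties'' and the ``careful analysis of the subcategories $\CN_i$'' that you mention are precisely where the theorem's content lives and precisely what is left undone; naming these ingredients is not a substitute for carrying out the geometry, and nothing in this paper supplies it.

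Separately, you conflate Theorem~\ref{th:Zheng} with the subsequent Theorem~\ref{th:LequivP}. The universal property of $\CL(\lambda)$, the morphism $\Phi$, the identification of $Z_\lambda$ with $k_\lambda$, and the appeal to Theorem~\ref{th:LequivB} all belong to the proof of Theorem~\ref{th:LequivP}, which \emph{uses} Zheng's theorem together with the quiver-Hecke enrichment from Theorem~\ref{th:catB} and Corollary~\ref{cor:additiveK0}. Zheng's statement is deliberately weaker: it asserts only the ``abstract'' $\Gsl_2$-level isomorphisms, with no $x_i$ or $\tau_{ij}$ in sight, and the $K_0$ identification with $L(\lambda)$. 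A proof of it must establish these geometrically and independently of the $2$-representation machinery of \S\ref{se:def2km}; otherwise the logical dependency runs backwards relative to how the paper is organized.
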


\subsubsection{$2$-representations}
\label{se:geo2rep}
The action by convolution of $\CB(\Gamma)$
on $D(\Rep(\hat{\Gamma}))$ (cf Theorem \ref{th:catB})
induces a graded action on $\CP(\lambda)$.

\begin{thm}
\label{th:LequivP}
The graded action of $\CB(\Gamma)$ on $\CP(\lambda)$ extends to
a graded action of $\FA(\Gamma)$.
There is an equivalence of graded $2$-representations of $\FA(\Gamma)$
$$\bigl(\CL(\lambda)\otimes_k k^\Gamma\otimes_\BZ\bar{\BQ}_l\bigr)^i\,\mgr
\iso \CP(\lambda).$$
\end{thm}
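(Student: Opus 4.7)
The plan is to combine Zheng's theorem with the universal property of $\CL(\lambda)$, then identify the resulting comparison functor as an equivalence by reducing to the Grothendieck group and using Nakayama's lemma. First I would transport the $\CB(\Gamma)$-action into the $\FA(\Gamma)$-action. Via the equivalence $R$ of Theorem \ref{th:catB}, the convolution functors $F_i = L_i\circ -$ on $\CP(\lambda)$ are equipped with the quiver Hecke natural transformations $x_i,\tau_{st}$. Theorem \ref{th:Zheng} supplies right adjoints $E_i$ that are locally nilpotent together with the claimed abstract $\rho_{i,\lambda}$ and $\sigma_{st}$ isomorphisms and the correct $\Gsl_2$-commutator relations at the level of $K_0(\CP(\lambda))$. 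Corollary \ref{cor:additiveK0} then upgrades this data to a genuine integrable graded $2$-representation of $\FA(\Gamma)$ on $\CP(\lambda)$, yielding the first half of the statement.

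Next I would pinpoint the highest weight object. Set $v = \Lambda_{\Rep_{\hat{\nu}}}$, which lives in weight $\lambda$. Since $\Rep_{\hat{\nu}}$ consists of representations of $\hat{\Gamma}$ with vanishing interior vertex spaces, we have $\Rep_{\hat{\nu}} = \mathrm{B}\bigl(\prod_i \GL_{\nu_i}\bigr)$, so
$$\End^\bullet(v) \simeq H^*\!\left(\mathrm{B}\!\prod_i \GL_{\nu_i}\right)\!\otimes\bar{\BQ}_l = \bigotimes_i \Lambda[x_{i,1},\ldots,x_{i,\nu_i}]^{\GS_{\nu_i}},$$
which matches $k_\lambda\otimes_k(k^\Gamma\otimes\bar{\BQ}_l)$ under the identification $z_{i,r}\leftrightarrow(-1)^r e_r(x_{i,1},\ldots,x_{i,\nu_i})$ from the proof of Theorem \ref{th:LequivB}. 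The vanishing $E_i(v)=0$ for all $i$ follows from the defining property of $\CN$: on $\Rep_{\hat{\nu}}$ the canonical map $V_i\to\bigoplus_a V_{q(a)}$ has vanishing source, so by adjunction $E_i(v)$ is forced into $\CN$. Thus $v\in\CP(\lambda)_\lambda^{\mathrm{hw}}$, and its endomorphism ring carries the same polynomial structure as the cyclotomic one.

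Now I would apply the universal property of $\CL(\lambda)$ from \S\ref{se:simplerep} to the object $v$ and the $k^\Gamma\otimes\bar{\BQ}_l$-algebra structure on $\End^\bullet(v)$, producing a graded morphism of $2$-representations
$$\Phi : \bigl(\CL(\lambda)\otimes_k k^\Gamma\otimes_\BZ\bar{\BQ}_l\bigr)^i\,\mgr \longrightarrow \CP(\lambda)$$
with $\Phi(v_\lambda)=v$. Essential surjectivity is immediate from the definition of $\CP(\lambda)$ as the smallest full subcategory stable under $E_i$ (equivalently, under $F_i$ by biadjointness) and direct summands containing $v$. For full faithfulness it suffices, via the generating objects $F_{i_n}\cdots F_{i_1}(v_\lambda)$, to compare $\Hom$-spaces: on the algebraic side, by Theorem \ref{th:LequivB} these are morphism spaces in the cyclotomic quiver Hecke algebra $H_n(\lambda)$, while on the geometric side they are the $\Ext$-spaces of \S\ref{se:poly} between convolutions of $L_i$'s applied to $v$. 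Both are free graded modules over $\End^\bullet(v)$, and Theorem \ref{th:Zheng} combined with Kang--Kashiwara--Webster's theorem identifies the $\bar{\BQ}_l$-rank at the residue field of each maximal ideal with $\dim L(\lambda)_\mu$ in the appropriate weight. A graded Nakayama-type argument in the spirit of Lemma \ref{le:isoatclosedpoints} then promotes the rank equality into an isomorphism on $\Hom$-spaces, proving that $\Phi$ is an equivalence.

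The main obstacle will be Step~3: matching the $\Hom$-spaces on both sides \emph{over the polynomial ring} $\End^\bullet(v)$, not just their ranks. Since both sides are naturally modules over $H^*(\Rep_{\hat{\nu}})\otimes\bar{\BQ}_l$, the geometric $\Hom$-computation from \S\ref{se:quivergeo} must be carried out in the relative setting over $\Rep_{\hat{\nu}}$ rather than the absolute setting used for $\CP$. If a direct pointwise rank comparison is unwieldy, one can fall back on the Jordan--H\"older machinery of Theorem \ref{th:JordanHolderfinite}: both source and target are integrable and supported below $\lambda$, and the highest-weight multiplicity category on each side is $k^\Gamma\otimes\bar{\BQ}_l$, so both are isotypic of type $\CL(\lambda)$ with the same multiplicity, forcing $\Phi$ to be an equivalence after idempotent completion.
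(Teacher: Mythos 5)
Your proposal follows the paper's route: Theorem~\ref{th:Zheng} combined with Corollary~\ref{cor:additiveK0} (using the adjunctions, the quiver Hecke relations coming from the $\CB(\Gamma)$-action of Theorem~\ref{th:catB}, and local nilpotence) produces the integrable $\FA(\Gamma)$-structure, and the resulting equivalence is then forced by the highest-weight/isotypic machinery of \S\ref{se:isotypic} once the endomorphism ring of the highest-weight object is identified. Your ``fallback'' via the isotypic/Jordan--H\"older argument is in fact the mechanism the paper uses rather than a direct relative $\Hom$-space computation; the main obstacle you isolate is bypassed entirely in the paper.

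Two small repairs. First, your justification of $E_i(v)=0$ via the defining property of $\CN$ is off the mark (on $\Rep_{\hat\nu}$ the canonical map $V_i\to\bigoplus V_j$ has zero source and so is {\em injective}, i.e.\ $\Rep_{\hat\nu}\subset U_i$, which says nothing about killing $E_i(v)$ mod $\CN$). The correct and simpler reason is that $\CP(\lambda)$ is supported in weights $\le\lambda$ — the stack components $\Rep_{\hat\nu+\mu}$ only exist for $\mu\in\BZ_{\ge 0}^I$ — so the weight-$(\lambda+\alpha_i)$ piece of $\CP(\lambda)$ is zero once the $\FA$-action is in place. Second, to conclude the equivalence from the isotypic decomposition it is not enough to match $\End^\bullet(v) \simeq H^*(BG_\nu)$ abstractly with $k_\lambda\otimes_k(k^\Gamma\otimes\bar\BQ_l)$: you need to know that the {\em canonical} map $Z_\lambda\otimes_\BZ\bar\BQ_l\to\End^\bullet(v)$ arising from the universal property is an isomorphism. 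The paper pins this map down by identifying the $H^*(B\GL_{\nu_i})$-action on $E_i^{(\nu_i)}(\Lambda_{\Rep_{\hat\nu}})$ (via the divided-power construction) with the action of $\bar\BQ_l[X_1,\ldots,X_{\nu_i}]^{\GS_{\nu_i}}$ induced by the $2$-representation structure — this is the step you should make explicit in place of the abstract ring comparison.
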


\begin{proof}
We have $\End^\bullet(\Lambda_{\Rep_\nu})\simeq H^*(BG_\nu)$. 
Given $M,N\in\CP(\lambda)$, the space $\Hom^\bullet(M,N)$ is
a finitely generated $H^*(BG_\nu)$-module \cite[proof of Proposition 3.2.5]{Zh}.
By Theorem \ref{th:Zheng}, the functors $E_i$ and $F_i$ induce
an action of $\Gsl_2$ on 
$K_0(\CP(\lambda))\otimes_{\BZ[q^{\pm 1/2}]}\BC[q^{\pm 1/2}]/(q^{1/2}-1)$.
 We deduce
from Corollary \ref{cor:additiveK0} that we have a $2$-representation
of $\FA(\Gamma)$ (the grading can be forgotten to check that the maps
$\rho_{s,\lambda}$ are isomorphisms).

Let $i\in I$ and $0\le r\le \nu_i$.
We have $\Rep_{\hat{\nu}+r\alpha_i}=\prod_{j\not=i}
B\GL_{\nu_j}\times BP_r$, where $P_r$ is the maximal parabolic subgroup
of $\GL_{\nu_i}$ with Levi $\GL_r\times\GL_{\nu_i-r}$. The graded action
of $H^*(B\GL_{\nu_i})$ on $E_i^{(\nu_i)}(\Lambda_{\Rep_\nu})$ corresponds
to the action of $\bar{\BQ}_l[X_1,\ldots,X_{\nu_i}]^{\GS_{\nu_i}}$.
We deduce that the canonical map $P_{\lambda}\otimes_\BZ\bar{\BQ}_l\to 
H^*(BG_\nu)$ is an isomorphism. This proves the last part of the theorem.
\end{proof}

\begin{rem}
Zheng provides more generally a construction of tensor products of
simple representations, and the first part of
Theorem \ref{th:LequivP}, and its proof, generalize
immediately to that case: this provides graded $2$-representations
with Grothendieck group that tensor product of simple representations.
\end{rem}

Putting Theorems \ref{th:LequivB} and \ref{th:LequivP} together, we obtain

\begin{cor}
There is an equivalence compatible with the graded action
of $\FA(\Gamma)$
$$(\CB(\lambda)\otimes_k k^\Gamma\otimes_\BZ\bar{\BQ}_l)^i\mgr\iso\CP(\lambda)
.$$
\end{cor}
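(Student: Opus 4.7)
The plan is to obtain the corollary by concatenating the two equivalences provided by Theorems~\ref{th:LequivB} and \ref{th:LequivP}. Concretely, I would first use Theorem~\ref{th:LequivB}, which gives an equivalence $\CL(\lambda)^i \iso \CB(\lambda)^i$ of additive $\FA$-categories, and base change it along the ring map $k\to k^\Gamma\otimes_\BZ\bar{\BQ}_l$. Since $\CB(\lambda)$ was defined by applying the quotient construction to $\CB\otimes_\BZ A_\lambda$, and the equivalence of Theorem~\ref{th:LequivB} arises from a functor $\CB\otimes_k k_\lambda \to \CL(\lambda)$ that is compatible with the $\FA$-action, base change gives an equivalence
\[
\bigl(\CL(\lambda)\otimes_k k^\Gamma\otimes_\BZ\bar{\BQ}_l\bigr)^i \iso
\bigl(\CB(\lambda)\otimes_k k^\Gamma\otimes_\BZ\bar{\BQ}_l\bigr)^i
\]
compatible with the action of $\FA(\Gamma)$.

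Next I would upgrade this to a graded equivalence. The categories $\CB$ and $\CL(\lambda)$ are enriched in graded $\BZ$-modules via the gradings introduced in \S\ref{se:defhalf} and \S\ref{se:def2km}, and the functor $\CB\otimes_k k_\lambda\to\CL(\lambda)$ constructed in the proof of Theorem~\ref{th:LequivB} is built from graded data ($x_s$, $\tau_{st}$, units and counits whose degrees are prescribed), so it respects the grading. Passing to $\mgr$-categories therefore upgrades the base-changed equivalence to a graded equivalence of $\FA(\Gamma)$-$2$-representations.

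Finally, composing with the graded equivalence of Theorem~\ref{th:LequivP},
\[
\bigl(\CL(\lambda)\otimes_k k^\Gamma\otimes_\BZ\bar{\BQ}_l\bigr)^i\,\mgr \iso \CP(\lambda),
\]
yields the asserted equivalence
\[
(\CB(\lambda)\otimes_k k^\Gamma\otimes_\BZ\bar{\BQ}_l)^i\,\mgr \iso \CP(\lambda),
\]
compatible with the graded action of $\FA(\Gamma)$.

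The only nontrivial point I anticipate is bookkeeping: verifying that the equivalence of Theorem~\ref{th:LequivB} is genuinely compatible with the grading and with base change along $A_\lambda\to k^\Gamma\otimes_\BZ\bar{\BQ}_l$ (equivalently, that the identification $Z_\lambda\iso k_\lambda$ is an isomorphism of graded rings). This is built into the proof of Theorem~\ref{th:LequivB} since the symmetric functions $(-1)^r e_r(x_{1,i},\ldots,x_{n_i,i})$ used to define the map $k_\lambda\to Z_\lambda$ are homogeneous of the correct degree; once this is checked, the composition of equivalences is automatic and delivers the corollary without further work.
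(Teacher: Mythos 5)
Your proposal is correct and is exactly the paper's argument: the paper's entire proof is the sentence ``Putting Theorems~\ref{th:LequivB} and \ref{th:LequivP} together, we obtain,'' i.e.\ base change the equivalence $\CL(\lambda)^i\iso\CB(\lambda)^i$ of Theorem~\ref{th:LequivB} along $k\to k^\Gamma\otimes_\BZ\bar{\BQ}_l$ and compose with the graded equivalence of Theorem~\ref{th:LequivP}. Your extra remark that $Z_\lambda\iso k_\lambda$ is graded (because $(-1)^r e_r(x_{1,i},\ldots,x_{n_i,i})$ has the same degree $2d_ir$ as $z_{i,r}$) is a correct and useful clarification of the bookkeeping the paper leaves implicit.
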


As a consequence, the indecomposable projective
modules for cyclotomic quiver Hecke algebras over 
$\bar{\BQ}_l\otimes_\BZ k^\Gamma$ correspond to the canonical
basis elements of $L(\lambda)$. When $\Gamma$ has type $A_n$ or
$\tilde{A}_n$, this is Ariki's Theorem (formerly, the Lascoux-Leclerc-Thibon
conjecture). Here, we used the geometry
of quiver varieties, which carry the same singularities
as flag varieties, in type $A$.

\begin{rem}
It would be interesting to extend Theorems \ref{th:catB} and
\ref{th:LequivP} to the case of coefficients $\BZ$ or $\BF_p$.
\end{rem}

Lauda has given an independent proof of the results of \S \ref{se:geo2rep}
for $\Gsl_2$: in this case, the geometry is that of flag varieties of
type $A$ \cite{Lau2}.
An earlier geometrical approach has been given by Cautis, Kamnitzer and
Licata for $\Gsl_2$, based on coherent sheaves on
cotangent bundles of flag varieties and compactifications of those
\cite{CauKaLi1}, later generalized to arbitrary $\Gamma$ \cite{CauKaLi2}.

Webster has given a presentation of our results and constructions
in \S \ref{se:geo2rep} and has used this to develop a categorification of the
Reshetikhin-Turaev invariants \cite{We1,We2}. He has also constructed
a counterpart of the categories $\CB(\lambda)$ for tensor products.

\end{document}